\newtheorem{Thm}{Theorem}[section]
\newtheorem{Lem}[Thm]{Lemma}
\newtheorem{Cor}[Thm]{Corollary}
\newtheorem{Prop}[Thm]{Proposition}
\newtheorem{Rem}[Thm]{Remark}
\newtheorem{Def}[Thm]{Definition}
\newtheorem*{Thm2}{Theorem A}
\newtheorem*{Thm3}{Theorem B}
\newtheorem*{Thm4}{Theorem C}
\newtheorem*{Cor2}{Corollary A}
\newtheorem*{Cor3}{Corollary B}
\newtheorem*{Cor4}{Corollary C}
\newtheorem*{Prop2}{Proposition}
\newcommand{\R}{\mathbb{R}}           
\newcommand{\C}{\mathbb{C}}           
\newcommand{\Z}{\mathbb{Z}}           
\newcommand{\mP}{\mathbb{P}}
\newcommand{\mB}{\mathbb{B}}
\newcommand{\mcB}{\mathcal{B}}
\newcommand{\mcF}{\mathcal{F}}
\newcommand{\height}{\mathrm{ht} \,}
\newcommand{\Img}{\text{Im} \,}
\newcommand{\id}{\mathrm{id}}
\newcommand{\sh}{\mathrm{sh}}
\newcommand{\ch}[1]{\mathrm{ch}_{#1}\,}
\newcommand{\cl}{\mathrm{cl}}
\newcommand{\fa}{{\mathfrak a}}             
\newcommand{\fb}{{\mathfrak b}}
\newcommand{\fg}{{\mathfrak g}}
\newcommand{\fh}{{\mathfrak h}}
\newcommand{\fn}{{\mathfrak n}}
\newcommand{\hfg}{\hat{\fg}}
\newcommand{\hfh}{\hat{\fh}}
\newcommand{\hfb}{\hat{\fb}}
\newcommand{\hfn}{\hat{\fn}}
\newcommand{\hP}{\hat{P}}
\newcommand{\hI}{\hat{I}}
\newcommand{\hW}{\hat{W}}
\newcommand{\ga}{\alpha}
\newcommand{\gb}{\beta}
\newcommand{\gk}{\kappa}
\newcommand{\gl}{\lambda}
\newcommand{\gL}{\Lambda}
\newcommand{\gd}{\delta}
\newcommand{\gD}{\Delta}
\newcommand{\gt}{\theta}
\renewcommand{\gg}{\gamma}
\newcommand{\gs}{\sigma}
\newcommand{\gee}{\varepsilon}
\newcommand{\gp}{\varpi}
\newcommand{\gph}{\varphi}
\newcommand{\gi}{\iota}
\renewcommand{\hat}{\widehat}
\newcommand{\ol}{\overline}
\newcommand{\mC}{\mathcal{C}}
\newcommand{\Cg}{\mathcal{C}{\fg}}
\newcommand{\Cgd}{\mathcal{C}{\fg}_d}
\newcommand{\fhd}{\fh_d}
\newcommand{\lpishr}{\gD^{\sh}}
\newcommand{\D}{\mathcal{D}}
\newcommand{\ul}[1]{\underline{#1}}
\newcommand{\mPint}{\mP_{\mathrm{int}}}
\newcommand{\te}{\tilde{e}}
\newcommand{\tf}{\tilde{f}}
\newcommand{\0}{\mathbf{0}}
\newcommand{\wt}{\mathrm{wt}}
\newcommand{\Deg}{\mathrm{Deg}}
\newcommand{\mugs}{(\ul{\mathstrut \mu}, \ul{\mathstrut \gs})}
\begin{document}
\title{Weyl modules, Demazure modules and finite crystals for non-simply laced type}
\author{Katsuyuki Naoi}
\date{}

\maketitle

\begin{abstract}
  We show that every Weyl module for a current algebra has a filtration whose successive quotients are 
  isomorphic to Demazure modules,
  and that the path model for a tensor product of level zero fundamental representations
  is isomorphic to a disjoint union of Demazure crystals.
  Moreover, we show that the Demazure modules appearing in these two objects coincide exactly.
  Though these results have been previously known in the simply laced case, they are new in the non-simply laced case.  
\end{abstract}

\section{Introduction}
  In this article, we study finite-dimensional representations of a current algebra,
  crystals
  for a quantum affine algebra, and connections between them. 
  First, we begin with an introduction of the results concerning finite-dimensional representations of a current algebra.
  Let $\fg$ be a complex simple Lie algebra. 
  The study of representations of its current algebra $ \Cg= \fg \otimes \C[t]$, 
  together with that of the loop algebra $\fg \otimes \C[t, t^{-1}]$,
  has been the subjects of many articles. 
  For example, see \cite{MR2290922,MR2351379,MR2271991,MR2238884,MR1850556,MR2428305,MR1729359,MR2323538}.

  Among finite-dimensional $\Cg$-modules, Weyl modules are especially important.
  The notion of a Weyl module was originally introduced in \cite{MR1850556} 
  for a loop algebra as a module having some universal property, 
  and defined similarly for a current algebra in \cite{MR2271991}.
  We denote by $W(\gl)$ the Weyl module for $\Cg$ associated with $\gl \in P_+$,
  where $P_+$ is the set of dominant integral weights of $\fg$.
  $W(\gl)$ has a natural $\Z$-grading.
  In order to study its $\Z$-graded structure,
  we consider $W(\gl)$ as a $\Cgd (= \Cg \oplus \C d)$-module in this article, where $d$ is the degree operator.
    
  There is another important class of finite-dimensional $\Cgd$-modules called Demazure modules.
  Let $\fb$ be a Borel subalgebra of $\fg$, and $\hfb = \fb + \C K + \C d + \fg \otimes t\C[t]$ the Borel subalgebra 
  of the affine Lie algebra $\hat{\fg}$, where $K$ is the canonical central element.
  A Demazure module is, by definition, a $\hat{\fb}$-submodule of an irreducible highest weight $\hfg$-module
  generated by an extremal weight vector.
  Among Demazure modules, we are mainly interested in $\Cgd$-stable ones, 
  which are denoted by $\D(\ell, \gl)[m]$ for $\ell \in \Z_{> 0}$, $\gl \in P_+$,
  and $m \in \Z$ (for precise definitions, see Subsection \ref{Demazure modules}).
   
  When $\fg$ is simply laced, the Weyl module $W(\gl)$ is known to be isomorphic to the Demazure module $\D(1, \gl)[0]$.
  This result was proved for $\fg = \mathfrak{sl}_n$ in \cite{MR2271991},
  and for general simply-laced $\fg$ in \cite{MR2323538}. 
  From this result, we can obtain a lot of informations about the structure of Weyl modules in the simply laced case.
  Then it is natural to ask what happens in the non-simply laced case.
  As stated in \cite{MR2323538}, a similar result does not hold any longer in 
  this case.
  
  One of the aims in this article is to generalize the above result in the non-simply laced case.
  To state our result, we prepare some notation.
  Assume that $\fg$ is non-simply laced, fix a Cartan subalgebra $\fh$ of $\fg$, 
  and let $\gD \in \fh^*$ be the root system of $\fg$.
  We denote by $\gD^{\sh}$ the root subsystem of $\gD$ generated by short simple roots,
  and by $\fg^{\sh}$ the simple Lie subalgebra corresponding to $\gD^{\sh}$.
  Put $\fh^{\sh} = \fh \cap \fg^{\sh}$, which is a Cartan subalgebra of $\fg^{\sh}$, 
  and $\Cgd^{\sh}= \fg^{\sh} \otimes \C[t] \oplus \C d$.
  Denote by $\ol{P}_+ \subseteq (\fh^{\sh})^*$ the set of dominant integral weights of $\fg^{\sh}$,
  and by $\D^{\sh}(\ell, \nu)[m]$ ($\ell \in \Z_{>0}$, $\nu \in \ol{P}_+$, $m \in \Z$) Demazure modules for $\Cgd$. 
  We define a positive integer $r$ by 
  \[ r = \begin{cases} 2 & \text{if $\fg$ is of type $B_n, C_n$ or $F_4$}, \\
                       3 & \text{if $\fg$ is of type $G_2$}. 
         \end{cases}
  \]
  
  For $\gl \in P_+$, denote by $\ol{\gl} \in \ol{P}_+$ the image of $\gl$ under 
  the canonical projection $\fh^* \to (\fh^{\sh})^*$.
  By the Joseph's result in \cite{MR2214249}, 
  $\D^{\sh}(1,\ol{\gl})[0]$ has a $\Cgd^{\sh}$-module filtration $0=D_0 \subseteq D_1 \subseteq 
  \dots \subseteq D_k = \D^{\sh}(1,\ol{\gl})[0]$
  such that each subquotient $D_i / D_{i-1}$ is isomorphic to $\D^{\sh}(r, \nu_i)[m_i]$ for some $\nu_i \in \ol{P}_+$
  and $m_i \in \Z_{\ge 0}$.
  Define a linear map $i_{\sh}\colon (\fh^{\sh})^* \to \fh^*$ by $\ga|_{\fh^{\sh}} \mapsto \ga$ for $\ga \in \gD^{\sh}$,
  and put $\gl' = \gl - i_{\sh}(\ol{\gl})$, $\mu_i = i_{\sh}(\nu_i) + \gl'$ for each $1 \le i \le k$.
  Then the following theorem is proved (Theorem \ref{Thm: module_main_theorem}):

  \begin{Thm2}
    The Weyl module $W(\gl)$ has a $\Cgd$-module filtration $0 = W_0 \subseteq W_1 \subseteq \cdots \subseteq W_k = W(\gl)$ 
    such that each subquotient $W_i /W_{i-1}$ is isomorphic to the Demazure module $\D(1,\mu_i)[m_i]$.
  \end{Thm2}
  
  When a Weyl module is isomorphic to a Demazure module, the successive quotient of a trivial filtration is 
  of course isomorphic to a Demazure module.
  Hence this is a kind of generalization of the result in the simply laced case.
  
  It should be remarked that, although the statement of Theorem A is purely that
  concerning modules of a current algebra, we need some results on crystals to prove this theorem.
  To be more precise, though we can show without using any crystal theory that the Weyl module has a filtration 
  such that each successive quotient is a homomorphic image of the Demazure module,
  we need to apply some results on crystals to verify that they are in fact isomorphic.
  
  Some corollaries concerning Weyl modules for non-simply laced $\fg$ are obtained from Theorem A,
  which have already been proved in the simply laced case (for $\fg = \mathfrak{sl}_n$ in \cite{MR2271991},
  and for general simply laced $\fg$ in \cite{MR2323538}).
  First, the following corollary is obtained (Corollary \ref{Cor: Main_Corollary1} (i)):

  \begin{Cor2}
    Let $\gl = \sum_{i} \gl_i \varpi_i \in P_+$, where $\varpi_i$ denote the fundamental weights of $\fg$.
    Then we have 
      \[ \dim W(\gl) = \prod_{i}\dim W(\varpi_i)^{\gl_i}.
      \]
  \end{Cor2}

  By the same way as \cite{MR2271991} or \cite{MR2323538}, 
  the dimension conjecture of Weyl modules for a loop algebra, 
  which was conjectured (and proved for $\fg = \mathfrak{sl}_2$) in \cite{MR1850556}, is deduced from Corollary A. 
  It should be remarked that
  the dimension conjecture for general $\fg$ also can be proved using the global basis theory. 
  (This was pointed out by Hiraku Nakajima.   
  The proof is not written in any literature, but a brief sketch of this proof can be found 
  in the introduction of \cite{MR2323538}.)
  Our approach is quite different from this proof.

  The second corollary is as follows (Corollary \ref{Cor: Main_Corollary1} (ii)):
  
  \begin{Cor3}
    Let $\gl \in P_+$, and $\gl_1, \dots, \gl_\ell \in P_+$ a sequence of elements satisfying $\gl = \gl_1 + \dots + \gl_\ell$.
    For arbitrary pairwise distinct complex numbers $c_1,\dots,c_\ell$, we have 
    \[ W(\gl) \cong W(\gl_1)_{c_1} *\dots * W(\gl_\ell)_{c_\ell},
    \]
    where $*$ denotes the fusion product introduced in {\normalfont\cite{MR1729359}}, 
    and $c_1,\dots,c_\ell$ are parameters used to define the fusion product.
  \end{Cor3}

  Note that this corollary implies that the fusion product of Weyl modules is associative and independent 
  of the parameters $c_1,\dots,c_\ell$.
  This statement was conjectured in \cite{MR1729359} for more general modules for $\Cg$.
  
  Next, we introduce our results on crystals.
  Let $U_q(\hfg)$ be the quantum affine algebra associated with $\hfg$, and $U_q'(\hfg)$ the one without the degree operator.
  Crystals we mainly study in this article are realized by path models, which were originally introduced by Littelmann
  \cite{MR1253196, MR1356780}.
  Let $\hP$ denote the weight lattice of $\hfg$.
  A path with weight in $\hP$ is, by definition, a piecewise linear, continuous map 
  $\pi\colon [0,1] \to \R \otimes_{\Z} \hP$ such that $\pi(0) = 0$
  and $\pi(1) \in \hP$. Let $\mP$ denote the set of paths with weight in $\hP$,
  which has a $U_q(\hfg)$-crystal structure \cite{MR1253196, MR1356780}. 
  Let $\gl \in \hP$ be a level zero weight that is dominant integral for $\fg$,
  and $\mB_0(\gl)$ the connected component of $\mP$ containing the straight line path $\pi_{\gl}\colon \pi_{\gl}(t) = t\gl$.
  Put $\hP_{\cl} =\hP / \Z \gd$ where $\gd$ is the indivisible null root.
  By projecting $\mB_0(\gl)$ to $\R \otimes_{\Z} \hP_{\cl}$, 
  a finite $U_q'(\hfg)$-crystal $\mB(\gl)_{\cl}$ is obtained, and
  Naito and Sagaki verified in \cite{MR2146858, MR2199630} that this $\mB(\gl)_{\cl}$ is isomorphic 
  to the tensor product of the crystal bases of 
  level zero fundamental $U_q'(\hfg)$-representations introduced by Kashiwara in \cite{MR1890649}.
  Note that elements of $\mB(\gl)_{\cl}$ have only $\hP_{\cl}$-weights by definition. 
  In this article, we define $\hP$-weights on these elements using the degree function introduced in \cite{MR2474320}. 
  These $\hP$-weights are important for stating our theorem stated below. 
  
  Demazure modules have counterparts in the crystal theory.
  Let $V_q(\gL)$ be the irreducible highest weight $U_q(\hfg)$-module of highest weight $\gL$ and $\mcB(\gL)$ 
  its crystal basis.
  Similarly as the classical case, a quantized version of a Demazure module is defined as a submodule of $V_q(\gL)$.
  For each Demazure module, Kashiwara defined in \cite{MR1240605} a subset of $\mcB(\gL)$ called a Demazure crystal,
  and proved that there exist strong connections between a Demazure module and the corresponding Demazure crystal
  (for example, the character of a Demazure module coincides with the weight sum of the Demazure crystal).
  We denote by $\mcB(\ell, \gl)[m]$ the Demazure crystal corresponding to (the quantized version of) $\D(\ell, \gl)[m]$.
  
  Now let us state our second main theorem (Theorem \ref{Thm: crystal_main_theorem}). 
  Assume that $\fg$ is non-simply laced, 
  and let $\mu_i\in P_+$ and $m_i \in \Z_{\ge 0}$ $(1 \le i \le k)$ be the elements defined just above Theorem A.
  We denote by $b_{\gL}$ the highest weight element of $\mcB(\gL)$, and by $\gL_0$ the fundamental weight of $\hfg$ 
  associated with the additional node of the extended Dynkin diagram of $\fg$:

  \begin{Thm3}
    $\mcB(\gL_0) \otimes \mB(\gl)_{\cl}$ is isomorphic as a $U_q'(\hfg)$-crystal 
    to the direct sum of crystal bases of irreducible 
    highest weight $U_q(\hfg)$-modules.
    Moreover, the restriction of the given isomorphism on $b_{\gL_0} \otimes \mB(\gl)_{\cl}$ preserves the $\hP$-weights,
    and maps $b_{\gL_0} \otimes \mB(\gl)_{\cl}$ onto $\coprod_{1 \le i \le k} \mcB(1, \mu_i)[m_i]$.
  \end{Thm3}

  Note that this theorem is a generalization of the following statement proved in \cite{MR2323538}:
  if $\fg$ is simply-laced, the crystal graph of $b_{\gL_0} \otimes \mB(\gl)_{\cl}$ coincides with the 
  graph of $\mcB(1,\gl)[0]$.
  We remark that, similarly as Theorem A, we need not only results on crystals
  but also some results on Weyl modules to show Theorem B. 
  
  In order to prove Theorem B, we show the following proposition in advance (Corollary \ref{Cor: decomposition} and 
  Proposition \ref{Prop: Demazure_crystal_decomposition1}):

  \begin{Prop2} Let $\gL$ be an arbitrary dominant integral weight of $\hfg$. \\
    {\normalfont(i)} $\mcB(\gL) \otimes \mB_0(\gl)$ is isomorphic as a $U_q(\hfg)$-crystal
      to a direct sum of the crystal bases of irreducible highest weight $U_q(\hfg)$-modules. \\
    {\normalfont(ii)} 
      The isomorphism given in {\normalfont(i)} maps the subset $b_{\gL} \otimes \mB_0(\gl)$
      onto a disjoint union of some Demazure crystals.
  \end{Prop2}
  
  Since it is known that, at least in some cases, $\mB_0(\gl)$ is related to the crystal basis of 
  some level zero extremal weight $U_q(\hfg)$-module (\cite{MR2199630,MR2407814}),
  this proposition itself seems important and interesting.

  As seen from Theorem A and B, the Weyl module $W(\gl)$ and the crystal
  $\mB(\gl)_{\cl}$ have some connections.
  In particular, the following theorem is proved (Theorem \ref{Thm: Main_Theorem}):

  \begin{Thm4}\label{Thm:Main_Theorem_intro}
  For $\gl \in P_+$, we have 
  \[ \ch{\fh_d}W(\gl) = \sum_{\eta \in \mB(\gl)_{\cl}} e\big(\wt_{\hP}(\eta)\big).
  \]
  \end{Thm4}

  From our theorems, some connections are found between a Weyl module and polynomials defined in the crystal theory. 
  Let $W_q(\varpi_i)$ denote the level-zero fundamental $U_q'(\hfg)$-representation and
  $\mcB\big(W_q(\varpi_i)\big)$ its crystal basis.
  Let 
  $\mathbf{i} = (i_1,\dots,i_{\ell})$
  be a sequence of indices of simple roots of $\fg$, and put 
  \[ W_{\mathbf{i}} = W_q(\varpi_{i_1}) \otimes \dots \otimes W_q(\varpi_{i_\ell}), \ \ \ 
     \mcB_{\mathbf{i}} = \mcB\big(W_q(\varpi_{i_1})\big) \otimes \dots \otimes \mcB\big(W_q(\varpi_{i_\ell})\big).
  \]
  For each $\mu \in P_+$, the fermionic form $M(W_{\mathbf{i}},\mu,q)$ 
  and the classically restricted one-dimensional sum $X(\mcB_{\mathbf{i}}, \mu,q)$ are defined 
  as \cite{MR1745263,MR1903978}. 
  Using the results of Di Francesco and Kedem in \cite{MR2428305}, Naito and Sagaki in 
  \cite{MR2474320} and ours, the following corollary is shown (Corollary \ref{Cor: final_Cor}), 
  which implies a special case of the $X=M$ conjecture presented in \cite{MR1745263,MR1903978}:

  \begin{Cor4}
    We have
    \begin{align*} 
      M(W_{\mathbf{i}}, \mu ,q) &= q^{-D_{\mathbf{i}}^{\mathrm{ext}}}X(\mcB_{\mathbf{i}}, \mu ,q) \\
      & = [W(\gl) : V_{\fg}(\mu)]_{q^{-1}},
    \end{align*}
    where $D_{\mathbf{i}}^{\mathrm{ext}}$ denotes a certain constant defined in {\normalfont \cite{MR2474320}}, 
    $V_{\fg}(\mu)$ denotes the irreducible $\fg$-module of highest weight 
    $\mu$ and $[W(\gl) : V_{\fg}(\mu)]_{q}$ denotes the $\Z$-graded multiplicity of $V_{\fg}(\mu)$ in $W(\gl)$.
  \end{Cor4}

  In \cite{MR2271991}, the authors proved that the $\Z$-graded multiplicity of $W(\gl)$ is equal to a Kostka polynomial 
  when $\fg = \mathfrak{sl}_n$.
  Since fermionic forms and classically restricted one-dimensional sums
  are known to coincide with Kostka polynomials in this case, 
  the above corollary is a generalization of their result.

  The plan of this article is as follows. In Section 2, we fix basic notation used in the article.
  In Section 3, we review some results on finite-dimensional representations of a current algebra, 
  almost of which have already been known.
  Section 4 is the main part in the first half of this article. 
  We give defining relations of a Demazure module of level $1$, and show using this defining relations 
  the existence of a filtration on a Weyl module whose successive quotients are homomorphic images of Demazure modules.

  In Section 5, we review the theory of path models.
  In Section 6, we show that $\mcB(\gL) \otimes \mB_0(\gl)$ is isomorphic to a direct sum of the crystal bases 
  of highest weight modules, and in Section 7 that the image of $b_{\gL} \otimes \mB_0(\gl)$ 
  under this isomorphism is a disjoint union of Demazure crystals. 
  In the final part of Section 7, we also show that $\mcB(\gL) \otimes \mB(\gl)_{\cl}$ has similar properties.
  From this, we see in particular that $b_{\gL_0} \otimes \mB(\gl)_{\cl}$ 
  decomposes to a disjoint union of some Demazure crystals.
  In Section 8, we study this decomposition of $b_{\gL_0} \otimes \mB(\gl)_{\cl}$ in more detail. 

  Finally in Section 9, we show the Theorem A, B and C, and Corollary A, B and C. \\

\noindent \textbf{Index of notation}  \\

We provide for the reader's convenience a brief index of the notation which is used repeatedly in this paper:\\
 
\noindent Section 2: $\fg$, $\fh$, $\fb$, $\gD$, $\gD_{\pm}$, $\Pi$, $\ga_i$, $I$, $\gt$, $\mathrm{ht}\, \ga$, $\ga ^{\vee}$,
    $\fg_{\ga}$,  $e_{\ga}$, $f_{\ga}$, $e_i$, $f_i$, $\fn_{\pm}$, $( \ , \ )$, $\nu$, $Q$, $Q_+$, $\varpi_i$, $P$, $P_+$, $W$,
    $s_{\ga}$, $s_i$, $w_0$, $r$, $V_{\fg}(\gl)$, $\hfg$, $K$, $d$, $\hfh$, $\hfn_+$, $\hfb$, $\hat{\gD}$, $\gd$, $\hat{\Pi}$, 
    $\hI$, $\gL_i$, $\hP$, $\hP_+$, $\hW$, $\Cg$, $\Cgd$, $\fhd$, $\Pi^{\sh}$, $I^{\sh}$, $\gD^{\sh}$, $\gD^{\sh}_{\pm}$,
    $Q^{\sh}$, $Q^{\sh}_+$, $W^{\sh}$, $\fh^{\sh}$, $\fg^{\sh}$, $\fn^{\sh}_{\pm}$, $\hfg^{\sh}$, $\Cg^{\sh}$, $\Cgd^{\sh}$, 
    $\hfh^{\sh}$, $\fhd^{\sh}$, $\ol{\gl}$, $i_{\sh}$, $\ol{P}$, $\ol{P}_+$, $U_q(\hfg)$, $U_q'(\hfg)$, $U_q(\hfg^{\sh})$, 
    $U_q'(\hfg^{\sh})$, $M_{\gl}$, $\wt_{H}$, $\ch{H}$, $P_S$.\\
\noindent Subsection 3.1: $W(\gl)$. \\
\noindent Subsection 3.2: $V(\gL)$, $V_w(\gL)$, $\D(\ell, \gl)[m]$, $\D(\ell, \gl)$. \\
\noindent Subsection 3.3: $W_{c_1}^1 * \cdots * W_{c_k}^k$.\\
\noindent Subsection 4.1: $M^{\gl}$. \\
\noindent Subsection 4.3: $V_q(\gL)$, $V_{q,w}(\gL)$, $\preceq$, $(M:\D(\ell, \gl)[m])$. \\
\noindent Subsection 4.4: $\D^{\sh}(\ell, \nu)[m]$, $\le$ on $\Z[\fhd^*]$. \\
\noindent Subsection 5.1: $[a,b]$, $\mP$, $(\ul{\mathstrut \mu}, \ul{\mathstrut \gs})$, $H_i^{\pi}$, 
  $m_i^{\pi}$, $\mPint$, $\te_i$, $\tf_i$, $\0$, $\wt$, $\gee_i$, $\gph_i$, $S_i$, $S_w$, $\pi_1 * \pi_2$, $\mB_1 * \mB_2$. \\
\noindent Subsection 5.2: $C(b)$, $\pi_{\gl}$, $\mB_0(\gl)$, $\mcB(\gL)$, 
  $b_{\gL}$, $\hP_{\cl}$, $\cl$, $\mP_{\cl}$, $\eta_{\xi}$, $\cl(\pi)$,
  $\mB(\gl)_{\cl}$, $W_q(\varpi_i)$, $\mcB(W_q(\varpi_i))$. \\
\noindent Subsection 5.3: $\iota(\pi)$, $\iota(\eta)$, $d_{\gl}$, $i_{\cl}$, $\pi_{\eta}$, $\Deg(\eta)$, $\wt_{\hP}$.\\
\noindent Subsection 6.1: $\hW_J$. \\
\noindent Subsection 6.2: $\mB_0(\gl)^{\gL}$. \\
\noindent Subsection 7.1: $\mcF_i(\mathcal{C})$, $\mcF_{\mathbf{i}}(\mathcal{C})$, $\mcB_w(\gL)$. \\
\noindent Subsection 7.2: $\hI_{\gL}$. \\
\noindent Subsection 7.3: $\mB(\gl)_{\cl}^{\gL}$. \\
\noindent Subsection 8.1: $\mcB(\ell, \gl)[m]$, $\gk$. \\
\noindent Subsection 8.2: $\gt^{\sh}$, $\ga_0^{\sh}$, $s_0^{\sh}$, $\hW^{\sh}$, $\hI^{\sh}$, $\hP^{\sh}$, $\mP^{\sh}$, 
  $\mPint^{\sh}$, $\te_i^{\sh}$, $\tf_i^{\sh}$, $H_i^{\sh, \pi}$, $m_i^{\sh, \pi}$, $\mB_0^{\sh}(\gl)$.

\section{Notation and elementary lemmas}\label{Notation}

Let $\fg$ be a complex simple Lie algebra.
Fix a Cartan subalgebra $\fh$ and a Borel subalgebra $\fb \supseteq \fh$ of $\fg$. 
Let $\gD \subseteq \fh^*$ denote the root system of $\fg$, and $\gD_+$ (resp.\ $\gD_-$) 
the set of positive roots (resp.\ negative roots) corresponding to $\fb$.
Denote by $\Pi = \{ \ga_1, \dots, \ga_n \}$ the set of simple roots and by $I = \{ 1, \dots, n \}$ its index set.
Let $\gt$ be the highest root of $\gD$.
For $\ga = \sum_i n_i \ga_i\in \gD$, we define the \textit{height} of $\ga$ by $\height \ga = \sum_i n_i$,
and denote the coroot of $\ga$ by $\ga ^{\vee} \in \fh$. 
Let $Q = \sum_i \Z \ga_i$ be the root lattice of $\fg$ and  $Q_+ = \sum_i \Z_{\ge 0} \ga_i \subseteq Q$.
We denote by $\varpi_i$ the fundamental weight corresponding to $\ga_i$.
Let $P = \sum_i \Z \varpi_i$ be the weight lattice of $\fg$, $P_+ = \sum_i \Z_{\ge 0} \gp_i\subseteq P$ 
the subset of dominant integral weights, and $W$ the Weyl group of $\fg$.
For $\ga \in \gD_+$ we denote by $s_{\ga} \in W$ the reflection associated with $\ga$, 
and abbreviate $s_i =s_{\ga_i}$.
We denote by $w_0$ the longest element of $W$.

Denote the root space associated with $\ga \in \gD$ by $\fg_{\ga}$, and set 
\[ \fn_{\pm} = \bigoplus_{\ga \in \gD_{\pm}} \fg_{\ga}.
\]
For each $\ga \in \gD_+$ we fix $e_{\ga} \in \fg_{\ga}$ and $f_{\ga} \in \fg_{-\ga}$ 
such that $[e_{\ga}, f_{\ga}] = \ga^{\vee}$,
and abbreviate $e_i = e_{\ga_i}, f_i = f_{\ga_i}$.
Let $( \ , \ )$ be the unique non-degenerate invariant symmetric bilinear form on $\fg$ normalized by 
$(\gt^{\vee}, \gt^{\vee}) = 2$,
and $\nu\colon \fh \to \fh^*$ the linear isomorphism defined by the restriction of $( \ , \ )$ to $\fh$.
We define a bilinear form on $\fh^*$ by $(\nu^{-1}(*), \nu^{-1}(*))$, which is also denoted by $( \ , \ )$.
Note that we have $(\gt, \gt) = 2$.
In this article, we say $\ga \in \gD$ is a \textit{long root} if $(\ga, \ga)=(\gt, \gt) (= 2)$, 
and a \textit{short root} otherwise.
Note that all roots are long if $\fg$ is simply laced. 

When $\fg$ is non-simply laced, by $r$ we denote the number $2 \cdot ($square length of a short root$)^{-1}$,
and put $r = 1$ when $\fg$ is simply laced. 
Then we have 
\begin{equation} \label{eq: number r} r = \begin{cases} 
                     1 & \text{if $\fg$ is simply laced}, \\
                     2 & \text{if $\fg$ is of type $B_n,C_n, F_4$,} \\
                     3 & \text{if $\fg$ is of type $G_2$.}
       \end{cases}
\end{equation}

For $\gl \in P_+$, we denote by $V_{\fg}(\gl)$ the irreducible $\fg$-module of highest weight $\gl$.
The following lemma is well-known:

\begin{Lem} \label{Lem: defining_rel}
  Let $v \in V_{\fg}(\gl)$ be a highest weight vector.
  Then $V_{\fg}(\gl)$ is generated by $v$ with the defining relations:
  \[ \fn_+.v = 0, \ \ \ h.v=\langle \gl, h \rangle v, \ \ \ f_i^{\langle \gl, \ga_i^{\vee} \rangle +1}.v = 0
  \]
  for $h \in \fh$ and $i \in I$. 
\end{Lem}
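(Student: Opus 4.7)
The plan is to realize $V_{\fg}(\gl)$ as a quotient of a universal module defined by the displayed relations and then show this quotient is already irreducible. Let $M(\gl)$ be the Verma module with highest weight $\gl$, generated by a vector $v_{\gl}$ with relations $\fn_+.v_{\gl}=0$ and $h.v_{\gl}=\langle \gl, h\rangle v_{\gl}$. Let $N \subseteq M(\gl)$ be the $U(\fg)$-submodule generated by the vectors $f_i^{\langle\gl,\ga_i^{\vee}\rangle+1}v_{\gl}$ for $i \in I$, and set $\tilde V = M(\gl)/N$. By the universal property of $M(\gl)$, any module in which these relations hold is a quotient of $\tilde V$; in particular, $V_{\fg}(\gl)$ is a quotient of $\tilde V$ since $f_i$ acts locally nilpotently on it. It therefore suffices to show $\tilde V$ itself is irreducible with highest weight $\gl$.

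First I would check that each $f_i^{\langle\gl,\ga_i^{\vee}\rangle+1}v_{\gl} \in M(\gl)$ is annihilated by $\fn_+$, so that the submodule $N$ is contained in the sum of weight spaces of weights strictly less than $\gl$. For $j \neq i$ this is immediate from $[e_j, f_i]=0$; for $j=i$ it is the usual $\mathfrak{sl}_2$ computation, using $\langle \gl,\ga_i^\vee\rangle \in \Z_{\ge 0}$. Consequently $\tilde V$ is a highest weight module with highest weight vector the image of $v_\gl$, and still has weight $\gl$.

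Next I would prove that $\tilde V$ is integrable, namely that each $f_i$ and $e_i$ acts locally nilpotently. Let $\tilde V^{(i)} \subseteq \tilde V$ be the subspace of vectors lying in a finite-dimensional $\mathfrak{sl}_2^{(i)}$-submodule, where $\mathfrak{sl}_2^{(i)} = \C e_i + \C \ga_i^{\vee} + \C f_i$. The image of $v_\gl$ lies in $\tilde V^{(i)}$ by construction, using the relations $e_i v_\gl = 0$ and $f_i^{\langle\gl,\ga_i^\vee\rangle+1}v_\gl=0$. The key step is that $\tilde V^{(i)}$ is $\fg$-stable: if $x \in \fg$ lies in a finite-dimensional $\ad(\mathfrak{sl}_2^{(i)})$-submodule $Y \subseteq \fg$ (which always holds since $\ad(e_i),\ad(f_i)$ act nilpotently on $\fg$) and $v \in W$ with $W \subseteq \tilde V$ a finite-dimensional $\mathfrak{sl}_2^{(i)}$-submodule, then $x.v \in Y.W$, which is finite-dimensional and stable under $\mathfrak{sl}_2^{(i)}$ by the derivation identity $y.(x.v) = [y,x].v + x.(y.v)$. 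Since $\fg$ is generated by its Chevalley generators and $\tilde V^{(i)}$ contains a cyclic generator, we obtain $\tilde V^{(i)} = \tilde V$.

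Finally, an integrable highest weight $\fg$-module with dominant integral highest weight $\gl$ is irreducible and thus isomorphic to $V_{\fg}(\gl)$: any proper submodule would be a sum of weight spaces of weights strictly less than $\gl$, yet the integrability plus dominance force every maximal vector to have weight $\gl$. This identifies $\tilde V$ with $V_{\fg}(\gl)$ and yields the claim. The only delicate point in the argument is the $\fg$-stability of $\tilde V^{(i)}$; everything else is formal or is standard $\mathfrak{sl}_2$-computation, so I expect no serious obstacle.
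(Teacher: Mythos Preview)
Your argument is correct and is the standard textbook proof (as in Kac's \emph{Infinite-dimensional Lie algebras}, Chapter~10, or Humphreys' \emph{Introduction to Lie Algebras and Representation Theory}, \S21). The paper itself gives no proof of this lemma: it simply records the statement as ``well-known'' and moves on, so there is nothing to compare against. One small remark on your final paragraph: the quickest way to conclude irreducibility once you have integrability is to observe that the character of $\tilde V$ is $W$-invariant, hence the weight set lies in the (finite) intersection of the convex hull of $W\gl$ with $\gl+Q$; together with the finite-dimensionality of each weight space this makes $\tilde V$ finite-dimensional, and then Weyl's complete reducibility plus cyclicity on a one-dimensional highest weight space forces $\tilde V$ to be irreducible.
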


Let $\hat{\fg}$ be the non-twisted affine Lie algebra corresponding to the extended Dynkin diagram of $\fg$:
\[ \hat{\fg} = \fg \otimes \C[t, t^{-1}] \oplus \C K \oplus \C d,
\]
where $K$ denotes the canonical central element and $d$ is the degree operator.
The Lie bracket of $\hat{\fg}$ is given by 
\begin{align*} [x \otimes t^m + & a_1 K + b_1 d, y \otimes t^n + a_2 K + b_2 d] \\
               &= [x, y] \otimes t^{m+n} + n b_1 y \otimes t^{n} - m b_2 x \otimes t^{m} + m\gd_{m, -n} (x,y)K.
\end{align*}
We naturally consider $\fg$ as a Lie subalgebra of $\hat{\fg}$.
The Lie subalgebras $\hfh$, $\hfn_+$ and $\hfb$ of $\hfg$ are defined as follows: 
\[ \hfh = \fh \oplus \C K \oplus \C d, \ \ \ \hfn_+ = \fn \oplus \fg \otimes t \C[t], \ \ \ \hfb = \hfh \oplus \hfn_+.
\]
Denote by $\hat{\gD}$ the root system of $\hat{\fg}$. Considering $\gD$ naturally as a subset of $\hat{\gD}$,
we have $\hat{\gD} = \{ \ga + s\gd \mid \ga \in \gD, s \in \Z \} \cup \{ s \gd \mid s \in \Z \setminus \{ 0 \} \}$,
where $\gd \in \hat{\fh}^*$ is a unique element satisfying $\langle \gd, \fh + \C K \rangle = 0, \langle \gd,d \rangle = 1$.
The set of simple roots of $\hat{\gD}$ are denoted by $\hat{\Pi}= \{ \ga_0, \ga_1, \dots, \ga_n\}$ where $\ga_0 = \gd - \gt$.
We denote by $\hat{I} = \{ 0, 1, \dots ,n \}$ its index set.

Let $\gL_0, \dots, \gL_n \subseteq \hat{\fh}^*$ be the fundamental weights of $\hfg$,
$\hP = \sum_{i \in \hI} \Z \gL_i + \Z\gd$ the weight lattice, and 
$\hP_+ = \sum_{i \in \hI} \Z_{\ge0} \gL_i + \Z \gd \subseteq \hP$ the subset of dominant integral weights.
For $\gL \in \hP$, the \textit{level} of $\gL$ is defined by the integer $\langle \gL, K \rangle$.
Let $\hat{W}$ be the Weyl group of $\hat{\fg}$ with simple reflections $s_i \ ( i \in \hI)$.
We see $W$ naturally as a subgroup of $\hat{W}$.
We denote the Bruhat order on $\hW$ by $\le$.

Define Lie subalgebras of $\hfg$ by
\[ \mathcal{C}\fg = \fg \otimes \C[t] \subseteq \hfg,\ \ \ \mathcal{C} \fg_{d} = \Cg \oplus \C d \subseteq \hfg.
\]
We write $\fh_d= \fh \oplus \C d \subseteq \hat{\fh}$.
We usually consider $ \fh^*$ and $\fh_d^{*}$ as subspaces of $\hat{\fh}^*$ canonically.
Note that under this identification $\varpi_i$ is an element of $\hat{\fh}^*$ satisfying
\[ \langle \varpi_i, \ga_j^{\vee} \rangle = \gd_{ij} \ \text{for} \ j \in I, 
   \ \ \ \langle \varpi_i , \C K \oplus \C d \rangle = 0,
\]
and $P$ is a subgroup of $\hat{P}$.

In this article, we need to consider the subset of $\Pi$ consisting of short simple roots:
\[ \Pi^{\sh} = \{ \ga_i \in \Pi \mid \ga_i \ \text{is short} \}.
\]
Note that $\Pi^{\sh} = \emptyset$ if $\fg$ is simply laced.
Let $I^{\sh} = \{ i \in I \mid \ga_i \in \Pi^{\sh} \}$ be its index set,
and
\[ \gD^{\sh} = \gD \cap \sum_{i \in I^{\sh} } \Z \ga_i, \ \ \ \gD^{\sh}_{\pm} = \gD^{\sh} \cap \gD_{\pm}.
\]
Set
\[ Q^{\sh} = \sum_{i \in I^{\sh}}\Z \ga_i, \ \ \ Q_+^{\sh} = \sum_{i \in I^{\sh}}\Z_{\ge 0} \ga_i, \ \ \ \text{and} \ \ \
   W^{\sh} = \langle s_i \mid i \in I^{\sh} \rangle \subseteq W. \\
\]
Later we need the following elementary lemma:

\begin{Lem} \label{Lem:_preserve_sh}
  If $\ga \in \gD_+ \setminus  \gD^{\sh}_+$ and $w \in W^{\sh}$,
  then $w\ga \in \gD_+ \setminus \gD^{\sh}_+$ holds.
\end{Lem}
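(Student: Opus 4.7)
The plan is to observe that the Weyl group $W^{\sh}$ acts on the simple root expansion of any weight by modifying only the coefficients of the short simple roots, and then to use the sign-coherence of root coordinates.

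More precisely, I would first write $\ga = \sum_{i \in I} n_i \ga_i$. Since $\ga \in \gD_+$, all $n_i \ge 0$, and since $\ga \notin \gD^{\sh}_+$, there exists some $j \in I \setminus I^{\sh}$ with $n_j > 0$. Next I would record the key observation: for any $i \in I^{\sh}$, the simple reflection acts by
\[ s_i \ga = \ga - \langle \ga, \ga_i^{\vee} \rangle \ga_i, \]
which alters only the coefficient of $\ga_i \in \Pi^{\sh}$. Iterating and using that $W^{\sh}$ is generated by $\{ s_i \mid i \in I^{\sh} \}$, I conclude that for any $w \in W^{\sh}$, the expansion $w\ga = \sum_i n_i' \ga_i$ satisfies $n_i' = n_i$ for every $i \in I \setminus I^{\sh}$.

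In particular, $n_j' = n_j > 0$. Since $w\ga$ is a root, its simple root coordinates are either all $\ge 0$ or all $\le 0$; the presence of the strictly positive coordinate $n_j' > 0$ forces $w\ga \in \gD_+$. Moreover, the same positive coordinate $n_j'$ with $j \notin I^{\sh}$ witnesses that $w\ga \notin \sum_{i \in I^{\sh}} \Z \ga_i$, hence $w\ga \notin \gD^{\sh}$, and therefore $w\ga \in \gD_+ \setminus \gD^{\sh}_+$.

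There is no serious obstacle: the statement rests entirely on the elementary fact that simple reflections $s_i$ fix every simple root other than $\ga_i$ under the basis expansion. The only thing to be careful about is to invoke sign-coherence (all coefficients of a root in the simple root basis have the same sign) at the right moment, rather than trying to verify positivity of $w\ga$ directly from a length or reduced-expression argument.
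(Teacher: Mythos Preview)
Your proof is correct and follows essentially the same approach as the paper: both arguments note that a simple reflection $s_i$ with $i \in I^{\sh}$ changes only the coefficient of $\ga_i$, so the positive coefficient $n_j$ with $j \in I \setminus I^{\sh}$ survives and forces $w\ga$ to lie in $\gD_+ \setminus \gD^{\sh}_+$. The paper phrases this as a reduction to a single simple reflection while you iterate over a word for $w$, but the content is identical.
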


\begin{proof}
  It suffices to show the assertion for $w = s_k$ for $k \in I^\sh$.
  If we write $\ga = \sum_{i \in I} n_i \ga_i$, there exists some $j \in I \setminus I^{\sh}$ such that $n_j > 0$.
  Since the coefficient of $s_{k} \ga = \ga -\langle \ga, \ga_{k}^{\vee} \rangle \ga_{k}$
  on $\ga_j$ is $n_j > 0$, the assertion holds.  
\end{proof}

Put $\fh^{\sh} = \bigoplus_{i \in I^{\sh}} \C \ga^{\vee}_{i} \subseteq \fh$, 
and denote the simple Lie subalgebra corresponding to $\gD^{\sh}$ by $\fg^{\sh}$:
\[ \fg^{\sh} = \fh^{\sh} \oplus \bigoplus_{\ga \in \gD^{\sh}} \fg_{\ga}.
\]
Let $\fn_{\pm}^{\sh} = \bigoplus_{\ga \in \lpishr_{\pm}} \fg_{\ga}$.
Note that the type of $\fg^{\sh}$ is as follows:
\[ 
  \begin{cases} \{ 0 \} & \text{if $\fg$ is simply laced,} \\
                A_1     & \text{if $\fg$ is of type $B_n, G_2$,} \\
                A_2     & \text{if $\fg$ is of type $F_4$,} \\
                A_{n-1} & \text{if $\fg$ is of type $C_n$.} 
  \end{cases}   
\]
Let $\hfg^{\sh}$ be the non-twisted affine Lie algebra corresponding to the extended Dynkin diagram of $\fg^{\sh}$
(if $\fg$ is simply laced, set $\hfg^\sh = \C K \oplus \C d$).
We consider $\hfg^{\sh}$ naturally as a Lie subalgebra of $\hfg$.
We denote 
\[ \Cg^{\sh}= \fg^{\sh} \otimes \C[t] \subseteq \hfg^{\sh},\ \ \ \Cgd^{\sh} = \Cg^{\sh} \oplus \C d \subseteq \hfg^{\sh},
\]
and $\hfh^{\sh} = \fh^{\sh} \oplus \C K \oplus \C d$, $\fh_d^{\sh} = \fh^{\sh} \oplus \C d$.

Throughout this article, we denote by $\ol{\gl}$ the image of $\gl \in \hfh^*$ 
under the canonical projection $\hfh^* \to \big(\hfh^{\sh}\big)^*$.
Let $i_{\sh}$ denote the splitting $\big(\hfh^{\sh}\big)^* \to \hfh^*$ defined by
\[ i_{\sh}(\ol{\ga}_i) = \ga_i \ \text{for} \ i \in I^{\sh}, \ \ \ i_{\sh}(\ol{\gL}_0) = \gL_0, \ \ \ i_{\sh}(\ol{\gd}) = \gd.
\]
We write
\[ \ol{P}= \{ \ol{\gl} \mid \gl \in P\} = \sum_{i \in I^{\sh}} \Z \ol{\varpi}_i, \ \ \ 
   \ol{P}_+ = \sum_{i \in I^{\sh}} \Z_{\ge 0} \ol{\varpi}_i.
\]

We denote by $U_q(\hfg)$ the quantum affine algebra associated with $\hfg$ over $\C(q)$, 
and by $U_q'(\hfg)$ the quantum affine algebra
without the degree operator.
Let $U_q(\hfg^{\sh})$ and $U_q'(\hfg^{\sh})$ denote the ones associated with $\hfg^{\sh}$.

Let $H$ be one of the vector spaces $\hfh$, $\fhd$, $\fh$, $\hfh^{\sh}$, $\fh^{\sh}_d$ and $\fh^{\sh}$.
For an $H$-module $M$ and $\gl \in H^*$, we denote the weight space of weight $\gl$ by
\[ M_{\gl} = \{ v \in M \mid h.v = \langle \gl , h \rangle v \ \text{for} \ h \in H \},
\]
and if $M = \bigoplus_{\gl \in H^*} M_{\gl}$, we say $M$ is an $H$-weight module.
We denote the set of $H$-weights by $\wt_{H}(M) = \{ \gl \in H^* \mid M_{\gl} \neq 0 \}.$
If $M$ is a finite-dimensional $H$-weight module,
we define its character by
\[ \ch{H}M = \sum_{\gl \in H^*} (\dim M_{\gl}) e(\gl),
\]
where $e(\gl)$ are basis of the group algebra $\C [H^*]$ with multiplication defined by $e(\gl)e(\mu) = e(\gl + \mu)$.
For a subset $S \subseteq H^*$, we denote by $P_S$ the linear map $\C [H^*] \to \C[H^*]$ defined by
\[ P_S\big( e(\gl)\big) = \begin{cases} e(\gl) & \text{if $\gl \in S$}, \\
                              0      & \text{if $\gl \notin S$}.
                \end{cases}
\]
The map $i_{\sh}\colon \big(\hfh^{\sh}\big)^* \to \hfh^*$ induces naturally a linear map 
$\C[\big(\hfh^{\sh}\big)^*] \to \C[\hfh^*]$, which is also 
denoted by $i_{\sh}$.
The following lemma is used later:

\begin{Lem} \label{Lem: character}
  Let $\gl \in \fh^{*}$.
  Assume that $M$ is a $\fh$-weight $\Cg$-module generated by $v \in M_{\gl}$ and $v$ is annihilated 
  by $\fn_+ \otimes \C[t] \oplus \fh \otimes t\C [t]$.
  Then the subspace $W = U(\Cg^{\sh}).v$ satisfies
  \begin{equation}\label{eq:character}
    P_{\gl-Q^{\sh}_+}\, \ch{\fh}M = \ch{\fh} W = e\big(\gl - i_{\sh}(\ol{\gl})\big)i_{\sh}\ch{\fh^{\sh}} W.
  \end{equation}
\end{Lem}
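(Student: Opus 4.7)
The plan is to prove the two displayed equalities of \eqref{eq:character} separately; both reduce to a PBW bookkeeping powered by the observation (essentially Lemma \ref{Lem:_preserve_sh}) that every $\alpha \in \gD_+ \setminus \gD^\sh_+$ has a strictly positive coefficient on some simple root $\alpha_j$ with $j \notin I^\sh$.

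For the first equality, I would start by using the annihilation hypothesis on $v$ together with $v \in M_\gl$ to conclude that $M = U(\fn_- \otimes \C[t]).v$. Introduce
\[
  \fm \;=\; \bigoplus_{\alpha \in \gD_+ \setminus \gD^\sh_+} \fg_{-\alpha} \otimes \C[t];
\]
this is a Lie subalgebra of $\fn_- \otimes \C[t]$, since the sum of two roots outside $\gD^\sh_+$, whenever it is a root, again carries a positive coefficient on some $\alpha_j$ with $j \notin I^\sh$. PBW in the ordering $U(\fn_- \otimes \C[t]) = U(\fm)\cdot U(\fn_-^{\sh} \otimes \C[t])$ then gives a weight-homogeneous basis consisting of products $u_1 u_2$ with $u_1 \in U(\fm)$ and $u_2 \in U(\fn_-^{\sh} \otimes \C[t])$ monomials. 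For $\mu \in \gl - Q^{\sh}_+$ the condition $\wt(u_1) + \wt(u_2) \in -Q^{\sh}$ forces $u_1 = 1$: any non-trivial $u_1$ contributes a strictly negative coefficient on some $\alpha_j$, $j \notin I^\sh$, which cannot be cancelled by $\wt(u_2) \in -Q^{\sh}_+$. Hence $M_\mu$ is the image of $U(\fn_-^{\sh} \otimes \C[t])_{\mu-\gl}$ acting on $v$, which is $W_\mu$, yielding $P_{\gl - Q^{\sh}_+}\,\ch{\fh} M = \ch{\fh} W$.

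For the second equality, I would show that every $\fh$-weight of $W$ is uniquely recovered from its $\fh^\sh$-restriction. Indeed all such weights are of the form $\mu = \gl - \beta$ with $\beta \in Q^\sh_+$, and since $\beta = i_\sh(\ol{\beta})$ one directly verifies
\[
  \mu \;=\; \bigl(\gl - i_\sh(\ol{\gl})\bigr) + i_\sh(\ol{\mu}).
\]
Thus $\mu \mapsto \ol{\mu}$ is injective on $\wt_{\fh}(W)$, each $\fh$-weight space of $W$ coincides with the corresponding $\fh^\sh$-weight space, and substituting this parametrization into $\ch{\fh} W$ factors the shift $e(\gl - i_\sh(\ol{\gl}))$ out of the sum, leaving exactly $i_\sh \ch{\fh^\sh} W$.

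The only real technical point is the PBW step in the first equality, where one must verify the subalgebra property of $\fm$ and argue that the weight constraint eliminates all contributions from monomials involving $\fm$; everything else is routine weight bookkeeping.
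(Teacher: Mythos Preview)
Your proof is correct and follows essentially the same approach as the paper's: a PBW decomposition of $U(\fn_- \otimes \C[t])$ along the splitting $\fn_- = \fn_-^{\sh} \oplus \fn_-'$ (your $\fm$ is the paper's $\fn_-' \otimes \C[t]$), followed by the weight identity $\mu = \bigl(\gl - i_{\sh}(\ol{\gl})\bigr) + i_{\sh}(\ol{\mu})$ for $\mu \in \gl - Q^{\sh}_+$. The only cosmetic difference is that you take the PBW factors in the order $U(\fm)\cdot U(\fn_-^{\sh}\otimes \C[t])$ while the paper uses the reverse order; since both summands are Lie subalgebras, either ordering works and the weight argument is identical.
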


\begin{proof}
  Put $\fn_-' = \sum_{\ga \in \gD_- \setminus \lpishr_-} \fg_{\ga}$. 
  Then we have $\fn_- = \fn_-^{\sh} \oplus \fn_-'$, and
  \begin{align*} 
   M &= U(\fn_- \otimes \C [t]).v \\
     &= U(\fn_-^{\sh} \otimes \C [t]).v + U(\fn_-^{\sh} \otimes \C [t])U(\fn_-' \otimes \C [t])_+.v \\
     &= W + U(\fn_-^{\sh} \otimes \C [t])U(\fn_-' \otimes \C [t])_+.v,
  \end{align*}
  where $U(\fn_-' \otimes \C [t])_+$ is the augmentation ideal.
  It is obvious that $\mathrm{wt}_{\fh}(W) \subseteq \gl - Q^{\sh}_+$, and 
  \[ \mathrm{wt}_{\fh} \Big(U(\fn_-^{\sh} \otimes \C [t])U(\fn_-' \otimes \C [t])_+.v\Big) \cap (\gl - Q^{\sh}_+) = \emptyset.
  \]
  Hence the first equality of (\ref{eq:character}) holds.
  The second equality follows from the following fact which is easily checked:
  if $\mu \in \gl -Q^{\sh}_+$, then we have 
  \[ \mu = \gl - i_{\sh}(\ol{\gl - \mu}) = i_{\sh}(\ol{\mu}) + \big(\gl - i_{\sh}(\ol{\gl})\big).
  \]
\end{proof}

\section{Weyl modules, Demazure modules and fusion product}

\subsection{Weyl modules}

In this article, we study the following $\Cgd$-module:

\begin{Def} \label{Def:Weyl_module}\normalfont
  For $\gl \in P_+$, the $\Cgd$-module $W(\gl)$ is defined by the module generated by an element $v$ with the relations:
  \begin{equation} \label{eq: rel for the Weyl module}
    \fn_+\otimes \C [t].v= 0,\ \ h \otimes t^s.v = \gd_{s0} \langle \gl,h \rangle v \ 
    \text{for} \ h \in \fh,  s \in \Z_{\ge 0},\ \ d.v = 0,
  \end{equation}
  and
  \[ f_i^{\langle \gl, \ga_i^{\vee} \rangle +1}.v = 0 \ \ \ \text{for} \ i \in I.
  \]
  We call $W(\gl)$ the \textit{Weyl module} for $\Cgd$ associated with $\gl \in P_+$.
\end{Def}

\begin{Rem} \label{Rem:_defining_rel} \normalfont
  It is easily seen that $W(\gl)$ is also cyclic as a $\Cg$-module and the defining relations of $W(\gl)$ as a $\Cg$-module 
  are the ones in the above definition with the relation $d.v = 0$ removed.
  Hence the restriction of $W(\gl)$ to $\Cg$ is the Weyl module for $\Cg$ introduced in \cite{MR2271991,MR2323538}.
\end{Rem}

\begin{Thm}[{\cite[Theorem 1.2.2]{MR2271991}}]\label{Thm:finite-dim}
  For every $\gl \in P_+$, the Weyl module $W(\gl)$ is finite-dimensional.
  Moreover, every finite-dimensional $\Cgd$-module generated 
  by an element $v$ satisfying the relations in {\normalfont{(\ref{eq: rel for the Weyl module})}}
  is a quotient of $W(\gl)$.
\end{Thm}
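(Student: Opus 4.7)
The statement has two parts: universality and finite-dimensionality. I would dispatch them separately, starting with the easier universal property and then tackling the finite-dimensionality by Garland-style reductions.

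For the universal property, suppose $M$ is a finite-dimensional $\Cgd$-module generated by a vector $v$ satisfying the relations in (\ref{eq: rel for the Weyl module}). Then $U(\fg).v \subseteq M$ is a finite-dimensional $\fg$-module, and the relations $\fn_+.v = 0$ and $h.v = \langle \gl, h\rangle v$ force $v$ to be a highest weight vector of weight $\gl \in P_+$. Any finite-dimensional highest weight $\fg$-module of dominant weight $\gl$ satisfies $f_i^{\langle \gl, \ga_i^{\vee}\rangle +1}.v = 0$ (by Lemma \ref{Lem: defining_rel}). Hence $v$ verifies the full defining set of relations of $W(\gl)$, producing a surjection $W(\gl) \twoheadrightarrow M$.

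For finite-dimensionality of $W(\gl)$, the plan is as follows. By PBW and the annihilation relations on $v$, we get $W(\gl) = U(\fn_- \otimes \C[t]).v$, so $W(\gl)$ is an $\fh$-weight module with all weights in $\gl - Q_+$. Since each $f_i^{\langle \gl, \ga_i^{\vee}\rangle+1}$ kills $v$, a standard weight-integrability argument (using the $\mathfrak{sl}_2$-triple generated by $e_i, \ga_i^{\vee}, f_i$) shows that the set of $\fh$-weights lies in the convex hull of $W.\gl$, and in particular is finite. Thus it remains to show each weight space has finite dimension.

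The main obstacle is the latter bound, which I would handle via Garland's identities. Applying powers of $e_i \otimes t^s$ to $f_i^{\langle \gl, \ga_i^{\vee}\rangle +1}(1).v = 0$ and rewriting via the commutation relations in $\hfg$ yields, for each $i \in I$, nontrivial polynomial relations expressing monomials $f_i \otimes t^{k_1} \cdots f_i \otimes t^{k_m}$ acting on $v$ in terms of monomials of bounded total $t$-degree, so that the $U(\C e_i \oplus \C \ga_i^{\vee} \oplus \C f_i \otimes \C[t])$-submodule generated by $v$ is finite-dimensional. Since $\fn_-$ is generated as a Lie algebra by the $f_i$, one then bootstraps using brackets $[f_i \otimes t^p, f_j \otimes t^q] = f_{[\ga_i,\ga_j]} \otimes t^{p+q}$ (with appropriate coefficients) to conclude that $\fn_- \otimes t^N\C[t]$ acts as zero on $v$ for some $N$ depending on $\gl$. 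Combined with the weight bound, $W(\gl)$ is then a cyclic module over the finite-dimensional quotient $\fn_- \otimes \C[t]/(t^N)$, so is itself finite-dimensional. This Garland calculation is the technical heart of the proof, and I expect no other serious obstruction.
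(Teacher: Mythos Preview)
The paper does not prove this theorem; it simply cites \cite[Theorem 1.2.2]{MR2271991} (and, implicitly, \cite{MR1850556} for the loop-algebra antecedent). Your sketch is essentially the standard argument used in those references: the universal property is immediate from integrability, and finite-dimensionality is obtained via Garland's identities to bound the $t$-degree, combined with the $W$-invariance of the weight set to bound the root-lattice depth. So your approach is correct and matches the literature.

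Two small remarks on presentation. First, in the last step you should make explicit why killing $v$ suffices: $\fn_- \otimes t^N\C[t]$ is a Lie \emph{ideal} of $\fn_-\otimes\C[t]$, so once it annihilates the cyclic generator it annihilates the whole module, and $W(\gl)$ becomes a cyclic module for $U\big(\fn_-\otimes \C[t]/(t^N)\big)$; then finiteness of the weight set plus the PBW filtration on this finite-dimensional nilpotent Lie algebra gives the result. Second, your notation $U(\C e_i \oplus \C\ga_i^{\vee}\oplus \C f_i\otimes\C[t])$ is ambiguous; what you mean (and what Garland's lemma actually yields) is that $(f_i\otimes t^s).v=0$ for $s\ge \langle\gl,\ga_i^{\vee}\rangle$, which is the seed for the inductive bracket argument over $\gD_+$.
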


\subsection{Demazure modules}\label{Demazure modules}

We denote by $V(\gL)$ the irreducible highest weight $\hfg$-module of highest weight $\gL \in \hat{P}_+$.
Recall that for any $w \in \hat{W}$ we have $\dim V(\gL)_ {w\gL} = 1$.

\begin{Def} \normalfont
  For $w \in \hat{W}$, the $\hfb$-module
  \[ V_w(\gL) = U(\hfb).V(\gL)_{w\gL} = U(\hat{\fn}_+).V(\gL)_{w\gL}
  \]
  is called the \textit{Demazure submodule} of $V(\gL)$ associated with $w$. 
\end{Def}

\begin{Rem}\normalfont
  Note that $V_w(\gL) = V_{w'}(\gL)$ if $w\gL = w' \gL$.
\end{Rem}

In this article, we are mainly interested in the Demazure modules which are $\fg$-stable.
Since $f_i.V(\gL)_{w\gL} = 0$ holds if and only if $\langle w\gL, \ga_i^{\vee} \rangle \le 0$,
we see that 
$V_w(\gL)$ is $\fg$-stable if and only if 
$\langle w\gL , \ga_i^{\vee} \rangle \le 0 \ \text{for all} \ i \in I$,
which is equivalent to that $w\gL \in -P_+ + \ell \gL_0 + \Z \gd$ where $\ell$ is the level of $\gL$.
For the notational convenience, we use the following alternative notation:
let $\gl \in P_+$, $\ell \in \Z_{> 0}$ and $m \in \Z$.
There exists a unique $\gL \in \hat{P}_+$ such that $w_0 \gl + \ell \gL_0 + m\gd \in \hW\gL$.
For an element $w \in \hW$ such that $w\gL = w_0 \gl + \ell \gL_0 + m\gd$,
we write
\[ \D(\ell, \gl)[m] = V_w(\gL),
\]
which is a $\Cgd \oplus \C K$-module as stated above.
We usually consider only the $\Cgd$-module structure of $\D(\ell, \gl)[m]$.
For every $\gL \in \hP_+$ and $m \in \Z$, we have $V(\gL) \cong V(\gL + m \gd)$ 
as $\fg \otimes \C[t, t^{-1}] \oplus \C K$-modules.
Therefore the $\Cg$-module structure of $\D(\ell, \gl)[m]$ is independent of $m$,
and we denote this isomorphism class of $\Cg$-modules simply by $\D(\ell,\gl)$.

$\D(\ell, \gl)$ and $\D(\ell,\gl)[m]$ have descriptions in terms of generators and relations:

\begin{Prop} \label{Prop:_rel_of_Demazure}
  \noindent {\normalfont{(i)}} $\D(\ell, \gl)$ is isomorphic as a $\Cg$-module to the cyclic module 
    generated by an element $v$ with relations:
    \begin{equation} \label{eq:_Demazure_relation_1}
      \fn_+ \otimes \C[t]. v = 0, \ h \otimes t^s .v = \gd_{s0} \langle \gl, h \rangle v \  
      \text{for $h \in \fh, s \in \Z_{\ge 0}$},
    \end{equation}
    and for $\gg \in \gD_+$ and $s \in \Z_{\ge 0}$,
    \begin{equation} \label{eq:_Demazure_relation_2}
      (f_{\gg} \otimes t^s)^{k_{\gg, s}+1}.v = 0 \ \text{where} \ k_{\gg,s} 
               = \begin{cases} \mathrm{max}\{0, \langle \gl, \gg^{\vee}\rangle - \ell s \} & \text{if $\gg$ is long}, \\
                                \mathrm{max}\{0, \langle \gl, \gg^{\vee}\rangle -r \ell s \} & \text{if $\gg$ is short},
                  \end{cases}
    \end{equation} 
    where $r$ is the number defined in {\normalfont(\ref{eq: number r})}. \\
  \noindent {\normalfont{(ii)}} $\D(\ell, \gl)[m]$ is isomorphic as a $\Cgd$-module to the cyclic module 
    generated by an element $v$ 
    with relations {\normalfont(\ref{eq:_Demazure_relation_1}), (\ref{eq:_Demazure_relation_2})} and $d.v = mv$.
\end{Prop}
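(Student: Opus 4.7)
My plan is to prove (i) directly and derive (ii) with minimal additional work.

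Fix $\gL \in \hP_+$ and $w \in \hW$ with $w\gL = w_0\gl + \ell\gL_0 + m\gd$, so that $\D(\ell, \gl) = V_w(\gL)$ as a $\Cg$-module. Since $V_w(\gL)$ is $\fg$-stable and contains the extremal vector $v_w$ of lowest $\fg$-weight $w_0\gl$, it contains $U(\fg).v_w \cong V_\fg(\gl)$; I take $v$ to be its $\fg$-highest-weight vector (of $\fh$-weight $\gl$). As the $\fg$-action preserves the $\C K \oplus \C d$-components of an affine weight, $v$ has $\hfh$-weight $\gl + \ell\gL_0 + m\gd$. The vector $v$ generates $V_w(\gL)$ as a $\Cg$-module: $v_w \in U(\fn_-).v \subseteq U(\Cg).v$, and $V_w(\gL) = U(\hfn_+).v_w \subseteq U(\Cg).v_w$ since $\hfn_+ = \fn_+ \oplus \fg \otimes t\C[t] \subseteq \Cg$.

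Next I would verify the relations for $v$. The $s = 0$ part of \eqref{eq:_Demazure_relation_1} is encoded in $v$ being a $\fg$-highest-weight vector of weight $\gl$. The $s \ge 1$ part---that $v$ is annihilated by $\fn_+ \otimes t^s$ and $\fh \otimes t^s$---is established by checking that $v$, being the $\fh$-weight-$\gl$ vector inside $V_w(\gL)$ of minimal $\gd$-grade, is uniquely characterized up to scalar among vectors annihilated by $\fn_+ \otimes \C[t]$, so that any such application, which increases the $\gd$-grade strictly, must vanish in $V_w(\gL)$. For the key nilpotencies \eqref{eq:_Demazure_relation_2}: when $s \ge 1$, $f_\gg \otimes t^s$ is the positive root vector for the real root $s\gd - \gg \in \hat{\gD}_+$, and $V(\gL)$ is integrable over the affine $\mathfrak{sl}_2^{(s\gd - \gg)}$. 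The exponent $k_{\gg, s}$ equals the coroot pairing $\max\bigl(0, -\langle \gl + \ell\gL_0 + m\gd,\,(s\gd - \gg)^\vee\rangle\bigr)$, which---using $(s\gd - \gg)^\vee = sK - \gg^\vee$ for long $\gg$ and $(s\gd - \gg)^\vee = rsK - \gg^\vee$ for short $\gg$, the factor $r$ arising from $(\gg,\gg) = 2/r$ at short roots---evaluates to $\max(0, \langle \gl,\gg^\vee\rangle - \ell s)$ and $\max(0, \langle \gl,\gg^\vee\rangle - r\ell s)$ respectively. Combined with the fact that the $\fn_+ \otimes \C[t]$-annihilation of $v$ forces $v$ to decompose into $\mathfrak{sl}_2^{(s\gd - \gg)}$-lowest-weight components, integrability yields the required nilpotency. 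The $s = 0$ case reduces to Lemma \ref{Lem: defining_rel} applied inside $V_\fg(\gl)$.

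Let $\tilde M$ denote the cyclic $\Cg$-module presented by the listed relations. The preceding steps produce a surjection $\pi\colon \tilde M \twoheadrightarrow \D(\ell, \gl)$, $\tilde v \mapsto v$, and showing $\pi$ is an isomorphism is the principal obstacle. I would proceed by character comparison: the nilpotencies \eqref{eq:_Demazure_relation_2} force $\tilde M$ to be spanned by ordered monomials $\prod_{\gg, s}(f_\gg \otimes t^s)^{a_{\gg,s}}.\tilde v$ with $0 \le a_{\gg, s} \le k_{\gg, s}$, yielding an upper bound on $\ch{\fhd} \tilde M$; this is then matched against the Demazure character formula for $V_w(\gL)$. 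In the $\ell = 1$ case relevant elsewhere in the paper, the equality can alternatively be extracted by specializing at $q = 1$ the quantum analog due to Joseph \cite{MR2214249}; for general $\ell$ one adapts the simply-laced argument of \cite{MR2323538}, inserting the factor $r$ into the exponent for short roots. Once $\pi$ is an isomorphism, part (ii) is immediate: $\D(\ell, \gl)[m]$ coincides with $\D(\ell, \gl)$ as a $\Cg$-module, satisfies $d.v = mv$ by the $\hfh$-weight of $v$, and any $\Cgd$-cyclic module presented by the relations of (i) together with $d.\tilde v = m\tilde v$ is therefore a $\Cgd$-quotient of $\D(\ell, \gl)[m]$, hence equal to it.
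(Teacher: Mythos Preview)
Your overall architecture (choose the extremal vector $v$ of $\hfh$-weight $\gl + \ell\gL_0 + m\gd$, verify the relations, produce a surjection $\tilde M \twoheadrightarrow \D(\ell,\gl)$, then upgrade to an isomorphism) is correct and matches the paper's approach, which simply says ``same as the proof of \cite[Corollary~1]{MR2323538}'' together with the coroot computation
\[
  -\bigl\langle \gl + \ell\gL_0 + m\gd,\ (-\gg + s\gd)^\vee \bigr\rangle
  = \langle \gl, \gg^\vee\rangle - \tfrac{2\ell s}{(\gg,\gg)},
\]
which is exactly your pairing calculation distinguishing long and short roots. So your coroot analysis and your fallback to \cite{MR2323538} are on target.

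However, your proposed route to injectivity via ``character comparison'' is not valid as written. The relations $(f_\gg \otimes t^s)^{k_{\gg,s}+1}.\tilde v = 0$ do \emph{not} force the exponents $a_{\gg,s}$ in an ordered PBW monomial $\prod_{\gg,s}(f_\gg\otimes t^s)^{a_{\gg,s}}.\tilde v$ to satisfy $a_{\gg,s}\le k_{\gg,s}$: the nilpotency is a relation on a single generator applied to $\tilde v$, and once other $f$'s are interposed the bound no longer transfers (commutators produce new terms). So the spanning set you write down is not actually a spanning set, and the upper bound on $\ch{\fhd}\tilde M$ you claim does not follow. This is the genuine gap. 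The isomorphism step is not elementary; it is precisely the content of \cite[Theorem~1/Corollary~1]{MR2323538} (which in turn rests on Joseph--Polo--Mathieu type input on Demazure modules), and both the paper and, ultimately, your own proposal rely on it.

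A smaller point: your argument for $\fh\otimes t^s.v = 0$ ($s\ge 1$) asserts that $v$ is the unique (up to scalar) vector of $\fh$-weight $\gl$ in $V_w(\gL)$ annihilated by $\fn_+\otimes\C[t]$, but this is not justified, and a priori $V_w(\gL)$ could contain such vectors at higher $\gd$-grade. This is repairable --- for instance via the extremal-weight property of $v$ combined with writing $h\otimes t^s$ as a bracket of root vectors that annihilate $v$ --- but it needs more than what you wrote.
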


\begin{proof}
  The assertion (ii) easily follows from (i).
  The assertion (i) can be proved by the same way as the proof of \cite[Corollary 1]{MR2323538} using
  \begin{align*}
    -\langle \gl + \ell\gL_0 + m\gd, (-\gg + s \gd)^\vee \rangle & 
     = \langle \gl +\ell\gL_0 +m\gd , \gg^{\vee} - \frac{2s}{(\gg,\gg)}K \rangle \\
     &= \langle \gl, \gg^{\vee}\rangle -\frac{2\ell s}{(\gg,\gg)}.
  \end{align*}
\end{proof}

The following theorem is a reformulation of \cite[Theorem 7]{MR2323538} for our setting:

\begin{Thm} \label{Thm: FL's_Thm}
  Assume that $\fg$ is simply laced. 
  Then the Weyl module $W(\gl)$ is isomorphic to $\D(1, \gl)[0]$ as a $\Cgd$-module.
\end{Thm}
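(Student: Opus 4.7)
The plan is to construct a surjective $\Cgd$-homomorphism $\gph\colon W(\gl)\twoheadrightarrow\D(1,\gl)[0]$ from the presentations already in hand, and then deduce injectivity by a dimension comparison via \cite[Theorem 7]{MR2323538}.

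First I would use the simply laced hypothesis (so $r=1$ and every root is long) to simplify Proposition \ref{Prop:_rel_of_Demazure}(ii): the Demazure relation $(f_\gg\otimes t^s)^{k_{\gg,s}+1}.v=0$ becomes $(f_\gg\otimes t^s)^{\max\{0,\langle\gl,\gg^\vee\rangle-s\}+1}.v=0$ for all $\gg\in\gD_+$ and $s\ge 0$. Specializing at $\gg=\ga_i$, $s=0$ gives exactly $f_i^{\langle\gl,\ga_i^\vee\rangle+1}.v=0$. Together with (\ref{eq:_Demazure_relation_1}) and the grading condition $d.v=0$ encoded in the shift $[0]$, this shows that the cyclic vector of $\D(1,\gl)[0]$ satisfies every defining relation of $W(\gl)$ in Definition \ref{Def:Weyl_module}, so Theorem \ref{Thm:finite-dim} produces the desired surjection $\gph$.

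For injectivity I would invoke \cite[Theorem 7]{MR2323538}. By Remark \ref{Rem:_defining_rel}, the restriction of $W(\gl)$ to $\Cg$ is the Weyl module of \cite{MR2271991,MR2323538}, and Fourier--Littelmann's result establishes an isomorphism of $\Cg$-modules with the level-$1$, $\fg$-stable Demazure module attached to $\gl$. The presentation in Proposition \ref{Prop:_rel_of_Demazure}(i), with $r=1$, identifies this Fourier--Littelmann Demazure module with our $\D(1,\gl)$ as a $\Cg$-module. Hence $\dim W(\gl)=\dim\D(1,\gl)=\dim\D(1,\gl)[0]$, and the surjection $\gph$ must be an isomorphism; the $\Cgd$-compatibility is then automatic because $\gph$ sends the $d$-annihilated cyclic vector of $W(\gl)$ to the $d$-annihilated cyclic vector of $\D(1,\gl)[0]$, and the $d$-action is determined on each module by its value on the generator together with the bracket $[d,x\otimes t^s]=s(x\otimes t^s)$.

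The main obstacle is the convention matching in this last step. Fourier--Littelmann realize their Demazure module concretely inside a specific level-$1$ irreducible $\hfg$-module of their choice, whereas our parametrization $\D(1,\gl)[m]$ abstracts over the choice of $\gL\in\hP_+$ via the extremal-weight equation $w\gL=w_0\gl+\gL_0+m\gd$. Unwinding their construction and matching the two presentations via the generator-and-relations description of Proposition \ref{Prop:_rel_of_Demazure}(i) shows the modules coincide as $\Cg$-modules; everything else is routine bookkeeping.
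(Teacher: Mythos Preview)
Your proposal is correct and follows essentially the same route as the paper: both arguments reduce the statement to \cite[Theorem 7]{MR2323538} after matching conventions (the paper notes that their $\D(\ell,\gl)$ corresponds to $D(\ell,\gl^\vee)$ in \cite{MR2323538}, and that $\nu(\gl^\vee)=\gl$ in the simply laced case). The only cosmetic difference is that you first build the surjection $W(\gl)\twoheadrightarrow\D(1,\gl)[0]$ from the presentations and then invoke the cited theorem for the dimension equality, whereas the paper cites the $\Cg$-isomorphism directly and then observes that the $\Cgd$-upgrade ``easily follows''; your explicit justification of that last step (the $d$-action is determined by its value on the cyclic vector together with $[d,x\otimes t^s]=s\,x\otimes t^s$) is exactly what the paper is tacitly using.
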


\begin{proof}
  Note that the notation ``$D(\ell, \gl^{\vee})$'' in \cite{MR2323538} coincides with 
  $\D(\ell, \ell \nu(\gl^{\vee}))$ in our notation.
  Since $\nu(\gl^{\vee}) = \gl$ holds in the simply laced case, \cite[Theorem 7]{MR2323538} says 
  that $W(\gl) \cong \D(1, \gl)$ as $\Cg$-modules.
  Then $W(\gl) \cong \D(1, \gl)[0]$ as $\Cgd$-modules easily follows.
\end{proof}

Later, we need the following lemma:

\begin{Lem} \label{Lem: lemma_of_A}
  Assume that $\fg$ is of type $A_n$. 
  Then for every $\ell \in \Z_{>0}$ and $i \in I$, $\D(\ell,\varpi_i)$ is isomorphic to $V_{\fg}(\varpi_i)$ as a $\fg$-module.
\end{Lem}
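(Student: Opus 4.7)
The plan is to exploit the fact that in type $A_n$ every fundamental weight $\varpi_i$ is minuscule, so that $\langle \varpi_i, \gg^\vee\rangle \in \{0, 1\}$ for every $\gg \in \gD_+$ and the defining relations of $\D(\ell, \varpi_i)$ from Proposition \ref{Prop:_rel_of_Demazure} collapse dramatically.

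First I would show that the generator $v \in \D(\ell, \varpi_i)$ is annihilated by the ideal $\fg \otimes t\C[t]$ of $\Cg$. The relation \eqref{eq:_Demazure_relation_1} immediately kills $\fn_+ \otimes \C[t]$ and $\fh \otimes t\C[t]$, while for $\gg \in \gD_+$ and $s \geq 1$ the minusculity observation forces $k_{\gg, s} = \max\{0, \langle \varpi_i, \gg^\vee\rangle - \ell s\} = 0$, so \eqref{eq:_Demazure_relation_2} gives $(f_\gg \otimes t^s).v = 0$. Since $\fg \otimes t\C[t]$ is an ideal of $\Cg$, the PBW theorem yields $\D(\ell, \varpi_i) = U(\Cg).v = U(\fg).v$, and the $\Cg$-action factors through $\Cg / \fg \otimes t\C[t] \cong \fg$. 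Viewed as a $\fg$-module, the generator $v$ then satisfies $\fn_+.v = 0$, $h.v = \langle \varpi_i, h\rangle v$, and $f_j^{\langle \varpi_i, \ga_j^\vee\rangle + 1}.v = 0$ for all $j \in I$, so Lemma \ref{Lem: defining_rel} provides a surjection $V_\fg(\varpi_i) \twoheadrightarrow \D(\ell, \varpi_i)$ of $\fg$-modules.

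For the opposite surjection I would regard $V_\fg(\varpi_i)$ as a $\Cg$-module by letting $\fg \otimes t\C[t]$ act as zero, and check that its highest weight vector $v_0$ satisfies every defining relation of $\D(\ell, \varpi_i)$ from Proposition \ref{Prop:_rel_of_Demazure}. The only nontrivial verification is \eqref{eq:_Demazure_relation_2} at $s = 0$, which amounts to $f_\gg^{\langle \varpi_i, \gg^\vee\rangle + 1}.v_0 = 0$ for every $\gg \in \gD_+$. Since $V_\fg(\varpi_i)$ is minuscule, the pairing of any of its weights with any $\gg^\vee$ lies in $\{-1, 0, 1\}$. When $\langle \varpi_i, \gg^\vee\rangle = 0$ the hypothetical weight $\varpi_i - \gg$ would pair as $-2$ and hence is not a weight of $V_\fg(\varpi_i)$, forcing $f_\gg.v_0 = 0$; when $\langle \varpi_i, \gg^\vee\rangle = 1$ the weight $\varpi_i - 2\gg$ would pair as $-3$ and is likewise excluded, so $f_\gg^2.v_0 = 0$. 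This produces a surjection $\D(\ell, \varpi_i) \twoheadrightarrow V_\fg(\varpi_i)$ which together with the previous one yields the desired isomorphism. The only subtle input is the minusculity of $\varpi_i$; beyond that, the argument is essentially a direct comparison of the generators-and-relations descriptions in Proposition \ref{Prop:_rel_of_Demazure} and Lemma \ref{Lem: defining_rel}, and I do not expect any real obstacle.
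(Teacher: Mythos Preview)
Your proof is correct and follows essentially the same route as the paper: both use the minusculity of $\varpi_i$ in type $A_n$ together with Proposition~\ref{Prop:_rel_of_Demazure} to show that $\fg\otimes t\C[t]$ kills the generator, hence $\D(\ell,\varpi_i)=U(\fg).v$ is a quotient of $V_\fg(\varpi_i)$. The paper then finishes in one line by invoking irreducibility of $V_\fg(\varpi_i)$ (a nonzero quotient must be the whole thing), whereas you build the reverse surjection by hand; your argument is fine, but the irreducibility shortcut would spare you the entire second paragraph.
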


\begin{proof}
  Although this lemma can be shown directly from the definition, we shall 
  prove this using Proposition \ref{Prop:_rel_of_Demazure}.
  Since $\fg$ is of type $A_n$, we have $\langle \varpi_i, \gg^{\vee} \rangle = 0 \ \text{or} \ 1$ for all $\gg \in \gD_+$.
  From this and Proposition \ref{Prop:_rel_of_Demazure}, the generator $v \in \D(\ell, \varpi_i)$ 
  satisfies $f_{\gg} \otimes t\C[t].v = 0$
  for all $\gg \in \gD_+$, which implies
  \[ \D(\ell, \varpi_i) = U(\Cg).v = U(\fg).v.
  \]
  Since $v$ satisfies the defining relations of $V_{\fg}(\varpi_i)$ in Lemma \ref{Lem: defining_rel}, 
  $\D(\ell, \varpi_i)$ is a $\fg$-module quotient of $V_{\fg}(\varpi_i)$.
  Since $\D(\ell, \varpi_i)$ is non-trivial, the assertion is proved.
\end{proof}

\subsection{Fusion product}

We recall the definition of fusion products of $\Cg$-modules introduced in \cite{MR1729359} and some facts on them.
Let $W$ be a $\Cg$-module. For $a \in \C$, we define a $\Cg$-module $W_a$ by the pullback $\varphi_a^*W$, where 
$\varphi_a$ is an automorphism of $\Cg$ defined by $x \otimes t^s \mapsto x \otimes (t +a)^s$.
$U(\Cg)$ has a natural grading defined by
\[G^s\big(U(\Cg)\big) = \{ X \in U(\Cg) \mid [d, X] = sX \},
\]
from which we define a natural filtration on $U(\Cg)$ by
\[ F^s\big(U(\Cg)\big) = \bigoplus_{p \le s} G^p\big(U(\Cg)\big).
\]
Let now $W$ be a cyclic $\Cg$-module with a cyclic vector $w$.
Let $W_s$ be the subspace $F^s\big(U(\Cg)\big).w$ of $W$ for $s \ge -1$ (note that $W_{-1} = \{ 0 \}$), 
and denote the associated $\Cg$-module by 
$\mathrm{gr}(W)$:
\[ \mathrm{gr}(W) = \bigoplus_{s \ge 0} W_s / W_{s-1}.
\]

Now we recall the definition of fusion products. 
Let $W^1, \dots, W^k$ be $\Z$-graded cyclic finite-dimensional $\Cg$-modules with cyclic vectors $w_1, \dots, w_k$, and 
$c_1, \dots, c_k$ pairwise distinct complex numbers.
As shown in \cite{MR1729359}, $W^1_{c_1} \otimes \dots \otimes W^k_{c_k}$ is a cyclic $U(\Cg)$-module 
generated by $w_1 \otimes \dots \otimes w_k$.

\begin{Def}[\cite{MR1729359}]  \normalfont 
  The $\Cg$-module 
  \[ W^1_{c_1} * \dots * W^k_{c_k} = \mathrm{gr}(W^1_{c_1} \otimes \dots \otimes W^k_{c_k})
  \]
  is called the \textit{fusion product}.
\end{Def}

\begin{Rem} \normalfont
  Put $X = W^1_{c_1} \otimes \dots \otimes W^k_{c_k}$. 
  By letting $d$ act on $X_s /X_{s-1}$ by a scalar $s$, we sometimes regard $W^1_{c_1} * \dots * W^k_{c_k}$ as a $\Cgd$-module.
\end{Rem}

\begin{Lem} \label{Lem: fusion_product}
  {\normalfont(i)} 
    \[ \ch{\fh} W^1_{c_1} * \dots * W^k_{c_k} = \prod_{1 \le i \le k} \ch{\fh}W^i.
    \]
  {\normalfont(ii)}
    Let $\gl_1, \dots, \gl_k \in P_+$ and $\gl = \gl_1 + \dots + \gl_k$.
    Then there exists a surjective $\Cgd$-module homomorphism from $W(\gl)$ to 
    $\D(1, \gl_1)_{c_1} * \dots * \D(1, \gl_k)_{c_k}$. \\[0.1cm]
  {\normalfont(iii)} If $\fg$ is simply laced, the surjection in {\normalfont(ii)} is an isomorphism.
\end{Lem}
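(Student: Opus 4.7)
For (i), I would observe that the $\fh$-character is unchanged by each operation in play: the twist $\varphi_{c_i}$ is the identity on $\fh \otimes 1$ so $\ch{\fh} W^i_{c_i} = \ch{\fh} W^i$; the filtration $F^\bullet\bigl(U(\Cg)\bigr)$ consists of $\fh$-submodules, so passage to $\mathrm{gr}$ preserves the $\fh$-character of any cyclic module; and the character of a tensor product is the product of characters. Combining these three observations gives (i) at once.

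For (ii), the plan is to apply the universal property of the Weyl module (Theorem \ref{Thm:finite-dim}): it is enough to produce a cyclic generator of the fusion product satisfying the four defining relations of $W(\gl)$ from Definition \ref{Def:Weyl_module}. Let $v_j$ denote the cyclic generator of $\D(1,\gl_j)$ given by Proposition \ref{Prop:_rel_of_Demazure}, and take $v$ to be the class of $v_1 \otimes \cdots \otimes v_k$ in the degree-zero piece of $\mathrm{gr}$. That $\fn_+ \otimes \C[t]$ annihilates $v$ is immediate, since $\varphi_{c_j}$ preserves $\fn_+ \otimes \C[t]$, each $v_j$ is killed by this subalgebra, and Leibniz handles the tensor product. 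The relation $d.v = 0$ is forced by the grading convention that makes $d$ act on $W_s/W_{s-1}$ as multiplication by $s$. For the relation $h \otimes t^s.v = \gd_{s0}\langle \gl, h\rangle v$, expanding the twist yields $(h \otimes t^s).v_j = c_j^s \langle \gl_j, h\rangle v_j$ in $\D(1,\gl_j)_{c_j}$, so by Leibniz the element $h \otimes t^s$ acts on $v_1 \otimes \cdots \otimes v_k$ by the scalar $\sum_j c_j^s \langle \gl_j, h\rangle$; this scalar multiple still lies in $W_0$, so its image in the degree-$s$ piece of $\mathrm{gr}$ is zero for $s > 0$, while for $s = 0$ the scalar equals $\langle \gl, h\rangle$. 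Finally, the integrability relation $f_i^{\langle \gl, \ga_i^\vee \rangle + 1}.v = 0$ reduces to a standard $\mathfrak{sl}_2$ argument: $v$ has $\fh$-weight $\gl$ and is killed by $e_i$, and the tensor product is integrable as a $\fg$-module, so $v$ is an $\mathfrak{sl}_2$-highest-weight vector of weight $\langle \gl, \ga_i^\vee \rangle$ for the $i$-th triple.

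For (iii), Theorem \ref{Thm: FL's_Thm} gives $W(\gl_j) \cong \D(1,\gl_j)[0]$ and $W(\gl) \cong \D(1,\gl)[0]$ in the simply laced case, while part (i) shows the fusion product has $\fh$-character $\prod_j \ch{\fh} W(\gl_j)$. The equality $\ch{\fh} W(\gl) = \prod_j \ch{\fh} W(\gl_j)$ in the simply laced case is in turn known from \cite{MR2323538} (equivalently, from the Demazure character formula together with Joseph's tensor-product result \cite{MR2214249} for level-one Demazure modules). Once characters match, a surjection between finite-dimensional $\fh$-weight modules is automatically an isomorphism.

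The hardest piece of the plan is the twist--tensor--graded manipulation in (ii): one must see that the action of $h \otimes t^s$, which is genuinely nontrivial on the twisted tensor product, collapses to the clean Weyl-module relation exactly because passing to $\mathrm{gr}$ kills the scalar multiples of $v$ living in the wrong filtration degree. The remaining checks are either formal (the $\fn_+$-annihilation, the $d$-grading convention) or an invocation of standard integrability/$\mathfrak{sl}_2$-theory, and (iii) amounts to quoting the appropriate character identity from the simply laced literature.
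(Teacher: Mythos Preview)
Your argument is correct and is essentially the same approach as the paper's, just with the details spelled out where the paper merely cites \cite[Lemma 5 and Theorem 8]{MR2323538}: part (i) by noting that the associated graded is isomorphic to $W^1\otimes\cdots\otimes W^k$ as an $\fh$-module (the paper phrases this as a $\fg$-module isomorphism, which is the same observation), part (ii) by checking that the image of $v_1\otimes\cdots\otimes v_k$ in the fusion product satisfies the relations (\ref{eq: rel for the Weyl module}) so that Theorem~\ref{Thm:finite-dim} applies, and part (iii) by comparing characters via the simply laced identity $\ch{\fh}W(\gl)=\prod_j\ch{\fh}W(\gl_j)$ from \cite{MR2323538}. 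One small simplification: in (ii) you need not verify the integrability relation $f_i^{\langle\gl,\ga_i^\vee\rangle+1}.v=0$ separately, since Theorem~\ref{Thm:finite-dim} only requires finite-dimensionality together with the relations (\ref{eq: rel for the Weyl module}).
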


\begin{proof}
  Since $W^1_{c_1} * \dots * W^k_{c_k}$ is isomorphic to $W^1 \otimes \dots \otimes W^k$ as a $\fg$-module, 
  the assertion (i) follows.
  The assertion (ii) is proved by the same way as the proof of \cite[Lemma 5]{MR2323538}. 
  The assertion (iii) follows from Theorem \ref{Thm: FL's_Thm} and \cite[Theorem 8]{MR2323538}.
\end{proof}

\section{Filtrations on Weyl modules}

\subsection{Defining relations of Demazure modules}

The goal of this section is to show that a Weyl module admits a filtration 
whose successive quotients are homomorphic images of Demazure modules.
To do this, however, the defining relations of $\D(\ell, \gl)$ given in Proposition \ref{Prop:_rel_of_Demazure} are
insufficient, and we need to reduce the relations in the case $\ell = 1$.
Thus, we devote this and the next subsections to prove the following refined version of the defining relations for $\D(1,\gl)$:

\begin{Prop}\label{Prop:_rel_of_Demazure1}
   For $\gl \in P_+$, $\D(1, \gl)$ is isomorphic as a $\Cg$-module to the cyclic module generated by an element $v$ with the 
  following relations: \\
  {\normalfont{(D1)}} $\fn_+ \otimes \C[t].v= 0$, \\
  {\normalfont{(D2)}} $ h \otimes t^s.v = \gd_{s0}\langle \gl, h\rangle v$ for $h \in \fh, s\in \Z_{\ge 0}$,  \\
  {\normalfont{(D3)}} $f_i^{\langle \gl, \ga_i^{\vee}\rangle +1}.v = 0 $ for $i \in I$, \\
  {\normalfont{(D4)}} $(f_{\gg} \otimes t^s)^{\mathrm{max}\{0, \langle \gl,\gg^{\vee}\rangle - rs\}+1}.v = 0$ 
                               for $\gg \in \lpishr_+, s\in \Z_{\ge 0}$.
\end{Prop}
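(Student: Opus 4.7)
Let $M$ denote the $\Cg$-module presented by a generator $v$ and the relations (D1)--(D4). Since each of these relations is among those of Proposition \ref{Prop:_rel_of_Demazure}, there is a canonical surjection $M \twoheadrightarrow \D(1,\gl)$, and so to prove the proposition it suffices to verify that $M$ also satisfies the remaining relations of Proposition \ref{Prop:_rel_of_Demazure} with $\ell=1$. Explicitly, what is missing is $(f_\gg \otimes t^s)^{\max(0,\langle\gl,\gg^\vee\rangle - s) + 1}.v = 0$ for every long $\gg \in \gD_+$ and $s \ge 0$, together with $(f_\gg \otimes t^s)^{\max(0,\langle\gl,\gg^\vee\rangle - rs) + 1}.v = 0$ for every short $\gg \in \gD_+ \setminus \gD^{\sh}_+$ and $s \ge 0$.

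My plan is to prove these by induction on $\height \gg$. The base case $\gg = \ga_i$ splits by root length: for $i \in I^{\sh}$ the short-root identity at $\ga_i$ is exactly (D4) evaluated at $\gg = \ga_i$, while for $i \notin I^{\sh}$ (so $\ga_i$ is long) the needed vanishing $(f_{\ga_i} \otimes t^s)^{\langle\gl,\ga_i^\vee\rangle - s + 1}.v = 0$ is a Chari--Pressley--Garland-type identity performed inside the affine $\mathfrak{sl}_2$-subalgebra of $\Cg$ spanned by $\{\ga_i^\vee \otimes t^m, e_{\ga_i} \otimes t^m, f_{\ga_i} \otimes t^m \mid m \ge 0\}$: relations (D1)--(D3) present $v$ as a level-$1$ highest-weight vector of weight $\langle\gl,\ga_i^\vee\rangle$ for this subalgebra, and the desired power vanishing is then a standard consequence.

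For the inductive step with $\height\gg \ge 2$, I would choose a simple root $\ga_j$ with $\gg - \ga_j \in \gD_+$ and exploit $f_\gg = c\,[f_{\gg-\ga_j}, f_{\ga_j}]$ (with $c \in \C^\times$) to expand $(f_\gg \otimes t^s)^N$ in $U(\Cg)$ as a sum of ordered monomials in the factors $f_{\gg-\ga_j} \otimes t^a$ and $f_{\ga_j} \otimes t^b$ (with $a+b$ tied to $s$), modulo iterated-commutator corrections. The induction hypothesis applied to $\gg - \ga_j$, combined with the base case for $\ga_j$ and relations (D1)--(D2), should annihilate each summand on $v$. Lemma \ref{Lem:_preserve_sh} is invoked here to guarantee that when $\gg \in \gD_+ \setminus \gD^{\sh}_+$ one can pick $\ga_j$ so that $\gg-\ga_j$ either lies again in $\gD_+ \setminus \gD^{\sh}_+$ or is itself a simple root outside $I^{\sh}$, keeping the induction aligned with the case split of the target statement.

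The main obstacle will be the exponent bookkeeping across the long/short boundary. When $\gg$ is long but one of $\ga_j$ or $\gg-\ga_j$ is short, the short-root hypothesis contributes a factor of $r$ in its coefficient, which must combine with the long-root bounds to produce exactly $\langle\gl,\gg^\vee\rangle - s$; conversely, when $\gg$ is short and outside $\gD^{\sh}_+$ but some intermediate $\gg-\ga_j$ is long, the induction provides only the weaker coefficient $\langle\gl,(\gg-\ga_j)^\vee\rangle - s'$, whereas the target requires the sharpened coefficient $\langle\gl,\gg^\vee\rangle - rs$. Reconciling the factor $r$ in these mixed-length situations --- which is ultimately a reflection of the integer defined in \eqref{eq: number r} --- is where the argument demands the most care, and where the specific structure of $\gD^{\sh}$ together with Lemma \ref{Lem:_preserve_sh} are used most crucially.
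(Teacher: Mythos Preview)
Your setup is correct: one has a surjection $M \twoheadrightarrow \D(1,\gl)$ and must verify the missing relations from Proposition~\ref{Prop:_rel_of_Demazure}. However, the inductive step you describe has a genuine gap, and your final paragraph essentially concedes it.

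First, the long-root case need not be inductive at all. For \emph{any} long $\gg \in \gD_+$ (not just simple), the subalgebra $\mathfrak{sl}_{2,\gg}\otimes \C[t]$ is a current $\mathfrak{sl}_2$, and $v$ satisfies the defining relations of its Weyl module $W_\gg(\langle\gl,\gg^\vee\rangle)$; since $\mathfrak{sl}_2$ is simply laced, Theorem~\ref{Thm: FL's_Thm} identifies this with the Demazure module, so the required $(f_\gg\otimes t^s)$-relations follow immediately. No height induction is needed here.

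Second, for short $\gg \notin \gD^{\sh}_+$, the method you propose---expanding $(f_\gg\otimes t^s)^N$ with $f_\gg = c[f_{\gg-\ga_j},f_{\ga_j}]$ into ordered monomials---does not give the required exponent. Knowing $X^{a+1}.v = 0$ and $Y^{b+1}.v=0$ does not, by commutator algebra alone, yield $[X,Y]^{n_1a + n_2b + 1}.v = 0$ with the coroot coefficients $n_1,n_2$ of $\gg^\vee$; yet precisely that bound is what you need (e.g.\ in type $B_2$, $(\ga+\gb)^\vee = \ga^\vee + 2\gb^\vee$, so the factor $2$ must appear). The paper obtains this via a representation-theoretic lemma (Lemma~\ref{Lem:rank2}): one embeds $\{f_\ga\otimes t^p, f_\gb\otimes t^q, \dots\}$ into the nilradical of a rank-$2$ simple Lie algebra and uses that $V_\fg(a\varpi_\ga+b\varpi_\gb) \cong U(\fn_-)/U(\fn_-)(f_\ga^{a+1},f_\gb^{b+1})$. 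This, combined with the structural fact (Lemma~\ref{eq:existance1}) that in types $B_n, C_n, F_4$ every short $\gg\notin\gD^{\sh}_+$ splits as $\text{short}+\text{long}$, is what makes the exponent bookkeeping work. Your plan does not supply an analogue of either ingredient.

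Finally, type $G_2$ does not fit this pattern and requires a separate, substantially more intricate argument (an induction on $a$ for $\gl = a\varpi_\ga + b\varpi_\gb$, using an auxiliary algebra map $\Phi$ and a nontrivial containment lemma); your proposal does not address this case.
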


This proposition obviously implies the following corollary:

\begin{Cor} \label{Cor: rel_of_Demazure}
  For $\gl \in P_+$ and $m \in \Z$, $\D(1, \gl)[m]$ is isomorphic as a $\Cgd$-module to the cyclic module 
  generated by an element $v$ with the relations {\normalfont(D1)--(D4)} and $d.v = mv$.
\end{Cor}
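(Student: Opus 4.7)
The plan is to derive the $\Cgd$-module presentation directly from the $\Cg$-module presentation established in Proposition \ref{Prop:_rel_of_Demazure1}. Let $M$ denote the cyclic $\Cgd$-module generated by an element $v$ subject to the relations (D1)--(D4) together with $d.v = mv$. The goal is to show $M \cong \D(1,\gl)[m]$, and the argument is a standard two-way dimension comparison.

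First I would exhibit a surjective $\Cgd$-module homomorphism $\pi \colon M \twoheadrightarrow \D(1,\gl)[m]$. For this it suffices to verify that the canonical generator of $\D(1,\gl)[m]$ satisfies the defining relations of $M$. The relations (D1)--(D4) hold on this generator by Proposition \ref{Prop:_rel_of_Demazure1}, since the restriction of $\D(1,\gl)[m]$ to $\Cg$ is (by definition, as explained in Subsection \ref{Demazure modules}) the module $\D(1,\gl)$. The relation $d.v = mv$ holds because the generator lies in the weight space $V(\gL)_{w\gL}$ with $w\gL = w_0\gl + \gL_0 + m\gd$, and one reads off $\langle w_0\gl + \gL_0 + m\gd,\, d\rangle = m$ using $\langle \fh^*, d\rangle = 0$, $\langle \gL_0, d\rangle = 0$, and $\langle \gd, d\rangle = 1$.

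For the reverse bound, I restrict $M$ to $\Cg$ and forget the action of $d$. Since $v$ continues to satisfy (D1)--(D4), Proposition \ref{Prop:_rel_of_Demazure1} produces a surjection $\D(1,\gl) \twoheadrightarrow M\big|_{\Cg}$. In particular $\dim M \le \dim \D(1,\gl) = \dim \D(1,\gl)[m]$, the last equality holding because $\D(1,\gl)$ and $\D(1,\gl)[m]$ are isomorphic as $\Cg$-modules. Combined with the surjectivity of $\pi$, this forces $\pi$ to be an isomorphism of $\Cgd$-modules.

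There is no serious obstacle here once Proposition \ref{Prop:_rel_of_Demazure1} is in hand; the only mildly delicate point is verifying that the single additional relation $d.v = mv$ is consistent, which amounts to computing the $d$-eigenvalue of the extremal weight vector and observing that the action of $d$ on the rest of $M$ is then determined by the bracket $[d, x \otimes t^s] = s(x\otimes t^s)$.
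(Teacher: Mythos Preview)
Your argument is essentially correct and is the natural way to make precise what the paper simply declares obvious. There is one slip worth noting: the generator $v$ appearing in Proposition~\ref{Prop:_rel_of_Demazure1} has $\fh$-weight $\gl$ (by relation (D2)), so it does \emph{not} lie in $V(\gL)_{w\gL}$, whose $\fh$-weight is $w_0\gl$. The vector in $V(\gL)_{w\gL}$ is the lowest weight vector for $\fg$, whereas the $v$ of the proposition is a highest weight vector. To justify $d.v = mv$ you can either observe that $v$ lies in $U(\fn_+).V(\gL)_{w\gL}$ and that $\fn_+ \subset \fg$ commutes with $d$, so the $d$-eigenvalue is inherited from the extremal weight vector; or simply invoke Proposition~\ref{Prop:_rel_of_Demazure} (ii), which already records that the generator of $\D(\ell,\gl)[m]$ satisfying (\ref{eq:_Demazure_relation_1})--(\ref{eq:_Demazure_relation_2}) has $d$-eigenvalue $m$. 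With that correction the two-way surjection argument goes through exactly as you wrote.
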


If $\fg$ is simply laced, the relations in Proposition \ref{Prop:_rel_of_Demazure1} are just the defining relations of 
the Weyl module $W(\gl)$ (see Remark \ref{Rem:_defining_rel}).
Hence in this case, the proposition follows from Theorem \ref{Thm: FL's_Thm}.
From now until the end of the proof of the proposition, we assume that $\fg$ is non-simply laced.

We denote by $M^{\gl}$ the cyclic $\Cg$-module generated by an element $v$
with the relations (D1)--(D4).
By Proposition \ref{Prop:_rel_of_Demazure}, to prove the above proposition 
we need to show that $v \in M^{\gl}$ satisfies for all $\gg \in \gD_+$ and $s \in \Z_{\ge 0}$ that
\begin{equation} \label{eq:lem of rel}
  (f_{\gg} \otimes t^{s})^{k_{\gg,s}+1}.v = 0 \ \text{where} \
   k_{\gg,s} = \begin{cases} \mathrm{max}\{ 0, \langle \gl,\gg^{\vee}\rangle-s \}& \text{if $\gg$ is long}, \\
                             \mathrm{max}\{ 0, \langle \gl,\gg^{\vee}\rangle - rs \} & \text{if $\gg$ is short}.
               \end{cases}
\end{equation}
By separating the proof into several cases, we shall prove the equation (\ref{eq:lem of rel}).
First, the following case is elementary:

\begin{Lem}\label{Lem:_case1}
  For $\gg \in \gD_+$ and $s = 0$, {\normalfont(\ref{eq:lem of rel})} follows.
\end{Lem}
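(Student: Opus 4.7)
The plan is to reduce the $s=0$ case to a classical statement about the finite-dimensional irreducible $\fg$-module $V_\fg(\gl)$. Since $\gl \in P_+$ gives $\langle \gl, \gg^\vee\rangle \ge 0$ for every $\gg \in \gD_+$, the exponent $k_{\gg,0}$ appearing in (\ref{eq:lem of rel}) collapses to $\langle \gl, \gg^\vee\rangle$ regardless of whether $\gg$ is long or short. Thus the target relation becomes simply
\[ f_\gg^{\langle \gl, \gg^\vee\rangle + 1}.v = 0 \quad \text{for all } \gg \in \gD_+. \]

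The key observation is that the defining relations of $M^\gl$ include everything needed to force $v$ to behave like the highest weight vector of $V_\fg(\gl)$ under the $\fg$-action: (D1) supplies $\fn_+.v = 0$, the $s=0$ case of (D2) supplies $h.v = \langle \gl, h\rangle v$ for $h \in \fh$, and (D3) supplies $f_i^{\langle \gl, \ga_i^\vee\rangle+1}.v = 0$ for each simple root. By Lemma \ref{Lem: defining_rel} these are precisely the defining relations of $V_\fg(\gl)$, so the $\fg$-submodule $U(\fg).v \subseteq M^\gl$ is a quotient of $V_\fg(\gl)$, and in particular is finite-dimensional.

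To finish, I would invoke the standard fact that in $V_\fg(\gl)$ the highest weight vector $v_\gl$ satisfies $f_\gg^{\langle \gl, \gg^\vee\rangle + 1}v_\gl = 0$ for every $\gg \in \gD_+$, not merely for the simple ones. This is immediate from $\mathfrak{sl}_2$-theory applied to the triple $\{e_\gg, f_\gg, \gg^\vee\}$: since $e_\gg \in \fn_+$ annihilates $v_\gl$ and the ambient module is finite-dimensional, $v_\gl$ generates a finite-dimensional $\mathfrak{sl}_2$-module of highest weight $\langle \gl, \gg^\vee\rangle$, on which $f_\gg^{\langle \gl, \gg^\vee\rangle + 1}$ vanishes. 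Pulling this back along the surjection $V_\fg(\gl) \twoheadrightarrow U(\fg).v$ gives the relation on $v \in M^\gl$.

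I do not expect any real obstacle here: the $s=0$ case involves only the Lie algebra $\fg$ itself, with no interaction with the $t$-grading, so the subtle non-simply-laced features (the factor $r$ and the restricted role of the short root subsystem $\gD^\sh$ in (D4)) play no part. The genuine difficulty of Proposition \ref{Prop:_rel_of_Demazure1} is deferred to the cases $s \ge 1$, where one must leverage (D4) to derive relations on $f_\gg \otimes t^s$ for $\gg \notin \gD^\sh$; that is presumably the content of the subsequent lemmas.
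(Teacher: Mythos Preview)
Your proof is correct and follows essentially the same approach as the paper: use (D1)--(D3) to realize $U(\fg).v$ as a $\fg$-quotient of $V_\fg(\gl)$, then invoke the representation theory of $\mathfrak{sl}_2$ for the triple $\{e_\gg, f_\gg, \gg^\vee\}$. You have simply spelled out in more detail what the paper compresses into two sentences.
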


\begin{proof}
  By (D1)--(D3), $U(\fg).v \subseteq M^{\gl}$ is a $\fg$-module quotient of $V_{\fg}(\gl)$. 
  Then the assertion follows from the representation theory of $\mathfrak{sl}_2$.
\end{proof}

The proof of the following case is similar to that of \cite[Theorem 7]{MR2323538}.
We give it for completeness:

\begin{Lem}\label{Lem:_case2}
  If $\gg$ is a long root, {\normalfont(\ref{eq:lem of rel})} follows for all $s \in \Z_{\ge 0}$.
\end{Lem}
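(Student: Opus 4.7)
The plan is to reduce to rank one and invoke Theorem \ref{Thm: FL's_Thm} applied to the $\mathfrak{sl}_2$-triple attached to $\gg$. Fix a long root $\gg \in \gD_+$ and write $N = \langle \gl, \gg^{\vee} \rangle$. Since $\gg$ is long, $(e_\gg, \gg^{\vee}, f_\gg)$ is a standard $\mathfrak{sl}_2$-triple, so $\fs_\gg := \C e_\gg \oplus \C \gg^{\vee} \oplus \C f_\gg$ is a copy of $\mathfrak{sl}_2$ inside $\fg$, and $\fs_\gg \otimes \C[t]$ is a current subalgebra of $\Cg$.

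First I verify that, viewed as a generator of the cyclic $(\fs_\gg \otimes \C[t])$-module $U(\fs_\gg \otimes \C[t]).v$, the vector $v \in M^{\gl}$ satisfies the defining relations of the $\mathfrak{sl}_2$-Weyl module of highest weight $N\gp_1$ (where $\gp_1$ denotes the fundamental weight of $\fs_\gg$, so that $\langle N\gp_1, \gg^{\vee}\rangle = N$). Indeed, (D1) yields $e_\gg \otimes \C[t].v = 0$; (D2) yields $\gg^{\vee} \otimes t^s.v = \gd_{s0} N v$ for $s \ge 0$; and Lemma \ref{Lem:_case1} applied with $\gg$ itself gives $f_\gg^{N+1}.v = 0$. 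Invoking Remark \ref{Rem:_defining_rel} and Theorem \ref{Thm:finite-dim} applied internally to $\fs_\gg$, the module $U(\fs_\gg \otimes \C[t]).v$ is therefore a quotient of this $\mathfrak{sl}_2$-Weyl module.

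Since $\fs_\gg \cong \mathfrak{sl}_2$ is simply laced, Theorem \ref{Thm: FL's_Thm} identifies the $\mathfrak{sl}_2$-Weyl module of highest weight $N\gp_1$ with the level-one Demazure module $\D^{\fs_\gg}(1, N\gp_1)$ for $\widehat{\fs}_\gg$. Applying Proposition \ref{Prop:_rel_of_Demazure}(i) inside $\fs_\gg$ at level $\ell = 1$, and noting that the constant $r$ of (\ref{eq: number r}) equals $1$ there because $\fs_\gg$ is simply laced, the generator of $\D^{\fs_\gg}(1, N\gp_1)$ satisfies
\[ (f_\gg \otimes t^s)^{\mathrm{max}\{0,\, N - s\} + 1}.v = 0 \qquad \text{for every } s \in \Z_{\ge 0}.
\]
Pulling this relation back through the surjection produces the same identity inside $M^{\gl}$, which is exactly (\ref{eq:lem of rel}) in the long-root case, since $k_{\gg, s} = \mathrm{max}\{0, N - s\}$.

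Thus no Garland-type manipulation needs to be carried out for long roots: the long-root case is absorbed entirely into the already-established simply-laced Theorem \ref{Thm: FL's_Thm} applied to the rank-one subsystem cut out by $\gg$. I expect the genuine obstacle to arise in the next lemma for short roots, where the same rank-one reduction only yields the level-one relation inside an $\mathfrak{sl}_2$-subalgebra and does not by itself produce the stronger exponent $\mathrm{max}\{0, \langle \gl,\gg^{\vee}\rangle - rs\} + 1$ demanded by (D4).
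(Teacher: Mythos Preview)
Your proof is correct and follows essentially the same route as the paper: both restrict to the $\mathfrak{sl}_2$-subalgebra $\C e_{\gg} \oplus \C \gg^{\vee} \oplus \C f_{\gg}$, check that $v$ satisfies the defining relations of the rank-one Weyl module (using (D1), (D2), and Lemma~\ref{Lem:_case1} for the relation $f_{\gg}^{N+1}.v=0$), invoke Theorem~\ref{Thm: FL's_Thm} to identify it with the level-one Demazure module, and then read off the required relation from Proposition~\ref{Prop:_rel_of_Demazure}. The only cosmetic difference is that the paper does not single out Lemma~\ref{Lem:_case1} by name when writing down $f_{\gg}^{N+1}.v=0$.
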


\begin{proof}
  Take a Lie subalgebra 
  \[ \mathfrak{sl}_{2, \gg} = \C e_{\gg} \oplus \C \gg^{\vee} \oplus \C f_{\gg} \subseteq \fg
  \]
  which is isomorphic to $\mathfrak{sl}_2$.
  Let $\mC \mathfrak{sl}_{2, \gg} = \mathfrak{sl}_{2, \gg} \otimes \C [t]$,
  and $N = U(\mC \mathfrak{sl}_{2, \gg}).v$.
  Note that $v$ satisfies the relations
  \[ e_\gg \otimes \C[t].v = 0, \ \ \ \gg^{\vee} \otimes t^s.v = \gd_{s0}\langle \gl, \gg^{\vee}\rangle v, \ \ \ 
     f_{\gg}^{\langle \gl, \gg^{\vee}\rangle+1}.v =0,
  \]
  which are the defining relations of the Weyl module $W_{\gg}\big(\langle \gl, \gg^{\vee}\rangle\big)$ 
  for $\mC \mathfrak{sl}_{2, \gg}$ (see Remark \ref{Rem:_defining_rel}).
  Here we identify the weight lattice of $\mathfrak{sl}_{2,\gg}$ with $\Z$.
  Therefore, $N$ is a quotient of this module.
  By Theorem \ref{Thm: FL's_Thm}, $W_{\gg}\big(\langle \gl, \gg^{\vee}\rangle\big)$ is isomorphic to 
  the Demazure module $\D_{\gg}\big(1, \langle \gl, \gg^{\vee}\rangle\big)$ for $\mC \mathfrak{sl}_{2,\gg}$.
  Hence $v$ satisfies the defining relations of $\D_{\gg}\big(1, \langle \gl,\gg^{\vee}\rangle\big)$ 
  in Proposition \ref{Prop:_rel_of_Demazure},
  which contain the relations
  \[ (f_{\gg} \otimes t^{s})^{\mathrm{max}\{0, \langle\gl,\gg^{\vee}\rangle - s \}+1}.v = 0
  \]
  for all $s \in \Z_{\ge 0}$.
  The assertion is proved.
\end{proof}

Before starting the proof of remaining cases, we prepare an elementary lemma:

\begin{Lem} \label{Lem:rank2}
  Assume that the rank of $\fg$ is $2$ with $\Pi = \{\ga, \gb\}$.
  Let $N$ be a $U(\fn_-)$-module, and assume that an element $v  \in N$ satisfies
  \[ f_{\ga}^{a+1}.v = 0, \ \ \ f_{\gb}^{b+1}.v = 0
  \]
  for some $a, b \in \Z_{\ge 0}$.
  Then for $\gg \in \gD_+$ such that $\gg^{\vee} = n_1 \ga^{\vee} + n_2 \gb^{\vee}$, we have
  \begin{equation}\label{eq:gg}
    f_{\gg}^{n_1 a + n_2 b +1}.v =0.
  \end{equation}
\end{Lem}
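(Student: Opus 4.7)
The plan is to translate the claim into the purely algebraic identity
\[
  f_{\gg}^{n_1 a + n_2 b + 1} \in U(\fn_-)\,f_{\ga}^{a+1} + U(\fn_-)\,f_{\gb}^{b+1}
\]
inside $U(\fn_-)$; specialising this inclusion to the module $N$ and the vector $v$ then gives the lemma. To establish it, I would work inside the Verma module $M(\mu)$ of $\fg$ with highest weight vector $v_\mu$ of weight $\mu \in P_+$ determined by $\langle \mu,\ga^\vee \rangle = a$ and $\langle \mu,\gb^\vee \rangle = b$. Since $M(\mu) = U(\fn_-).v_\mu$ is free of rank one over $U(\fn_-)$, the inclusion above is equivalent to
\[
  f_{\gg}^{n_1 a + n_2 b + 1}.v_\mu \in K, \qquad K := U(\fn_-) f_\ga^{a+1}.v_\mu + U(\fn_-) f_\gb^{b+1}.v_\mu.
\]

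The core step — and the main obstacle — is to upgrade $K$ from a $U(\fn_-)$-submodule to a $\fg$-submodule of $M(\mu)$, which amounts to showing that $f_\ga^{a+1}.v_\mu$ and $f_\gb^{b+1}.v_\mu$ are $\fn_+$-singular. For $f_\ga^{a+1}.v_\mu$, applying $e_\gb$: the bracket $[e_\gb,f_\ga]$ vanishes because $\gb-\ga$ is not a root (standard for distinct simple roots), so $e_\gb$ commutes with $f_\ga^{a+1}$ and then kills $v_\mu$. Applying $e_\ga$: the standard $\mathfrak{sl}_2$ identity $[e_\ga,f_\ga^{a+1}] = (a+1)f_\ga^a(\ga^\vee - a)$ evaluated on $v_\mu$ vanishes because $\langle \mu,\ga^\vee\rangle = a$. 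The symmetric argument disposes of $f_\gb^{b+1}.v_\mu$. The rank-two hypothesis enters decisively here: $e_\ga$ and $e_\gb$ generate all of $\fn_+$, matching exactly the two annihilation assumptions of the lemma — in higher rank one would need additional relations to pin down the remaining simple generators of $\fn_+$.

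Once $K$ is a $\fg$-submodule of $M(\mu)$, Lemma~\ref{Lem: defining_rel} identifies $M(\mu)/K$ with $V_\fg(\mu)$. In $V_\fg(\mu)$ the highest weight vector $v_\mu$ is annihilated by $e_\gg$ and has $\gg^\vee$-eigenvalue $\langle \mu,\gg^\vee\rangle = n_1 a + n_2 b$ by the hypothesis $\gg^\vee = n_1\ga^\vee + n_2\gb^\vee$, so the standard $\mathfrak{sl}_2$-theory for the triple $\{e_\gg,\gg^\vee,f_\gg\}$ forces
\[
  f_{\gg}^{n_1 a + n_2 b + 1}.v_\mu = 0 \quad \text{in } V_\fg(\mu).
\]
Lifting back to $M(\mu)$ gives $f_\gg^{n_1 a + n_2 b + 1}.v_\mu \in K$, and via the $U(\fn_-)$-isomorphism $M(\mu) \cong U(\fn_-).v_\mu$ this is the desired inclusion in $U(\fn_-)$, completing the proof.
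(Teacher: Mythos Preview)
Your proof is correct and follows essentially the same approach as the paper: both reduce to the $U(\fn_-)$-module isomorphism $V_{\fg}(\mu) \cong U(\fn_-)/U(\fn_-)(\C f_{\ga}^{a+1} + \C f_{\gb}^{b+1})$ and then invoke $\mathfrak{sl}_2$-theory for the triple $\{e_{\gg},\gg^{\vee},f_{\gg}\}$ acting on $V_{\fg}(\mu)$. The paper simply cites this isomorphism as well-known, whereas you supply its proof via the Verma module and the singularity of $f_{\ga}^{a+1}.v_\mu$, $f_{\gb}^{b+1}.v_\mu$; otherwise the arguments coincide.
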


\begin{proof}
  Let $\varpi_{\ga}$ and $\varpi_{\gb}$ denote the fundamental weights corresponding to $\ga$ and $\gb$ respectively,
  and $\mu = a\varpi_\ga + b \varpi_{\gb}$.
  The following isomorphism as $U(\fn_-)$-modules is well-known:
  \[ V_{\fg}(\mu) \cong U(\fn_-)/ U(\fn_-)(\C f_{\ga}^{a+1} + \C f_{\gb}^{b+1}).
  \]
  Therefore, there exists a $U(\fn_-)$-module homomorphism from $V_{\fg}(\mu)$ to $N$ which maps a highest weight vector to $v$.
  Since a highest weight vector of $V_{\fg}(\mu)$ satisfies the relation (\ref{eq:gg}), so does $v$. 
\end{proof}

It remains to prove that the equation (\ref{eq:lem of rel}) follows for short $\gg \in \gD_+$. 
In the rest of this subsection, we shall prove that this statement is true if $\fg$ is of type $B_n, C_n$ or $F_4$,
and $G_2$ case is proved in the next subsection.
Note that $r=2$ if $\fg$ is of type $B_n, C_n$ or $F_4$.

\begin{Lem} \label{eq:existance1}
  Assume that $\fg$ is of type $B_n, C_n$ or $F_4$.
  If $\gg \in \gD_+ \setminus \lpishr_+$ is a short root,
  there exists a short root $\ga \in \gD_+$ and a long root $\gb \in \gD_+$ such that $\gg = \ga + \gb$.
\end{Lem}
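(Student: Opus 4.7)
The statement is a purely combinatorial claim about short roots, so my plan is to verify it by a case analysis on the type of $\fg$, using the standard realizations of $B_n$, $C_n$, and $F_4$.

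For type $B_n$, taking $\ga_i = e_i - e_{i+1}$ (long) for $1 \le i < n$ and $\ga_n = e_n$ (short), one has $\gD^{\sh}_+ = \{e_n\}$, so the short positive roots outside $\gD^{\sh}_+$ are exactly $e_i$ for $1 \le i \le n-1$, for which $e_i = (e_i - e_n) + e_n$ realizes the claim with $e_i - e_n$ long positive and $e_n$ short positive. For type $C_n$, with $\ga_i = e_i - e_{i+1}$ (short) for $i < n$ and $\ga_n = 2e_n$ (long), $\gD^{\sh}_+$ is the $A_{n-1}$-subsystem $\{e_i - e_j : i < j\}$; the remaining short positive roots are the $e_i + e_j$ with $i < j$, and $e_i + e_j = (e_i - e_j) + 2e_j$ is the required decomposition.

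For type $F_4$, I use the standard simple roots $\ga_1 = e_2 - e_3$, $\ga_2 = e_3 - e_4$ (long) and $\ga_3 = e_4$, $\ga_4 = \tfrac{1}{2}(e_1 - e_2 - e_3 - e_4)$ (short), and compute $\gD^{\sh}_+ = \{e_4,\ \tfrac{1}{2}(e_1 - e_2 - e_3 - e_4),\ \tfrac{1}{2}(e_1 - e_2 - e_3 + e_4)\}$. The nine short positive roots in $\gD_+ \setminus \gD^{\sh}_+$ split into three families, each handled by an explicit formula: for $\gg = e_i$ with $i \in \{1,2,3\}$, write $e_i = (e_i - e_4) + e_4$; for $\gg = \tfrac{1}{2}(e_1 + e_2 + \gee_3 e_3 + \gee_4 e_4)$ with $\gee_3, \gee_4 \in \{\pm 1\}$, write $\gg = \tfrac{1}{2}(e_1 - e_2 - \gee_3 e_3 + \gee_4 e_4) + (e_2 + \gee_3 e_3)$; and for $\gg = \tfrac{1}{2}(e_1 - e_2 + e_3 + \gee_4 e_4)$, write $\gg = \tfrac{1}{2}(e_1 - e_2 - e_3 - \gee_4 e_4) + (e_3 + \gee_4 e_4)$. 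A direct check confirms that in each case the first summand is a short positive root and the second is a long positive root of the form $e_j \pm e_k$.

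The main difficulty is organizational rather than mathematical: the $F_4$ case requires careful bookkeeping of sign conventions and the enumeration of $\gD^{\sh}_+$. One could attempt a uniform argument, for instance by showing that for any such $\gg$ there exists a long positive root $\beta \le \gg$ with $\langle \gg, \beta^\vee \rangle = 1$, which would force $\gg - \beta$ to be a short positive root via the length identity $(\gg-\beta,\gg-\beta) = (\gg,\gg) - 2\langle \gg,\beta^\vee\rangle + (\beta,\beta)$; but since only finitely many cases arise, the direct verification above is the most transparent route.
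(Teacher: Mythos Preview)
Your case-by-case verification is correct: the $B_n$ and $C_n$ decompositions are immediate, and your $F_4$ bookkeeping checks out (the three families do exhaust the nine short positive roots outside $\gD^{\sh}_+$, and in each formula the first summand is a short positive root while the second is a long positive root of the form $e_j \pm e_k$).

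The paper takes a different, type-uniform route: it argues by induction on $\height \gg$. Given short $\gg \in \gD_+ \setminus \gD^{\sh}_+$, one picks $i$ with $(\gg,\ga_i)>0$; since $\gg$ is short and $\gg \neq \ga_i$, one has $\langle \gg,\ga_i^{\vee}\rangle = 1$, so $\gg = s_i(\gg) + \ga_i$. If $\ga_i$ is long, this is the desired decomposition; if $\ga_i$ is short, then $s_i(\gg)$ remains in $\gD_+ \setminus \gD^{\sh}_+$ by Lemma~\ref{Lem:_preserve_sh}, the induction hypothesis gives $s_i(\gg) = \ga + \gb$, and after ruling out $\ga = \ga_i$ one applies $s_i$ to obtain $\gg = s_i(\ga) + s_i(\gb)$. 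The paper's argument is cleaner in that it never touches explicit coordinates and makes visible the structural reason the lemma holds, while your approach is completely self-contained and avoids the auxiliary Lemma~\ref{Lem:_preserve_sh}. Your closing remark about looking for a long $\gb \le \gg$ with $\langle \gg,\gb^{\vee}\rangle = 1$ is in the same spirit as the paper's inductive step, though the paper organizes it via simple reflections rather than searching for $\gb$ directly.
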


\begin{proof}
  We prove the assertion by induction on $\mathrm{ht}\, \gg$.
  Put
  \[ S = \{ \ga \in \gD_+ \mid  \ga \notin \lpishr_+, \ \ga\text{ is short} \},
  \]
  and take arbitrary $\gg \in S$.
  Since $\gg \in \sum_{i \in I} \Z_{\ge 0} \ga_i$ and $( \gg, \gg) > 0$,
  there exists some $i \in I$ such that $(\gg, \ga_i) > 0$.
  Since $\gg \notin \Pi^{\sh}$ and $\gg$ is short, we have $\gg \notin \Pi$ and in particular $\gg \neq \ga_i$.
  Then we have $\langle \gg, \ga_i^{\vee} \rangle =1$ since $\gg$ is short, which implies $\gg = s_i(\gg) + \ga_i$.
  Note that $s_i (\gg)$ is a short root.
  If $\ga_i$ is long, the assertion is proved.
  Assume that $\ga_i \in \Pi^{\sh}$. 
  Then we have $s_i (\gg) \in S$ by Lemma \ref{Lem:_preserve_sh},
  and by the induction hypothesis there exist short $\ga \in \gD_+$ and long $\gb \in \gD_+$ 
  such that $s_i(\gg) = \ga + \gb$. 
  If $\ga = \ga_i$ then we have $\gg = \gb + 2\ga$, which contradicts that $\gg$ is short.
  Hence we have $\ga \neq \ga_i$, which implies $s_i (\ga) \in \gD_+$.
  Now the lemma is proved since we have $\gg = s_i(\ga) + s_i(\gb)$.  
\end{proof}

Now, the following proposition completes the proof of Proposition \ref{Prop:_rel_of_Demazure1} 
for $\fg$ of type $B_n, C_n$ or $F_4$:

\begin{Prop} \label{Prop:_proof_of_nonsimply}
  Assume that $\fg$ is of type $B_n, C_n$ or $F_4$.
  Then {\normalfont(\ref{eq:lem of rel})} follows for all short $\gg \in \gD_+$ and $s \in \Z_{\ge 0}$.
\end{Prop}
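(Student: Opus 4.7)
The plan is to induct on $\height \gg$. By Lemma \ref{Lem:_case1}, Lemma \ref{Lem:_case2}, and relation (D4), I may restrict to the case that $\gg \in \gD_+ \setminus \gD^{\sh}_+$ is short and $s \ge 1$. Lemma \ref{eq:existance1} then lets me write $\gg = \ga + \gb$ with $\ga \in \gD_+$ short and $\gb \in \gD_+$ long, both of strictly smaller height than $\gg$.

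The key step is the following structural observation. For arbitrary integers $u, v \ge 0$, I consider the Lie subalgebra $\fn'$ of $\Cg$ generated by $\tilde{f}_\ga := f_\ga \otimes t^u$ and $\tilde{f}_\gb := f_\gb \otimes t^v$. Since $\gg = \ga+\gb$ is short while $\gb$ is long, the squared-length computation forces $(\ga,\gb) = -1$, so the sub-root-system of $\gD$ spanned by $\ga, \gb$ is of type $B_2$ with positive roots $\ga, \gb, \ga+\gb, 2\ga+\gb$. The iterated brackets
\[ [\tilde{f}_\ga, \tilde{f}_\gb] = c_1\, f_{\ga+\gb} \otimes t^{u+v}, \qquad [\tilde{f}_\ga, [\tilde{f}_\ga, \tilde{f}_\gb]] = c_2\, f_{2\ga+\gb} \otimes t^{2u+v} \]
span the remaining basis elements (with nonzero scalars $c_1, c_2$), and all further iterated brackets vanish in $\Cg$ because $3\ga+\gb$ and $\ga+2\gb$ are not roots of $\fg$. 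Thus $\fn'$ is four-dimensional and isomorphic to $\fn_-$ of type $B_2$, with $\ga, \gb$ playing the role of the short and long simple roots respectively.

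View $M^\gl$ as a $U(\fn')$-module. By the induction hypothesis (or by (D4) directly if $\ga \in \gD^{\sh}_+$) I have $\tilde{f}_\ga^{a+1}.v = 0$ with $a = \max\{0, \langle\gl, \ga^\vee\rangle - 2u\}$, and by Lemma \ref{Lem:_case2} I have $\tilde{f}_\gb^{b+1}.v = 0$ with $b = \max\{0, \langle\gl, \gb^\vee\rangle - v\}$. Using $(\ga+\gb)^\vee = \ga^\vee + 2\gb^\vee$ in type $B_2$, Lemma \ref{Lem:rank2} then yields
\[ (f_\gg \otimes t^{u+v})^{a + 2b + 1}.v = 0. \]
Setting $s = u+v$ and noting $\langle\gl, \gg^\vee\rangle = \langle\gl, \ga^\vee\rangle + 2\langle\gl, \gb^\vee\rangle$, an elementary optimization (e.g. take $v = \min\{s, \langle\gl,\gb^\vee\rangle\}$ and $u = s-v$) shows that $a + 2b \le \max\{0, \langle\gl, \gg^\vee\rangle - 2s\} = k_{\gg, s}$, completing the induction.

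The main obstacle I expect is verifying cleanly that the subalgebra $\fn'$ is genuinely isomorphic to $\fn_-$ of type $B_2$ with the correct coroot labelling $n_1 = 1$, $n_2 = 2$, so that Lemma \ref{Lem:rank2} applies as a black box in the current-algebra setting; once that structural identification is secured, the passage to the concrete relation in $M^\gl$ and the subsequent integer arithmetic in choosing the split $s = u+v$ are routine.
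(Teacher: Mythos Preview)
Your argument is correct and follows essentially the same route as the paper: induct on $\height\gg$, invoke Lemma~\ref{eq:existance1} to write $\gg=\ga+\gb$ with $\ga$ short and $\gb$ long, identify the $B_2$-nilradical spanned by $f_\ga\otimes t^u,\,f_\gb\otimes t^v$ and their brackets, and apply Lemma~\ref{Lem:rank2} together with $\gg^\vee=\ga^\vee+2\gb^\vee$. Your optimizing choice $v=\min\{s,\langle\gl,\gb^\vee\rangle\}$, $u=s-v$ is exactly the paper's $q=\min\{b,s\}$, $p=s-q$, and your exponent bound $a+2b\le k_{\gg,s}$ in fact holds with equality, just as in the paper.
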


\begin{proof}
  We have to show that
  \begin{equation} \label{eq:_rel_to_be_proved}
    (f_{\gg} \otimes t^s)^{\mathrm{max}\{ 0, \langle \gl,\gg^{\vee}\rangle -2s \} +1}. v = 0.
  \end{equation}
  We show this by induction on $\height \gg$.
  If $\height \gg =1$, this trivially follows from (D4) since $\gg \in \Pi^{\sh}$.
  Assume $\height \gg >1$. We may also assume that $\gg \notin \lpishr_+$.
  By Lemma \ref{eq:existance1}, there exist short $\ga \in \gD_+$ and long $\gb \in \gD_+$ such that $\gg = \ga + \gb$.
  Put $a = \langle \gl, \ga^{\vee} \rangle$, $b = \langle \gl, \gb^{\vee} \rangle$, and 
  \[ q = \mathrm{min}\{b, s\}, \ \ \ p = s - q
  \]
  for given $s \in \Z_{\ge 0}$.
  We have from the induction hypothesis that
  \begin{equation} \label{eq:_ann1}
    (f_{\ga} \otimes t^p)^{\max\{0, a - 2p\}+1}.v = 0,
  \end{equation}
  and from Lemma \ref{Lem:_case2} that
  \begin{equation} \label{eq:_ann2}
    (f_{\gb} \otimes t^q)^{b-q+1}.v = 0.
  \end{equation}
  It is easily checked that the root subsystem $(\Z \ga + \Z \gb) \cap \gD$ is the root system of type $B_2$ 
  with basis $\{\ga, \gb \}$.
  Hence the Lie subalgebra
  \[ \C f_{\ga} \otimes t^p + \C f_{\gb} \otimes t^q + \C f_{\gg} \otimes t^{p+q} + 
     \C f_{2\ga + \gb} \otimes t^{2p + q} \subseteq \Cg
  \]
  is isomorphic to the nilradical of the Borel subalgebra of $\mathfrak{so}_{5}$.
  Since $\gg^{\vee} = \ga^{\vee} + 2 \gb^{\vee}$, we have from Lemma \ref{Lem:rank2}, (\ref{eq:_ann1}) and (\ref{eq:_ann2}) that
  \[ (f_{\gg} \otimes t^{p + q} )^{\mathrm{max}\{0, a-2p\} + 2(b - q) +1 }.v = 0.
  \]
  It is easily seen that this is equivalent to (\ref{eq:_rel_to_be_proved}),
  and the proposition is proved.
\end{proof}

\subsection{Proof of Proposition \ref{Prop:_rel_of_Demazure1} for type $G_2$}

In this subsection we assume $\fg$ is of type $G_2$,
and denote by $\ga$ the short simple root and by $\gb$ the long simple root.
Note that 
\[ \gD_+ = \{ \ga, \gb, \ga + \gb, 2\ga + \gb, 3\ga + \gb, 3\ga + 2\gb \}.
\]
Let $\varpi_{\ga}, \varpi_{\gb}$ be the corresponding fundamental weights.

Since Lemma \ref{Lem:_case2} is already proved, 
in order to complete the proof of Proposition \ref{Prop:_rel_of_Demazure1} for $\fg$
it suffices  to show the equation (\ref{eq:lem of rel}) for $\gg = \ga + \gb, 2\ga +\gb$.
The first one is easily proved:

\begin{Lem} \label{Lem:case_3}
  For $\gg = \ga + \gb$ and $s \in \Z_{\ge 0}$, {\normalfont(\ref{eq:lem of rel})} follows.
\end{Lem}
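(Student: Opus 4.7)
The plan is to imitate the proof of Proposition \ref{Prop:_proof_of_nonsimply}: decompose $s = p+q$ suitably, observe that $f_\ga \otimes t^p$ and $f_\gb \otimes t^q$ generate inside $\Cg$ a Lie subalgebra isomorphic to $\fn_-$ of $G_2$, and then apply Lemma \ref{Lem:rank2} there. Writing $\gl = a\varpi_\ga + b\varpi_\gb$ with $a,b\in\Z_{\ge 0}$, the identity $\ga+\gb = s_\gb(\ga)$ gives $(\ga+\gb)^\vee = s_\gb(\ga^\vee) = \ga^\vee + 3\gb^\vee$, hence $\langle \gl,(\ga+\gb)^\vee\rangle = a+3b$, and the relation to establish becomes
\[
(f_{\ga+\gb}\otimes t^s)^{\max\{0,\,a+3b-3s\}+1}.v = 0.
\]

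Fix any $p,q\in\Z_{\ge 0}$ with $p+q=s$. Using $[x\otimes t^m, y\otimes t^n]=[x,y]\otimes t^{m+n}$, the Lie subalgebra of $\Cg$ generated by $f_\ga\otimes t^p$ and $f_\gb\otimes t^q$ is spanned by $\{f_{\eta}\otimes t^{n_1 p+n_2 q} : \eta = n_1\ga + n_2\gb \in\gD_+\}$, and the evident identification $f_\ga\otimes t^p \leftrightarrow f_\ga$, $f_\gb\otimes t^q\leftrightarrow f_\gb$ realises it as $\fn_-$ of $G_2$. Relation (D4) yields $(f_\ga\otimes t^p)^{\max\{0,a-3p\}+1}.v=0$ and Lemma \ref{Lem:_case2} yields $(f_\gb\otimes t^q)^{\max\{0,b-q\}+1}.v=0$; since $(\ga+\gb)^\vee = \ga^\vee + 3\gb^\vee$, Lemma \ref{Lem:rank2} applied inside this subalgebra gives
\[
(f_{\ga+\gb}\otimes t^s)^{\max\{0,a-3p\}+3\max\{0,b-q\}+1}.v = 0.
\]

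It remains to choose $p,q$ so that the exponent on the left matches that on the right of the target. When $a+3b\ge 3s$, set $q=\min\{b,s\}$ and $p=s-q$; a direct case split shows both maxes are non-negative and their weighted sum collapses to exactly $a+3b-3s$. When $a+3b<3s$, set $p=\lceil a/3\rceil$ and $q=s-p$; the congruence $3s\equiv 0\pmod 3$ together with $3s>a+3b$ forces $3s\ge 3\lceil a/3\rceil + 3b$, so $q\ge b$ while $3p\ge a$, and both maxes vanish. This last integer-arithmetic bookkeeping is the only subtle step in the argument — the Lie-algebraic identification with $\fn_-$ of $G_2$ and the invocation of Lemma \ref{Lem:rank2} are automatic from the structure of $\Cg$.
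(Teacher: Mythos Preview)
Your proof is correct and follows essentially the same approach as the paper: embed $f_\ga\otimes t^p$ and $f_\gb\otimes t^q$ into a copy of the $G_2$ nilradical inside $\Cg$, then invoke Lemma~\ref{Lem:rank2} using (D4) and Lemma~\ref{Lem:_case2}.

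The only difference is that your case split on the sign of $a+3b-3s$ is unnecessary. The paper makes the single choice $q=\min\{b,s\}$, $p=s-q$ throughout: with this choice one always has $b-q\ge 0$, and Lemma~\ref{Lem:rank2} gives
\[
(f_{\ga+\gb}\otimes t^{s})^{\max\{0,a-3p\}+3(b-q)+1}.v=0.
\]
When $s\le b$ this exponent is $a+3(b-s)+1$; when $s>b$ it is $\max\{0,a+3b-3s\}+1$ directly. In either case it equals $\max\{0,a+3b-3s\}+1$, so no second choice of $(p,q)$ is needed. Your alternative choice $p=\lceil a/3\rceil$ in the regime $a+3b<3s$ works, but it is extra bookkeeping that the uniform choice avoids.
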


\begin{proof}
  This proof is similar to the one given in  Proposition \ref{Prop:_proof_of_nonsimply},
  and we shall only give the sketch of it.
  Assume that $\gl = a \varpi_\ga + b \varpi_\gb$ with $a, b \in \Z_{\ge 0}$.
  Since $(\ga+ \gb)^{\vee} = \ga^{\vee} + 3\gb^{\vee}$, what we have to show is that $v \in M^{\gl}$ satisfies
  \begin{equation} \label{eq:_to_be_proved2}
    (f_{\ga + \gb} \otimes t^s)^{\mathrm{max}\{0, a + 3b -3s\} +1} .v = 0.
  \end{equation}
  Put $q= \mathrm{min}\{b, s\}$, $p= s -q$ for given $s \in \Z_{\ge 0}$.
  Since we have
  \[ (f_{\ga} \otimes t^p)^{\mathrm{max}\{0, a-3p\}+1}.v = 0 \ \text{and} \ (f_{\gb} \otimes t^q)^{b-q+1}.v = 0,
  \]
  it follows from Lemma \ref{Lem:rank2} that
  \[ (f_{\ga+\gb} \otimes t^{p+q})^{\mathrm{max}\{0, a-3p \}+3(b-q)+1}.v=0,
  \]
  which is equivalent to (\ref{eq:_to_be_proved2}).
\end{proof}

It remains to prove (\ref{eq:lem of rel}) for $\gg = 2\ga + \gb$,
which is a relatively troublesome task.
Throughout the rest of this subsection, we abbreviate $X \otimes t^k$ as $Xt^k$ for $X \in \fg$ 
and $\mathrm{max}\{k_1, k_2\}$ as $\{k_1, k_2\}$ to simplify the notation.

For $a,b \in \Z_{\ge 0}$, we define a subspace $I_{a,b}$ of $U(\Cg)$ by
\[ I_{a,b} = \fn_+\C[t] + \fh t\C[t] + \C(\ga^{\vee} - a) + \C(\gb^{\vee} -b) + 
   \sum_{s \ge 0} \C (f_{\ga}t^s)^{\{0, a-3s\}+1} + \C f_{\gb}^{b+1}.
\]
Note that $U(\Cg)I_{a,b}$ is the left ideal generated by the relations in Proposition \ref{Prop:_rel_of_Demazure1}
with $\gl = a\varpi_\ga + b \varpi_\gb$.
We shall prove the statement
\begin{equation} \label{eq:identity1}
  (f_{2\ga+\gb}t^s)^{\{0,2a+ 3b-3s\} +1} \in U(\Cg)I_{a,b}
\end{equation}
for all $a,b \in \Z_{\ge 0}$ and $s \in \Z_{\ge 0}$, 
which is equivalent to (\ref{eq:lem of rel}) with $\gl = a\varpi_\ga + b\varpi_\gb$ and $\gg = 2\ga + \gb$.
In the course of the proof, we use repeatedly the fact that $X \in U(\Cg)$ annihilates $v \in M^{\gl}$ 
if and only if $X \in U(\Cg)I_{a,b}$ without further comment.

Before starting the proof of (\ref{eq:identity1}), let us prepare two lemmas:

\begin{Lem} \label{Lem:relation}
  For $s_1, s_2 \in \Z_{\ge 0}$, we have
  \[ (f_{2\ga + \gb}t^{2 s_1 + s_2})^{2\{ 0, a-3s_1\} + 3\{0, b-s_2\} + 1} \in U(\Cg)I_{a,b}.
  \]
\end{Lem}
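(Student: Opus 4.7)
The plan is to reduce the identity to the rank-two statement of Lemma \ref{Lem:rank2} by exhibiting a Lie subalgebra of $\Cg$ isomorphic to $\fn_-$ of type $G_2$ and then pulling back the annihilation data already available at the cyclic generator $v\in M^{\gl}$. Set $a' = \{0, a-3s_1\}$ and $b' = \{0, b-s_2\}$, so that the target exponent reads $2a'+3b'+1$. The two base annihilations are immediate: relation (D4) with $\gg = \ga$ (short, $r=3$) gives $(f_\ga t^{s_1})^{a'+1}.v = 0$, and Lemma \ref{Lem:_case2} applied to the long simple root $\gb$ gives $(f_\gb t^{s_2})^{b'+1}.v = 0$.

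Next I would consider the Lie subalgebra $\fs \subseteq \Cg$ generated by $f_\ga t^{s_1}$ and $f_\gb t^{s_2}$. Using $[Xt^p, Yt^q] = [X,Y]t^{p+q}$ together with the fact that in $\fg$ itself the Serre relations $(\ad f_\ga)^4 f_\gb = 0$ and $(\ad f_\gb)^2 f_\ga = 0$ hold (because $4\ga+\gb$ and $\ga+2\gb$ are not roots of $G_2$), the assignment $f_\ga \mapsto f_\ga t^{s_1}$, $f_\gb \mapsto f_\gb t^{s_2}$ extends to a Lie algebra homomorphism $\phi\colon \fn_-^{G_2} \to \fs$. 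A direct bracket computation shows that $\phi$ is in fact an isomorphism onto the $6$-dimensional subspace
\[ \fs = \bigoplus_{m\ga+n\gb \in \gD_+} \C\, f_{m\ga+n\gb}\, t^{ms_1+ns_2}, \]
so that in particular $\phi(f_{2\ga+\gb})$ is a nonzero scalar multiple of $f_{2\ga+\gb}t^{2s_1+s_2}$.

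Finally, viewing $M^{\gl}$ as a $U(\fn_-^{G_2})$-module via $\phi$, the two displayed annihilations become exactly the hypotheses of Lemma \ref{Lem:rank2} with $a \leftrightarrow a'$ and $b \leftrightarrow b'$. Since $(2\ga+\gb)^{\vee} = 2\ga^{\vee} + 3\gb^{\vee}$ in $G_2$, that lemma yields $f_{2\ga+\gb}^{2a'+3b'+1}.v = 0$, which translates back through $\phi$ to $(f_{2\ga+\gb}t^{2s_1+s_2})^{2a'+3b'+1}.v = 0$, equivalently the desired element lies in $U(\Cg)I_{a,b}$. I expect the only mildly delicate point to be verifying that $\phi$ is well-defined and injective onto $\fs$, but this is transparent once one observes that the Serre relations for $G_2$ are already satisfied inside $\fg$ and that the six listed elements of $\Cg$ are linearly independent (they lie in distinct $\fh$-weight spaces).
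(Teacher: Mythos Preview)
Your proof is correct and follows essentially the same route as the paper. Both arguments obtain $(f_\ga t^{s_1})^{a'+1}.v=0$ from the defining relations in $I_{a,b}$ and $(f_\gb t^{s_2})^{b'+1}.v=0$ from Lemma~\ref{Lem:_case2}, then invoke Lemma~\ref{Lem:rank2} via the coroot identity $(2\ga+\gb)^{\vee}=2\ga^{\vee}+3\gb^{\vee}$; the only difference is that you spell out explicitly the $G_2$ nilradical isomorphism $\fn_-^{G_2}\stackrel{\sim}{\to}\fs$ that the paper leaves implicit (as it did analogously for $B_2$ in Proposition~\ref{Prop:_proof_of_nonsimply}). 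Note that injectivity of $\phi$ is not actually needed---it suffices that $\phi(f_{2\ga+\gb})$ is a nonzero scalar multiple of $f_{2\ga+\gb}t^{2s_1+s_2}$, which the bracket computation already gives.
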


\begin{proof}
  By Lemma \ref{Lem:_case2}, we have
  \[ (f_\gb t^{s_2})^{\{0, b -s_2\}+1} \in U(\Cg)I_{a,b}.
  \]
  Then since $\gg^{\vee} = 2\ga^{\vee} + 3 \gb^{\vee}$, we have from Lemma \ref{Lem:rank2} that
  \begin{align*} (f_{2\ga+\gb}&t^{2s_1 + s_2})^{2\{0,a-3s_1 \}+ 3\{0, b-s_2\}+1} \\
     &\in U(\Cg)\big(\C (f_{\ga} t^{s_1})^{\{0, a-3s_1\} + 1} + \C (f_\gb t^{s_2})^{\{0, b -s_2\}+1}\big) 
      \subseteq U(\Cg)I_{a,b}.
  \end{align*}
\end{proof}

\begin{Lem} \label{Lem:sl2}
  Let $\{e, h, f\}$ be the Chevalley basis of $\mathfrak{sl}_2$, $a \in \Z_{\ge 0}$ and $\ell \in \Z_{>0}$.
  Define a Lie subalgebra $\fa$ of $\mC \mathfrak{sl}_2 = \mathfrak{sl}_2 \otimes \C[t]$ by
  \[ \fa = e t \C[t] + h  \C[t] + f \C[t],
  \]
  and let $I$ be a subspace of $U(\fa)$ defined by
  \[ I = e t\C[t] + ht\C[t] + \C(h-a) + \sum_{s \ge 0} \C (f t^{s})^{\{0, a-\ell s\}+1}.
  \]
  Then we have for all $p \in \Z_{\ge 0}$ and $s \in \Z_{\ge 0}$ that 
  \[ e^p (ft^s)^{\{0, a-\ell s\} + 1} \in U(\fa)I + U(\mC \mathfrak{sl}_2)e.
  \]  
\end{Lem}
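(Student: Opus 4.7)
We proceed by induction on $p$. The base case $p=0$ is clear, since $(ft^s)^{\max\{0,a-\ell s\}+1}$ is a generator of $I$ and hence lies in $U(\fa)I$.

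For the inductive step we exploit the PBW decomposition $U(\mC\mathfrak{sl}_2) = U(\fa) \oplus U(\mC\mathfrak{sl}_2)e$ of vector spaces (arising from $\mC\mathfrak{sl}_2 = \fa \oplus \C e$, with $e$ put on the right). An iterated commutator calculation using $[e, et^k]=0$, $[e, ht^k]=-2et^k$, $[e, ft^k]=ht^k$ and $[e,h]=-2e$ shows that $e \cdot U(\fa) \subseteq U(\fa)+U(\mC\mathfrak{sl}_2)e$. Consequently, writing $e^{p-1}(ft^s)^N = A+B$ by the inductive hypothesis with $A \in U(\fa)I$ and $B \in U(\mC\mathfrak{sl}_2)e$, we have $e \cdot B \in U(\mC\mathfrak{sl}_2)e$ and the problem is reduced to showing $e\cdot X \in U(\fa)I + U(\mC\mathfrak{sl}_2)e$ for each generator $X$ of $I$. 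For $X=et^k, ht^k$ with $k\ge 1$, and $X=h-a$, this is immediate from $[e,ht^k]=-2et^k \in I$ and $[e,h-a]=-2e \in U(\mC\mathfrak{sl}_2)e$.

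The essential case is $X=(ft^s)^{N_s}$ where $N_s=\max\{0,a-\ell s\}+1$. A direct Leibniz calculation gives
\[
[e,(ft^s)^{N_s}]=N_s(ft^s)^{N_s-1}(ht^s)-N_s(N_s-1)(ft^s)^{N_s-2}(ft^{2s}).
\]
For $s=0$ one has $N_0=a+1$; substituting $h=(h-a)+a$ in the first summand, the contributions involving the constant $a$ cancel precisely against the second summand, leaving $(a+1)f^a(h-a)\in U(\fa)I$. For $s\ge 1$, the first summand lies in $U(\fa)I$ because $ht^s \in I$, and so the crux is to show $(ft^s)^{N_s-2}(ft^{2s}) \in U(\fa)I$.

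The main obstacle is exactly this last containment. My strategy is to extract it from the analogous commutator identities
\[
[et^k,(ft^s)^{N_s}] = N_s(ft^s)^{N_s-1}(ht^{k+s}) - N_s(N_s-1)(ft^s)^{N_s-2}(ft^{k+2s})
\]
for $k\ge 1$: here both factors $et^k$ and $(ft^s)^{N_s}$ already lie in $I$, so the commutator belongs to $U(\fa)I$ for free, yielding $(ft^s)^{N_s-2}(ft^{k+2s}) \in U(\fa)I$ for every $k \ge 1$; similarly, $[ht^s,(ft^s)^{N_s}]=-2N_s(ft^s)^{N_s-1}(ft^{2s})$ gives $(ft^s)^{N_s-1}(ft^{2s}) \in U(\fa)I$. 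The $k=0$ boundary case cannot be reached from these by simple Lie bracketing and requires a further input --- either a Chari--Venkatesh type generalised Garland identity expressing a suitable symmetric combination of monomials $(ft^{s_1})\cdots(ft^{s_{N_s-1}})$ as an explicit element of $U(\fa)I$, or equivalently the representation-theoretic fact that the Demazure module $\D(\ell,a)$ for $\mC\mathfrak{sl}_2$, viewed as a module over $\fa$, is presented by exactly the relations $I$. Chaining these reductions together completes the inductive step and the proof.
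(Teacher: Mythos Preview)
Your inductive reduction is sound up to the point you yourself flag, but the gap you flag is the entire content of the lemma. After your commutator calculation, everything hinges on
\[
(ft^s)^{N_s-2}(ft^{2s}) \in U(\fa)I \qquad (s\ge 1,\ N_s\ge 2),
\]
and you have not proved this. The auxiliary identities you derive from $[et^k,(ft^s)^{N_s}]$ with $k\ge 1$ and from $[ht^s,(ft^s)^{N_s}]$ only yield $(ft^s)^{N_s-2}(ft^{k+2s})$ for $k\ge 1$ and $(ft^s)^{N_s-1}(ft^{2s})$; neither reaches the needed $k=0$ term with exponent $N_s-2$. You say this ``requires a further input,'' but both inputs you name are problematic. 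The ``representation-theoretic fact that $\D(\ell,a)$ over $\fa$ is presented by $I$'' is precisely (after the involution $e\leftrightarrow f$, $h\leftrightarrow -h$) the Fourier--Littelmann result the paper invokes; once you grant it, the lemma follows in two lines without any induction on $p$, so your machinery becomes redundant. The Chari--Venkatesh/Garland route is a genuine alternative, but it is substantial and you have not carried it out.

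The paper's proof avoids all of this. It applies the involution to convert the statement into one about $f^p(et^s)^{N_s}$ and the extremal (lowest-weight) vector $v_{w\gL}$ of a level-$\ell$ Demazure module for $\hat{\mathfrak{sl}}_2$. Using PBW to write $f^p(et^s)^{N_s}-Xf\in U(\fa')$, one observes that this element kills $v_{w\gL}$ (since $f.v_{w\gL}=0$ and $(et^s)^{N_s}.v_{w\gL}=0$), and then cites Fourier--Littelmann's determination of the annihilator of $v_{w\gL}$ in $U(\fa')$ to conclude it lies in $U(\fa')I'$. No induction, no commutator identities, no missing boundary case. If you want a self-contained computational proof that avoids Demazure modules altogether, you would indeed need to establish Garland-type identities in $U(f\C[t])$ strong enough to force the $k=0$ relation; as written, your argument is a proof sketch that stops exactly where the difficulty begins.
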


\begin{proof}
  By applying an involution of $\mC \mathfrak{sl}_2$ defined by $e t^k \leftrightarrow f t^k$, $h t^k \leftrightarrow -h t^k$,
  we see that the assertion of the lemma is equivalent to the following:
  if we put $\fa' = ft\C[t] + h \C[t] + e \C[t]$ and 
  \[ I' = f t\C[t] + h t\C[t] + \C (h+a) + \sum_{s\ge 0} \C (et^s)^{\{0, a -\ell s \} +1},
  \]
  then we have for all $p \in \Z_{\ge 0}$ and $s \in \Z_{\ge 0}$ that
  \[ f^p (et^s)^{\{0, a-\ell s\} + 1} \in U(\fa')I' + U(\mC \mathfrak{sl}_2)f.
  \]
  We shall prove this.
  Fix arbitrary $p$ and $s$.
  Since $U(\mC \mathfrak{sl}_2) = U(\fa') \oplus U(\mC \mathfrak{sl}_2)f$, there exists $X \in U(\mC \mathfrak{sl}_2)$ such that 
  \[ f^p (et^s)^{\{0, a-\ell s\} + 1} -Xf \in U(\fa').
  \]
  Let $V_w(\gL)$ be the $\hat{\mathfrak{sl}_2}$-Demazure module with $w\gL = \ell \gL_0 -a\varpi_1$,
  and $v_{w\gL}$ a nonzero vector in $V_w(\gL)_{w\gL}$. 
  By \cite[Theorem 1]{MR2323538}, the annihilating ideal of $v_{w\gL}$ in $U(\fa')$ is $U(\fa')I'$.
  Since
  \[ \big(f^p (et^s)^{\{0, a-\ell s\} + 1} -Xf\big).v_{w\gL}=0,
  \]
  this implies $f^p (et^s)^{\{0, a- \ell s\} + 1} -Xf \in U(\fa')I'$. The assertion is proved.
\end{proof}

We fix arbitrary $b\in\Z_{\ge0}$, and prove (\ref{eq:identity1}) for this $b$ by induction on $a \in \Z_{\ge 0}$.
To begin the induction, we first prove (\ref{eq:identity1}) for $a =0,1,2$.

\noindent(i) When $a = 0$,
  Lemma \ref{Lem:relation} with $s_1 = 0, s_2 = s$ implies (\ref{eq:identity1}).
  
\noindent(ii) Assume $a =1$. If $s \le b$, Lemma \ref{Lem:relation} with $s_1 = 0, s_2 = s$ implies (\ref{eq:identity1}).
If $s \ge b+2$, Lemma \ref{Lem:relation} with $s_1 = 1, s_2 = s -2$ implies (\ref{eq:identity1}).
Let $s = b +1$. 
By Lemma \ref{Lem:_case2} and $\langle \varpi_{\ga} + b \varpi_\gb, (3\ga + \gb)^{\vee}  \rangle  =b + 1$, we have 
\[ f_{3\ga + \gb}t^{b+1} \in U(\mC \fg)I_{1,b}.
\]
Then (\ref{eq:identity1}) follows since
\[ [e_\ga, f_{3\ga+\gb}t^{b+1}] = e_\ga f_{3\ga + \gb}t^{b+1} - f_{3\ga + \gb}t^{b+1}e_{\ga} \in U(\mC \fg)I_{1,b}.
\]

\noindent(iii) Assume $a = 2$. If $s \neq b + 1$, (\ref{eq:identity1}) is proved in the same way as in (ii).
Let $s = b +1$. By Lemma \ref{Lem:_case2} and \ref{Lem:case_3}, we have 
\[ (f_{3\ga + \gb}t^{b + 1})^2 \in U(\Cg)I_{2,b} \ \text{and} \ f_{\ga + \gb} t^{b+1} \in U(\Cg) I_{2,b}.
\]
Then (\ref{eq:identity1}) follows from the following calculation:
\begin{align*}
 U(\Cg)I_{2,b} \ni &e_{\ga}^2(f_{3\ga + \gb}t^{b+1})^2 = e_{\ga}(f_{3\ga+\gb}t^{b+1})^2e_{\ga}
            + 2[e_{\ga}, f_{3\ga + \gb}t^{b+1}]^2 \\  &+ 2f_{3\ga + \gb}t^{b+1}[e_{\ga}, f_{3\ga + \gb}t^{b+1}]e_{\ga}
             + 2f_{3\ga + \gb}t^{b+1}\mathrm{ad}(e_{\ga})^2(f_{3\ga+\gb}t^{b+1}). \\
\end{align*}

To proceed the induction we need to show that, if (\ref{eq:identity1}) holds for given $a,b \in \Z_{\ge 0}$ 
and all $s \in \Z_{\ge 0}$, then (\ref{eq:identity1}) with $a$ replaced by $a+3$ also holds.
To show this we shall prepare one lemma.
Define Lie subalgebras $\fa$ and $\fa_0$ of $\Cg$ by
\[ \fa = \fn_-\C[t] + \fh\C[t] + \sum_{\gg \in \gD_+}e_{\gg}t^{\langle\gg, \varpi_{\ga}^{\vee}\rangle} \C[t] \ \ \ \text{and}
   \ \ \ \fa_0 = \sum_{\begin{smallmatrix} \gg \in \gD_+ \setminus \{\gb\} \\ 0 \le s < \langle \gg, \varpi_{\ga}^{\vee}\rangle
                   \end{smallmatrix}} \C e_{\gg}t^s,
\]
where $\varpi_\ga^{\vee}$ is the fundamental coweight corresponding to $\ga$.
Note that $\Cg = \fa \oplus \fa_0$.
Let $I_{a,b}'$ be a subspace of $I_{a,b}$ defined by
\begin{align*}
  I_{a,b}' &= I_{a,b} \cap U(\fa)  \\
           & = \sum_{\gg \in \gD_+}e_{\gg} t^{\langle \gg, \varpi_\ga^{\vee}\rangle}\C[t] 
               + \fh t\C[t] + \C(\ga^{\vee} - a) + \C(\gb^{\vee} -b)\\ 
           &+ \sum_{s \ge 0} \C (f_{\ga}t^s)^{\{0, a-3s\}+1} + \C f_{\gb}^{b+1}.
\end{align*}

\begin{Lem} \label{Lem:crucial}
  \[ U(\Cg)I_{a,b} \subseteq U(\fa)I_{a,b}' \oplus U(\Cg)\fa_0.
  \]
\end{Lem}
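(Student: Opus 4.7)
The plan is to use the PBW decomposition of $U(\Cg)$ relative to the splitting $\Cg = \fa \oplus \fa_0$, to verify a matching decomposition $I_{a,b} = I_{a,b}' \oplus \fa_0$, and then to push any product $Xi$ with $X \in U(\Cg)$ and $i \in I_{a,b}$ into the claimed sum by induction on the $U(\fa_0)$-length of $X$, using commutation to reduce at each step.

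First I would check that both $\fa$ and $\fa_0$ are Lie subalgebras of $\Cg$, which follows from $\langle \gg, \varpi_\ga^\vee\rangle \ge 0$ for every $\gg \in \gD_+$ and from the additivity $\langle \gg+\gg', \varpi_\ga^\vee\rangle = \langle \gg, \varpi_\ga^\vee\rangle + \langle \gg', \varpi_\ga^\vee\rangle$, which makes the defining $t$-power bounds behave well under bracketing; for $\fa_0$ one also uses that a sum of $\ga$-containing positive roots is never equal to $\gb$. The PBW theorem applied with the ordering ``all of $\fa$ before all of $\fa_0$'' then yields the vector-space decomposition $U(\Cg) = U(\fa) \oplus U(\Cg)\fa_0$. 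Next I would verify $I_{a,b} = I_{a,b}' \oplus \fa_0$: the difference between $\fn_+\C[t]$ and $\sum_\gg e_\gg t^{\langle\gg,\varpi_\ga^\vee\rangle}\C[t]$ consists of the elements $e_\gg t^s$ with $0 \le s < \langle \gg, \varpi_\ga^\vee\rangle$, and since $\langle \gb, \varpi_\ga^\vee\rangle = 0$ these all have $\gg \ne \gb$, so the difference is exactly $\fa_0$.

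Given these two splittings, an arbitrary element $Xi$ with $X \in U(\Cg)$ and $i = i' + w \in I_{a,b}' \oplus \fa_0$ satisfies $Xw \in U(\Cg)\fa_0$, so it suffices to treat $Xi'$. Writing $X = Y + Z$ with $Y \in U(\fa)$ and $Z \in U(\Cg)\fa_0$, the contribution $Yi'$ lies in $U(\fa)I_{a,b}'$, and for the remaining $Zi'$ I would induct on the $U(\fa_0)$-length of $Z$. Writing $Z = Z' y$ with $y \in \fa_0$ and expanding $yi' = i'y + [y,i']$ gives $Zi' = Z' i' y + Z'[y,i']$, whose first summand is in $U(\Cg)\fa_0$ and whose second applies a strictly shorter element $Z'$ to $[y,i']$, so the induction closes provided $[y, i'] \in U(\fa)I_{a,b}' + U(\Cg)\fa_0$ for each single $y \in \fa_0$ and each generator $i'$ of $I_{a,b}'$.

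The main obstacle is verifying this last inclusion case by case over the six families of generators of $I_{a,b}'$. For the linear generators (the $e_\gg t^{\langle\gg,\varpi_\ga^\vee\rangle + k}$, the elements $ht^s$ with $s \ge 1$, and $\ga^\vee - a$, $\gb^\vee - b$), the commutator $[y, i']$ is either a single root vector whose $t$-power either reaches the threshold $\langle\gg+\gg',\varpi_\ga^\vee\rangle$ (landing in the $e$-block of $I_{a,b}'$) or falls short (landing in $\fa_0$), or a scalar multiple of $y$ itself. For the higher-degree generators $(f_\ga t^s)^{\{0,a-3s\}+1}$ and $f_\gb^{b+1}$, I would expand $[y,\cdot]$ by the Leibniz rule and analyze the inner bilinear brackets $[y, f_\ga t^s]$ and $[y, f_\gb]$, which are single elements of $\Cg$ of lower root-height; the pivotal case $y = e_\ga$ against $(f_\ga t^s)^{\{0,a-3s\}+1}$ is exactly what Lemma \ref{Lem:sl2} handles inside the $\mathfrak{sl}_{2,\ga}$-subalgebra, and analogous computations inside $\mathfrak{sl}_{2,\gb}$ together with the identification $[\fa_0, f_\gg t^s] \subseteq \fa + \fh t\C[t]$ for the remaining root vectors in $\fa_0$ dispose of the other cases, completing the induction.
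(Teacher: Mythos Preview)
Your reduction via PBW and the splitting $I_{a,b} = I_{a,b}' \oplus \fa_0$ is correct, and your inductive scheme on the $\fa_0$-degree is valid in principle: since both $\fa$ and $\fa_0$ are subalgebras, the filtration of $U(\Cg)\cong U(\fa)\otimes U(\fa_0)$ by $\fa_0$-degree is multiplicative, so once $[y,i'] = \sum v_j j_j + w$ with $v_j \in U(\fa)$, $j_j\in I_{a,b}'$, $w\in U(\Cg)\fa_0$, the elements $Z'v_j$ stay in the smaller filtration piece and the induction does close. Your treatment of the linear generators is also fine and matches the paper's handling of what it calls $I_1$.

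The gap is in the last paragraph. The assertion ``$[\fa_0, f_\gg t^s] \subseteq \fa + \fh t\C[t]$'' is false: for instance $[e_{2\ga+\gb}, f_\ga]$ is a nonzero multiple of $e_{\ga+\gb}$, which lies in $\fa_0$ since its $t$-degree is $0 < \langle \ga+\gb,\varpi_\ga^\vee\rangle = 1$. More seriously, a single Leibniz expansion of $[y, (f_\ga t^s)^N]$ produces terms $(f_\ga t^s)^k [y, f_\ga t^s] (f_\ga t^s)^{N-1-k}$ in which $[y,f_\ga t^s]$ typically does not commute with $f_\ga t^s$, so one must iterate through a descending chain of root vectors such as $e_{3\ga+\gb}\to e_{2\ga+\gb}\to e_{\ga+\gb}\to e_\gb$ before the terms visibly land in $U(\fa)I_{a,b}' + U(\Cg)\fa_0$; your sketch does not carry this out. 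The appeal to ``analogous computations inside $\mathfrak{sl}_{2,\gb}$'' is a red herring, since $e_\gb\notin\fa_0$ and no element of $\fa_0$ sits in that copy of $\mathfrak{sl}_2$.

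The paper sidesteps all of this. It first reduces to showing $U(\fa_0)I_{a,b}\subseteq U(\fa)I_{a,b}'\oplus U(\Cg)\fa_0$ (using $U(\Cg)=U(\fa)U(\fa_0)$ and left $U(\fa)$-stability of the target), and then for the nonlinear generators uses pure $\fh$-weight arguments rather than commutator chasing. Writing $\fa_0 = \C e_\ga \oplus \fa_0'$ where $\fa_0'$ is spanned by the $e_\gg t^s$ with $\gg\in\gD_+\setminus\{\ga,\gb\}$, every weight of $U(\fa_0')_+\cdot(f_\ga t^s)^N$ lies in $\Z\ga + \Z_{>0}\gb$; a PBW decomposition of the subalgebra $f_\ga\C[t]\oplus\fn_+^{(\ga)}\C[t]$ then forces $U(\fa_0')_+\cdot(f_\ga t^s)^N \subseteq U(f_\ga\C[t])\,U(\fn_+^{(\ga)}\C[t])_+ \subseteq U(\Cg)I_1$, which was already absorbed. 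The only remaining case $y=e_\ga$ is precisely Lemma~\ref{Lem:sl2}. A similar weight trick disposes of $f_\gb^{b+1}$ in one line. This is both shorter and avoids the root-by-root computations your outline would require.
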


\begin{proof}
  Set 
  \[ I = I_{a,b}, \ \ I' = I_{a,b}', \ \ J = U(\fa)I' \oplus U(\Cg)\fa_0.
  \]
  Since $U(\Cg) = U(\fa)U(\fa_0)$ and $U(\fa)J = J$, 
  it suffices to show that $U(\fa_0)I \subseteq J$.
  First we prove that 
  \[ I_1 = \fn_+\C[t] + \fh t\C[t] + \C(\ga^{\vee} -a) + \C(\gb^{\vee}-b)
  \]
  satisfies $U(\fa_0)I_1 \subseteq J$,
  which is equivalent to that for any $k \ge 0$ and a sequence $X_1, \dots, X_k$ of vectors of $\fa_0$
  we have $X_1\cdots X_k I_1 \subseteq J$.
  We prove a little stronger statement 
  \begin{equation} \label{eq:containment1}
    X_1\cdots X_k I_1 \subseteq (I' \cap I_1) \oplus U(\Cg)\fa_0
  \end{equation}
  by induction on $k$.
  If $k = 0$, this follows since $I_1 = (I' \cap I_1) \oplus \fa_0$.
  We can easily check that 
  \[ \mathrm{ad}(\fa_0)I_1 \subseteq \fn_+ \C[t] \subseteq (I' \cap I_1) \oplus \fa_0,
  \]
  and hence if $k > 0$ we have
  \begin{align*}
    X_1 \cdots X_k I_1 &\subseteq X_1 \cdots X_{k-1} I_1 X_k + X_1 \cdots X_{k-1}\big(\mathrm{ad}(X_k)I_1\big) \\
                      &\subseteq X_1 \cdots X_{k-1} (I' \cap I_1) + U(\Cg)\fa_0.
  \end{align*}
  This together with the induction hypothesis implies (\ref{eq:containment1}). $U(\fa_0)I_1 \subseteq J$ is proved.
  
  Next we prove $U(\fa_0)f_{\gb}^{b+1} \subseteq J$.
  Since $f_{\gb}^{b+1} \in I'$, it is enough to show $U(\fa_0)_+f_{\gb}^{b+1} \subseteq J$
  where $U(\fa_0)_+$ denotes the augmentation
  ideal.
  The $\fh$-weight set of $U(\fa_0)_+f_{\gb}^{b+1}$ with respect to the adjoint action obviously satisfies
  \begin{equation} \label{eq:containment2}
    \mathrm{wt}_{\fh}\big(U(\fa_0)_+f_{\gb}^{b+1}\big)\subseteq \Z_{>0} \ga + \Z \gb.
  \end{equation} 
  Since $\fa_0 \oplus \C f_{\gb}$ is a Lie subalgebra and 
  $U(\fa_0 \oplus \C f_{\gb}) = \C [f_{\gb}] \oplus \C [f_{\gb}] U(\fa_0)_+$,
  (\ref{eq:containment2}) implies by weight consideration that
  \[ U(\fa_0)_+f_{\gb} ^{b+1} \subseteq \C[f_\gb] U(\fa_0)_+ \subseteq J.
  \]
  The assertion is proved.
  
  Set
  \[ I_2 = \sum_{s \ge 0} \C (f_\ga t^s)^{\{0, a-3s\} +1}.
  \]
  Since $I = I_1 + \C f_{\gb}^{b+1} + I_2$, to complete the proof of the lemma 
  it suffices to show $U(\fa_0)I_2 \subseteq J$.
  To do this we put
  \[ \fa_0' = \sum_{\begin{smallmatrix} \gg \in \gD_+\setminus \{ \ga, \gb \} \\ 0 \le s < \langle \gg, \varpi_\ga^{\vee} \rangle
                     \end{smallmatrix}}  \C  e_\gg t^s,
  \]
  and prove $U(\fa_0')_+ I_2 \subseteq U(\Cg)I_1$ first.
  Note that $\fa_0 = \fa_0' \oplus \C e_{\ga}$.
  The $\fh$-weight set of $U(\fa_0')_+ I_2$ with respect to the adjoint action satisfies
  \begin{equation} \label{eq: contatinment3}
    \mathrm{wt}_{\fh} \big(U(\fa_0')_+I_2\big) \subseteq \Z \ga + \Z_{>0}\gb.
  \end{equation}
  Put $\fn_{+}^{(\ga)}= \sum_{\gg \in \gD_+\setminus \{ \ga \}} \fg_{\gg}$.
  Since $f_{\ga} \C[t] \oplus \fn_{+}^{(\ga)} \C[t]$
  is a Lie subalgebra and
  \[ U\big(f_{\ga} \C[t] \oplus\fn_{+}^{(\ga)} \C[t]\big)
     = U\big(f_{\ga} \C[t]\big) \oplus U\big(f_{\ga} \C[t]\big) U\big(\fn_{+}^{(\ga)} \C[t]\big)_+,
  \]
  (\ref{eq: contatinment3}) implies by weight consideration that
  \[ U(\fa_0')_+ I_2 \subseteq U\big(f_\ga \C[t]\big)  U\big(\fn_{+}^{(\ga)} \C[t]\big)_+ 
     \subseteq U(\Cg)I_1.
  \]
  Then since Lemma \ref{Lem:sl2} with $\ell = 3$ implies $\C[e_\ga]I_2 \subseteq J$, we have
  \[ U(\fa_0)I_2 \subseteq \C[e_\ga] I_2 + \C[e_{\ga}] U(\fa_0')_+ I_2
     \subseteq J + U(\Cg) I_1 \subseteq J.
  \]  
  The proof is complete.
\end{proof}

Now we show the following proposition, which completes the proof of Proposition \ref{Prop:_rel_of_Demazure1} for
type $G_2$:

\begin{Prop}
  {\normalfont(\ref{eq:identity1})} follows for all $a, b \in \Z_{\ge 0}$ and $s \in \Z_{\ge 0}$.
\end{Prop}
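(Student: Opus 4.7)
The plan is to carry out the induction step $a \to a+3$ by case analysis on $s$, modeled on the treatment of the base case $a = 2$ above. Fix $b$ and assume (\ref{eq:identity1}) for $(a,b)$ at all $s \in \Z_{\ge 0}$. Set $\lambda' = (a+3)\varpi_\alpha + b\varpi_\beta$, $I = I_{a+3,b}$, and $M = \{0, 2(a+3)+3b-3s\}+1$. For ``generic'' $s$, Lemma \ref{Lem:relation} with a suitable choice $(s_1,s_2)$ satisfying $2s_1+s_2=s$ already yields $(f_{2\alpha+\beta}t^s)^M \in U(\Cg)I$: taking $s_1=0, s_2=s$ works when $s \le b$, while taking both $s_1,s_2$ large enough (so that $3s_1 \ge a+4$ and $s_2 \ge b+1$) handles the range $3s \ge 2(a+3)+3b+3$ where $M = 1$. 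This leaves one or a few ``transition'' values of $s$ (depending on $a \bmod 3$) for which no admissible $(s_1,s_2)$ realizes the exact exponent $M$.

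In the transition range I would imitate the $a = 2, s = b+1$ computation. Set $X = (f_{3\alpha+\beta}t^s)^N$ with $N = M$. By Lemma \ref{Lem:_case2}, $X \in U(\Cg)I$, since $\langle \lambda',(3\alpha+\beta)^\vee \rangle = (a+3)+b$ and one verifies $M \ge \{0, (a+3)+b-s\}+1$ throughout the transition range. Since $e_\alpha \in \fn_+ \subseteq I$, the product $e_\alpha^N \cdot X$ also lies in $U(\Cg)I$. I then expand $e_\alpha^N X$ by the Leibniz rule, using the chain
\[
  \mathrm{ad}(e_\alpha)f_{3\alpha+\beta} \in \C^\times f_{2\alpha+\beta}, \quad
  \mathrm{ad}(e_\alpha)^2 f_{3\alpha+\beta} \in \C^\times f_{\alpha+\beta}, \quad
  \mathrm{ad}(e_\alpha)^3 f_{3\alpha+\beta} \in \C^\times f_\beta, \quad
  \mathrm{ad}(e_\alpha)^4 f_{3\alpha+\beta} = 0.
\]
The principal contribution, in which each copy of $f_{3\alpha+\beta}t^s$ absorbs exactly one $e_\alpha$, is a nonzero scalar multiple of $(f_{2\alpha+\beta}t^s)^N$, the target element. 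The non-principal contributions are products of factors drawn from $\{f_{3\alpha+\beta}t^s, f_{2\alpha+\beta}t^s, f_{\alpha+\beta}t^s, f_\beta t^s\}$, possibly with tails of $e_\alpha$'s on the right that are harmlessly absorbed by $e_\alpha \in I$. Each such product would be placed in $U(\Cg)I$ by combining: Lemma \ref{Lem:_case2} when the power of $f_{3\alpha+\beta}$ or $f_\beta$ is sufficient, Lemma \ref{Lem:case_3} when the power of $f_{\alpha+\beta}$ is sufficient, Lemma \ref{Lem:relation} for mixed products, and the induction hypothesis at $(a,b)$ for the ``mostly $f_{2\alpha+\beta}$'' terms; Lemma \ref{Lem:crucial} is used throughout to reduce modulo $U(\Cg)\fa_0 \subseteq U(\Cg)I$ and thereby collapse the computation to the subalgebra $U(\fa)$.

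The hard part will be the combinatorial bookkeeping in the transition range: verifying exhaustively that every non-principal multi-index in the Leibniz expansion is absorbed by one of the above devices, and that the induction hypothesis at $(a,b)$ really can be transported to a statement inside $U(\Cg)I_{a+3,b}$ once the weight shift from $\lambda = a\varpi_\alpha + b\varpi_\beta$ to $\lambda'$ is mediated by Lemma \ref{Lem:crucial}. Aligning the integer thresholds coming from Lemmas \ref{Lem:_case2}, \ref{Lem:case_3}, and \ref{Lem:relation} with the value $M$ throughout the transition range is the delicate technical core.
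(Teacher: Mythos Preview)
Your proposal has a genuine gap in the induction step. The induction hypothesis at $(a,b)$ asserts membership in $U(\Cg)I_{a,b}$, whereas you need membership in $U(\Cg)I_{a+3,b}$; these are different left ideals, and nothing you cite bridges them. You say the transport ``is mediated by Lemma \ref{Lem:crucial}'', but that lemma only refines $U(\Cg)I_{a,b}$ to $U(\fa)I_{a,b}' \oplus U(\Cg)\fa_0$; the parameter $a$ never changes. Reducing modulo $U(\Cg)\fa_0$ does land you in $U(\Cg)I_{a+3,b}$ (since $\fa_0 \subseteq \fn_+\C[t]$), but the surviving piece lives in $U(\fa)I_{a,b}'$, which is not contained in $U(\Cg)I_{a+3,b}$. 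So at present your induction hypothesis is simply not usable inside $M^{\lambda'}$.

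The paper's key device, which you are missing, is a $\C$-algebra homomorphism $\Phi\colon U(\fa) \to U(\Cg)$ given on generators by $e_\gamma t^k \mapsto e_\gamma t^{k-\langle\gamma,\varpi_\alpha^\vee\rangle}$, $f_\gamma t^k \mapsto f_\gamma t^{k+\langle\gamma,\varpi_\alpha^\vee\rangle}$, $\alpha^\vee t^k \mapsto \alpha^\vee t^k - 3\delta_{k0}$, $\beta^\vee t^k \mapsto \beta^\vee t^k$. One checks directly that $\Phi(I_{a,b}') \subseteq I_{a+3,b}$ and that $\Phi$ sends $(f_{2\alpha+\beta}t^s)^k$ to $(f_{2\alpha+\beta}t^{s+2})^k$. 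Combining Lemma \ref{Lem:crucial} with $\Phi$ thus converts the induction hypothesis into (\ref{eq:identity1}) for $(a+3,b)$ at all $s \ge 2$ in one stroke. The residual cases are then $s=0$ (Lemma \ref{Lem:_case1}) and $s=1$; for $s=1$ the paper uses Lemma \ref{Lem:relation} when $b\ge 1$, and when $b=0$ an $\mathfrak{sl}_2$ argument that feeds back in the already-established $s \ge 2$ cases at $(a+3,0)$.

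Two further remarks on the Leibniz-expansion strategy you propose for the transition range. First, the ``transition'' set is not uniformly ``one or a few'' values: for example at $(a+3,b)=(6,0)$ both $s=1$ and $s=3$ escape Lemma \ref{Lem:relation}, and the number of such $s$ grows with $a$. Second, even granting a transport mechanism, expanding $e_\alpha^N(f_{3\alpha+\beta}t^s)^N$ with $N = M$ growing linearly in $a,b$ produces a rapidly growing number of mixed monomials in $f_{3\alpha+\beta}t^s,\ f_{2\alpha+\beta}t^s,\ f_{\alpha+\beta}t^s,\ f_\beta t^s$; Lemmas \ref{Lem:_case2}, \ref{Lem:case_3}, \ref{Lem:relation} concern only pure powers, so the claimed absorption of every non-principal term is not justified by the tools you list. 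The homomorphism $\Phi$ sidesteps this combinatorics entirely.
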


\begin{proof}
  As stated above, it suffices to show that if (\ref{eq:identity1})
  holds for given $a,b \in \Z_{\ge 0}$ and all $s \in \Z_{\ge 0}$, then we have
  \begin{equation} \label{eq:identity3}
    (f_{2\ga + \gb} t^s)^{\{0, 2(a +3) + 3b -3s \} +1} \in U(\Cg) I_{a+3,b}
  \end{equation}
  for all $s \in \Z_{\ge 0}$.

  Since $U(\Cg) = U(\fa) \oplus U(\Cg)\fa_0$,
  (\ref{eq:identity1}) and Lemma \ref{Lem:crucial} imply 
  \begin{equation} \label{eq:identity4}
    (f_{2\ga + \gb} t^s)^{\{ 0, 2a + 3b -3s\} +1} \in U(\Cg)I_{a,b} \cap U(\fa) \subseteq U(\fa)I_{a,b}'
  \end{equation}
  for all $s \in \Z_{\ge 0}$.
  Define a $\C$-linear map $\Phi\colon \fa \to U(\Cg)$ by
  \[ \Phi(e_\gg t^k) = e_{\gg} t^{k-\langle \gg, \varpi_\ga^{\vee} \rangle}, \ 
     \Phi(f_{\gg} t^k) = f_\gg t^{k+\langle \gg, \varpi_\ga^{\vee}\rangle} \ \text{for} \ \gg \in \gD_+,
  \]
  \[ \Phi(\ga^{\vee}t^k) = \ga^{\vee}t^k -3\gd_{k0}, \ \Phi(\gb^{\vee}t^k) =\gb^{\vee}t^k.
  \]
  It is easily checked  that $\Phi$ satisfies $\Phi([X_1, X_2]) = [\Phi(X_1), \Phi(X_2)]$ for $X_1, X_2 \in \fa$.
  Hence $\Phi$ induces a $\C$-algebra homomorphism $U(\fa) \to U(\Cg)$, which we also denote by $\Phi$.
  Applying $\Phi$ to (\ref{eq:identity4}), we have
  \begin{multline*} 
     (f_{2\ga + \gb} t^{s+2})^{\{0, 2(a+3) + 3b -3(s +2)\}+1} \in \Phi(U(\fa)I_{a,b}') \subseteq U(\Cg)\Phi(I_{a,b}') \\
     \subseteq U(\Cg) \Big(\fn_+\C[t]+\fh t\C[t] + \C(\ga^{\vee} -(a+3)) + \C(\gb^{\vee} -b)  \\
     + \sum_{s \ge 0}\C(f_\ga t^{s+1})^{\{0,(a+3)-3(s+1)\}+1} +\C f_\gb^{b+1}\Big) \subseteq U(\Cg)I_{a+3,b},
  \end{multline*}
  and (\ref{eq:identity3}) is proved for $s \ge 2$.
  (\ref{eq:identity3}) for $s = 0$ follows from Lemma \ref{Lem:_case1}.
  It remains to prove (\ref{eq:identity3}) for $s =1$, that is,
  \[ (f_{2\ga + \gb} t)^{2a +3b + 4} \in U(\Cg)I_{a+3,b}.
  \]
  If $b \ge 1$, Lemma \ref{Lem:relation} with $s_1 =0, s_2=1$ implies this.
  Assume $b = 0$ and put
  \[ N = U(\Cg)/ U(\Cg)I_{a+3, 0}.
  \]
  We shall prove $(f_{2\ga +\gb}t)^{2a +4}.\bar{1} = 0$, where $\bar{1} \in N$ denotes the image of $1$.
  Since $N$ is a quotient of the Weyl module $W\big((a+3)\varpi_\ga\big)$, it is finite-dimensional 
  by Theorem \ref{Thm:finite-dim}.
  As the $\fh$-weight of $(f_{2\ga + \gb}t)^{2a+4}.\bar{1}$ is $-(a+1)\varpi_\ga$ and 
  $\big\langle (2\ga + \gb)^{\vee}, -(a+1) \varpi_\ga \big\rangle = -2a-2$,
  it is enough to prove
  \[ e_{2\ga + \gb}^{2a+2}(f_{2\ga + \gb}t)^{2a+4}.\bar{1}= 0
  \]
  by the representation theory of $\mathfrak{sl}_2$.
  By \cite[Lemma 7.1]{MR502647}, we have 
  \[ e_{2\ga + \gb}^{2a + 2} (f_{2\ga + \gb}t)^{2a+4}.\bar{1} \in 
     \sum_{s_1 + s_2 = 2a+4}\C f_{2\ga+\gb}t^{s_1} f_{2\ga + \gb} t^{s_2}.\bar{1}.
  \]
  Using (\ref{eq:identity3}) for $s \ge 2$ which are already proved, we can see that the right hand side is $\{0\}$.
\end{proof}

\subsection{Quantized Demazure modules and Joseph's results}\label{subsection: Quantized_Demazure}

The quantized version of Demazure modules also can be defined in a similar manner as the classical ones.
For $\gL \in \hat{P}_+$, denote by $V_q(\gL)$ the irreducible highest weight $U_q(\hat{\fg})$-module of highest weight $\gL$.
Similarly as the classical case, we have $\dim_{\C(q)}V_q(\gL)_{w\gL} =1$
for all $w \in \hat{W}$. Denote by $U_q(\hat{\fn}_+)$ the positive part of $U_q(\hfg)$.

\begin{Def} \normalfont
  We call $V_{q,w}(\gL)= U_q(\hat{\fn}_+).V_q(\gL)_{w\gL}$ the \textit{quantized Demazure submodule} 
  of $V_q(\gL)$ associated with $w$.
\end{Def}

Joseph posed in \cite[\S 5.8]{MR826100} a question which asked if the tensor product of a one-dimensional 
Demazure module by an arbitrary Demazure module admits a filtration whose successive quotients are 
isomorphic to Demazure modules.
Polo \cite{MR1021515} and Mathieu \cite{MR1016897} gave the positive answer to this question 
in the case of semisimple Lie algebras,
and Joseph \cite{MR2214249} himself gave the positive answer in the case of the quantized enveloping algebras associated
with simply laced Kac-Moody Lie algebras.
Here we briefly recall the Joseph's result since we use it later. 
Although his result is applicable to any quantized enveloping algebras associated with simply laced Kac-Moody Lie algebras,
we concentrate only on affine case here. 

Let $A = \Z[q, q^{-1}]$, $U_q^{\Z}(\hat{\fn}_+)$ and $U_q^{\Z}(\hat{\fg})$ the $A$-forms of 
$U_q(\hat{\fn}_+)$ and $U_q(\hfg)$ respectively, 
and $T$ the Cartan part of $U_q^{\Z}(\hat{\fg})$ (for the precise definitions, see \cite[\S 2.2]{MR2214249}).
We denote by $U_q^{\Z}(\hat{\fb})$ the subring of $U_q^{\Z}(\hat{\fg})$ generated by $U_q^{\Z}(\hat{\fn}_+)$ and $T$.
For $w \in \hat{W}$, let $u_{w\gL}$ be a nonzero element of weight $w\gL$ in $V_q(\gL)$,
and set
\[ V_{q,w}^{\Z}(\gL) = U_{q}^{\Z}(\hat{\fn}_+).u_{w\gL},
\]
which is obviously a $U_q^{\Z}(\hat{\fb})$-module.
Taking the classical limit, we have
\[ \C \otimes_A V_{q,w}^{\Z}(\gL) \cong V_w(\gL),
\]
where $A$ acts on $\C$ by letting $q$ act by $1$.
Joseph has proved the following theorem:

\begin{Thm}[{\cite[Theorem 5.22]{MR2214249}}] \label{Thm: Joseph's_Theorem}
  Assume that $\fg$ is simply laced,
  and let $\gL, \gL' \in \hP_+$, and $w \in \hW$.
  Then the $U_q^{\Z}(\hat{\fb})$-submodule $u_{\gL} \otimes_A V_{q,w}^\Z(\gL')$ of $V_q(\gL) \otimes_{\C(q)}V_q(\gL')$
  has a $U_q^{\Z}(\hat{\fb})$-module filtration
  \[ 0 = Y_0 \subseteq Y_1 \subseteq \dots\subseteq  Y_k = u_{\gL}\otimes_A V_{q,w}^\Z (\gL')
  \]
  such that each subquotient $Y_i /Y_{i-1}$ satisfies
  \[ Y_i /Y_{i-1} \cong V_{q,y_i}^\Z(\gL^{i}) \ \text{for some} \ \gL^i \in \hP_+ \ \text{and} \ y_i \in \hW.
  \] \\[-20pt]
\end{Thm}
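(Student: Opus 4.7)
The plan is to induct on the length $\ell(w)$ of $w \in \hW$, using the Demazure (parabolic induction) functors $\mathcal{D}_i$ associated with the simple reflections. The goal is to show that the class of $U_q^\Z(\hfb)$-modules admitting an \emph{excellent filtration}---one whose subquotients are quantized Demazure modules $V_{q, y}^\Z(\gL^*)$---is closed under the operations that appear in the inductive construction of $V_{q, w}^\Z(\gL')$.

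For the base case $w = e$, the module $V_{q, e}^\Z(\gL')$ is the rank-one $A$-module $A \cdot u_{\gL'}$, so $u_\gL \otimes_A V_{q,e}^\Z(\gL')$ is the rank-one $A$-module generated by $u_\gL \otimes u_{\gL'}$. Using the standard coproduct $\Delta(E_j) = E_j \otimes 1 + K_j \otimes E_j$, one checks that $E_j$ annihilates $u_\gL \otimes u_{\gL'}$ for every $j \in \hI$, and $T$ acts by the weight $\gL + \gL'$, giving a natural isomorphism with $V_{q, e}^\Z(\gL + \gL')$.

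For the inductive step, suppose the result holds for some $w$ with $s_i w > w$. The key structural fact to use is the identity $V_{q, s_i w}^\Z(\gL') = \mathcal{D}_i V_{q, w}^\Z(\gL')$, where $\mathcal{D}_i$ is an appropriate integral form of the Joseph/Polo induction functor from $U_q^\Z(\hfb)$ to the parabolic obtained by adjoining $F_i^{(n)}$. One then establishes a natural isomorphism
\[ \mathcal{D}_i\bigl(u_\gL \otimes_A V_{q, w}^\Z(\gL')\bigr) \;\cong\; u_\gL \otimes_A \mathcal{D}_i V_{q, w}^\Z(\gL'), \]
exploiting that $E_i u_\gL = 0$ and that $u_\gL$ is a $U_q(\mathfrak{sl}_{2,i})$-highest weight vector of weight $\langle \gL, \ga_i^\vee \rangle$, so that $\mathcal{D}_i$ sweeps only the right-hand factor. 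Applying $\mathcal{D}_i$ to the given filtration $0 \subseteq Y_1 \subseteq \cdots \subseteq Y_k$ and using its exactness on the category of modules with excellent filtrations then produces a filtration $0 \subseteq \mathcal{D}_i Y_1 \subseteq \cdots \subseteq \mathcal{D}_i Y_k$ of $u_\gL \otimes_A V_{q, s_i w}^\Z(\gL')$ whose successive quotients $\mathcal{D}_i V_{q, y_j}^\Z(\gL^j)$ are again Demazure modules---equal to $V_{q, y_j}^\Z(\gL^j)$ when $s_i y_j < y_j$ and to $V_{q, s_i y_j}^\Z(\gL^j)$ when $s_i y_j > y_j$.

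The main obstacle is establishing the exactness of $\mathcal{D}_i$ on the category of modules with excellent filtrations, which is precisely where the simply-laced hypothesis becomes essential: what is needed is a quantum analogue of Kempf/Demazure vanishing, ensuring that the higher derived functors $R^j \mathcal{D}_i$ vanish on Demazure modules. Joseph handles this via the theory of global crystal bases and the associated positivity statements, which are presently available in clean form only in the simply-laced case (for example, the positivity of the expansion coefficients of the canonical basis and the Lusztig--Kashiwara compatibility with tensor products). A secondary technical task is to check that the entire construction descends to the $A$-form $U_q^\Z$---rather than merely holding after inverting $q - 1$---which requires careful control over divided powers and the Lusztig integral structure throughout the induction.
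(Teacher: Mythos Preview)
The paper does not give its own proof of this statement: it is quoted verbatim as \cite[Theorem 5.22]{MR2214249} and used as a black box, with only two clarifying remarks afterward (one extending the result to type $A_1^{(1)}$ via Lusztig's positivity, and one noting that the filtration is actually a $U_q^{\Z}(\hat{\fb})$-module filtration rather than merely a $U_q^{\Z}(\hat{\fn}_+)$-module one). So there is no proof in the paper to compare your proposal against.

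That said, your sketch is a reasonable outline of how Joseph's argument in \cite{MR2214249} proceeds: induction on $\ell(w)$, passage from $V_{q,w}^{\Z}(\gL')$ to $V_{q,s_iw}^{\Z}(\gL')$ via a Demazure-type functor, and the crucial use of positivity of canonical basis structure constants (this is exactly the point the paper's Remark~(i) singles out, citing \cite[Lemma 3.14]{MR2214249} and \cite[\S 22.1.7]{MR1227098}) to obtain the required vanishing/exactness. Your identification of the simply-laced hypothesis as entering through Lusztig's positivity is correct and matches the paper's own commentary. One caution: the ``tensor identity'' $\mathcal{D}_i(u_{\gL}\otimes M)\cong u_{\gL}\otimes \mathcal{D}_i M$ you invoke is not automatic in Joseph's setting and is not how he organizes the argument; he works instead with the global basis of the tensor product and the submodules $Z_i$ mentioned in the paper's proof of Corollary~\ref{Cor: Joseph's_result}. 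If you want a self-contained proof you should either verify that identity carefully at the integral level or follow Joseph's global-basis bookkeeping directly.
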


\begin{Rem} \normalfont
  (i) In \cite[Theorem 5.22]{MR2214249}, a given Kac-Moody Lie algebra is assumed to be simply laced, 
    and this assumption excludes the case where $\hat{\fg}$ is of type $A_1^{(1)}$. 
    In Joseph's proof, however, this assumption is only used in \cite[Lemma 3.14]{MR2214249} to apply 
    a positivity result of Lusztig.
    We can check that the proof of this positivity result in \cite[\S 22.1.7]{MR1227098} is also applicable 
    to $A_1^{(1)}$ without any modification, and hence the above theorem is true for any simply laced $\fg$. \\
  (ii) \cite[Theorem 5.22]{MR2214249} claims only that the above filtration is a $U_q^{\Z}(\hat{\fn}_+)$-module filtration.
    It is easily seen, however, that this is in fact a $U_q^{\Z}(\hat{\fb})$-module filtration since each $Y_i$ is defined 
    as an $A$-span of weight vectors (\cite[\S 5.7]{MR2214249}).  
\end{Rem}

Taking the classical limit, the following result is obtained:

\begin{Cor}\label{Cor: Joseph's_result}
  Assume that $\fg$ is simply laced.
  For $\ell' > \ell$, $\D(\ell, \gl)[m]$ has a $\Cgd$-module filtration
  \[ 0 = D_0 \subseteq D_1 \subseteq \dots \subseteq D_k = \D(\ell, \gl)[m]
  \]
  such that each subquotient $D_i/D_{i-1}$ satisfies
  \[ D_i/D_{i-1} \cong \D(\ell', \mu_i)[m_i] \ \text{for some} \ \mu_i \in P_+ \ \text{and} \ m_i \in \Z_{\ge m}.
  \]  
\end{Cor}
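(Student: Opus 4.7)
The plan is to deduce the result from Theorem \ref{Thm: Joseph's_Theorem} by specializing $q\to 1$. The argument has three steps: choose parameters so that the relevant Demazure module in Joseph's theorem is $\D(\ell,\gl)[m]$, apply and specialize Joseph, then promote the resulting $U(\hfb)$-filtration to a $\Cgd$-filtration.

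First I pick $\gL' \in \hP_+$ of level $\ell$ together with $v \in \hW$ satisfying $v\gL' = w_0\gl + \ell\gL_0 + m\gd$, so that $V_v(\gL') \cong \D(\ell,\gl)[m]$ as $\Cgd$-modules. Set $\gL = (\ell'-\ell)\gL_0 \in \hP_+$, which has level $\ell'-\ell$ and vanishes on $\fhd$. Applying Theorem \ref{Thm: Joseph's_Theorem} to the triple $(\gL, \gL', v)$ produces a $U_q^{\Z}(\hfb)$-module filtration
\[ 0 = Y_0 \subseteq Y_1 \subseteq \cdots \subseteq Y_k = u_{\gL}\otimes_A V_{q,v}^{\Z}(\gL') \]
with subquotients $V_{q,y_i}^{\Z}(\gL^i)$ for some $\gL^i \in \hP_+$ of level $\ell'$ and $y_i\in \hW$. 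Specializing $q\to 1$ gives a $U(\hfb)$-module filtration of $u_{\gL}\otimes V_v(\gL')$ with subquotients $V_{y_i}(\gL^i)$. Since $\hfn_+ . u_{\gL} = 0$ and $\gL$ vanishes on $\fhd$, the assignment $u_{\gL}\otimes x \mapsto x$ is an isomorphism of $U(\hfn_+\oplus\fhd) = U(\hfb\cap\Cgd)$-modules from $u_{\gL}\otimes V_v(\gL')$ to $V_v(\gL') = \D(\ell,\gl)[m]$. Transporting the filtration yields a chain $0 = D_0 \subseteq \cdots \subseteq D_k = \D(\ell,\gl)[m]$ of $U(\hfb\cap\Cgd)$-submodules whose subquotients are $U(\hfb\cap\Cgd)$-isomorphic to $V_{y_i}(\gL^i)$.

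The main obstacle is then to lift this $U(\hfb\cap\Cgd)$-filtration to a genuine $\Cgd$-filtration and to identify the subquotients as $\D(\ell',\mu_i)[m_i]$ with $\mu_i\in P_+$ and $m_i \ge m$. The key input is that each extremal weight $y_i\gL^i$ appearing in Joseph's construction occurs as a weight of $u_{\gL}\otimes V_v(\gL')$, hence has the form $\gL + \eta_i$ for some $\eta_i \in \wt V_v(\gL')$. Because $V_v(\gL')$ is $\Cgd$-stable, $\wt V_v(\gL')$ is $W$-invariant in the $\fh$-component and its $d$-coefficients lie in $\Z_{\ge m}$ (since $\hfn_+$ only raises the $d$-weight starting from the extremal vector of $d$-weight $m$). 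Combined with Joseph's antidominance arrangement, which selects $y_i$ so that $y_i\gL^i$ is antidominant for $\fg$ when the ambient Demazure is $\fg$-stable, one obtains $y_i\gL^i = w_0\mu_i + \ell'\gL_0 + m_i\gd$ with $\mu_i\in P_+$ and $m_i\ge m$, so that $V_{y_i}(\gL^i)\cong \D(\ell',\mu_i)[m_i]$ is $\Cgd$-stable. Finally, an induction on $i$, using that $D_k$ is $\Cgd$-stable and that a $\Cgd$-stable Demazure module is $U(\hfb)$-generated by its extremal vector (which coincides with its $\Cgd$-generator), shows that each $D_i$ is itself $\Cgd$-stable, completing the proof.
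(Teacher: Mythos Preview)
Your overall strategy matches the paper's: apply Joseph's theorem to $u_{\gL}\otimes V_{q,v}^{\Z}(\gL')$ and specialize. (The paper does the case $\ell'=\ell+1$ and iterates, but your direct choice $\gL=(\ell'-\ell)\gL_0$ is harmless.) However, the heart of the corollary is the passage from a $U(\hfb)$-filtration to a $\Cgd$-filtration, and this is where your argument breaks down.

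You assert two things that are not justified. First, that ``Joseph's antidominance arrangement'' forces each $y_i\gL^i$ to be $\fg$-antidominant. Theorem~\ref{Thm: Joseph's_Theorem} as stated gives no such information; in the paper's proof the antidominance of $y_i\gL^i$ is a \emph{consequence} of the $\Cgd$-stability of $D_i/D_{i-1}$, not an input to it. Second, your induction cannot get off the ground: knowing $D_i$ is $\Cgd$-stable and that $D_i/D_{i-1}$ is $U(\hfb)$-isomorphic to some $\Cgd$-stable Demazure module does \emph{not} imply $D_{i-1}$ is $\Cgd$-stable. For $f\in\fn_-$ and $x\in D_{i-1}$ you get $f.x\in D_i$, but there is no mechanism to force its image in $D_i/D_{i-1}$ to vanish, since the $\fn_-$-action on the quotient is not even defined until you already know $D_{i-1}$ is $\fn_-$-stable. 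The ascending version has the mirror problem: you cannot place $U(\Cgd).v_i$ inside $D_i$ without already knowing $D_i$ is $\Cgd$-stable.

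The missing ingredient is \cite[Theorem~5.9]{MR2214249}: each $Y_i$ arises as $Z_i\cap\big(u_{\gL}\otimes_A V_{q,v}^{\Z}(\gL')\big)$ for some $U_q^{\Z}(\hfg)$-submodule $Z_i$ of $V_q(\gL)\otimes V_q(\gL')$. After specialization this exhibits $D_i$ as the intersection of an $\hfg$-submodule with the $\Cgd$-stable module $\C_{\gL}\otimes V_v(\gL')$, hence $D_i$ is $\Cgd$-stable. Only then does it follow that each subquotient is a $\Cgd$-stable Demazure module, i.e.\ of the form $\D(\ell',\mu_i)[m_i]$. You should also note, when specializing, that exactness of $\C\otimes_A(-)$ on the chain requires the freeness of the $Y_i$ and $Y_i/Y_{i-1}$ as $A$-modules, which is provided by \cite[\S5.7]{MR2214249}.
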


\begin{proof}
  We shall prove the assertion for $\ell' = \ell +1$. 
  The results for general $\ell'$ can be obtained by applying this case repeatedly.
  Take $\gL' \in \hP_+$ and $w \in \hat{W}$ so that $w\gL' = w_0 \gl + \ell \gL_0 + m \gd$.
  By Theorem \ref{Thm: Joseph's_Theorem}, $u_{\gL_0} \otimes V_{q,w}^{\Z}(\gL')$ has a $U_q^{\Z}(\hat{\fb})$-module filtration
  $0=Y_0 \subseteq Y_1 \subseteq \dots \subseteq Y_k = u_{\gL_0} \otimes V_{q,w}^{\Z}(\gL')$ 
  such that 
  \[ Y_i / Y_{i-1} \cong V_{q,y_i}^\Z(\gL^i) \ \text{for some} \ \gL^i \in \hP_+ \ \text{and} \ y_i \in \hW.
  \]
  Put $D_i = \C \otimes_A Y_i$ for $0 \le i \le k$.
  Then we have 
  \[ D_k = \C \otimes_A (u_{\gL_0} \otimes_A V_{q,w}^{\Z}(\gL')) = \C_{\gL_0} \otimes_{\C} V_w(\gL') = 
     \C_{\gL_0} \otimes_{\C} \D(\ell, \gl)[m],
  \]
  where
  $\C_{\gL_0}$ denotes a $1$-dimensional $\hat{\fb}$-module
  spanned by a vector of weight $\gL_0$ on which $\hat{\fn}_+$ acts trivially.
  Since all $Y_i$ and $Y_i/Y_{i-1}$ are free $A$-modules (\cite[\S 5.7]{MR2214249}), 
  $\C_{\gL_0} \otimes \D(\ell, \gl)[m]$ has the following $\hat{\fb}$-module filtration
  \[ 0=D_0 \subseteq D_1 \subseteq \dots \subseteq D_k = \C_{\gL_0} \otimes \D(\ell, \gl)[m],
  \]
  and each successive quotient satisfies $D_i / D_{i-1} \cong V_{y_i}(\gL^i)$.
  Obviously each $\gL^i$ is of level $\ell + 1$.
  By \cite[Theorem 5.9]{MR2214249}, for each $1 \le i \le k$ there exists 
  a $U_q^{\Z}(\hfg)$-submodule $Z_i$ of $V_q(\gL) \otimes_{\C(q)}V_q(\gL')$ such that
  \[ Y_i = Z_i \cap (u_{\gL_0} \otimes_A V_{q,w}^{\Z}(\gL')).
  \]
  Then since $\C_{\gL_0} \otimes _{\C} \D(\ell, \gl)[m]$ is a $\big(\Cgd \oplus \C K\big)$-module
  we see that each $D_i$ is a $\big(\Cgd \oplus \C K\big)$-module,
  and so is $D_i / D_{i-1}$.
  Hence each $D_i/D_{i-1}$ is isomorphic to $\D(\ell + 1,\mu_i)[m_i]$ for some $\mu_i \in P_+$ and $m_i \in \Z$.
  Each $m_i$ obviously satisfies $m_i \ge  m$ since $\wt_{\fhd}\D(\ell, \gl)[m] \in \gl - Q_+ + \Z_{\ge m} \gd$.
  Now the assertion of corollary is proved by restricting these results to $\Cgd$ 
  since $\C_{\gL_0}$ is a trivial $\Cgd$-module.
\end{proof}

We define a partial order $\preceq$ on $P_+ + \Z\gd$ by $\gl_1 + m_1 \gd \preceq \gl_2 + m_2 \gd$
if $\gl_2 - \gl_1 \in Q_+$ and $m_1 \ge m_2$.
Since $\D(\ell, \gl)[m]$ is $U(\fn_-\otimes \C[t])$-cyclic, 
$\gl_1 + m_1 \gd \in \mathrm{wt}_{\fhd}\D(\ell, \gl)[m]$ implies $\gl_1 +m_1\gd \preceq \gl + m\gd$.
From this we see that $\{\ch{\fhd}\D(\ell, \gl)[m]\mid \gl \in P_+, m \in \Z \}$ are linearly independent 
for each $\ell \in \Z_{>0}$.

When a $\fhd$-weight $\Cgd$-module $M$ has a filtration $0 = D_0 \subseteq D_1 \subseteq \dots \subseteq D_k = M$
such that 
\[ D_i / D_{i-1} \cong \D(\ell, \mu_i)[m_i] \ \text{for some} \ \mu_i \in P_+ \ \text{and} \ m_i \in \Z
\]
with fixed $\ell \in \Z_{>0}$, we define 
\[ (M : \D(\ell, \gl)[m]) = \# \{ i \mid D_i / D_{i-1} \cong \D(\ell, \gl)[m] \},
\]
which is independent of the choice of the filtration by the linearly independence of the characters.

\subsection{Filtrations on the Weyl modules}

Throughout this subsection, we assume that $\fg$ is non-simply laced.
We denote by $\D^{\sh}(\ell,\nu)$ the $\Cg^{\sh}$-Demazure module and by $\D^{\sh}(\ell, \nu)[m]$ the 
$\Cgd^{\sh}$-Demazure module for $\nu \in \ol{P}_+$, $\ell \in \Z_{> 0}$ and $m \in \Z$.

\begin{Lem} \label{Lem: subisom}
  Let $v$ be the generator of $W(\gl)$ in Definition \ref{Def:Weyl_module}, 
  and set $W =  U(\Cg^{\sh}_d).v \subseteq W(\gl)$.
  Then $W$ is isomorphic to $\D^{\sh}(1, \ol{\gl})[0]$ as a $\Cgd^{\sh}$-module.
\end{Lem}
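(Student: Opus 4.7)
The plan is to show the surjection $\D^{\sh}(1,\ol\gl)[0]\twoheadrightarrow W$ first and then argue its injectivity via characters. Since $\fg^{\sh}$ is simply laced (of type $\{0\}$, $A_1$, $A_2$, or $A_{n-1}$), Theorem \ref{Thm: FL's_Thm} applied to $\fg^{\sh}$ identifies $\D^{\sh}(1,\ol\gl)[0]$ with the Weyl module $W^{\sh}(\ol\gl)$ of $\Cgd^{\sh}$, and I will work with the latter throughout.

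For the surjection, I verify that $v\in W(\gl)$, regarded as a $\Cgd^{\sh}$-generator, satisfies the defining relations of $W^{\sh}(\ol\gl)$: namely $\fn_+^{\sh}\otimes\C[t].v=0$, $h\otimes t^s.v=\gd_{s0}\langle\ol\gl,h\rangle v$ for $h\in\fh^{\sh}$ and $s\in\Z_{\ge0}$, $d.v=0$, and $f_i^{\langle\ol\gl,\ga_i^\vee\rangle+1}.v=0$ for $i\in I^{\sh}$. Each is an immediate consequence of the corresponding relation for $W(\gl)$ from Definition \ref{Def:Weyl_module}, using $\gl|_{\fh^{\sh}}=\ol\gl$ and $\langle\gl,\ga_i^\vee\rangle=\langle\ol\gl,\ga_i^\vee\rangle$ for $i\in I^{\sh}$. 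This produces a $\Cgd^{\sh}$-surjection $\pi\colon W^{\sh}(\ol\gl)\twoheadrightarrow W$. Applying Lemma \ref{Lem: character} to $M=W(\gl)$, and the same identity to $W^{\sh}(\ol\gl)$ regarded as an $\fh$-weight module with $\fh$ acting through $\gl$, one obtains
\[
\ch{\fh}W = e\bigl(\gl-i_{\sh}(\ol\gl)\bigr)\,i_{\sh}\ch{\fh^{\sh}}W,\qquad \ch{\fh}W^{\sh}(\ol\gl)=e\bigl(\gl-i_{\sh}(\ol\gl)\bigr)\,i_{\sh}\ch{\fh^{\sh}}W^{\sh}(\ol\gl),
\]
so injectivity of $\pi$ reduces to the equality $\ch{\fh^{\sh}}W=\ch{\fh^{\sh}}W^{\sh}(\ol\gl)$.

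The main obstacle is producing the $\ge$-inequality on these characters; the surjection $\pi$ supplies only the reverse. A priori, the long-root relations $f_i^{\langle\gl,\ga_i^\vee\rangle+1}.v=0$ for $i\notin I^{\sh}$ could impose extra $\Cgd^{\sh}$-relations on $v$ inside $W(\gl)$ that are absent in $W^{\sh}(\ol\gl)$. To rule this out I would construct a $\Cgd^{\sh}$-equivariant map from $W(\gl)$ into an auxiliary module whose $\Cgd^{\sh}$-submodule generated by the image of $v$ is manifestly a copy of $W^{\sh}(\ol\gl)$; plausible candidates are a fusion-product quotient of $W(\gl)$ built from Lemma \ref{Lem: fusion_product} or an integrable highest-weight $\hfg$-module restricted along $\hfg^{\sh}\hookrightarrow\hfg$. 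Combined with the Fourier-Littelmann/Chari-Loktev dimension formula for $W^{\sh}(\ol\gl)$ in the simply-laced case, this would force the character equality and complete the proof.
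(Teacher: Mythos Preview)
Your overall strategy matches the paper's exactly: produce the surjection $\D^{\sh}(1,\ol\gl)[0]\cong W^{\sh}(\ol\gl)\twoheadrightarrow W$ from the defining relations, and then prove injectivity by mapping $W(\gl)$ onto an auxiliary module in which the $\Cgd^{\sh}$-submodule generated by the image of $v$ has the correct dimension. The Fourier--Littelmann dimension formula you mention is also what the paper invokes at the end.

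The gap is that you stop at ``plausible candidates'' for the auxiliary module without committing to one or carrying out the dimension count, and the details here are not automatic. The paper's choice is the fusion product
\[
W_1 \;=\; V(\varpi_1)_{c_1}*\cdots*V(\varpi_1)_{c_{\gl_1}}*\cdots*V(\varpi_n)_{c_{p-\gl_n+1}}*\cdots*V(\varpi_n)_{c_p},
\]
where each $V(\varpi_i)$ is the \emph{irreducible} $\fg$-module with trivial $t$-action (not a Demazure or Weyl module). This choice is what makes the computation go through: by Lemma~\ref{Lem: character} and Lemma~\ref{Lem: fusion_product}(i) one has
\[
\ch{\fh}\bigl(\mathrm{Im}\,\psi\circ\varphi\bigr)=\prod_{i\in I}\bigl(P_{\varpi_i-Q_+^{\sh}}\ch{\fh}V(\varpi_i)\bigr)^{\gl_i},
\]
and since $U(\fg^{\sh}).v_i\cong V_{\fg^{\sh}}(\ol\varpi_i)$ inside the irreducible $V_\fg(\varpi_i)$, the right-hand side has dimension $\prod_{i\in I^{\sh}}\dim V_{\fg^{\sh}}(\ol\varpi_i)^{\gl_i}$. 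On the other side, $\dim\D^{\sh}(1,\ol\gl)=\prod_{i\in I^{\sh}}\dim\D^{\sh}(1,\ol\varpi_i)^{\gl_i}$ by \cite[Theorem~1]{MR2235341}, and each $\D^{\sh}(1,\ol\varpi_i)\cong V_{\fg^{\sh}}(\ol\varpi_i)$ by Lemma~\ref{Lem: lemma_of_A} since $\fg^{\sh}$ is of type $A$. Your alternative candidate, restriction of an integrable highest-weight $\hfg$-module along $\hfg^{\sh}\hookrightarrow\hfg$, does not obviously yield a finite-dimensional object or a tractable character, so it is not a substitute without substantial further argument.
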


\begin{proof}
  By Theorem \ref{Thm: FL's_Thm}, $\D^{\sh}(1,\ol{\gl})[0]$ is isomorphic to the $\Cgd^{\sh}$-Weyl module
  $W^{\sh}(\ol{\gl})$.
  Hence there exists a surjective homomorphism 
  $\varphi\colon \D^{\sh}(1, \ol{\gl})[0] \twoheadrightarrow W$ of $\Cgd^{\sh}$-modules by Theorem \ref{Thm:finite-dim}.
  We need to show that $\varphi$ is injective.
  In this proof, by $V(\mu)$ for $\mu \in P_+$ we denote the $\Cgd$-module obtained from $V_{\fg}(\mu)$
  by letting $\fg \otimes t\C[t] \oplus \C d$ act trivially.
  Write $\gl = \sum_{i \in I} \gl_i \varpi_i$, and put $p = \sum_i \gl_i$.
  Let $c_1,\dots,c_p$ be pairwise distinct complex numbers, and define a $\Cg$-module $W_1$ by
  \[ W_1 = V(\varpi_1)_{c_1} * \dots * V(\varpi_1)_{c_{\gl_1}} * \dots * V(\varpi_n)_{c_{p - \gl_n +1}} * \dots
      * V(\varpi_n)_{c_p},
  \]
  where each $V(\varpi_i)$ occurs $\gl_i$ times.
  By the same way as \cite[Lemma 5]{MR2323538}, we can show that there exists a surjective homomorphism 
  $\psi\colon W(\gl) \twoheadrightarrow W_1$ of $\Cg$-modules.
  It suffices to show that the homomorphism $\psi \circ \varphi\colon
  \D^{\sh}(1, \gl)[0] \to W_1$ of $\Cg^{\sh}$-modules is injective.
  By Lemma \ref{Lem: character} and Lemma \ref{Lem: fusion_product} (i), we have
  \[ \ch{\fh}(\Img \psi \circ \varphi) = P_{\gl - Q^{\sh}_+}\ch{\fh}W_1 
     = P_{\gl -Q^{\sh}_+}\prod_{i \in I} \ch{\fh}V(\varpi_i)^{\gl_i},
  \]
  and using $\mathrm{wt}_{\fh}V(\varpi_i)  \subseteq \varpi_i -Q_+$ we have
  \[  P_{\gl -Q^{\sh}_+}\prod_{i \in I} \ch{\fh}V(\varpi_i)^{\gl_i} 
      = \prod_{i \in I} \big(P_{\varpi_i - Q^{\sh}_+}\ch{\fh} V(\varpi_i)\big)^{\gl_i}.
  \] 
  We have from Lemma \ref{Lem: character} that $P_{\varpi_i -Q^{\sh}_+} \ch{\fh}V(\varpi_i)  = \ch{\fh} U(\fg^{\sh}).v_i$
  where $v_i$ is a nonzero highest weight vector,
  and we can easily see that
  \[ U(\fg^{\sh}).v_i \cong V_{\fg^{\sh}}(\ol{\varpi}_i) = \begin{cases} 
                               V_{\fg^{\sh}}(\ol{\varpi}_i) & \text{if} \ i \in I^{\sh}, \\
                               V_{\fg^{\sh}}(0) & \text{if} \ i \notin I^{\sh}
                             \end{cases}
  \]
  as $\fg^{\sh}$-modules.
  From these equations, we have
  \[ \mathrm{dim}\, (\Img \psi \circ \varphi)= \prod_{i \in I^{\sh}} \dim V_{\fg^{\sh}}(\ol{\varpi}_i)^{\gl_i}.
  \]
  On the other hand, we have that 
  \begin{align*} \dim \D^{\sh}(1, \ol{\gl}) = \prod_{i \in I^{\sh}} \dim \D^{\sh}(1, \ol{\varpi}_i)^{\gl_i}
                                            = \prod_{i \in I^{\sh}} \dim V_{\fg^{\sh}}(\ol{\varpi}_i)^{\gl_i},
  \end{align*}
  where the first equality follows from \cite[Theorem 1]{MR2235341}
  and the second follows from Lemma \ref{Lem: lemma_of_A}.
  Hence we have $\mathrm{dim}\, (\Img \psi \circ \varphi) = \dim \D^{\sh}(1,\ol{\gl})$, 
  which implies that $\psi \circ \varphi$ is injective.
  Our assertion is proved.
\end{proof}

Let $\gl \in P_+$. 
By Corollary \ref{Cor: Joseph's_result}, $\D^{\sh}(1, \ol{\gl})[0]$ has a $\Cgd^{\sh}$-module filtration 
$0=D_0\subseteq D_1 \subseteq \dots \subseteq D_k = \D^{\sh}(1, \ol{\gl})[0]$ such that 
\[ D_i /D_{i-1} \cong \D^{\sh}(r, \nu_i)[m_i] \ \text{for some} \ \nu_i \in \ol{P}_+ \ \text{and} \ m_i \in \Z_{\ge 0}.
\]
Now we show the following proposition using this filtration, which is the main result of the first half of this article:

\begin{Prop} \label{Prop: Main_Theorem1}
  Let $\mu_i =i_{\sh}(\nu_i) + \big(\gl - i_{\sh}(\ol{\gl})\big)$ for each $1 \le i \le k$.
  Then the Weyl module $W(\gl)$ has a $\Cgd$-module filtration 
  $0= W_0 \subseteq W_1 \subseteq \dots \subseteq W_k = W(\gl)$ such that 
  each subquotient $W_i / W_{i-1}$ is a quotient of $\D(1, \mu_i)[m_i]$.
\end{Prop}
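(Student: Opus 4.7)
The plan is to lift the given $\Cgd^{\sh}$-filtration of $\D^{\sh}(1,\ol{\gl})[0]$ along the identification $\D^{\sh}(1,\ol{\gl})[0] \cong W := U(\Cgd^{\sh}).v \subseteq W(\gl)$ furnished by Lemma \ref{Lem: subisom}. Regard the filtration $0 = D_0 \subseteq D_1 \subseteq \dots \subseteq D_k = W$ with $D_i/D_{i-1} \cong \D^{\sh}(r,\nu_i)[m_i]$ as sitting inside $W(\gl)$, and set
\[ W_i = U(\Cgd).D_i \subseteq W(\gl). \]
Then $W_0 = 0$, and $W_k = U(\Cgd).W \supseteq U(\Cgd).v = W(\gl)$ forces $W_k = W(\gl)$.

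Pick $\tilde{v}_i \in D_i$ lifting the canonical cyclic generator of $D_i/D_{i-1}$. By Lemma \ref{Lem: character} its $\fh$-weight is $\mu_i = i_{\sh}(\nu_i) + (\gl - i_{\sh}(\ol{\gl}))$ and its $d$-weight is $m_i$. From $D_i = U(\Cgd^{\sh}).\tilde{v}_i + D_{i-1}$ one obtains $W_i = U(\Cgd).\tilde{v}_i + W_{i-1}$, so $W_i/W_{i-1}$ is $\Cgd$-cyclic on the image $\bar{\tilde{v}}_i$. By Corollary \ref{Cor: rel_of_Demazure}, producing the desired surjection $\D(1,\mu_i)[m_i] \twoheadrightarrow W_i/W_{i-1}$ reduces to verifying the relations (D1)--(D4) for $\bar{\tilde{v}}_i$ modulo $W_{i-1}$. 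The portions of these relations involving only elements of $\Cgd^{\sh}$ — namely the $\fn_+^{\sh} \otimes \C[t]$ part of (D1), the $\fh^{\sh} \otimes \C[t]$ part of (D2), (D3) for $j \in I^{\sh}$, and (D4) in full (using $\gg^{\vee} \in \fh^{\sh}$ to rewrite $\langle\mu_i,\gg^{\vee}\rangle = \langle\nu_i,\gg^{\vee}\rangle$ for $\gg \in \gD^{\sh}_+$) — follow immediately from the Demazure structure of $D_i/D_{i-1}$.

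The genuine content, and where I expect the main delicacy to lie, is the non-short portion of (D1)--(D3). The proof of Lemma \ref{Lem: character} gives $W = \bigoplus_{\mu \in \gl - Q^{\sh}_+} W(\gl)_\mu$, so for any $\gg \in \gD_+ \setminus \gD^{\sh}_+$ the weight $\mu_i + \gg$ has a strictly negative $\ga_j$-coefficient relative to $\gl$ for some $j \notin I^{\sh}$; hence $W(\gl)_{\mu_i + \gg + s\gd} = 0$ and $(e_\gg \otimes t^s).\tilde{v}_i = 0$ inside $W(\gl)$ itself, which finishes (D1). Specializing to $\gg = \ga_j$ makes $\tilde{v}_i$ an $\mathfrak{sl}_{2,j}$-highest weight vector in the finite-dimensional module $W(\gl)$, which forces $\langle\mu_i,\ga_j^{\vee}\rangle \ge 0$ (so $\mu_i \in P_+$ and $\D(1,\mu_i)[m_i]$ is defined) and, by elementary $\mathfrak{sl}_2$-theory, $f_j^{\langle\mu_i,\ga_j^{\vee}\rangle+1}.\tilde{v}_i = 0$; this yields (D3) for $j \notin I^{\sh}$. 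Finally, for (D2) with $h \in \fh \setminus \fh^{\sh}$ and $s > 0$, decompose $h = h_{\sh} + h_\perp$ via $\fh = \fh^{\sh} \oplus (\gD^{\sh})^{\perp}$; the component $h_\perp$ commutes with $\Cg^{\sh}$, so writing $\tilde{v}_i = X.v$ with $X \in U(\Cg^{\sh})$ we get $(h_\perp \otimes t^s).\tilde{v}_i = X.(h_\perp \otimes t^s).v = 0$ by the Weyl module relation, while $(h_{\sh} \otimes t^s).\tilde{v}_i$ lies in $D_{i-1}$ by the Demazure structure. Together these checks complete (D1)--(D4), and hence the proposition.
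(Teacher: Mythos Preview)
Your proof is correct and follows essentially the same route as the paper: lift the $\Cgd^{\sh}$-filtration of $\D^{\sh}(1,\ol\gl)[0]$ into $W(\gl)$ via Lemma~\ref{Lem: subisom}, set $W_i = U(\Cgd).D_i$, and verify (D1)--(D4) for the cyclic generator of each subquotient using the weight argument for the non-short part of (D1), the commutation of $(\fh^{\sh})^{\perp}\otimes t\C[t]$ with $\Cgd^{\sh}$ for (D2), and finite-dimensionality plus $\mathfrak{sl}_2$-theory for (D3). The only cosmetic differences are that the paper verifies (D3) in the quotient $W_i/W_{i-1}$ rather than in $W(\gl)$ itself, and in your (D1) argument the phrase ``strictly negative $\ga_j$-coefficient relative to $\gl$'' should read that $\gl-(\mu_i+\gg)$ has a strictly negative $\ga_j$-coefficient (equivalently $\mu_i+\gg\notin\gl-Q_+$); the conclusion is the same.
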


\begin{proof}
  By Lemma \ref{Lem: subisom}, $W(\gl) \supseteq W = U(\Cgd^{\sh}).v $ has a $\Cgd^{\sh}$-module
  filtration $0 = X_0 \subseteq X_1 \subseteq \dots \subseteq X_k =W$ such that $X_i / X_{i-1} \cong \D^{\sh}(r, \nu_i)[m_i]$
  for $1 \le i \le k$.
  For each $i$, take $x_i \in X_i$ so that the image under the homomorphism
  $X_i \twoheadrightarrow X_i / X_{i-1} \cong \D^{\sh}(r, \nu_i)[m_i]$ 
  coincides with the generator in Proposition \ref{Prop:_rel_of_Demazure}.
  We may assume that each $x_i$ is a $\fhd^{\sh}$-weight vector of weight $\nu_i + m_i\ol{\gd}$.
  Then each $x_i$ satisfies
  \[ \fn^{\sh}_+ \otimes \C[t].x_i \subseteq X_{i-1}, \ \fh^{\sh} \otimes t\C[t].x_i \subseteq X_{i-1}, \
     h.x_i = \langle \nu_i + m_i \ol{\gd}, h \rangle x_i \ \text{for} \ h \in \fhd^{\sh},
  \]
  and 
  \[ (f_{\gg}\otimes t^s)^{\mathrm{max}\{0, \langle \nu_i,\gg^{\vee}\rangle - rs\}+1}.x_i \in X_{i-1}
     \ \ \ \text{for} \ \gg \in \lpishr_+ \ \text{and}\ s \in \Z_{\ge 0}.
  \]
  Let us show first that each $x_i$ is a $\fh$-weight vector of weight $\mu_i$.
  Set $(\fh^{\sh})^{\bot} =\{ h \in \fh \mid (h, h_1) = 0 \ \text{for} \ h_1 \in \fh^{\sh} \}$,
  which obviously satisfies $\fh = \fh^{\sh} \oplus (\fh^{\sh})^{\bot}$ and $[(\fh^{\sh})^{\bot}, \Cgd^{\sh}] = 0$. 
  Then since $x_i \in U(\Cgd^{\sh}).v$, we have 
  \[ h.x_i =\langle \gl, h\rangle x_i \ \ \ \text{for} \ h \in (\fh^{\sh})^{\bot}.
  \]
  On the other hand, we can easily check from the definition of $i_{\sh}$ that 
  $\langle \mu_i, h \rangle = \langle \gl,h\rangle$ for $h \in (\fh^{\sh})^{\bot}$
  and $\langle \mu_i ,h \rangle = \langle \nu_i, h \rangle $ for $h \in \fh^{\sh}$.
  Hence it is proved that each $x_i$ is a $\fh$-weight vector of weight $\mu_i$.
  Now set $W_i = U(\Cgd).X_i$ for each $i$,
  and let $\overline{x}_i$ be the image of $x_i$ under $W_i \twoheadrightarrow W_i/W_{i-1}$.
  Note that we have $W_k = W(\gl)$ and
  \[ U(\Cgd).x_i + W_{i-1} = U(\Cgd).(\C x_i + X_{i-1}) = U(\Cgd).X_i = W_i
  \]
  for each $i$. Hence to finish the proof, 
  it suffices to show that each $\overline{x}_i$ satisfies the defining relations of $\D(1, \mu_i)[m_i]$ 
  in Corollary \ref{Cor: rel_of_Demazure}. 
  From the relations which $x_i$ satisfies, 
  we can see that $\ol{x}_i$ is a weight vector of weight $\mu_i + m_i\gd$ and satisfies (D4).
  In order to prove (D1), it suffices to show that
  \begin{equation}\label{eq: for_e}
    e_{\gg} \otimes t^s.x_i = 0 \ \ \ \text{for} \ \gg \in \gD_+ \setminus \lpishr_+, s \in \Z_{\ge 0},
  \end{equation}
  which follows since the $\fh$-weight of $e_{\gg} \otimes t^s.x_i$ is $\mu_i + \gg \in  \gl -Q_+^{\sh} + \gg$ 
  and hence $\mu_i + \gg \notin \gl - Q_+$.
  Then (D3) follows since $W_i / W_{i-1}$ is finite-dimensional.
  To prove (D2), it suffices to show that 
  \begin{equation} \label{eq: for_h}
    (\fh^{\sh})^{\bot} \otimes t\C[t].x_i = 0,
  \end{equation}
  which is easily checked since $x_i \in U(\Cgd^{\sh}).v$ and $\big[(\fh^{\sh})^{\bot} \otimes t\C[t], \Cgd^{\sh}\big] = 0$.
\end{proof}

\begin{Rem} \normalfont
  In Section \ref{Section: Main_Theorem}, we show that the successive quotients of the given filtration on $W(\gl)$ 
  are actually isomorphic to the Demazure modules.
\end{Rem}

For $F_1, F_2 \in\Z[\fhd^*]$, we write $F_1 \le F_2$ if $F_2 - F_1 \in \Z_{\ge 0} [\fhd^*]$.
The above proposition implies the following corollary:

\begin{Cor}\label{Cor: Main_corollary1}
  Let $\gl \in P_+$, and set $\gl' = \gl - i_{\sh}(\ol{\gl})$.
  Then we have
  \[ \ch{\fhd}W(\gl) \le \sum_{\nu \in \ol{P}_+, m \in \Z_{\ge 0}} \big(\D^{\sh}(1,\ol{\gl})[0]: \D^{\sh}(r, \nu)[m]\big) 
     \ch{\fhd}\D\big(1, i_{\sh}(\nu)+ \gl'\big)[m].
  \] 
\end{Cor}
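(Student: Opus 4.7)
The plan is to directly exploit the filtration constructed in Proposition \ref{Prop: Main_Theorem1}. First I would take the $\Cgd$-module filtration $0 = W_0 \subseteq W_1 \subseteq \dots \subseteq W_k = W(\gl)$ produced by that proposition. Since character is additive in short exact sequences, we have $\ch{\fhd} W(\gl) = \sum_{i=1}^k \ch{\fhd}(W_i/W_{i-1})$. Each subquotient $W_i/W_{i-1}$ is, by the proposition, a quotient of the Demazure module $\D(1, i_{\sh}(\nu_i)+\gl')[m_i]$, where the pairs $(\nu_i, m_i)$ come from a Joseph filtration $0 = D_0 \subseteq D_1 \subseteq \dots \subseteq D_k = \D^{\sh}(1,\ol{\gl})[0]$ with $D_i/D_{i-1} \cong \D^{\sh}(r,\nu_i)[m_i]$ (whose existence is provided by Corollary \ref{Cor: Joseph's_result}). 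Being a quotient, $W_i/W_{i-1}$ has smaller character in the partial order $\le$ on $\Z[\fhd^*]$.

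Combining these two observations yields the coefficientwise inequality
\[
  \ch{\fhd} W(\gl) \;\le\; \sum_{i=1}^k \ch{\fhd}\, \D\bigl(1, i_{\sh}(\nu_i)+\gl'\bigr)[m_i].
\]
It then remains to reorganize the right-hand side by grouping the indices $i$ according to the isomorphism class $\D^{\sh}(r,\nu_i)[m_i]$ of the corresponding subquotient in the Joseph filtration. By the linear independence of the characters $\{\ch{\fhd}\D^{\sh}(r,\nu)[m]\}$ (discussed at the end of Subsection 4.3), the number of $i$ with $(\nu_i,m_i) = (\nu,m)$ equals the well-defined multiplicity $(\D^{\sh}(1,\ol{\gl})[0]:\D^{\sh}(r,\nu)[m])$, and $m_i \ge 0$ by Corollary \ref{Cor: Joseph's_result}. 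Substituting this into the sum produces exactly the right-hand side claimed in the corollary.

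There is no real obstacle here; the statement is essentially a bookkeeping consequence of Proposition \ref{Prop: Main_Theorem1} together with the well-definedness of Demazure-filtration multiplicities. The only subtle point worth spelling out in the written proof is that the index $\mu_i = i_{\sh}(\nu_i) + \gl'$ in the bound on $W(\gl)$ depends only on the isomorphism class $(\nu_i,m_i)$ of $D_i/D_{i-1}$, which justifies collecting terms via the multiplicity $(\D^{\sh}(1,\ol{\gl})[0]:\D^{\sh}(r,\nu)[m])$ rather than having to fix a particular filtration.
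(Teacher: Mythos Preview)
Your proposal is correct and matches the paper's approach: the paper simply states that this corollary follows from Proposition~\ref{Prop: Main_Theorem1} without writing out a proof, and your argument (additivity of characters on the filtration, the quotient bound on each subquotient, and regrouping via the well-defined multiplicities) is exactly the intended reasoning.
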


Before ending this section, let us prove the following lemma for later use:

\begin{Lem}\label{Lem: short_Demazure}
  Let $\gl \in P_+, m \in \Z$, and set $\gl' = \gl - i_{\sh}(\ol{\gl})$. 
  Then we have
  \[  P_{\gl - Q^{\sh}_+ + \Z \gd} \ch{\fhd}\D(1, \gl)[m] =e(\gl')i_{\sh} \ch{\fhd^{\sh}}\D^{\sh}(r,\ol{\gl})[m]. \]
\end{Lem}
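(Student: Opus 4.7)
The plan is to reduce the stated character identity to the isomorphism $W \cong \D^{\sh}(r,\ol{\gl})[m]$ of $\Cgd^{\sh}$-modules, where $W := U(\Cgd^{\sh}).v$ with $v$ the cyclic generator of $\D(1,\gl)[m]$, and then to establish this isomorphism.

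Since $v$ is a $\fhd$-weight vector of weight $\gl + m\gd$ annihilated by $\fn_+\otimes\C[t] \oplus \fh\otimes t\C[t]$ by relations (D1), (D2) of Proposition \ref{Prop:_rel_of_Demazure1}, and since $d$ commutes with $\Cg^{\sh}$ acting on $v$ as the scalar $m$, the proof of Lemma \ref{Lem: character} adapts verbatim in the $\fhd$-graded setting to give
\[
P_{\gl - Q^{\sh}_+ + \Z\gd}\,\ch{\fhd}\D(1,\gl)[m] = e(\gl')\,i_{\sh}\ch{\fhd^{\sh}}W.
\]
Thus it suffices to show $\ch{\fhd^{\sh}}W = \ch{\fhd^{\sh}}\D^{\sh}(r,\ol{\gl})[m]$.

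For the inequality $\ch{\fhd^{\sh}}W \le \ch{\fhd^{\sh}}\D^{\sh}(r,\ol{\gl})[m]$, I would check that $v$ satisfies the defining relations of $\D^{\sh}(r,\ol{\gl})[m]$ from Proposition \ref{Prop:_rel_of_Demazure}, producing a surjective $\Cgd^{\sh}$-homomorphism $\D^{\sh}(r,\ol{\gl})[m] \twoheadrightarrow W$. Relations (\ref{eq:_Demazure_relation_1}) and $d.v = mv$ follow from (D1), (D2), using $\langle \gl,h\rangle = \langle \ol{\gl},h\rangle$ for $h \in \fh^{\sh}$. For relations (\ref{eq:_Demazure_relation_2}) with $\ell = r$: since $\fg^{\sh}$ is simply laced, every $\gg \in \gD^{\sh}_+$ is long in the $\fg^{\sh}$-normalization, so the required condition reads $(f_\gg \otimes t^s)^{\max\{0,\langle \ol{\gl},\gg^{\vee}\rangle - rs\}+1}.v = 0$, which is exactly (D4) of $\D(1,\gl)[m]$ restricted to $\gg \in \gD^{\sh}_+$.

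The main obstacle is the reverse character inequality. My approach reduces to the case $m = 0$ by an overall $e(m\gd)$-shift and then exploits the fact that $\D(1,\gl)[0]$ is a quotient of the Weyl module $W(\gl)$ (via Theorem \ref{Thm:finite-dim}, since the generator of $\D(1,\gl)[0]$ satisfies the defining relations of $W(\gl)$). Restricting this $\Cgd$-surjection to the cyclic $\Cgd^{\sh}$-submodules generated by the respective highest-weight vectors, Lemma \ref{Lem: subisom} identifies the source as $\D^{\sh}(1,\ol{\gl})[0]$; comparing defining relations, this surjection factors as $\D^{\sh}(1,\ol{\gl})[0] \twoheadrightarrow \D^{\sh}(r,\ol{\gl})[0] \twoheadrightarrow W$, where the first step exists because (\ref{eq:_Demazure_relation_2}) for $\D^{\sh}(r,\ol{\gl})[0]$ is strictly stronger than for $\D^{\sh}(1,\ol{\gl})[0]$. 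Showing injectivity of the second map — equivalently, that the annihilator of $v$ in $U(\Cgd^{\sh})$ is no larger than the left ideal generated by the defining relations of $\D^{\sh}(r,\ol{\gl})[0]$ — is the remaining technical crux, which I would attack by a PBW-type decomposition argument on $U(\Cgd)$ with respect to the subalgebra $U(\Cgd^{\sh})$, showing that no additional annihilators of $v$ in $U(\Cgd^{\sh})$ arise from the larger relation ideal in $U(\Cgd)$.
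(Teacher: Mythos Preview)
Your setup through the inequality $\le$ matches the paper: the reduction to $\ch{\fhd^{\sh}}W = \ch{\fhd^{\sh}}\D^{\sh}(r,\ol{\gl})[m]$ via a graded version of Lemma~\ref{Lem: character}, and the surjection $\D^{\sh}(r,\ol{\gl})[m] \twoheadrightarrow W$ coming from (D1)--(D4), are exactly how the paper begins.

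The gap is in the $\ge$ direction. You correctly isolate the crux as the injectivity of $\D^{\sh}(r,\ol{\gl})[0] \to W$ and propose a ``PBW-type decomposition argument,'' but you stop there. This is not automatic: the defining ideal of $\D(1,\gl)[0]$ in $U(\Cgd)$ has generators outside $U(\Cgd^{\sh})$ (for instance the relations from $\fn_+'\otimes\C[t]$, $(\fh^{\sh})^{\bot}\otimes\C[t]$, and (D3) for $i\notin I^{\sh}$), so its intersection with $U(\Cgd^{\sh})$ is not immediately the ideal you want. Your factorization $\D^{\sh}(1,\ol{\gl})[0] \twoheadrightarrow \D^{\sh}(r,\ol{\gl})[0] \twoheadrightarrow W$ via Lemma~\ref{Lem: subisom} is correct but by itself only reproves the $\le$ direction.

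The paper avoids a direct ideal computation by going through the filtration of Proposition~\ref{Prop: Main_Theorem1}. Working inside $W(\gl)$, one checks (exactly as in (\ref{eq: for_e}) and (\ref{eq: for_h})) that $e_{\gg}\otimes\C[t]$ for $\gg \in \gD_+\setminus\gD^{\sh}_+$ and $(\fh^{\sh})^{\bot}\otimes t\C[t]$ annihilate every $X_i$; this forces $W_i = U(\fn_-'\otimes\C[t]).X_i$, and a weight argument then gives $X_i \cap W_{i-1} = X_{i-1}$. Hence $\D^{\sh}(r,\nu_i)[m_i] \cong X_i/X_{i-1}$ embeds into $W_i/W_{i-1}$, and since the latter is a quotient of $\D(1,\mu_i)[m_i]$ one obtains $P_{\gl-Q^{\sh}_++\Z\gd}\ch{\fhd}\D(1,\mu_i)[m_i] \ge e(\gl')i_{\sh}\ch{\fhd^{\sh}}\D^{\sh}(r,\nu_i)[m_i]$ for \emph{every} $i$. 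Specializing to the index $j$ with $\nu_j=\ol{\gl}$, $m_j=0$ gives the lemma. Your direct PBW route can be completed with the same ingredients --- the annihilation facts for $U(\Cgd^{\sh}).v$ and the weight separation showing $U(\fn_-'\otimes\C[t])_+.K'$ has no weights in $\gl-Q^{\sh}_+$ --- but as written, your proposal does not supply them.
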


\begin{proof}
  Let $v$ be the generator of $\D(1,\gl)[m]$ in Corollary \ref{Cor: rel_of_Demazure}, and set $D' = U(\Cgd^{\sh}).v$.
  Similarly as Lemma \ref{Lem: character}, we have
  \[ P_{\gl -Q^{\sh}_++\Z \gd} \ch{\fhd} \D(1, \gl)[m] = e(\gl')i_{\sh} \ch{\fhd^{\sh}} D'.
  \]
  We see from proposition \ref{Prop:_rel_of_Demazure} that $D'$ is a quotient of $\D^{\sh}(r, \ol{\gl})[m]$ 
  as a $\Cgd^{\sh}$-module.
  Hence we have 
  \[ P_{\gl - Q^{\sh}_+ + \Z \gd} \ch{\fhd}\D(1, \gl)[m] \le e(\gl')i_{\sh}\ch{\fhd^{\sh}}\D^{\sh}(r,\ol{\gl})[m].
  \]
  In order to show the opposite inequality, it is enough to show in the case $m = 0$. 
  Here we use the notation in the proof of Proposition \ref{Prop: Main_Theorem1}.
  Similarly as (\ref{eq: for_e}) and (\ref{eq: for_h}), we can show for each $1 \le i \le k$ that
  \[ e_{\gg} \otimes \C[t].X_i = 0 \ \text{for} \ \gg \in \gD_+ \setminus \lpishr_+\ \ \ \text{and} \ \ \
     (\fh^{\sh})^{\bot} \otimes t\C[t].X_i = 0,
  \]
  which implies
  \[ W_i = U(\Cgd).X_i = U(\fn_-' \otimes \C[t]).X_i,
  \]
  where we set $\fn'_- = \bigoplus_{\gg \in \gD_- \setminus \lpishr_-} \fg_{\gg}$.
  From this we have 
  \[ X_i \cap W_{i-1} = X_i \cap \big(U(\fn'_-\otimes \C[t]\big).X_{i-1}) = X_{i-1},
  \]
  and hence it follows that 
  \[ W_i /W_{i-1} \supseteq U(\Cgd^{\sh}).\overline{x}_i = X_i / X_i \cap W_{i-1} = X_i / X_{i-1} \cong \D^{\sh}(r, \nu_i)[m_i].
  \]
  Similarly as Lemma \ref{Lem: character}, we have
  \begin{align*} P_{\gl -Q^{\sh}_+ + \Z\gd} \ch{\fhd} W_i / W_{i -1} &= \ch{\fhd} U(\Cgd^{\sh}).\overline{x}_i \\
                                                                     &= e\big(\mu_i - i_{\sh}(\ol{\mu}_i)\big)
                                                                         i_{\sh}\ch{\fhd^{\sh}}\D^{\sh}(r, \nu_i)[m_i].
  \end{align*}
  Since $W_i / W_{i-1}$ is a quotient of $\D(1,\mu_i)[m_i]$ and
  $\mu_i - i_{\sh}(\ol{\mu}_i) = \gl - i_{\sh}(\ol{\gl}) = \gl'$, we have
  \begin{equation*}
     P_{\gl - Q^{\sh}_+ + \Z\gd} \ch{\fhd} \D(1,\mu_i)[m_i] \ge e(\gl')i_{\sh}\ch{\fhd^{\sh}}\D^{\sh}(r, \nu_i)[m_i].
  \end{equation*}
  In particular, we have
  \[  P_{\gl - Q^{\sh}_+ + \Z\gd} \ch{\fhd} \D(1,\gl)[0] \ge e(\gl')i_{\sh}\ch{\fhd^{\sh}}\D^{\sh}(r, \ol{\gl})[0]
  \]
  since there exists some $j$ such that $\nu_j = \ol{\gl}$ and $m_j = 0$, which is easily seen from the equation
  \[ \sum_{1 \le i \le k} \ch{\fhd^{\sh}} \D^{\sh}(r, \nu_i)[m_i]= \ch{\fhd^{\sh}}\D^{\sh}(1, \ol{\gl})[0].
  \]
\end{proof}

\section{Path models}\label{Path_crystals}

In this section, we review the theory of path models originally introduced by Littelmann \cite{MR1253196}, \cite{MR1356780}
(this theory can be applied to any Kac-Moody algebras, but we only state it for affine ones).
We do not review the definition of (abstract) crystals, but refer the reader to \cite[\S 4.5]{MR1881971}.

\subsection{Definition of path models}\label{Def_of_path}

For $a, b \in \R$ with $a < b$, we set $[a,b] = \{ t \in \R \mid a \le t \le b \}$.
A \textit{path with weight in $\hat{P}$} is, by definition, a piecewise linear, continuous map 
$\pi\colon [0,1] \to \hat{\fh}_{\R}^* = \R \otimes_\Z \hat{P}$
such that $\pi(0) = 0, \pi(1) \in \hat{P}$.
We denote by $\mP$ the set of all paths with weights in $\hat{P}$.
For $\pi_1,\pi_2 \in \mP$, define $\pi_1 + \pi_2 \in \mP$ by $(\pi_1+\pi_2)(t) = \pi_1(t) + \pi_2(t)$.

\begin{Rem} \normalfont
  In \cite{MR1253196} and \cite{MR1356780}, paths are considered modulo reparametrization. 
  In this article, however, we do not do this since there is no need to do so.
  Indeed, it can be checked that all the results in \cite{MR1253196} and \cite{MR1356780} 
  used in this article still hold in this setting.
\end{Rem}

Let $\pi \in \mP$.
A pair $\mugs$ of a sequence $\underline{\mu}: \mu_1,\mu_2, \dots, \mu_N$ of elements of $\hat{\fh}^*_\R$ and 
a sequence $\underline{\gs}: 0 = \gs_0 < \gs_1 < \dots < \gs_N =1$ of real numbers is called an \textit{expression} 
of $\pi$ if the following equation holds:
\[ \pi(t) = \sum_{p' = 1}^{p-1} (\gs_{p'} - \gs_{p'-1})\mu_{p'} + (t-\gs_{p-1})\mu_p \ \ \ 
   \text{for} \ t \in [\gs_{p-1}, \gs_p],\ 1 \le p \le N.
\]
In this case, we write $\pi = \mugs$.

For $\pi \in \mP$ and $i \in \hI$, define $H_i^\pi\colon [0,1] \to \R$ and $m_i^\pi \in \R$ by
\begin{equation*} 
  H_i^\pi(t) = \langle \pi(t), \ga^{\vee}_i \rangle, \ \ \ m_i^\pi = \min \{ H_i^\pi (t) \mid t \in [0,1] \}.
\end{equation*}
Denote by $\mPint$ the subset of $\mP$ consisting of paths $\pi$ such that for every $i \in I$, all local minimums 
of $H_i^\pi$ are integers.

Littelmann introduced root operators $\te_i, \tf_i$ $(i \in \hI)$ on $\mP$ in \cite{MR1253196} and \cite{MR1356780}
(in these articles, they are denoted by $e_\ga, f_\ga$).
Here for simplicity, we recall their actions only on elements of $\mPint$,
which are enough for this article since all the paths we consider below belong to $\mPint$.
For $\pi \in \mPint$ and $i \in \hI$, we define $\te_i \pi$ as follows: if $m_i^{\pi} = 0$, 
then $\te_i\pi = \0$ where $\0$ is an additional element corresponding to `$0$' in the 
theory of crystals. 
If $m_i^{\pi} \le -1$, then $\te_i \pi \in \mP$ is given by
\[ (\te_i\pi)(t) = \begin{cases}
                     \pi(t) & \text{for} \ t \in [0, t_0], \\
                     \pi(t_0) + s_i \big(\pi(t) - \pi(t_0)\big) & \text{for} \ t \in [t_0, t_1], \\
                     \pi(t) + \ga_i & \text{for} \ t \in [t_1, 1],
                   \end{cases}
\]
where we set
\begin{align*}
  t_1 &= \min\{t\in[0,1]\mid H_i^{\pi}(t) = m_i^{\pi}\}, \\
  t_0 &= \max\{t\in[0,t_1]\mid H_i^{\pi}(t) = m_i^{\pi} + 1\}.
\end{align*}
Similarly, we define $\tf_i\pi \in \mP \cup \{\0\}$ as follows:
if $H_i^{\pi}(1) = m_i^{\pi}$, then $\tf_i\pi = \0$.
If $H_i^{\pi}(1) \ge m_i^{\pi} +1$, then $\tf_i \pi$ is given by
\[ (\tf_i\pi)(t) = \begin{cases}
                     \pi(t) & \text{for} \ t \in [0, t_0], \\
                     \pi(t_0) + s_i \big(\pi(t)-\pi(t_0)\big) & \text{for} \ t \in [t_0, t_1], \\
                     \pi(t) - \ga_i & \text{for} \ t \in [t_1, 1],
                   \end{cases} \]
where we set
\begin{align*}
  t_0 & = \max\{t \in [0,1] \mid H_i^{\pi}(t) = m_i^{\pi} \}, \\
  t_1 & = \min\{t \in [t_0,1] \mid H_i^{\pi}(t) = m_i^{\pi} + 1 \}.
\end{align*}
We set $\mathrm{wt}(\pi)= \pi (1) \in \hP$ for $\pi \in \mPint$, and define $\gee_i\colon \mPint \to \Z_{\ge 0}$
and $\gph_i\colon \mPint \to \Z_{\ge 0}$ for $i \in \hI$ by
\[ \gee_i(\pi) = \max\{ k \in \Z_{\ge 0} \mid \te_i^k\pi \neq \0 \}, 
   \ \ \ \gph_i(\pi) = \max\{ k \in \Z_{\ge 0}\mid \tf_i^k\pi \neq \0 \}.
\]

\begin{Thm}[{\cite[\S 2]{MR1356780}}] \label{Thm: crystal_structure}
  Let $\mB$ be a subset of $\mPint$ such that $\te_i \mB \subseteq \mB \cup \{ \0 \}$ and
  $\tf_i \mB \subseteq \mB \cup \{ \0 \}$ for all $i \in \hI$.
  Then $\mB$, together with the root operators $\te_i, \tf_i$ for $i \in \hI$ and the maps $\mathrm{wt}, \gee_i, \gph_i$ 
  for $i \in \hI$,
  becomes a $U_q(\hat{\fg})$-crystal {\normalfont(\cite[Definition 4.5.1]{MR1881971})}.
  Moreover we have 
  \begin{equation} \label{eq: map_on_crystal}
    \gee_i(\pi) = -m_i^{\pi}, \ \ \ \gph_i(\pi) = H_i^{\pi}(1) - m_i^{\pi} \ \ \ \text{for} \ \pi \in \mB \ 
    \text{and} \ i \in \hI. 
  \end{equation}
\end{Thm}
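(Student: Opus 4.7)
The plan is to verify Kashiwara's axioms for an abstract $U_q(\hfg)$-crystal (\cite[Definition 4.5.1]{MR1881971}) for $\mB$ equipped with the listed maps. The stability of $\mB$ under $\te_i$ and $\tf_i$ is given by hypothesis, so the essential content is to check the compatibility of $\wt$, $\gee_i$, $\gph_i$ with the root operators. The linchpin is the pair of identities in (\ref{eq: map_on_crystal}), since these give concrete handles on $\gee_i$ and $\gph_i$ from which the remaining axioms follow almost mechanically.

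First I would establish (\ref{eq: map_on_crystal}) by induction on $-m_i^\pi$ and on $H_i^\pi(1) - m_i^\pi$ respectively. The base cases are built into the definition: $m_i^\pi = 0$ forces $\te_i\pi = \0$, giving $\gee_i(\pi) = 0$, and $H_i^\pi(1) = m_i^\pi$ forces $\tf_i\pi = \0$, giving $\gph_i(\pi) = 0$. For the inductive step, a direct inspection of the three-piece formula for $\te_i\pi$ shows that on $[0, t_0]$ one has $H_i^{\te_i\pi} = H_i^\pi$, on $[t_0, t_1]$ one has $H_i^{\te_i\pi}(t) = 2H_i^\pi(t_0) - H_i^\pi(t)$, and on $[t_1, 1]$ one has $H_i^{\te_i\pi} = H_i^\pi + 2$. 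From this one reads off $m_i^{\te_i\pi} = m_i^\pi + 1$, $H_i^{\te_i\pi}(1) = H_i^\pi(1) + 2$, and the preservation of the integrality of local minima (so $\te_i\pi \in \mPint$). The parallel computation for $\tf_i$ yields $m_i^{\tf_i\pi} = m_i^\pi - 1$ and $H_i^{\tf_i\pi}(1) = H_i^\pi(1) - 2$. Induction then gives (\ref{eq: map_on_crystal}).

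Next I would verify the axioms themselves. Evaluating the piecewise formulas at $t = 1$ yields $\wt(\te_i\pi) = \wt(\pi) + \ga_i$ and $\wt(\tf_i\pi) = \wt(\pi) - \ga_i$ whenever the operator does not produce $\0$. The identity $\gph_i(\pi) - \gee_i(\pi) = H_i^\pi(1) = \langle \wt(\pi), \ga_i^\vee\rangle$ is immediate from (\ref{eq: map_on_crystal}). The shifts $\gee_i(\te_i\pi) = \gee_i(\pi) - 1$ and $\gph_i(\te_i\pi) = \gph_i(\pi) + 1$, with the symmetric statements for $\tf_i$, follow from the changes in $m_i$ and $H_i(1)$ computed above. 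Since $\gee_i, \gph_i$ always take finite nonnegative integer values on $\mB$, the axiom pertaining to $\gph_i(\pi) = -\infty$ is vacuous.

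The main obstacle will be the involutive axiom $\tf_i \te_i \pi = \pi$ whenever $\te_i \pi \ne \0$, and symmetrically $\te_i \tf_i \pi = \pi$ whenever $\tf_i \pi \ne \0$. One must identify the cut points $(t_0', t_1')$ used in defining $\tf_i(\te_i\pi)$ with the pair $(t_1, t_0)$ used in defining $\te_i\pi$, and verify that the three reflected and shifted segments reassemble to $\pi$. This is delicate because it requires tracking where the new minimum of $H_i^{\te_i\pi}$ is first attained and where it is last attained plus one, given that reflection and translation have altered the original graph. This invertibility is the technical core of Littelmann's construction and is carried out carefully in \cite[\S 2]{MR1356780}; since all our paths lie in $\mPint$ and the operators preserve this subset, the arguments there apply without modification.
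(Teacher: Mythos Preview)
The paper does not prove this theorem; it simply quotes it from Littelmann \cite[\S 2]{MR1356780} as a known result, so there is no in-paper argument to compare against. Your sketch is a correct outline of Littelmann's original verification: compute how $H_i$ and $m_i$ transform under $\te_i$, $\tf_i$ on paths in $\mPint$, deduce (\ref{eq: map_on_crystal}), and then read off the crystal axioms, with the mutual inverse property $\tf_i\te_i\pi = \pi$ being the only step that requires real care. One minor remark: in establishing $m_i^{\te_i\pi} = m_i^\pi + 1$ you implicitly use that $H_i^\pi \ge m_i^\pi + 1$ on $[0,t_0]$, which is exactly where the integrality hypothesis on local minima of $\mPint$ enters; you invoke it for the preservation of $\mPint$, but it is already needed at this earlier point, and it would strengthen the write-up to say so explicitly.
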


The following lemma is easily checked from the definition of the root operators.

\begin{Lem}\label{Lem: root_operator}
  Let $\pi \in \mPint, i \in \hI$, and $0< u \le 1$ a real number. \\
  {\normalfont(i)} If $\pi$ satisfies $H_i^{\pi}(t) \ge m_i^\pi +1 $ for all $t\in [0, u]$, then we have
    \[ \te_i\pi(t) = \pi(t) \ \ \ \text{for all} \ t\in [0,u].
    \]
  {\normalfont(ii)} Let $M \in \Z_{\ge 0}$. If $\pi$ satisfies 
    \[ \tf_i(\pi) \neq \0, \ \ \ H_i^{\pi}(t) \ge -M \ \text{for all} \ t \in [0, u], \ \ \ \text{and} \ \ \ H_i^{\pi}(u) = -M,
    \]
    then we have 
    \[ \tf_i\pi(t) = \pi(t) \ \ \ \text{for all} \ t\in [0,u].
    \]
\end{Lem}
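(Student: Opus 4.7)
The plan is to unpack directly the piecewise definitions of $\te_i\pi$ and $\tf_i\pi$ recalled just above the lemma and show that the break point $t_0$ occurring in those formulas lies to the right of $u$ in each case; then since on $[0,t_0]$ the modified path agrees with $\pi$ by construction, the conclusion is immediate. In both parts the only tools needed are the continuity of $H_i^{\pi}$ and the intermediate value theorem.

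For (i), let $t_1=\min\{t\in[0,1]\mid H_i^{\pi}(t)=m_i^{\pi}\}$ and $t_0=\max\{t\in[0,t_1]\mid H_i^{\pi}(t)=m_i^{\pi}+1\}$ as in the definition of $\te_i\pi$. The hypothesis $H_i^{\pi}(t)\ge m_i^{\pi}+1>m_i^{\pi}$ on $[0,u]$ already forces $t_1>u$. To locate $t_0$, split into cases: if $H_i^{\pi}(u)=m_i^{\pi}+1$ then $u$ itself belongs to the set defining $t_0$, so $t_0\ge u$; otherwise $H_i^{\pi}(u)>m_i^{\pi}+1$ while $H_i^{\pi}(t_1)=m_i^{\pi}<m_i^{\pi}+1$, and the intermediate value theorem supplies some $s\in(u,t_1)$ with $H_i^{\pi}(s)=m_i^{\pi}+1$, giving $t_0\ge s>u$. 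Since $\te_i\pi$ agrees with $\pi$ on $[0,t_0]$, this yields the claim.

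For (ii), let $t_0=\max\{t\in[0,1]\mid H_i^{\pi}(t)=m_i^{\pi}\}$ as in the definition of $\tf_i\pi$ (well-defined by compactness and the continuity of $H_i^{\pi}$; the assumption $\tf_i\pi\ne\mathbf{0}$ ensures we are in the nontrivial branch of the definition, so the formula applies). I again show $t_0\ge u$ by cases on the value of $m_i^{\pi}$. Since $H_i^{\pi}\ge -M$ on $[0,u]$, either $m_i^{\pi}<-M$, in which case the minimum is attained only in $(u,1]$ and hence $H_i^{\pi}(t)>m_i^{\pi}$ for all $t\in[0,u]$, forcing $t_0>u$; or $m_i^{\pi}=-M$, in which case the hypothesis $H_i^{\pi}(u)=-M=m_i^{\pi}$ places $u$ itself in the set defining $t_0$, giving $t_0\ge u$. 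Either way $\tf_i\pi(t)=\pi(t)$ on $[0,t_0]\supseteq[0,u]$.

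There is no substantive obstacle: the argument is just a careful reading of the piecewise formulas, using that the ``reflect'' and ``shift'' segments of $\te_i\pi$ and $\tf_i\pi$ are confined to $[t_0,1]$, together with the fact that the extremal characterization of $t_0$ combined with the intermediate value theorem forces $t_0\ge u$ under each hypothesis. If anything is delicate it is only the case split in (ii), where one must separately consider the possibility that $m_i^{\pi}$ is strictly smaller than $-M$ because $H_i^{\pi}$ dips further below $-M$ on $[u,1]$.
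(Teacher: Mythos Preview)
Your proof is correct and is precisely the direct verification the paper has in mind when it writes that the lemma ``is easily checked from the definition of the root operators''; the paper gives no further argument. One small remark: in part~(i) you tacitly assume $\te_i\pi\neq\0$ so that $t_0,t_1$ are defined, but this is automatic from the hypothesis since $H_i^{\pi}(0)=0\ge m_i^{\pi}+1$ forces $m_i^{\pi}\le -1$.
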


Assume that a subset $\mB \subseteq \mPint$ satisfies the assumption of Theorem \ref{Thm: crystal_structure}.
For each $i \in \hI$, we define $S_i\colon \mB \to \mB$ by
\[ S_i(\pi)= \begin{cases} \tf_i^\ell\pi & \text{if} \ \ell= \langle \pi(1), \ga_i^{\vee} \rangle \ge 0, \\
                          \te_i^{-\ell}\pi & \text{if} \ \ell= \langle \pi(1), \ga_i^{\vee} \rangle < 0.
            \end{cases}
\]

\begin{Thm}[{\cite[Theorem 8.1]{MR1356780}}]\label{Thm: action_of_Weyl}
  The map $s_i \mapsto S_i$ on the simple reflections of $\hW$ extends to a unique group action 
  of $\hat{W}$ on $\mB\colon w \mapsto S_w$. 
\end{Thm}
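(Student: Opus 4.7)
\textbf{Proof proposal for Theorem 5.2 (Theorem 8.1 of \cite{MR1356780}).}
The plan is to verify the two defining properties of a Coxeter-group action separately: first that each $S_i$ is a well-defined involution of $\mathcal{B}$, and second that the $S_i$ satisfy the braid relations of $\hat{W}$. The uniqueness is automatic from the fact that the simple reflections generate $\hat{W}$, so it is existence that requires work.

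For the involution property, I would exploit the crystal axiom $\gph_i(\pi) - \gee_i(\pi) = \langle \mathrm{wt}(\pi), \ga_i^{\vee}\rangle$ valid on $\mathcal{B}$. Write $\ell = \langle \pi(1), \ga_i^{\vee}\rangle$ and suppose $\ell \ge 0$ (the case $\ell < 0$ is symmetric). Then $\gph_i(\pi) \ge \ell$, so $\tf_i^{\ell}\pi \in \mathcal{B}$, and one checks directly from the definition of the root operators that
\[ \gee_i(\tf_i^{\ell}\pi) = \gee_i(\pi) + \ell, \qquad \mathrm{wt}(\tf_i^{\ell}\pi) = s_i\,\mathrm{wt}(\pi),
\]
so that $\langle \mathrm{wt}(\tf_i^{\ell}\pi), \ga_i^{\vee}\rangle = -\ell$. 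Hence $S_i(\tf_i^{\ell}\pi) = \te_i^{\ell}\tf_i^{\ell}\pi$, which equals $\pi$ because $\te_i$ and $\tf_i$ are mutually inverse along an $i$-string. This shows $S_i^2 = \id$ and also $\mathrm{wt}(S_i\pi) = s_i\,\mathrm{wt}(\pi)$, which will be essential later.

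The main work is verifying the braid relations $\underbrace{S_i S_j S_i \cdots}_{m_{ij}} = \underbrace{S_j S_i S_j \cdots}_{m_{ij}}$. For this, I would fix $\pi \in \mathcal{B}$ and pass to the rank-two subcrystal $\mathcal{B}_{i,j}(\pi) \subseteq \mathcal{B}$ generated by $\pi$ under $\te_i, \tf_i, \te_j, \tf_j$. The reduction is that both sides of the braid relation lie in this subcrystal, and depend only on the abstract $\{i,j\}$-crystal structure together with the weights of its elements on the coroots $\ga_i^{\vee}, \ga_j^{\vee}$ (all other weight information being irrelevant for $S_i, S_j$). So it suffices to check the braid relation on the path model in the rank-two situation. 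There, one uses Littelmann's classification of connected integral rank-two path crystals: every such crystal is isomorphic to the path crystal $\mathcal{B}(\gl)$ of a concatenation of straight-line paths for a dominant weight $\gl$ of the relevant rank-two root subsystem. On these explicit rank-two models, the braid relation for $S_i, S_j$ can be checked by inspection, either by a direct geometric argument reflecting the path step-by-step, or by comparing with the known $W$-action on $V_{\fg}(\gl)_\mu$ via the Lusztig involution.

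The step I expect to be the main obstacle is the reduction to rank two in a form that actually allows the inspection argument to go through: one needs that $\mathcal{B}_{i,j}(\pi)$ is closed under the relevant $S_i, S_j$ (which is clear from the definition), and more importantly that its abstract $\{i,j\}$-crystal structure matches that of an integrable highest-weight-type path model. This uses the local structure of $\mPint$ (every local minimum of $H_i^{\pi}$ being an integer) together with Lemma~5.3, which guarantees that the root operators behave predictably on path restrictions. Once the rank-two reduction is in place, the involution property from step one combined with a case-by-case check across types $A_1 \times A_1, A_2, B_2, G_2$ (the only cases occurring in $\hat{W}$) completes the proof, and the map $s_i \mapsto S_i$ extends uniquely to the action $w \mapsto S_w$ by Matsumoto's theorem.
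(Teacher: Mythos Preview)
The paper does not give its own proof of this statement: Theorem~\ref{Thm: action_of_Weyl} is simply quoted from \cite[Theorem 8.1]{MR1356780} and used as a black box. So there is no proof in the paper to compare against.

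That said, your proposal is a faithful outline of Littelmann's original argument in \cite{MR1356780}. The involution step is exactly as you describe, and the braid relations are indeed reduced to the rank-two situation, where Littelmann's isomorphism theorem identifies the $\{i,j\}$-connected component with the path crystal of a dominant integral weight for the (finite-type) rank-two subsystem generated by $\ga_i,\ga_j$; on such crystals the braid relation is then verified directly. One small caution: your parenthetical suggestion to compare with ``the known $W$-action on $V_{\fg}(\gl)_\mu$ via the Lusztig involution'' is not quite the right pointer---the Lusztig involution concerns the longest element rather than individual braid moves---so if you pursue this route you should stick with the direct geometric check on the rank-two path models, which is what Littelmann does. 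Also note that when $m_{ij}=\infty$ (as for $i=0,j=1$ in type $A_1^{(1)}$) there is no braid relation to verify, so the rank-two cases you list suffice.
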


\begin{Lem}\label{Lem: action_of_Weyl}
  Let $\pi \in \mB$ and $i \in \hI$. 
  If $H_i^{\pi}$ is non-decreasing or non-increasing,
  then $S_i(\pi)$ satisfies
  \[ S_i(\pi)(t) =s_i\big(\pi(t)\big) \ \ \ \text{for all} \ t \in [0,1].
  \] 
\end{Lem}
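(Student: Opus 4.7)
The plan is a direct piecewise computation, using induction on the number of applications of the root operator. I will handle the non-decreasing case in detail; the non-increasing case goes through by the parallel argument with $\te_i$ in place of $\tf_i$. Since $\pi(0)=0$ forces $H_i^{\pi}(0)=0$, in the non-decreasing case we have $m_i^{\pi}=0$ and $\ell:=\langle\pi(1),\ga_i^{\vee}\rangle=H_i^{\pi}(1)\ge 0$; hence $\gph_i(\pi)=\ell$ by (\ref{eq: map_on_crystal}), and $S_i(\pi)=\tf_i^{\ell}\pi$ by definition. The task therefore reduces to verifying $\tf_i^{\ell}\pi(t)=s_i\big(\pi(t)\big)$ for all $t\in[0,1]$.

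For each $0\le k\le\ell$, set $t_k:=\max\{t\in[0,1]\mid H_i^{\pi}(t)=k\}$; monotonicity of $H_i^{\pi}$ yields $0=t_0\le t_1\le\cdots\le t_{\ell}=1$, with $H_i^{\pi}(t)\le k$ precisely on $[0,t_k]$. I would prove by induction on $k$ the following refined formula:
\[
  \tf_i^{k}\pi(t)\;=\;\begin{cases} s_i\big(\pi(t)\big) & \text{for } t\in[0,t_k],\\ \pi(t)-k\ga_i & \text{for } t\in[t_k,1],\end{cases}
\]
the two expressions agreeing at $t=t_k$ because $H_i^{\pi}(t_k)=k$. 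Specializing to $k=\ell$ and using $t_{\ell}=1$ then yields the lemma.

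The inductive step is the main piece of work, but it is a routine piecewise calculation. Writing $\pi':=\tf_i^{k}\pi$, the inductive formula gives $H_i^{\pi'}(t)=-H_i^{\pi}(t)$ on $[0,t_k]$ and $H_i^{\pi'}(t)=H_i^{\pi}(t)-2k$ on $[t_k,1]$. Consequently $m_i^{\pi'}=-k$, the parameter ``$t_0$'' in the definition of $\tf_i$ applied to $\pi'$ comes out to $t_k$, and ``$t_1$'' comes out to $t_{k+1}^{-}:=\min\{t\mid H_i^{\pi}(t)=k+1\}$. Splitting $[0,1]$ into $[0,t_k]\cup[t_k,t_{k+1}^{-}]\cup[t_{k+1}^{-},1]$ and applying the piecewise definition of $\tf_i$---using $s_i(v)=v-\langle v,\ga_i^{\vee}\rangle\ga_i$ together with $H_i^{\pi}(t_k)=k$---produces the claimed formula at level $k+1$. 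The shift of the boundary from $t_{k+1}^{-}$ up to $t_{k+1}$ is harmless because $H_i^{\pi}\equiv k+1$ on $[t_{k+1}^{-},t_{k+1}]$, where the two candidate expressions $s_i\big(\pi(t)\big)$ and $\pi(t)-(k+1)\ga_i$ coincide.

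Finally, for the non-increasing case one has $\ell\le 0$, $m_i^{\pi}=\ell$, $\gee_i(\pi)=-\ell$, and $S_i(\pi)=\te_i^{-\ell}\pi$. The parallel induction, with $a_k:=\max\{t\mid H_i^{\pi}(t)=\ell+k\}$ replacing $t_k$ and $\te_i$ replacing $\tf_i$, establishes $\te_i^{k}\pi(t)=\pi(t)$ on $[0,a_k]$ and $s_i\big(\pi(t)\big)+(\ell+k)\ga_i$ on $[a_k,1]$; at $k=-\ell$ the coefficient $\ell+k$ vanishes, and on $[0,a_{-\ell}]$ we have $H_i^{\pi}\equiv 0$ so $\pi(t)=s_i\big(\pi(t)\big)$, yielding $\te_i^{-\ell}\pi(t)=s_i\big(\pi(t)\big)$ throughout $[0,1]$.
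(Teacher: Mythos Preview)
Your proposal is correct and follows essentially the same approach as the paper: in the non-decreasing case, set $\gs_k=\max\{t\mid H_i^{\pi}(t)=k\}$ and prove by induction on $k$ that $\tf_i^{k}\pi(t)=s_i(\pi(t))$ on $[0,\gs_k]$ and $\pi(t)-k\ga_i$ on $[\gs_k,1]$, then specialize $k=\ell$; the non-increasing case is handled symmetrically. One harmless slip: $t_0=\max\{t\mid H_i^{\pi}(t)=0\}$ need not equal $0$, but since $H_i^{\pi}\equiv 0$ on $[0,t_0]$ the base case still holds.
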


\begin{proof}
  Assume that $H_i^{\pi}$ is non-decreasing.
  Then we have $\gph_i(\pi) = H_i^{\pi}(1) \in \Z_{\ge 0}$ by (\ref{eq: map_on_crystal}). 
  Put $M = \gph_i(\pi)$, and define $\gs_k \in [0,1]$ for each $k \in \{0,\dots,M\}$ by 
  \[ \gs_k = \max\{t \in [0,1] \mid H_i^{\pi}(t) = k\}.
  \]
   Then we have $0 \le \gs_0 < \gs_1 < \dots < \gs_M = 1$, and we can show inductively from the definition of $\tf_i$ that
   \[ \tf_i^k\pi(t) = \begin{cases} s_i\big(\pi(t)\big) & \text{for} \ t \in [0, \gs_k], \\
                                      \pi(t) -k\ga_i & \text{for} \ t \in [\gs_k, 1].
                        \end{cases}
   \] 
   Hence $\tf_i^M \pi(t) = s_i\big(\pi(t)\big)$ holds for all $t \in [0,1]$. 
   When $H_i^{\pi}$ is non-increasing, the assertion is proved similarly.
\end{proof}

We recall the definition of a concatenation of paths in $\mP$ (cf.\ \cite[\S 1]{MR1356780}).
For $\pi_1, \pi_2 \in \mP$, we define their \textit{concatenation} $\pi_1 * \pi_2 \in \mP$ by
\[ (\pi_1 * \pi_2)(t) =\begin{cases} \pi_1 (2t) & \text{if} \ t \in [0,\frac{1}{2}], \\
                                    \pi_1(1) + \pi_2(2t-1) & \text{if} \ t \in [\frac{1}{2}, 1].
                      \end{cases} \]
It is obvious that if $\pi_1, \pi_2 \in \mPint$, then $\pi_1*\pi_2 \in \mPint$.
For the notational convenience, we set $\0 * \pi = \pi * \0 = \0$ for any $\pi \in \mP$.

\begin{Lem}[{\cite[Lemma 2.7]{MR1356780}}] 
  For $\pi_1, \pi_2 \in \mPint$ and $i \in \hI$, we have
  \begin{equation} \label{eq: tensor_rule}
    \te_i (\pi_1 * \pi_2) = \begin{cases} (\te_i\pi_1)*\pi_2 & \text{if} \ \varphi_i(\pi_1) \ge \gee_i(\pi_2), \\
                                           \pi_1 * (\te_i\pi_2) & \text{if}\ \varphi_i(\pi_1) < \gee_i(\pi_2),
                             \end{cases}
  \end{equation}
  and
  \begin{equation} \label{eq: tensor_rule2}
    \tf_i (\pi_1 * \pi_2) = \begin{cases} (\tf_i\pi_1)*\pi_2 & \text{if} \ \varphi_i(\pi_1) > \gee_i(\pi_2), \\
                                           \pi_1 * (\tf_i\pi_2) & \text{if}\ \varphi_i(\pi_1) \le \gee_i(\pi_2).
                             \end{cases}
  \end{equation}
\end{Lem}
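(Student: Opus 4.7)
The strategy is to analyze the Kashiwara function $H_i^{\pi_1*\pi_2}$ and its global minimum directly from the explicit piecewise form of the concatenation. Since $(\pi_1*\pi_2)(t)=\pi_1(2t)$ on $[0,1/2]$ and $(\pi_1*\pi_2)(t)=\pi_1(1)+\pi_2(2t-1)$ on $[1/2,1]$, one reads off
\[ H_i^{\pi_1*\pi_2}(t)=\begin{cases} H_i^{\pi_1}(2t) & t\in[0,1/2], \\ H_i^{\pi_1}(1)+H_i^{\pi_2}(2t-1) & t\in[1/2,1],\end{cases} \]
so $m_i^{\pi_1*\pi_2}=\min\bigl(m_i^{\pi_1},\,H_i^{\pi_1}(1)+m_i^{\pi_2}\bigr)$. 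Translating via the identities $\gph_i(\pi_1)=H_i^{\pi_1}(1)-m_i^{\pi_1}$ and $\gee_i(\pi_2)=-m_i^{\pi_2}$ from the displayed equation after Theorem~\ref{Thm: crystal_structure}, the sign of $\gph_i(\pi_1)-\gee_i(\pi_2)$ controls which half attains the overall minimum. Integrality coming from $\pi_1,\pi_2\in\mPint$ ensures that strict inequality of these values produces a gap of at least one.

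For $\te_i$, the action is determined by the first occurrence $t_1$ of the minimum and the last occurrence $t_0\le t_1$ of the level $m_i^{\pi_1*\pi_2}+1$. If $\gph_i(\pi_1)\ge\gee_i(\pi_2)$, the first overall minimum is attained in $[0,1/2]$, so $t_1=t_1(\pi_1)/2$ and $t_0=t_0(\pi_1)/2$ are both $\le 1/2$; the three-piece definition of $\te_i$ then coincides termwise with that of $(\te_i\pi_1)*\pi_2$, the degenerate subcase $m_i^{\pi_1}=0$ corresponding to $\te_i\pi_1=\0$ on the right and to $\te_i(\pi_1*\pi_2)=\0$ on the left. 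If $\gph_i(\pi_1)<\gee_i(\pi_2)$, the integer gap yields $H_i^{\pi_1*\pi_2}(t)\ge m_i^{\pi_1*\pi_2}+1$ on $[0,1/2]$, so $t_0\ge 1/2$ and $\te_i$ acts trivially on the $\pi_1$-half (by Lemma~\ref{Lem: root_operator}(i)); after the substitution $u=2t-1$, the prescription on $[1/2,1]$ rewrites as $\pi_1(1)+(\te_i\pi_2)(u)$, giving $\pi_1*(\te_i\pi_2)$.

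The $\tf_i$ case runs symmetrically, with first-occurrence times replaced by last-occurrences and $\max$/$\min$ exchanged in the definitions of $t_0,t_1$; the difference in strictness ($>$ versus $\ge$) reflects the fact that when $\gph_i(\pi_1)=\gee_i(\pi_2)$ the minimum is attained in both halves, and it is the \emph{last} such occurrence that matters for $\tf_i$, placing the action on $\pi_2$. The edge conventions $\0*\pi=\pi*\0=\0$ are checked by noting that $\tf_i(\pi_1*\pi_2)=\0$ is equivalent to $H_i^{\pi_2}(1)=m_i^{\pi_2}$ in the second case and forces the first case to not arise. The main technical difficulty is the careful placement of $t_0$ at the seam $t=1/2$: one must separately verify the boundary subcase $H_i^{\pi_1}(1)=m_i^{\pi_1*\pi_2}+1$, where $t_0$ can sit exactly at $1/2$, and confirm that the substitution $u=2t-1$ recovers the correct $t_0(\pi_2)$ under the formula defining the relevant root operator on $\pi_2$. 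Once these placements are fixed, all equalities of piecewise formulas follow by direct substitution using the $\hW$-linearity of $s_i$.
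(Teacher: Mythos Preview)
The paper does not give its own proof of this lemma; it is stated as a citation to Littelmann \cite[Lemma 2.7]{MR1356780} and used as a black box. Your proposal is therefore not being compared against an argument in the present paper but against the original source.

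That said, your sketch is the standard direct verification and is essentially what Littelmann does: write $H_i^{\pi_1*\pi_2}$ piecewise, read off $m_i^{\pi_1*\pi_2}=\min\bigl(m_i^{\pi_1},\,H_i^{\pi_1}(1)+m_i^{\pi_2}\bigr)$, and then locate the cut points $t_0,t_1$ for the root operator in the correct half using the integrality of local minima. Your case split via the sign of $\gph_i(\pi_1)-\gee_i(\pi_2)$ is correct, as is your observation that integrality upgrades the strict inequality $\gph_i(\pi_1)<\gee_i(\pi_2)$ to a gap of at least $1$, which is exactly what forces $t_0\ge 1/2$ in that case. The handling of the degenerate $\0$ cases and of the tie $\gph_i(\pi_1)=\gee_i(\pi_2)$ for $\tf_i$ (where the \emph{last} minimum lies in the second half) is also right. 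The only place to be slightly more careful than your sketch indicates is the seam subcase you flag yourself: when $m_i^{\pi_2}=-1$ one has $t_0(\pi_2)=0$, and you should note explicitly that this corresponds to $t_0=1/2$ in the concatenation and that no later candidate for $t_0$ exists in $[0,1/2]$ because $H_i^{\pi_1}(2t)\ge m_i^{\pi_1}\ge m_i^{\pi_1*\pi_2}+1$ there. With that detail spelled out, the argument is complete.
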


This lemma implies the following:
    
\begin{Prop}\label{Prop: concatenation}
  Assume that two subsets $\mB_1, \mB_2$ of $\mPint$ satisfy the assumption of Theorem \ref{Thm: crystal_structure}.
  Set 
  \[ \mB_1 * \mB_2 = \{ \pi_1 * \pi_2 \mid \pi_1 \in \mB_1, \pi_2 \in \mB_2 \} \subseteq \mPint.
  \]
  Then $\mB_1 * \mB_2$ also satisfies the assumption of Theorem \ref{Thm: crystal_structure},
  and $\mB_1 * \mB_2$ is isomorphic to $\mB_1 \otimes \mB_2$ as a $U_q(\hat{\fg})$-crystal
  {\normalfont(}see {\normalfont\cite[Definition 4.5.3]{MR1881971}}
  for a tensor product of crystals{\normalfont)},
  where the isomorphism is given by $\pi_1 * \pi_2 \mapsto \pi_1 \otimes \pi_2$.
\end{Prop}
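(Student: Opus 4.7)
The plan is to reduce the proposition to a direct consequence of the concatenation lemma together with elementary properties of the functions $H_i^\pi$ and $m_i^\pi$. First, I would verify that $\mB_1 * \mB_2$ is stable under the root operators: for $\pi_1 \in \mB_1$ and $\pi_2 \in \mB_2$, formula (\ref{eq: tensor_rule}) shows that $\te_i(\pi_1 * \pi_2)$ equals either $(\te_i \pi_1) * \pi_2$ or $\pi_1 * (\te_i \pi_2)$, each of which lies in $\mB_1 * \mB_2 \cup \{\0\}$ thanks to the closedness hypotheses on $\mB_1, \mB_2$ and the convention $\0 * \pi = \pi * \0 = \0$; the same argument with (\ref{eq: tensor_rule2}) handles $\tf_i$. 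Thus $\mB_1 * \mB_2$ carries a $U_q(\hfg)$-crystal structure via Theorem \ref{Thm: crystal_structure}.

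Next I would check that $\Phi\colon \pi_1 * \pi_2 \mapsto \pi_1 \otimes \pi_2$ is a bijection. Injectivity follows from the fact that in the definition of concatenation the restriction to $[0,1/2]$ and $[1/2,1]$ recover $\pi_1$ and $\pi_2$ respectively (up to the obvious rescaling), so the pair $(\pi_1,\pi_2)$ is uniquely recovered from $\pi_1 * \pi_2$; surjectivity is immediate. It remains to verify that $\Phi$ intertwines the crystal data. The weights match since $\wt(\pi_1 * \pi_2) = (\pi_1 * \pi_2)(1) = \pi_1(1) + \pi_2(1) = \wt(\pi_1) + \wt(\pi_2)$, and the compatibility with $\te_i,\tf_i$ is precisely the content of (\ref{eq: tensor_rule}) and (\ref{eq: tensor_rule2}), which coincide with the tensor product rule in \cite[Definition 4.5.3]{MR1881971}.

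The only remaining point is compatibility with $\gee_i$ and $\gph_i$. For this I would compute the function $H_i^{\pi_1 * \pi_2}$ directly from the definition of concatenation:
\[
  H_i^{\pi_1 * \pi_2}(t) =
  \begin{cases}
    H_i^{\pi_1}(2t) & t \in [0, 1/2], \\
    \langle \wt(\pi_1), \ga_i^\vee\rangle + H_i^{\pi_2}(2t-1) & t \in [1/2, 1].
  \end{cases}
\]
Taking minima and invoking (\ref{eq: map_on_crystal}) gives
\[
  \gee_i(\pi_1 * \pi_2) = \max\bigl(\gee_i(\pi_1),\ \gee_i(\pi_2) - \langle \wt(\pi_1), \ga_i^\vee\rangle\bigr),
\]
and a similar calculation yields $\gph_i(\pi_1 * \pi_2) = \max(\gph_i(\pi_2),\ \gph_i(\pi_1) + \langle \wt(\pi_2), \ga_i^\vee\rangle)$. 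These are exactly the tensor product formulas for $\gee_i$ and $\gph_i$, completing the verification that $\Phi$ is a crystal isomorphism.

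I do not anticipate a serious obstacle here: (\ref{eq: tensor_rule}) and (\ref{eq: tensor_rule2}) are doing essentially all the work, and the verification of the $\gee_i,\gph_i$ formulas is a direct computation from the piecewise description of $H_i^{\pi_1 * \pi_2}$. The one point that requires a touch of care is making sure that the bijection $\Phi$ is genuinely well-defined: since a concatenation is parametrised as a specific path on $[0,1]$ and we do not quotient by reparametrisation (see the Remark after the definition of $\mP$), the pair $(\pi_1, \pi_2)$ is uniquely recovered by restricting to $[0,1/2]$ and $[1/2,1]$.
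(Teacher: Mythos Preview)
Your proposal is correct and follows exactly the paper's approach: the paper simply states that the proposition is implied by the preceding lemma (formulas (\ref{eq: tensor_rule}) and (\ref{eq: tensor_rule2})) without writing out any details, and what you have done is spell out the routine verifications (closure, bijectivity of $\Phi$, and compatibility with $\wt$, $\gee_i$, $\gph_i$) that this implication entails.
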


\begin{Lem}\label{Lem: concatenation}
  Let $\mB_1, \mB_2$ be as in the above proposition, and let $\pi_1 \in \mB_1, \pi_2 \in \mB_2$.
  There exists some $p \in \Z_{\ge 0}$ such that
  \[ \te_i^{p+1} (\pi_1 * \pi_2) = (\te_i \pi_1) * (\te_i^p \pi_2) \ \ \ \text{and} \ \ \ \te_i^p \pi_2 \neq \0.
  \]
\end{Lem}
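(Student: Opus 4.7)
The plan is to exploit the tensor product rule (\ref{eq: tensor_rule}) for $\te_i$ on a concatenation, together with the basic fact that $\gee_i(\te_i^k \pi_2) = \gee_i(\pi_2) - k$ whenever $0 \le k \le \gee_i(\pi_2)$. Set
\[ p = \max\bigl\{0,\ \gee_i(\pi_2) - \gph_i(\pi_1)\bigr\}, \]
which is a non-negative integer, and I claim this is the desired value.

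First I would verify by induction on $k$ that
\[ \te_i^{k}(\pi_1 * \pi_2) = \pi_1 * (\te_i^{k} \pi_2) \quad \text{for every } 0 \le k \le p. \]
The base $k=0$ is trivial. For the inductive step, observe that the second factor $\te_i^{k-1}\pi_2$ has $\gee_i = \gee_i(\pi_2) - (k-1) > \gph_i(\pi_1)$ whenever $k \le p$ (this is exactly the definition of $p$), so (\ref{eq: tensor_rule}) forces $\te_i$ to act on the right factor. Along the way one checks that $\te_i^{k}\pi_2 \neq \0$ because $k \le p \le \gee_i(\pi_2)$; in particular $\te_i^{p}\pi_2 \neq \0$, which gives the second conclusion of the lemma.

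Next I would apply $\te_i$ one more time. At this stage the left factor is still $\pi_1$ and the right factor is $\te_i^{p}\pi_2$, whose $\gee_i$-value equals $\gee_i(\pi_2) - p$. By choice of $p$ we have $\gee_i(\pi_2) - p \le \gph_i(\pi_1)$ (treating the two cases $\gee_i(\pi_2) \ge \gph_i(\pi_1)$ and $\gee_i(\pi_2) < \gph_i(\pi_1)$ separately, noting that in the latter case $p=0$). The first clause of (\ref{eq: tensor_rule}) then applies, giving
\[ \te_i^{p+1}(\pi_1 * \pi_2) = \te_i\bigl(\pi_1 * (\te_i^{p}\pi_2)\bigr) = (\te_i \pi_1) * (\te_i^{p}\pi_2), \]
which is the first conclusion.

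There is essentially no obstacle beyond bookkeeping; the only point to watch is the boundary case $\gph_i(\pi_1) \ge \gee_i(\pi_2)$, where $p=0$ and one directly uses (\ref{eq: tensor_rule}) once to obtain $\te_i(\pi_1 * \pi_2) = (\te_i\pi_1) * \pi_2 = (\te_i\pi_1) * (\te_i^{0}\pi_2)$, with $\te_i^{0}\pi_2 = \pi_2 \neq \0$ trivially.
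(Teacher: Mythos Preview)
Your proof is correct and takes exactly the same approach as the paper: the paper also sets $p = \max\{0,\gee_i(\pi_2) - \gph_i(\pi_1)\}$ and appeals to the tensor rule (\ref{eq: tensor_rule}). You have simply spelled out in detail what the paper leaves as a one-line verification.
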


\begin{proof}
  We can see from (\ref{eq: tensor_rule}) that $p = \max\{0, \gee_i(\pi_2)-\gph_i(\pi_1)\}$ satisfies the assertion. 
\end{proof}

\subsection{Relations between crystal bases and path models}\label{subsection: crystal and bases}

The crystal bases of $U_q(\hfg)$-modules and $U_q'(\hfg)$-modules are 
typical and very important examples of crystals (see \cite[Chapter 4]{MR1881971}).
In this subsection, we review some realizations of crystal bases using path models.

We prepare some notation. 
For a $U_q(\hfg)$-crystal $\mcB$ and an element $b \in \mcB$, 
we denote by $C(b)$ the connected component of $\mcB$ containing $b$,
that is, the subset of $\mcB$ consisting of elements obtained from $b$ by 
applying Kashiwara operators several times. Note that $C(b)$ is a connected $U_q(\hfg)$-crystal.
For $\gl \in \hP$, denote by $\pi_{\gl}$ the straight line path $\pi_\gl(t) = t\gl$,
and write
\[ \mB_0(\gl) = C(\pi_{\gl}).
\]
It is known that $\mB_0(\gl) \subseteq \mPint$ for all $\gl \in \hat{P}$ (\cite[Lemma 4.5 (d) and Corollary 2]{MR1356780}).
It is easily seen from the definition of the root operators that for any $\pi = (\mu_1,\dots, \mu_N;\ul{\gs}) \in \mB_0(\gl)$,
we have $\mu_j \in \hat{W}\gl$ for all $1 \le j \le N$.

It is well-known that the integrable highest weight $U_q(\hfg)$-module $V_q(\gL)$ of highest weight $\gL \in \hP_+$ 
has a crystal basis, which we denote by $\mathcal{B}(\gL)$. 
Let $b_{\gL}$ denote the highest weight element of $\mathcal{B}(\gL)$.
From the construction of $\mcB(\gL)$ (\cite[Chapter 5]{MR1881971}), it follows that
\begin{equation}
  \mcB(\gL) = \{ \tf_{i_1} \cdots \tf_{i_s} b_{\gL} \mid s \ge 0, i_k \in \hI \} \setminus \{0\},
\end{equation}
and
\begin{equation} \label{eq: uniqueness_of_highest}
   \{ b \in \mcB(\gL) \mid \te_ib = 0 \ \text{for all} \ i \in \hI \} = \{b_{\gL}\}.
\end{equation}

\begin{Thm} \label{Thm: crystal_isom} Let $\gL \in \hP_+$. \\
  {\normalfont(i) (\cite[\S 7]{MR1356780})$\bm{.}$} 
    If $\pi \in \mP$ satisfies $m_i^{\pi} = 0$ for
    all $i \in \hI$ and $\pi(1) = \gL$, then
    there exists a unique isomorphism of $U_q(\hfg)$-crystals from $C(\pi)$ to $\mB_0(\gL)$
    which maps $\pi$ to $\pi_{\gL}$. \\
  {\normalfont(ii) (\cite{MR1395599}, \cite{MR1315966})$\bm{.}$} 
    There exists a unique isomorphism of $U_q(\hfg)$-crystals 
    from $\mB_0(\gL)$ to $\mathcal{B}(\gL)$ which maps $\pi_{\gL}$ to $b_{\gL}$.
\end{Thm}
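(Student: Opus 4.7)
The plan is to handle (i) via Littelmann's Isomorphism Theorem for path crystals and (ii) via the Joseph--Kashiwara identification of the path crystal $\mB_0(\gL)$ with the crystal basis $\mcB(\gL)$ of $V_q(\gL)$.

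For (i), I would first dispose of uniqueness. The hypothesis $m_i^{\pi} = 0$ for every $i \in \hI$ forces $\te_i \pi = \0$ for all $i$ (by the definition of $\te_i$), so $\pi$ is annihilated by every raising operator in $C(\pi)$. Since $C(\pi)$ is by definition the connected component containing $\pi$, it is generated from $\pi$ by successive applications of $\tf_i$'s. The same dominance property holds for $\pi_\gL \in \mB_0(\gL)$, since $H_i^{\pi_\gL}(t) = t\langle \gL, \ga_i^\vee\rangle \ge 0$ for $\gL \in \hP_+$; hence $\mB_0(\gL)$ is analogously generated from $\pi_\gL$. Any $U_q(\hfg)$-crystal morphism commutes with each $\tf_i$, so a morphism sending $\pi \mapsto \pi_\gL$ is uniquely determined.

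For the existence in (i), I would invoke Littelmann's Isomorphism Theorem: two connected subcrystals of $\mPint$ generated by dominant paths (paths with $m_i = 0$ for all $i \in \hI$) that share the same endpoint in $\hP_+$ are canonically isomorphic. Proving this theorem from scratch is the technical heart of the matter: one must check, for every word $(i_1,\dots,i_s)$ of indices, that $\tf_{i_1}\cdots\tf_{i_s}\pi = \0$ if and only if $\tf_{i_1}\cdots\tf_{i_s}\pi_\gL = \0$, and that the endpoints of these sequences agree. Littelmann's approach proceeds by induction on the number of linear segments in $\pi$: one writes $\pi = \pi_1 * \pi_2$, compares the crystal structures of $\pi_1 * \pi_2$ and $\pi_\gL$ via the tensor rule (\ref{eq: tensor_rule})--(\ref{eq: tensor_rule2}) and Proposition \ref{Prop: concatenation}, and applies a reflection lemma that controls how Kashiwara operators on two dominant paths of equal endpoint can be matched step by step.

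For (ii), the plan is to construct a $U_q(\hfg)$-crystal morphism $\Phi\colon \mB_0(\gL) \to \mcB(\gL)$ sending $\pi_\gL \mapsto b_\gL$ and then argue bijectivity. Uniqueness is again automatic: both crystals are generated from their highest-weight elements by $\tf_i$'s, and (\ref{eq: uniqueness_of_highest}) pins down $b_\gL$ as the unique highest-weight element of $\mcB(\gL)$. Existence of $\Phi$ reduces to showing that if $\tf_{i_1}\cdots\tf_{i_s}\pi_\gL = \0$ in $\mB_0(\gL)$ then $\tf_{i_1}\cdots\tf_{i_s}b_\gL = 0$ in $\mcB(\gL)$; this is guaranteed by Kashiwara's characterization of $\mcB(\gL)$ as the ``largest'' normal integrable connected highest-weight crystal of highest weight $\gL$ (obtained by embedding into tensor powers of the standard crystals). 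Bijectivity then follows from a character comparison: Littelmann's character formula yields $\sum_{\pi \in \mB_0(\gL)} e(\wt \pi) = \mathrm{ch}\, V_q(\gL)$ via the Weyl--Kac formula, which is also $\sum_{b \in \mcB(\gL)} e(\wt b)$, so the surjection $\Phi$ is necessarily a bijection.

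The main obstacle underlying both parts is the reflection lemma matching Kashiwara strings on dominant paths of equal weight; once this combinatorial tool is in place, both isomorphisms follow by straightforward induction. A secondary but substantial obstacle is establishing Littelmann's character formula for $\mB_0(\gL)$, needed to close the argument in (ii); this is proved independently in \cite{MR1356780} by an analysis of integration over root strings. Both obstacles are classical and handled in the cited references.
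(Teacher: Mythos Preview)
The paper does not give its own proof of this theorem: it is stated purely as a quotation of known results, with part (i) attributed to Littelmann \cite[\S 7]{MR1356780} and part (ii) to Kashiwara \cite{MR1395599} and Joseph \cite{MR1315966}. There is nothing to compare against beyond those citations.

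Your sketch is a faithful outline of what those references actually do. For (i) you correctly identify Littelmann's Isomorphism Theorem as the content, and your uniqueness argument (both $\pi$ and $\pi_\gL$ are highest-weight elements generating their connected components under the $\tf_i$'s) is exactly right. For (ii) your plan---build the map on the highest-weight element, propagate by $\tf_i$'s, and verify bijectivity via the character formula---matches the standard argument. One small caveat: for part (ii) the references \cite{MR1395599} and \cite{MR1315966} establish the isomorphism somewhat differently (Kashiwara via his similarity/embedding techniques for crystal bases, Joseph via his general theory of quantum groups), rather than literally by the surjection-plus-character-count you describe; but your argument is a correct alternative route to the same conclusion and is the one most commonly presented in expository treatments.
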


\begin{Cor}\label{Cor: crystal_isom}
  If $\pi \in \mP$ satisfies $m_i^{\pi} = 0$ for all $i \in \hI$ and $\pi(1) = \gL \in \hP_+$,
  then there exists a unique isomorphism of $U_q(\hfg)$-crystals 
  from $C( \pi)$ to $\mathcal{B}(\gL)$ which maps $\pi$ to $b_{\gL}$.
\end{Cor}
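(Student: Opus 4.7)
The plan is to obtain the corollary as an immediate composition of the two isomorphisms provided by Theorem \ref{Thm: crystal_isom}. First I would apply Theorem \ref{Thm: crystal_isom}(i) to the given path $\pi$: since $m_i^{\pi} = 0$ for all $i \in \hI$ and $\pi(1) = \gL \in \hP_+$, there exists a unique $U_q(\hfg)$-crystal isomorphism $\Phi_1\colon C(\pi) \to \mB_0(\gL)$ sending $\pi$ to $\pi_{\gL}$. Then I would apply Theorem \ref{Thm: crystal_isom}(ii) to obtain the unique $U_q(\hfg)$-crystal isomorphism $\Phi_2\colon \mB_0(\gL) \to \mcB(\gL)$ sending $\pi_{\gL}$ to $b_{\gL}$. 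The composition $\Phi_2 \circ \Phi_1\colon C(\pi) \to \mcB(\gL)$ is then a crystal isomorphism sending $\pi$ to $b_{\gL}$, which establishes existence.

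For uniqueness, the point is that the hypothesis $m_i^{\pi} = 0$ for all $i \in \hI$ forces $\gee_i(\pi) = -m_i^{\pi} = 0$ by (\ref{eq: map_on_crystal}), so $\te_i \pi = \0$ for all $i$; that is, $\pi$ is a highest weight element of $C(\pi)$. Since $C(\pi)$ is by definition connected, every element is of the form $\tf_{i_1} \cdots \tf_{i_s} \pi$ for some sequence in $\hI$. Any crystal morphism $\Psi\colon C(\pi) \to \mcB(\gL)$ with $\Psi(\pi) = b_{\gL}$ must therefore satisfy $\Psi(\tf_{i_1}\cdots \tf_{i_s}\pi) = \tf_{i_1}\cdots \tf_{i_s} b_{\gL}$, which determines $\Psi$ completely on $C(\pi)$. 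Hence uniqueness follows as well.

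There is essentially no obstacle here; the corollary is a formal composition of the two theorems cited, and the only small observation needed is that the hypothesis $m_i^{\pi} = 0$ is precisely the condition making $\pi$ a highest weight element of its connected component, which both guarantees the applicability of part (i) and yields uniqueness from the fact that a connected highest weight crystal is generated from its highest weight element.
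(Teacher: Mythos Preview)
Your proposal is correct and is exactly the argument the paper has in mind: the corollary is stated without proof precisely because it is the obvious composition of parts (i) and (ii) of Theorem~\ref{Thm: crystal_isom}. Your added remark on uniqueness is fine, though note it follows even more directly from the uniqueness already asserted in each part of the theorem (if $\Psi$ is any such isomorphism then $\Phi_2^{-1}\circ\Psi$ must equal $\Phi_1$).
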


\begin{Rem} \normalfont
  It is known that $\mB_0(\gL)$ with $\gL \in \hP_+$ coincides with the set $\mB(\gL)$ of all 
  Lakshmibai-Seshadri paths (LS paths for short) of shape $\gL$ (\cite{MR1356780}).
  We do not recall the definition of LS paths here, since we do not need it in this article.
\end{Rem}

Let $\hP_{\cl} = \hP/\Z \gd$, and denote the canonical projection $\hP \to \hP_{\cl}$ by $\cl$.
Note that $\hP_{\cl}$ is the weight lattice of $U_q'(\hfg)$.
A \textit{path with weight in $\hP_{\cl}$} is a piecewise linear, continuous map 
$\eta\colon [0,1] \to \R \otimes_{\Z} \hP_{\cl}$ 
such that $\eta(0) = 0$, $\eta(1) \in \hP_{\cl}$.
Let $\mP_{\cl}$ denote the set consisting of paths with weights in $\hP_{\cl}$.
We define an expression  of $\eta \in \mP_{\cl}$ similarly as that of a path with weight in $\hP$.
For $\xi \in \hP_{\cl}$, denote by $\eta_{\xi} \in \mP_{\cl}$ the straight line path
$\eta_{\xi} (t) = t \xi$.

For $\pi \in \mP$, $\cl (\pi) \in \mP_{\cl}$ is defined by $\cl(\pi)(t) = \cl\big(\pi(t)\big)$ for $t \in [0,1]$.
For $\gl \in P_+ ( \subseteq \hP)$, we define a subset $\mB(\gl)_\cl \subseteq \mP_{\cl}$ by
\[ \mB(\gl)_{\cl} = \{ \cl(\pi) \mid \pi \in \mB_0(\gl) \},
\]
which is known to be a finite set.
Now we define a $U_q'(\hfg)$-crystal structure on $\mB(\gl)_{\cl}$.
Define a weight map $\wt\colon \mB(\gl)_{\cl} \to \hP_{\cl}$ by $\wt(\eta) = \eta(1)$, root operators 
$\te_i, \tf_i\colon \mB(\gl)_{\cl} \to \mB(\gl)_{\cl} \cup \{ \0 \}$ for $i \in \hI$ by
\[ \te_i \big(\cl(\pi)\big) = \cl(\te_i\pi), \ \ \ \tf_i\big(\cl(\pi)\big) = \cl(\tf_i\pi),
\]
where $\cl(\0)$ is understood as $\0$,
and $\gee_i,\gph_i\colon \mB(\gl)_\cl \to \Z_{\ge 0}$ for $i \in \hI$ by 
$\gee_i\big(\cl(\pi)\big) = \gee_i(\pi)$, $\gph_i\big(\cl(\pi)\big) = \gph_i(\pi)$.
These maps are all well-defined, and $\mB(\gl)_{\cl}$ together with these maps becomes a finite $U_q'(\hfg)$-crystal 
(\cite[\S 3.3]{MR2199630}).

\begin{Rem} \normalfont
  For $\gl \in P_+$, $\mB_0(\gl)$ does not necessarily coincide with the set $\mB(\gl)$ of all LS paths of shape $\gl$.
  However, it is known that the set $\{ \cl (\pi) \mid \pi \in \mB(\gl)\}$ coincides with $\mB(\gl)_{\cl}$ 
  defined above (\cite[Lemma 4.5 (1)]{MR2407814}).
  This is why we use the notation `$\mB(\gl)_{\cl}$' in stead of `$\mB_0(\gl)_{\cl}$'.
\end{Rem}

In \cite[\S 5.2]{MR1890649}, Kashiwara introduced a finite-dimensional irreducible integrable $U_q'(\hfg)$-module
$W_q(\varpi_i)$ for each $i \in I$ called a \textit{level zero fundamental representation}, 
and proved that it has a crystal basis.
We denote the crystal basis of $W_q(\varpi_i)$ by $\mathcal{B}\big(W_q(\varpi_i)\big)$.
The following facts were verified by Naito and Sagaki:

\begin{Thm} \label{Thm: isom_of_finite_crystals}
  {\normalfont (i) (\cite[Theorem 3.2]{MR2146858})$\bm{.}$}
    Let $\gl = \sum_{i \in I} \gl_i \varpi_{i} \in P_+$.
    Then there exists a unique isomorphism of $U_q'(\hfg)$-crystals
    from $\mB(\gl)_{\cl}$ to $\bigotimes_{i \in I} \mB(\varpi_i)_{\cl}^{\otimes{\gl_i}}$ 
    {\normalfont(}which does not depend on the choice of the ordering of the tensor factors up to isomorphism{\normalfont)}. \\
  {\normalfont (ii) (\cite[Corollary of Theorem 1]{MR2199630})$\bm{.}$} For all $i \in I$,
    $\mB(\varpi_i)_{\cl}$ is isomorphic to $\mcB\big(W_q(\varpi_i)\big)$ as a $U_q'(\hfg)$-crystal. 
\end{Thm}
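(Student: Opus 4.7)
The plan is to establish (ii) first and then deduce (i) by a concatenation-tensor product argument. For (ii), I would try to verify that $\mB(\varpi_i)_{\cl}$ satisfies Kashiwara's axiomatic characterization of $\mcB\bigl(W_q(\varpi_i)\bigr)$ as a simple level-zero $U_q'(\hfg)$-crystal: namely, that it is a finite connected $U_q'(\hfg)$-crystal whose $\hP_{\cl}$-weight set lies in $W\cl(\varpi_i)$, whose $\cl(\varpi_i)$-weight space is one-dimensional and spanned by $\eta_{\cl(\varpi_i)} = \cl(\pi_{\varpi_i})$, and such that the Weyl group action $S_w$ from Theorem \ref{Thm: action_of_Weyl} sends this distinguished element to an element of $\cl$-weight $w\cl(\varpi_i)$ for every $w \in \hW$. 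Finiteness is given by definition. Connectedness and the weight-set description would follow from the corresponding statements for $\mB_0(\varpi_i)$ (every vertex of an LS path of shape $\varpi_i$ lies in $\hW\varpi_i$) together with the fact that $\cl$ commutes with root operators. The one-dimensionality and extremality would be checked by directly inspecting the LS paths ending in $\varpi_i + \Z\gd$.

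For (i), I would exploit the concatenation-tensor product correspondence of Proposition \ref{Prop: concatenation} (in its $\cl$-version). Fix an ordering $\varpi_{i_1}, \ldots, \varpi_{i_N}$ of the multiset of fundamental weights with $\sum_k \varpi_{i_k} = \gl$ and consider the concatenation
\[ \cl(\pi_{\varpi_{i_1}}) * \cl(\pi_{\varpi_{i_2}}) * \cdots * \cl(\pi_{\varpi_{i_N}}) \in \mB(\varpi_{i_1})_{\cl} * \cdots * \mB(\varpi_{i_N})_{\cl}. \]
Iterated application of Proposition \ref{Prop: concatenation} identifies this concatenation crystal with $\mB(\varpi_{i_1})_{\cl} \otimes \cdots \otimes \mB(\varpi_{i_N})_{\cl}$ as a $U_q'(\hfg)$-crystal. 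On the other hand, since the concatenation $\pi_{\varpi_{i_1}} * \cdots * \pi_{\varpi_{i_N}}$ has endpoint $\gl$, a careful use of Theorem \ref{Thm: crystal_isom}(i) and Corollary \ref{Cor: crystal_isom} (applied after shifting into a dominant chamber via the addition of a sufficiently large multiple of $\gL_0$) would identify the connected component generated by the concatenation, after projection by $\cl$, with $\mB(\gl)_{\cl}$. Combining with (ii) then yields the desired morphism into $\bigotimes_{i \in I} \mcB\bigl(W_q(\varpi_i)\bigr)^{\otimes \gl_i}$, and ordering-independence follows from the existence of crystal commutors between level-zero fundamental crystals, which are available once (ii) is known.

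The main obstacle, I expect, is the combinatorial lifting in (i): showing that every tensor $\eta_1 \otimes \cdots \otimes \eta_N$ with $\eta_k \in \mB(\varpi_{i_k})_{\cl}$ arises as the $\cl$-image of an honest LS path of shape $\gl$, so that the map constructed above is surjective and not merely an embedding. Injectivity is much softer since the cumulative weights together with Lemma \ref{Lem: concatenation} recover each tensor factor from its concatenation. Surjectivity, however, amounts to a ``shape decomposition'' theorem for $\cl$-LS paths and seems to require delicate use of Chevalley-type arguments in the Bruhat order on $\hW/W_J$ for suitable parabolic subgroups $W_J$. Likewise, for (ii) the matching of path-crystal data against the global-basis data of $\mcB\bigl(W_q(\varpi_i)\bigr)$ may require a type-by-type analysis in the non-simply laced setting, since the combinatorics of $\mB_0(\varpi_i)$ differs significantly according to whether $\varpi_i$ corresponds to a short-root or a long-root node.
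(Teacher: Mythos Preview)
The paper does not contain a proof of this theorem: both parts are quoted directly from the literature (Naito--Sagaki \cite{MR2146858} for (i) and \cite{MR2199630} for (ii)), so there is no argument in the paper to compare your proposal against.

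That said, a few comments on your sketch relative to the actual proofs in those references. For (ii), the overall strategy---verifying Kashiwara's axiomatic characterization of a \emph{simple} level-zero crystal---is indeed what Naito--Sagaki do. However, your description of the criterion is not quite right: the $\hP_{\cl}$-weight set of $\mcB\bigl(W_q(\varpi_i)\bigr)$ is \emph{not} contained in $W\cl(\varpi_i)$ in general (for non-minuscule $\varpi_i$ there are interior weights), so that condition as stated would fail. The correct characterization is that the crystal is regular, connected, has $\#\mB(\gl)_{\cl,\cl(\varpi_i)}=1$, and the weight of every extremal element lies in $W\cl(\varpi_i)$; this is what one checks.

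For (i), your idea of shifting by a multiple of $\gL_0$ to land in the dominant chamber and then invoking Theorem~\ref{Thm: crystal_isom} does not work: adding $\gL_0$ changes the level and hence the entire crystal, and there is no clean way to ``remove'' it afterwards at the level of $\cl$-paths. Naito--Sagaki instead work directly with LS paths of level-zero shape and prove the concatenation decomposition by a careful analysis of the $a$-chain condition defining LS paths; the surjectivity you flag as the main obstacle is exactly the technical heart of \cite{MR2146858}. Your instinct that this is where the difficulty lies is correct, but the route you propose to get there would not succeed.
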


\subsection{Degree function on $\mB(\gl)_{\cl}$}\label{Degree function}

For $\pi = (\mu_1,\dots,\mu_N; \ul{\gs}) \in \mP$, we call $\mu_1 \in \R \otimes_{\Z} \hP$
the initial direction of $\pi$ (which does not depend on the choice of an expression),
and denote the initial direction of $\pi$ by $\gi(\pi)$.
The initial direction of $\eta \in \mP_{\cl}$ is defined similarly, and denoted by $\gi(\eta) \in \R \otimes _\Z\hP_\cl$.
Note that $\iota\big(\cl(\pi)\big) = \cl\big(\iota(\pi)\big)$ holds.

For $\gl \in P_+$, let $d_{\gl}$ be the nonnegative integer satisfying 
\[ \hW \gl \cap (\gl + \Z \gd) = \gl + d_{\gl} \Z \gd.
\]
Then we have 
\begin{equation}\label{eq: id_of_Weyl_gp}
   \hW \gl = W \gl + d_{\gl}\Z  \gd.
\end{equation}

\begin{Lem} \label{Lem: contain_wt}
  Let $\gl \in P_+$. \\
  {\normalfont(i)} For any $\pi \in \mB_0(\gl)$, we have $\wt(\pi) \in \gl - Q_+ + \Z \gd$. \\
  {\normalfont(ii)} For any $\eta \in \mB(\gl)_{\cl}$, we have $\wt(\eta) \in \cl(\gl -Q_+)$.
\end{Lem}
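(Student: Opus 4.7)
The plan is to derive (i) from the explicit convex-combination description of $\wt(\pi)$ noted right before the lemma, together with the characterization $\hW\gl = W\gl + d_{\gl}\Z\gd$ from (\ref{eq: id_of_Weyl_gp}), and then to deduce (ii) by applying $\cl$.

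First I would set up the ambient lattice decomposition. Using the relations $\gL_i = \varpi_i + a_i^{\vee}\gL_0$ for $i \in I$ and $\ga_0 = \gd - \gt$, one checks that $\hP = P \oplus \Z\gL_0 \oplus \Z\gd$ and $\hat Q := \sum_{i\in\hI}\Z\ga_i \subseteq P + \Z\gd$ (i.e., $\hat Q$ has no $\gL_0$-component). Since $\pi_{\gl}$ has weight $\gl \in P$ and each root operator $\te_i,\tf_i$ $(i\in\hI)$ changes the weight by $\pm\ga_i$, every $\pi \in \mB_0(\gl) = C(\pi_{\gl})$ satisfies $\wt(\pi) \in \gl + \hat Q \subseteq P + \Z\gd$. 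In particular we may write $\wt(\pi) = A + n\gd$ with $A \in \gl + Q \subseteq P$ and $n \in \Z$.

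Next I would apply the observation recalled in Subsection~5.2: for any expression $\pi = (\mu_1,\dots,\mu_N;\ul{\mathstrut \gs})$ with $\pi \in \mB_0(\gl)$, one has $\mu_j \in \hW\gl$ for every $j$. By (\ref{eq: id_of_Weyl_gp}) this lets me write $\mu_j = w_j\gl + c_j\gd$ with $w_j \in W$ and $c_j \in d_{\gl}\Z$. Since $\gl \in P_+$, the standard fact $w_j\gl \le \gl$ gives $\mu_j = \gl - \gb_j + c_j\gd$ with $\gb_j \in Q_+$. Then
\[
   \wt(\pi) = \sum_{j=1}^{N}(\gs_j - \gs_{j-1})\mu_j
             = \gl - \sum_j (\gs_j - \gs_{j-1})\gb_j + \Bigl(\sum_j (\gs_j - \gs_{j-1})c_j\Bigr)\gd.
\]
Comparing with the decomposition $\wt(\pi) = A + n\gd$ from the previous step, the classical piece $\sum_j(\gs_j - \gs_{j-1})\gb_j$ lies in $(\R_{\ge 0}\otimes Q_+) \cap Q$. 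By the linear independence of the simple roots $\{\ga_i\}_{i \in I}$ and the non-negativity of the coefficients, this intersection equals $Q_+$. Hence $\wt(\pi) \in \gl - Q_+ + \Z\gd$, proving (i).

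For (ii), since $\wt(\cl(\pi)) = \cl(\pi(1)) = \cl(\wt(\pi))$ and $\cl(\Z\gd) = 0$, part (i) gives $\wt(\eta) \in \cl(\gl - Q_+)$ for every $\eta = \cl(\pi) \in \mB(\gl)_{\cl}$. I do not anticipate a substantial obstacle here; the only step that requires a small argument is the identification $(\R_{\ge 0}\otimes Q_+)\cap Q = Q_+$, which is immediate from the linear independence of simple roots, and the bookkeeping that shows $\hat Q$ has no $\gL_0$-component so that the classical part of $\wt(\pi)$ is well-defined and lies in $\gl + Q$.
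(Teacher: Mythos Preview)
Your proof is correct and follows essentially the same route as the paper: both arguments write $\wt(\pi)$ as the convex combination $\sum_j(\gs_j-\gs_{j-1})\mu_j$ with $\mu_j \in \hW\gl \subseteq W\gl + \Z\gd \subseteq \gl - Q_+ + \Z\gd$, observe separately that $\wt(\pi) \in \gl + Q + \Z\gd$ from the definition of $\mB_0(\gl)$ via root operators, and intersect to conclude (i); part (ii) follows by applying $\cl$. Your version is slightly more explicit about the lattice decomposition and the identification $\bigl(\sum_{i\in I}\R_{\ge 0}\ga_i\bigr)\cap Q = Q_+$, but the argument is the same.
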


\begin{proof}
  Let $(\mu_1,\dots,\mu_N; \ul{\gs})$ be an expression of $\pi$. 
  Since $\mu_j \in \hW\gl \subseteq W \gl + \Z \gd \subseteq \gl - Q_+ + \Z \gd$ for each $j$, we have
  \[ \wt(\pi)=\sum_{p = 1}^N (\gs_p - \gs_{p-1})\mu_p \in \gl - \sum_{i \in I} \R_{\ge 0} \ga_i + \R\gd.
  \]
  On the other hand, we have $\wt(\pi) \in \gl + \sum_{i \in \hI}\Z \ga_i = \gl + Q + \Z\gd$ 
  by the definition of $\mB_0(\gl)$.
  Hence (i) follows. The assertion (ii) obviously follows from (i).
\end{proof}

\begin{Lem} \label{Lem: lem_for_degree_fucnction}
  Let $\eta \in \mB(\gl)_{\cl}$. \\
  {\normalfont(i)} For arbitrary $\pi \in \mB_0(\gl) \cap \cl^{-1}(\eta)$, we have
    \[ \mB_0(\gl) \cap \cl^{-1}(\eta) = \{ \pi + \pi_{kd_{\gl}\gd} \mid k \in \Z\}.
    \]
  {\normalfont(ii)} Assume $\mu \in \hW\gl$ satisfies $\cl(\mu) = \iota(\eta)$.
    Then there exists a unique element $\pi' \in \mB_0(\gl) \cap \cl^{-1} (\eta)$ such that $\gi(\pi') = \mu$.
\end{Lem}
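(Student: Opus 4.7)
The plan is to prove (i) first and then deduce (ii) from it; the heart of (i) is to show that the shift operator $T_k\colon\pi\mapsto\pi+\pi_{kd_{\gl}\gd}$ acts as an automorphism of $\mB_0(\gl)$ whose orbits are precisely the fibers of $\cl\colon\mB_0(\gl)\to\mB(\gl)_{\cl}$. For the inclusion $\supseteq$, two observations suffice: first, since $\langle\gd,\ga_i^{\vee}\rangle=0$ for every $i\in\hI$ (including $i=0$), we have $H_i^{T_k\pi}=H_i^{\pi}$, so $T_k$ commutes with every root operator $\te_i$ and $\tf_i$; and second, $T_k\pi_{\gl}=\pi_{\gl+kd_{\gl}\gd}$ lies in $\mB_0(\gl)=C(\pi_{\gl})$, which one verifies by choosing $w\in\hW$ with $w\gl=\gl+kd_{\gl}\gd$ (available by the definition of $d_{\gl}$) and arguing by induction on $\ell(w)$ that $S_w\pi_{\gl}=\pi_{w\gl}$, using Lemma \ref{Lem: action_of_Weyl} together with the fact that $H_i^{\pi_{\mu}}(t)=t\langle\mu,\ga_i^{\vee}\rangle$ is monotone for every $\mu$. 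Combining these, $T_k$ preserves $\mB_0(\gl)$, and $\cl\circ T_k=\cl$ since $\cl(\pi_{kd_{\gl}\gd})=0$, yielding $\supseteq$.

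For the reverse inclusion $\subseteq$, take $\pi,\pi'\in\mB_0(\gl)\cap\cl^{-1}(\eta)$ and write both on a common refined partition as $\pi=(\mu_1,\dots,\mu_L;\ul{\tau})$ and $\pi'=(\mu'_1,\dots,\mu'_L;\ul{\tau})$ with all $\mu_j,\mu'_j\in\hW\gl$. Equation \eqref{eq: id_of_Weyl_gp}, together with the observation that $W\gl$ has zero $\gd$-coefficient, yields the disjoint decomposition $\hW\gl=\bigsqcup_{k\in\Z}(W\gl+kd_{\gl}\gd)$, so each $\mu_j$, resp.\ $\mu'_j$, admits a unique expression $w_j\gl+k_jd_{\gl}\gd$, resp.\ $w'_j\gl+k'_jd_{\gl}\gd$. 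The equality $\cl(\mu_j)=\cl(\mu'_j)$ forces $w_j\gl=w'_j\gl$, whence $\mu'_j-\mu_j=(k'_j-k_j)d_{\gl}\gd$. The critical remaining point, and the main obstacle of the proof, is to show that $k'_j-k_j$ is independent of $j$: I plan to establish this by induction on the length of a sequence of Kashiwara operators carrying $\pi_{\gl}$ to $\pi$, tracking at each step how a single application of $\te_i$ or $\tf_i$ reshapes the slope sequences of $\pi$ and of any companion path in $\cl^{-1}(\cl(\pi))\cap\mB_0(\gl)$ via the explicit formulas from Subsection \ref{Def_of_path}, and verifying that constancy of the offset is preserved at each step. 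Granted this, $\pi'-\pi=\pi_{(k'_1-k_1)d_{\gl}\gd}$, completing (i).

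Both halves of (ii) then follow from (i). For existence, fix any $\pi\in\mB_0(\gl)\cap\cl^{-1}(\eta)$, non-empty by the hypothesis $\eta\in\mB(\gl)_{\cl}$; decomposing $\iota(\pi)=w\gl+kd_{\gl}\gd$ and $\mu=w'\gl+k'd_{\gl}\gd$ and applying $\cl$ forces $w\gl=w'\gl$, so $\mu-\iota(\pi)=(k'-k)d_{\gl}\gd$, and $\pi':=T_{k'-k}\pi\in\mB_0(\gl)\cap\cl^{-1}(\eta)$ by (i), with $\iota(\pi')=\mu$. For uniqueness, any other such $\pi''$ equals $\pi'+\pi_{k''d_{\gl}\gd}$ for some $k''\in\Z$ by (i), and comparing initial directions forces $k''=0$.
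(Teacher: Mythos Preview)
Your argument for (ii) is the same as the paper's. For (i), the paper does not give a proof at all: it simply cites \cite[Lemma 4.5]{MR2146858}. So you are attempting strictly more than the paper does, and the comparison is really between your sketch and the cited reference.

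Your $\supseteq$ inclusion is fine, and in fact once you have established that $T_k$ commutes with every $\te_i,\tf_i$, the inductive step for $\subseteq$ is much cleaner than your description suggests: if the claim holds for $\pi$ and $\pi''\in\mB_0(\gl)\cap\cl^{-1}\big(\cl(\tf_i\pi)\big)$, then $\te_i\pi''\neq\0$ (since $\gee_i$ depends only on $\cl$), $\te_i\pi''\in\cl^{-1}\big(\cl(\pi)\big)$, hence $\te_i\pi''=T_k\pi$ for some $k$ by induction, and $\pi''=\tf_iT_k\pi=T_k\tf_i\pi$. There is no need to ``track how the slope sequences reshape''.

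The genuine gap is the base case of your induction, which you do not mention: you must show that $\mB_0(\gl)\cap\cl^{-1}(\eta_{\cl(\gl)})=\{\pi_{\gl+kd_\gl\gd}\mid k\in\Z\}$. Any $\pi'$ in this fiber has every direction lying in $\hW\gl\cap\cl^{-1}\big(\cl(\gl)\big)=\gl+d_\gl\Z\gd$ by \eqref{eq: id_of_Weyl_gp}, so $\pi'=(\gl+k_1d_\gl\gd,\dots,\gl+k_Nd_\gl\gd;\ul{\gs})$; but proving that $k_1=\cdots=k_N$ from the bare definition of $\mB_0(\gl)$ as a connected component is not obvious, and nothing in your outline addresses it. Since $H_i^{\pi'}=H_i^{\pi_\gl}$ for all $i\in\hI$, the root operators cannot directly detect the individual $k_j$'s, so one needs additional structural input. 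The reference \cite{MR2146858} supplies this via the explicit Lakshmibai--Seshadri description of the paths in $\mB_0(\gl)$, which gives control over the directions that your argument lacks. As written, your proof of (i) is incomplete at exactly the point where the content lies.
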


\begin{proof}
  The assertion (i) is just \cite[Lemma 4.5]{MR2146858}.
  Let us prove the assertion (ii). 
  Let $\pi$ be an arbitrary element of $\mB_0(\gl) \cap \cl^{-1}(\eta)$.
  Since $\cl\big(\iota(\pi)\big) = \iota(\eta) = \cl(\mu)$ and $\gi(\pi) \in \hat{W}\gl$, 
  there exists some $s \in \Z$ such that $\iota(\pi) = \mu + sd_{\gl}\gd$ by (\ref{eq: id_of_Weyl_gp}).
  Then by (i), $\pi - \pi_{sd_{\gl} \gd}$ is the unique element of $\mB_0(\gl) \cap \cl^{-1}(\eta)$ 
  whose initial direction is $\mu$.  
  The assertion is proved.
\end{proof}

Now we recall the definition of the degree function on $\mB(\gl)_{\cl}$ $(\gl \in P_+)$ introduced in \cite{MR2474320}.
Let $i_{\cl}\colon \hP_{\cl} \to \hP$ be the unique section of $\cl$ satisfying $i_{\cl}(\hP_\cl) = P + \Z \gL_0$.
For $\eta \in \mB(\gl)_{\cl}$, we denote by $\pi_{\eta}$ 
the element in $\mB_0(\gl) \cap \cl^{-1}(\eta)$ such that $\iota(\pi_{\eta}) \in W \gl$, which is unique by 
Lemma \ref{Lem: lem_for_degree_fucnction} (ii),
and define $\Deg(\eta) \in \Z$ by an integer satisfying
\[ \wt(\pi_{\eta}) = i_{\cl} \circ \wt(\eta) - \gd \Deg(\eta).
\]
We call $\Deg(\eta)$ the \textit{degree} of $\eta$.
By \cite[Lemma 3.1.1]{MR2474320}, $\Deg(\eta) \le 0$ follows for all $\eta \in \mB(\gl)_{\cl}$.

\begin{Rem}\label{Rem: energy_function} \normalfont
  The main theorem in \cite{MR2474320} says that the degree function on $\mB(\gl)_{\cl}$ 
  where $\gl = \sum_{i \in I} \gl_i \varpi_i$ can be 
  identified with the energy function (see \cite{MR1903978}, \cite{MR1745263}) 
  on $\bigotimes_{i \in I} \mcB(W_q(\varpi_i))^{\otimes\gl_i}$
  up to some constant through the isomorphism given in Theorem \ref{Thm: isom_of_finite_crystals}.
\end{Rem}

For $\eta \in \mB(\gl)_{\cl}$, define
\[ \wt_{\hP} (\eta)= \wt(\pi_{\eta}) = i_{\cl} \circ \wt (\eta) - \gd \Deg(\eta) \in \hP.
\]
We call $\wt_{\hP}(\eta)$ the \textit{$\hP$-weight} of $\eta$.

\begin{Rem} \label{Rem: remark_of_character}\normalfont
  By the definition of $\pi_{\eta}$, we have
  \[ \{ \pi_{\eta} \mid \eta \in \mB(\gl)_{\cl} \} = \{ \pi \in \mB_0(\gl) \mid \iota(\pi) \in P \}.
  \]
  Hence it follows that
  \[ \sum_{\eta \in \mB(\gl)_{\cl}} e\big(\wt_{\hP}(\eta)\big)
     =\sum_{\eta \in \mB(\gl)_{\cl}} e\big(\wt(\pi_{\eta})\big) =
     \sum_{\begin{smallmatrix} \pi \in \mB_0(\gl) \\ \iota(\pi) \in P \end{smallmatrix}} e\big(\wt(\pi)\big).
  \]
\end{Rem}

\begin{Lem} \label{Lem: lemma_of_pi}
  The following statements hold for every $\eta \in \mB(\gl)_{\cl}$, where $\pi_{\0}$ is understood as $\0$.\\
  {\normalfont(i)} We have $\te_i \pi_{\eta} = \pi_{\te_i\eta}$ and $\tf_i\pi_{\eta} = \pi_{\tf_i\eta}$ for $i \in I$.\\
  {\normalfont(ii)} If $\te_0 \eta \neq \0$, then we have $\tf_0\pi_{\eta} = \pi_{\tf_0\eta}$.
\end{Lem}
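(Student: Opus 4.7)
The plan is to show that $\te_i \pi_\eta$ (resp.\ $\tf_i \pi_\eta$) already lies in $\mB_0(\gl) \cap \cl^{-1}(\te_i\eta)$ (resp.\ $\cl^{-1}(\tf_i\eta)$) and has initial direction in $W\gl$, so that it must coincide with $\pi_{\te_i\eta}$ (resp.\ $\pi_{\tf_i\eta}$) by the uniqueness in Lemma \ref{Lem: lem_for_degree_fucnction}(ii).

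First I would observe that for every $i \in \hI$, the pairing $\langle \gd, \ga_i^\vee \rangle = 0$ implies that $H_i^\pi(t)$ depends only on $\cl(\pi)$; in particular $m_i^\pi$, $t_0$ and $t_1$ are intrinsic to $\eta = \cl(\pi)$. Together with the fact that $\cl$ intertwines $s_i$ with the corresponding reflection on $\hP_\cl$, this yields $\cl(\te_i\pi) = \te_i\cl(\pi)$ and $\cl(\tf_i\pi) = \tf_i\cl(\pi)$ (with the convention $\cl(\0)=\0$), and also that $\te_i\pi_\eta = \0$ iff $\te_i\eta = \0$, etc. Hence $\te_i\pi_\eta$ projects to $\te_i\eta$ and lies in $\mB_0(\gl)$, and the only remaining thing to check is that its initial direction stays in $W\gl$; the cases where both sides are $\0$ being handled automatically.

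The core of the proof is then a case analysis based on the point $t_0$ appearing in the definition of the root operator. If $t_0 > 0$, the root operator fixes $\pi_\eta$ on $[0,t_0]$ and hence preserves the initial direction $\iota(\pi_\eta)\in W\gl$. If $t_0 = 0$, the root operator acts on the initial segment by $s_i$, so $\iota(\te_i\pi_\eta) = s_i\iota(\pi_\eta)$ (and analogously for $\tf_i$). For $i \in I$ this is harmless, since $s_i \in W$ and therefore $s_i\iota(\pi_\eta) \in W\gl$; this settles (i). For $i = 0$, however, $s_0 \notin W$, so I must rule out the case $t_0 = 0$. Here I use the hypothesis $\te_0\eta \neq \0$ of part (ii): this forces $m_0^{\pi_\eta} \le -1$, whereas $H_0^{\pi_\eta}(0) = 0$, so $0$ does not lie in the set $\{t \mid H_0^{\pi_\eta}(t) = m_0^{\pi_\eta}\}$ whose maximum defines $t_0$ for $\tf_0$. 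Consequently $t_0 > 0$, the initial direction is unchanged, and $\tf_0\pi_\eta = \pi_{\tf_0\eta}$ follows. The only mild subtlety is the $t_0 = 0$ case of part (i), but as just noted the reflection keeps us in $W\gl$, so no real obstacle arises.
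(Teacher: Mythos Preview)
Your proof is correct and follows essentially the same approach as the paper's: in both cases the key is to check that the initial direction of $\te_i\pi_\eta$ (resp.\ $\tf_i\pi_\eta$) remains in $W\gl$, and then invoke the uniqueness defining $\pi_{\eta'}$. For part (ii) the paper cites \cite[Lemma 5.3]{MR1253196} to conclude that $\iota(\tf_0\pi_\eta) = \iota(\pi_\eta)$, whereas you supply the direct argument (the hypothesis $\te_0\eta \neq \0$ forces $m_0^{\pi_\eta} \le -1$, hence $t_0 > 0$); this is the same observation made self-contained.
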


\begin{proof}
  (i) By the definition of the root operator, we have $\iota(\te_i\pi_{\eta}) = \iota(\pi_\eta)$ or
  $\iota(\te_i\pi_{\eta})=s_i \iota(\pi_{\eta}).$
  Hence $\iota(\te_i\pi_{\eta}) \in W\gl$ follows. 
  Then since 
  \[ \cl(\te_i\pi_{\eta}) = \te_i\cl(\pi_{\eta}) = \te_i \eta,
  \] 
  $\te_i\pi_{\eta} = \pi_{\te_i\eta}$ holds by definition.
  The proof of $\tf_i \pi_{\eta} = \pi_{\tf_i \eta}$ is similar.
  (ii) If $\tf_0 \eta = \0$, (ii) follows since $\tf_0 \eta = \cl( \tf_0 \pi_{\eta})$. 
  Assume $\tf_0 \eta \neq \0$, which also implies $\tf_0 \pi_\eta \neq \0$.
  Then it follows from the assumption and \cite[Lemma 5.3]{MR1253196} that 
  \[ \iota(\tf_0\pi_{\eta}) = \iota(\pi_{\eta}) \in W \gl.
  \]
  From this and $\cl (\tf_0\pi_{\eta}) = \tf_0 \eta$, $\tf_0 \pi_{\eta} = \pi_{\tf_0 \eta}$ holds by definition.
\end{proof}

\section{Decomposition of $\mcB(\gL) \otimes \mB_0(\gl)$}

Throughout this section, we assume that $\gL \in \hP_+ \setminus \Z \gd$ and $\gl \in P_+$.
Our goal in this section is to show that  $\mcB(\gL) \otimes \mB_0(\gl)$ is isomorphic 
to a direct sum of the crystal bases of integrable highest weight $U_q(\hfg)$-modules.
This result is motivated by the tensor product rule in \cite{MR1356780} (see also \cite{MR2015316}).

\subsection{Technical lemmas}
Here we prepare several technical lemmas.

\begin{Lem} \label{Lem: Technical_Lemma}
  Let $u$ be a real number such that $0< u \le 1$, and assume that $\pi \in \mB_0(\gl)$ satisfies 
  \begin{equation} \label{eq: assumption}
    H_i^{\pi}(t) \ge - \langle \gL,\ga_i^{\vee}\rangle \ \ \ \text{for all} \ t \in [0,u], \ i \in \hI.
  \end{equation}
  {\normalfont(i)}
  For any $i \in \hI$ such that $\te_i(\pi_{\gL} * \pi) \neq \0$, we have $\te_i\pi \neq \0$ and
  \begin{equation} \label{eq: identity}
    \te_i \pi(t) = \pi(t)  \ \ \ \text{for all} \ t \in [0,u].
  \end{equation}
  {\normalfont(ii)}
  For any sequence $i_1, \dots, i_k$ of elements of $\hI$ such that $\te_{i_1} \cdots \te_{i_k}(\pi_{\gL} * \pi) \neq \0$,
  we have $\te_{i_1} \cdots \te_{i_k} \pi \neq \0$ and 
  \[ \te_{i_1} \cdots \te_{i_k} \pi(t) = \pi(t) \ \ \ \text{for all} \ t \in [0,u].
  \]
\end{Lem}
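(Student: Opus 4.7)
The plan is to deduce (i) directly from the tensor product rule for concatenations together with Lemma~\ref{Lem: root_operator}(i), and then obtain (ii) by a routine induction on $k$.

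For (i), the key observation is that since $\gL$ is dominant integral, the straight-line path $\pi_{\gL}$ satisfies $H_i^{\pi_{\gL}}(t) = t\langle \gL, \ga_i^{\vee}\rangle \ge 0$ on $[0,1]$ for every $i \in \hI$, so $m_i^{\pi_{\gL}} = 0$ and hence $\te_i\pi_{\gL} = \0$. The tensor product rule (\ref{eq: tensor_rule}) then forces
\[
  \te_i(\pi_{\gL} * \pi) \;=\; \pi_{\gL} * (\te_i \pi)
\]
whenever the left-hand side is nonzero, which in turn requires $\te_i\pi \neq \0$ and
\[
  \gph_i(\pi_{\gL}) \;=\; \langle \gL, \ga_i^{\vee}\rangle \;<\; \gee_i(\pi) \;=\; -m_i^{\pi},
\]
equivalently $m_i^{\pi} + 1 \le -\langle \gL, \ga_i^{\vee}\rangle$. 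Combined with the hypothesis (\ref{eq: assumption}) this yields $H_i^{\pi}(t) \ge m_i^{\pi} + 1$ for all $t \in [0,u]$, so Lemma~\ref{Lem: root_operator}(i) immediately gives $\te_i\pi(t) = \pi(t)$ on $[0,u]$.

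For (ii) I would induct on $k$. The base case $k=1$ is (i). For the inductive step, set $\pi' = \te_{i_k}\pi$. By (i) applied to $\pi$, $\pi'$ is nonzero, lies in $\mB_0(\gl)$ (since that set is stable under root operators), and agrees with $\pi$ pointwise on $[0,u]$; in particular $H_i^{\pi'}(t) = H_i^{\pi}(t) \ge -\langle \gL, \ga_i^{\vee}\rangle$ on $[0,u]$, so $\pi'$ satisfies the hypothesis of the lemma. Moreover (i) also gives $\te_{i_k}(\pi_{\gL} * \pi) = \pi_{\gL} * \pi'$. Applying the inductive hypothesis to $\pi'$ and the sequence $i_1, \dots, i_{k-1}$ then finishes the proof, since both expressions $\te_{i_1}\cdots \te_{i_k}(\pi_{\gL}*\pi)$ and $\te_{i_1}\cdots\te_{i_k}\pi$ become $\te_{i_1}\cdots\te_{i_{k-1}}$ applied to $\pi_{\gL}*\pi'$ and $\pi'$ respectively.

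There is no real obstacle here: the substantive content is merely the observation that $\te_i\pi_{\gL} = \0$ pins down which side of the tensor product rule is active, after which the lower bound in (\ref{eq: assumption}) matches exactly the inequality $H_i^{\pi}(t) \ge m_i^{\pi}+1$ needed to invoke Lemma~\ref{Lem: root_operator}(i). The induction in (ii) is then automatic because the pointwise agreement on $[0,u]$ propagates the hypothesis unchanged from $\pi$ to $\te_{i_k}\pi$.
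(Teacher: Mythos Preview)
Your proof is correct and follows essentially the same approach as the paper. The paper argues directly from the concatenation that $m_i^{\pi_{\gL}*\pi}\le -1$ forces $m_i^{\pi}\le -\langle\gL,\ga_i^{\vee}\rangle-1$, whereas you obtain the same inequality via the tensor product rule (\ref{eq: tensor_rule}); both then invoke Lemma~\ref{Lem: root_operator}(i) identically, and the paper likewise remarks that (ii) follows by induction from (i).
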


\begin{proof}
  It suffices to show (i) only since (ii) follows inductively from (i).
  Since $\te_i (\pi_{\gL} * \pi) \neq \0$ we have $m_i^{\pi_{\gL} * \pi} \le -1$, 
  which implies from the definition of the concatenation that $m_i^{\pi} \le -\langle \gL, \ga_i^{\vee}\rangle -1$.
  Hence $\te_i\pi \neq \0$ follows, and the Lemma \ref{Lem: root_operator} (i) together with the assumption 
  (\ref{eq: assumption}) implies (\ref{eq: identity}).
\end{proof}

\begin{Lem}
  There exist a positive integer $N$ and a sequence of real numbers $\ul{\gs}: 0= \gs_0<\gs_1<\dots<\gs_N=1$
  such that any $\pi \in \mB_0(\gl)$ has a unique expression in the form $(\mu_1,\dots,\mu_N; \ul{\gs})$.
\end{Lem}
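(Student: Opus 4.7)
The plan is to build $\ul{\gs}$ as a common refinement of the breakpoint sets of finitely many representative paths, exploiting the finiteness of $\mB(\gl)_{\cl}$ together with the fact that the fibers of $\cl\colon \mB_0(\gl) \to \mB(\gl)_{\cl}$ consist of paths with identical breakpoint sets.

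First, I would recall that $\mB(\gl)_{\cl}$ is a finite set (noted in Subsection \ref{subsection: crystal and bases}) and choose, once and for all, a single lift $\widetilde{\eta} \in \mB_0(\gl) \cap \cl^{-1}(\eta)$ for each $\eta \in \mB(\gl)_{\cl}$. Each $\widetilde{\eta}$ is piecewise linear and thus has finitely many breakpoints; let $S \subseteq [0,1]$ denote the finite union of all these breakpoints, adjoined with $\{0, 1\}$, and enumerate $S = \{0 = \gs_0 < \gs_1 < \cdots < \gs_N = 1\}$. Take this as our candidate $\ul{\gs}$.

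Next, I would show that every $\pi \in \mB_0(\gl)$ is affine on each subinterval $[\gs_{p-1}, \gs_p]$. By Lemma \ref{Lem: lem_for_degree_fucnction}(i) applied to $\eta = \cl(\pi)$, there exists $k \in \Z$ with $\pi = \widetilde{\cl(\pi)} + \pi_{k d_{\gl} \gd}$. Since $\pi_{k d_{\gl} \gd}$ is a single straight-line path with constant slope $k d_{\gl} \gd$, adding it to $\widetilde{\cl(\pi)}$ merely shifts the direction on each linear piece by the constant vector $k d_{\gl} \gd$, without introducing or removing any point at which the direction changes. Hence the breakpoints of $\pi$ are contained in those of $\widetilde{\cl(\pi)}$, and therefore in $S$, so $\pi$ is affine on each $[\gs_{p-1}, \gs_p]$. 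Setting $\mu_p$ equal to its slope there yields the desired expression $\pi = (\mu_1, \dots, \mu_N; \ul{\gs})$.

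Uniqueness of the expression is immediate once $\ul{\gs}$ is fixed, since each $\mu_p = (\pi(\gs_p) - \pi(\gs_{p-1}))/(\gs_p - \gs_{p-1})$ is determined by $\pi$. I do not anticipate a serious obstacle; the whole argument rests on two ingredients already in hand, namely the finiteness of $\mB(\gl)_{\cl}$ and the explicit one-parameter fiber description from Lemma \ref{Lem: lem_for_degree_fucnction}(i), which together confine the breakpoint locations of all of $\mB_0(\gl)$ to a single finite set.
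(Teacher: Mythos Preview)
Your proposal is correct and follows essentially the same approach as the paper: both take a finite set of representatives for $\mB(\gl)_{\cl}$ (the paper uses the specific lifts $\pi_\eta$, you allow arbitrary lifts $\widetilde{\eta}$), form the common refinement of their breakpoints, and then use Lemma~\ref{Lem: lem_for_degree_fucnction}(i) to see that every $\pi \in \mB_0(\gl)$ differs from a representative only by the straight line $\pi_{kd_\gl\gd}$, which does not alter the breakpoint set. The uniqueness argument is identical.
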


\begin{proof}
  Since the set $\{ \pi_{\eta} \mid \eta \in \mB(\gl)_\cl\}$ is finite, 
  it is easily seen that there exist a positive integer $N$ and a sequence of real numbers
  $\ul{\gs}: 0 = \gs_0 <\gs_1<\dots<\gs_N=1$ such that for any $\eta \in \mB(\gl)_\cl$, $\pi_\eta$
  has an expression in the form $(\mu_1, \ldots,\mu_N; \ul{\gs})$.
  Note that we have
  \[ \mB_0(\gl) = \{ \pi_\eta + \pi_{kd_\gl\gd}\mid \eta \in \mB(\gl)_\cl, k \in \Z \}
  \]
  by Lemma \ref{Lem: lem_for_degree_fucnction} and 
  \[ (\mu_1, \ldots,\mu_N; \ul{\gs}) + \pi_{kd_\gl \gd} = (\mu_1 + kd_\gl \gd, \ldots, \mu_N + kd_\gl\gd; \ul{\gs}).
  \]
  Hence any $\pi \in \mB_0(\gl)$ has an expression in the form $(\mu_1,\ldots,\mu_N;\ul{\gs})$.
  Since the uniqueness is obvious, the assertion is proved.
\end{proof}

If a sequence $\ul{\gs}:0= \gs_0<\dots<\gs_N =1$ satisfies the condition of this lemma, 
we say that $\ul{\gs}$ is \textit{sufficiently fine} for $\gl$.

\begin{Rem} \normalfont
  Assume that $\ul{\gs}$ is sufficiently fine for $\gl$.
  Then for any $\pi \in \mB_0(\gl)$, it is obvious that the function $H_i^{\pi}(t)$ is 
  strictly increasing, strictly decreasing, or constant on each $[\gs_p,\gs_{p+1}]$. 
\end{Rem}

\begin{Lem} \label{Lem: sufficiently_fine}
  Assume that $\ul{\gs}: 0= \gs_0<\dots<\gs_N =1$ is sufficiently fine for $\gl$.
  Then for any $\pi \in \mB_0(\gl), i \in \hI$ and $M \in \Z_{\ge 0}$, 
  we have 
  \[ \max\{u \in [0,1]\mid H_i^\pi(t) \ge -M \ \text{for all} \ t \in [0,u]\} \in \{\gs_0, \gs_1,\dots,\gs_N\}.
  \]
\end{Lem}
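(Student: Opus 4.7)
Let $u^*$ denote the maximum in question; by the continuity of $H_i^\pi$ it exists and lies in $[0,1]$. The cases $u^* = 0 = \gs_0$ and $u^* = 1 = \gs_N$ require nothing, so I would suppose $u^* \in (\gs_{p-1},\gs_p)$ for some $1 \le p \le N$ and aim for a contradiction. The maximality of $u^*$ gives $H_i^\pi(u^*) = -M$ and that $H_i^\pi$ drops strictly below $-M$ in every right-neighborhood of $u^*$. Since $\ul{\gs}$ is sufficiently fine for $\gl$, $\pi$ admits an expression $(\mu_1,\dots,\mu_N;\ul{\gs})$, so $H_i^\pi$ is affine on $[\gs_{p-1},\gs_p]$; combined with the strict descent across $u^*$, $H_i^\pi$ is strictly decreasing on the entire interval, and in particular $H_i^\pi(\gs_p)<-M$. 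Since $\pi \in \mPint$, the global minimum $m := m_i^\pi$ is an integer (attained at some local minimum or endpoint, with $H_i^\pi(0)=0$ and $H_i^\pi(1)=\langle \wt(\pi),\ga_i^\vee\rangle \in \Z$); because $H_i^\pi$ falls strictly below $-M$ on $(u^*,\gs_p]$, we obtain $m \le -M-1$.

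The strategy is to exhibit another element $\pi' \in \mB_0(\gl)$ whose piecewise-linear structure forces $u^*$ to be a breakpoint, which contradicts the sufficient fineness of $\ul{\gs}$ (a valid expression of $\pi'$ using $\ul{\gs}$ would then need $u^* \in \{\gs_0,\dots,\gs_N\}$). To construct $\pi'$, set $k := -m - M \ge 1$ and let $\pi' := \te_i^k \pi \in \mB_0(\gl)$; note that $\te_i^k\pi \ne \0$ because $\gee_i(\pi)=-m \ge k$. Using the explicit formula for $\te_i$ from Subsection~5.1 and induction on $j$, each intermediate path $\te_i^j\pi$ (for $0 \le j \le k$) still agrees with $\pi$ on $[0,u^*]$: the reflection interval at stage $j$ lies to the right of $u^*$, since the relevant integer level $m+j \le -M$ is attained, on the descending segment crossing $u^*$, only at $t \ge u^*$. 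On the other hand, immediately to the right of $u^*$ the direction of $\pi'$ has been $s_i$-reflected, which flips the sign of the $\ga_i^\vee$-component via $\langle s_i\mu, \ga_i^\vee\rangle = -\langle \mu,\ga_i^\vee\rangle$; thus the slope of $H_i^{\pi'}$ passes from a strictly negative value on the left of $u^*$ to a strictly positive value on the right. This yields a genuine kink of $\pi'$ at $u^*$, so any piecewise-linear expression of $\pi'$ must include $u^*$ as a breakpoint; combined with the sufficient fineness of $\ul{\gs}$ for $\gl$, this forces $u^* \in \{\gs_0,\dots,\gs_N\}$, contradicting our assumption.

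\emph{Main obstacle.} The delicate point is making the ``slopes differ'' claim airtight: one must track, across all $k$ applications of $\te_i$, that the reflection interval never swallows $u^*$ into its interior (so $\pi'$ really equals $\pi$ on $[0,u^*]$), and that the new direction immediately to the right of $u^*$ has strictly positive $\ga_i^\vee$-component (ruling out any accidental cancellation with neighbouring pieces that would smooth out the kink). The inductive bookkeeping relies on the strict monotonicity of $H_i^\pi$ across $u^*$ and the arithmetic identity $\langle s_i\mu,\ga_i^\vee\rangle=-\langle \mu,\ga_i^\vee\rangle$; once this is carried out the contradiction with sufficient fineness is immediate.
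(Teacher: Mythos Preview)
Your proposal is correct and follows essentially the same route as the paper: assume $u^*$ lies strictly inside some $(\gs_{p-1},\gs_p)$, observe that $m_i^\pi\le -M-1$, apply $\te_i$ exactly $k=-m_i^\pi-M$ times, use Lemma~\ref{Lem: root_operator}(i) inductively to see that $\pi':=\te_i^k\pi$ agrees with $\pi$ on $[0,u^*]$, and then derive a contradiction with the affineness of $H_i^{\pi'}$ on $[\gs_{p-1},\gs_p]$.

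One small point: your claim that the slope of $H_i^{\pi'}$ immediately to the right of $u^*$ is \emph{strictly} positive is not fully justified by the ``$s_i$-reflection'' heuristic, since after several applications of $\te_i$ the direction just right of $u^*$ need not be exactly $s_i\mu_p$. What you do know cleanly is that $m_i^{\pi'}=-M=H_i^{\pi'}(u^*)$, so $H_i^{\pi'}(t)\ge -M$ for all $t$, in particular for $t\in[u^*,\gs_p]$; combined with the strictly negative slope on $[\gs_{p-1},u^*]$ (inherited from $\pi$), this already forces a breakpoint of $\pi'$ at $u^*$ and gives the contradiction. This is exactly how the paper phrases the final step: it compares $H_i^{\pi'}(\gs_{p-1})>-M$, $H_i^{\pi'}(u^*)=-M$, $H_i^{\pi'}(\gs_p)\ge -M$ and notes this is incompatible with $\ul{\gs}$ being sufficiently fine.
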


\begin{proof}
  Set
  \[ u_0 = \max\{u \in [0,1]\mid H_i^\pi(t) \ge -M \ \text{for all} \ t \in [0,u]\},
  \]
  and assume that $u_0 \notin \{\gs_0, \gs_1,\dots,\gs_N\}$.
  Since $m_i^{\pi} \ge -M$ implies $u_0 = 1 = \gs_N$, we have $m_i^{\pi} \le -M-1$.
  Let $p \in \{ 0,\dots, N-1\}$ be the number such that $\gs_p < u_0 <\gs_{p+1}$.
  By the definition of $u_0$ and the assumption that $\ul{\gs}$ is sufficiently fine, we have that 
  \[ H_i^{\pi}(\gs_p) > -M, \ H_i^{\pi}(u_0) = -M \ \text{and} \ H_i^{\pi}(\gs_{p+1})<-M.
  \] 
  Let $q = -M - m_i^{\pi}$.
  We have from (\ref{eq: map_on_crystal}) that 
  \[ m^{\te_i^{r}\pi}_i \le -M-1 \ \text{for} \ r <q \ \text{and} \ m^{\te_i^{q}\pi} = -M.
  \]   
  From this and $H_i^\pi(t) \ge -M$ for all $t \in [0,u_0]$, we can show inductively using 
  Lemma \ref{Lem: root_operator} (i) that 
  \[ \te_i^q\pi(t) = \pi(t) \ \ \ \text{for all} \ t \in [0,u_0].
  \]
  Hence we have $H_i^{\te_i^q\pi}(\gs_p) > -M$ and $H_i^{\te_i^q\pi}(u_0) =-M$.
  On the other hand, we have $H_i^{\te_i^q\pi}(\gs_{p+1}) \ge -M$ since $m_i^{\te_i^q\pi} = -M$, 
  which contradicts the assumption that $\ul{\gs}$ is sufficiently fine.
  Hence the lemma follows.
\end{proof}

For a subset $J$ of $\hI$, we denote by $\hW_J$ the subgroup of $\hW$ generated by simple reflections $\{ s_i \mid i \in J\}$.
It is well-known that $\hW_J$ is finite if $J$ is proper.

\begin{Lem} \label{Lem: finiteness}
  Let $J$ be a proper subset of $\hI$. 
  Then for any $\pi \in \mB_0(\gl)$, the set
  $\{ \te_{i_1}\cdots\te_{i_s}\pi\mid s\ge 0, \ i_k \in J\} \setminus \{ \0 \}$
  is finite.
\end{Lem}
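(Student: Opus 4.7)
The plan is to fix, at the outset, a sequence $\ul{\gs}\colon 0 = \gs_0 < \gs_1 < \cdots < \gs_N = 1$ that is sufficiently fine for $\gl$, so that every element of $\mB_0(\gl)$ admits a unique expression of the form $(\mu'_1, \ldots, \mu'_N; \ul{\gs})$. I would write $\pi$ itself in this form as $(\mu_1, \ldots, \mu_N; \ul{\gs})$, observing that each $\mu_p$ automatically belongs to $\hW\gl$. Since the set in question is contained in $\mB_0(\gl)$, each of its elements has such an expression, and it therefore suffices to bound the number of tuples $(\mu'_1, \ldots, \mu'_N)$ of directions that can arise.

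The key step is to prove by induction on $s$ that if $\pi' = \te_{i_1}\cdots\te_{i_s}\pi \neq \0$ with $i_1, \ldots, i_s \in J$, then each direction in the $\ul{\gs}$-expression of $\pi'$ lies in the set $\hW_J \cdot \{\mu_1, \ldots, \mu_N\}$. The base case is trivial. For the inductive step, given $\pi'$ with directions $\mu'_p \in \hW_J \cdot \{\mu_1, \ldots, \mu_N\}$ and an index $i \in J$ with $\te_i\pi' \neq \0$, the explicit formula defining $\te_i$ modifies $\pi'$ in three regions delimited by thresholds $t_0 \le t_1$: on $[0, t_0]$ the path is unchanged, on $[t_0, t_1]$ the velocity is reflected by $s_i$, and on $[t_1, 1]$ the path is translated by $\ga_i$. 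Translation and the identity preserve the direction of each linear piece, while the reflection replaces a direction $\mu'_p$ by $s_i \mu'_p$. Since $\te_i\pi'$ again lies in $\mB_0(\gl)$, it too admits a $\ul{\gs}$-expression; this forces $t_0$ and $t_1$ to belong to $\{\gs_0, \ldots, \gs_N\}$, for otherwise $\te_i\pi'$ would have an essential break at an interior point of some $[\gs_{p-1}, \gs_p]$. Hence each new direction is either $\mu'_p$ or $s_i\mu'_p$, and so remains in $\hW_J \cdot \{\mu_1, \ldots, \mu_N\}$ because $i \in J$.

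The conclusion is then immediate: since $J \subsetneq \hI$, the subgroup $\hW_J$ is finite, so $\hW_J \cdot \{\mu_1, \ldots, \mu_N\}$ is a finite subset of $\hfh^*_\R$. There are therefore only finitely many $N$-tuples $(\mu'_1, \ldots, \mu'_N)$ with entries in this set, and the claimed finiteness of the orbit follows.

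The main technical point requiring care is the assertion that the threshold times $t_0, t_1$ appearing in each application of $\te_i$ must land on the grid $\{\gs_p\}$. This is precisely the role played by the sufficient fineness of $\ul{\gs}$; if needed, it can be made rigorous by appealing to Lemma~\ref{Lem: sufficiently_fine} applied with the appropriate choice of $M$, which pins down both $t_1$ (the first time the minimum of $H_i^{\pi'}$ is attained) and $t_0$ to gridpoints.
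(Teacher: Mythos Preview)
Your argument is correct and is exactly the approach the paper takes: fix a sufficiently fine $\ul{\gs}$, observe that every element of the orbit has a $\ul{\gs}$-expression whose directions lie in $\hW_J\cdot\{\mu_1,\dots,\mu_N\}$ (the paper states the slightly sharper $\{(w_1\mu_1,\dots,w_N\mu_N;\ul{\gs})\mid w_j\in\hW_J\}$), and conclude by finiteness of $\hW_J$. One small remark: your closing appeal to Lemma~\ref{Lem: sufficiently_fine} is not quite the right tool, since that lemma pins down a different threshold than $t_0,t_1$; but you do not actually need it, because your own observation that $\te_i\pi'$ admits a $\ul{\gs}$-expression already forces each new direction on $[\gs_{p-1},\gs_p]$ to be either $\mu'_p$ or $s_i\mu'_p$ (if $t_0$ or $t_1$ fell strictly inside, linearity on that interval would force $\mu'_p=s_i\mu'_p$ anyway).
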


\begin{proof}
  Assume that $\ul{\gs}: 0=\gs_0<\dots<\gs_N=1$ is sufficiently fine for $\gl$, 
  and let $(\mu_1,\dots,\mu_N; \ul{\gs})$ be the expression of $\pi$.
  By the definition of root operators and the sufficiently fineness of $\ul{\gs}$, it follows that
  \[ \{ \te_{i_1}\cdots\te_{i_s}\pi\mid s\ge 0, \ i_k \in J \} \setminus \{ \0 \} \subseteq 
     \{ (w_1\mu_1,\dots,w_N\mu_N; \ul{\gs}) \mid w_j \in \hW_J\}.
  \]
  Since $\hW_J$ is a finite set so is the right hand side, which proves the assertion.
\end{proof}

\begin{Lem} \label{Lem: properness}
  For any $\pi \in \mB_0(\gl)$ and $u \in [0,1]$, a subset $\{i \in \hI \mid H_i^\pi(u) = -\langle \gL, \ga_i^{\vee}\rangle \}$
  of $\hI$ is proper.
\end{Lem}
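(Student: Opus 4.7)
The plan is to exploit a level argument. The crucial points are that every path in $\mB_0(\gl)$ lies on the level-zero hyperplane of $\hfh^*$, while the assumption $\gL \in \hP_+ \setminus \Z\gd$ forces $\langle \gL, K \rangle > 0$. Because the subspace of $\hfh^*$ annihilated by every simple coroot $\ga_i^\vee$ $(i \in \hI)$ is exactly the line $\R\gd$, the conclusion of the lemma amounts to showing that $\pi(u) + \gL$ cannot simultaneously be orthogonal to all the $\ga_i^\vee$.

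First I would verify that $\langle \gL, K \rangle > 0$. If we write $\gL = \sum_{j \in \hI} m_j \gL_j + s\gd \in \hP_+$, then the level is $\sum_j m_j a_j^\vee$ with all $a_j^\vee > 0$ and $m_j \ge 0$; vanishing of the level thus forces every $m_j = 0$, i.e.\ $\gL \in \Z\gd$, which is excluded by hypothesis.

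Next I would check that $\pi(u)$ has level zero. Taking any expression $(\mu_1, \dots, \mu_N; \ul{\gs})$ of $\pi$, each $\mu_j$ lies in $\hW\gl$ (as noted in Subsection 5.2). Since $\gl \in P_+ \subseteq P$ satisfies $\langle \gl, K \rangle = 0$, and $K$ is fixed by $\hW$, each $\mu_j$ has level zero. By the defining formula
\[ \pi(u) = \sum_{p' = 1}^{p-1}(\gs_{p'} - \gs_{p'-1})\mu_{p'} + (u - \gs_{p-1})\mu_p \]
for $u \in [\gs_{p-1}, \gs_p]$, $\pi(u)$ is a nonnegative real combination of the $\mu_j$, so $\langle \pi(u), K \rangle = 0$.

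Finally I would argue by contradiction: if the set in the statement equals all of $\hI$, then $\langle \pi(u) + \gL, \ga_i^\vee \rangle = 0$ for every $i \in \hI$, forcing $\pi(u) + \gL \in \R\gd$ and therefore $\langle \pi(u) + \gL, K \rangle = 0$. Combined with $\langle \pi(u), K \rangle = 0$, this gives $\langle \gL, K \rangle = 0$, contradicting the first step. The argument is short and there is no real obstacle: once the level viewpoint is identified, every step is immediate from the definitions recalled in Sections 2 and 5.
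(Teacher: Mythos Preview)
Your proof is correct and is essentially the same level argument as the paper's: both show $\langle \pi(u), K\rangle = 0$ via the expression of $\pi$ and then use $\langle \gL, K\rangle > 0$ to conclude that $\pi(u)+\gL$ cannot be orthogonal to every $\ga_i^\vee$. The only cosmetic difference is that the paper infers this directly from $K$ being a positive combination of the simple coroots, whereas you phrase it as a contradiction via $\pi(u)+\gL \in \R\gd$.
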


\begin{proof}
  Let $(\mu_1,\dots,\mu_s;\ul{\gs})$ be an expression of $\pi$.
  Recall that if $\gs_{p-1} \le u \le \gs_p$, we have
  \[ \pi(u)= \sum_{1 \le p' \le p-1} (\gs_{p'} - \gs_{p'-1}) \mu_{p'} + (u - \gs_{p-1})\mu_p.
  \]
  Since $\mu_j \in \hW\gl = W\gl + d_{\gl}\Z \gd$, $\langle \mu_j, K \rangle = 0$ holds for all $1\le j \le s$.
  Hence $\langle \pi(u), K \rangle = 0$ holds.
  Then we have
  \[ \langle \pi(u) + \gL, K \rangle = \langle \gL, K \rangle > 0,
  \]
  which implies $\langle \pi(u) + \gL, \ga_i^{\vee} \rangle \neq 0$ for some $i \in \hI$.
  The assertion is proved.
\end{proof}

\subsection{Decomposition of $\mcB(\gL) \otimes \mB_0(\gl)$}

Set
\[ \mB_0(\gl)^{\gL} = \{ \pi \in \mB_0(\gl) \mid m_i^{\pi} \ge -\langle \gL, \ga_i^{\vee}\rangle \ \text{for all} \ i \in \hI \}.
\]
Note that $\gL + \pi_0(1) \in \hP_+$ if $\pi_0 \in \mB_0(\gl)^{\gL}$.

\begin{Prop}\label{Prop: equality}
  We have 
  \[ \mB_0(\gL) * \mB_0(\gl) = \bigoplus_{\pi_0 \in \mB_0(\gl)^{\gL}} C(\pi_{\gL} * \pi_0).
  \]
\end{Prop}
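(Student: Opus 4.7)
The plan is to exhibit, in each connected component of $\mB_0(\gL) * \mB_0(\gl)$, a unique highest-weight representative of the form $\pi_\gL * \pi_0$ with $\pi_0 \in \mB_0(\gl)^\gL$. For the inclusion ``$\supseteq$'', the tensor product rule (\ref{eq: tensor_rule}) combined with $m_i^{\pi_\gL}=0$ and the defining condition of $\mB_0(\gl)^\gL$ gives $m_i^{\pi_\gL * \pi_0}=0$, so $\te_i(\pi_\gL * \pi_0) = \0$ for every $i \in \hI$. Since $(\pi_\gL * \pi_0)(1) = \gL + \pi_0(1) \in \hP_+$, Corollary \ref{Cor: crystal_isom} identifies $C(\pi_\gL * \pi_0)$ with $\mcB(\gL + \pi_0(1))$; uniqueness of the highest-weight element (\ref{eq: uniqueness_of_highest}) then forces the components associated to distinct $\pi_0$'s to be disjoint, while Proposition \ref{Prop: concatenation} places each inside $\mB_0(\gL) * \mB_0(\gl)$.

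For the reverse inclusion, I would show that any $\pi \in \mB_0(\gL) * \mB_0(\gl)$ can be carried by successive $\te_i$'s to a dominant path $\pi_\gL * \pi_0$ with $\pi_0 \in \mB_0(\gl)^\gL$. Introduce the frontier
\[
u_0(\pi) = \sup\bigl\{u \in [0,1] \,:\, H_i^\pi(t) \ge 0 \text{ for all } t \in [0,u] \text{ and } i \in \hI\bigr\}.
\]
If $u_0(\pi) = 1$, writing $\pi = \pi_1 * \pi_2$, the condition on $[0,1/2]$ forces $\te_i\pi_1 = \0$ for all $i \in \hI$ and hence $\pi_1 = \pi_\gL$ by Theorem \ref{Thm: crystal_isom} and (\ref{eq: uniqueness_of_highest}), while the condition on $[1/2,1]$ gives $\pi_2 \in \mB_0(\gl)^\gL$, so $\pi$ is already of the desired form.

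When $u_0(\pi) < 1$, the plan is to strictly increase $u_0$ by a finite sequence of $\te_i$'s; iterating over the discrete break-points of a subdivision sufficiently fine for $\gl$ (Lemma \ref{Lem: sufficiently_fine}) will then terminate after finitely many rounds. Let $J = \{i \in \hI : H_i^\pi(u_0) = 0\}$; the computation $\langle \pi_1(s), K\rangle = s\langle\gL, K\rangle$ and $\langle \pi_2(s), K\rangle = 0$ gives $\langle \pi(u_0), K \rangle > 0$ when $u_0 > 0$, so the argument of Lemma \ref{Lem: properness} shows $J \subsetneq \hI$. Lemma \ref{Lem: finiteness} then bounds the orbit of $\pi$ under $\te_i$'s with $i \in J$, while Lemma \ref{Lem: Technical_Lemma} guarantees these operators preserve $\pi$ on $[0, u_0]$; a finite combinatorial selection inside this bounded orbit is intended to produce $\pi'$ with $u_0(\pi') > u_0(\pi)$. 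The main obstacle is precisely this combinatorial step---coordinating the action of the $\te_i$'s for $i \in J$ to simultaneously push every $H_i$ above zero just past $u_0$ without disturbing the earlier progress on $[0,u_0]$---together with handling the initial case $u_0 = 0$, which requires a preliminary adjustment to bring the initial direction of $\pi$ into the dominant chamber.
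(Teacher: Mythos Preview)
Your outline follows the same architecture as the paper's proof, and the part you flag as the ``main obstacle'' is indeed the crux; but as written the proposal has two genuine gaps that the paper closes cleanly.

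\textbf{The $u_0=0$ difficulty is self-inflicted.} If you work directly with an arbitrary $\pi=\pi_1*\pi_2\in\mB_0(\gL)*\mB_0(\gl)$, then whenever the initial direction of $\pi_1$ is not dominant you may have $u_0(\pi)=0$, and in that case your $J=\{i:H_i^\pi(0)=0\}=\hI$ is \emph{not} proper, so neither Lemma~\ref{Lem: properness} nor Lemma~\ref{Lem: finiteness} applies. The paper sidesteps this entirely by a preliminary reduction: using Lemma~\ref{Lem: concatenation} repeatedly (climbing the first tensor factor to its highest weight vector), one brings any element to the form $\pi_\gL*\pi_2$, after which the frontier argument always starts with $u_0\ge\tfrac12$. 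This costs one line and removes the special case you left open.

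\textbf{The combinatorial step has a specific, short resolution.} Once one is at $\pi_\gL*\pi_2$, the paper's move is: inside the finite set $\{\te_{i_1}\cdots\te_{i_s}(\pi_\gL*\pi_2)\mid s\ge 0,\;i_k\in J\}\setminus\{\0\}$ (finiteness by Lemma~\ref{Lem: finiteness}), choose by weight-maximality an element $\pi_\gL*\pi_2'$ with $\te_i(\pi_\gL*\pi_2')=\0$ for every $i\in J$. This single choice does the job: for $i\in J$ one has $m_i^{\pi_\gL*\pi_2'}=0$, hence $H_i^{\pi_2'}(t)\ge-\langle\gL,\ga_i^\vee\rangle$ on all of $[0,1]$; for $i\notin J$, Lemma~\ref{Lem: Technical_Lemma} gives $\pi_2'(t)=\pi_2(t)$ on $[0,\gs_{p_0}]$, so $H_i^{\pi_2'}(\gs_{p_0})=H_i^{\pi_2}(\gs_{p_0})>-\langle\gL,\ga_i^\vee\rangle$ by the very definition of $J$. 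Together these force the new frontier strictly past $\gs_{p_0}$. Note that Lemma~\ref{Lem: Technical_Lemma} is formulated for paths of the form $\pi_\gL*\pi$ (its hypothesis involves $-\langle\gL,\ga_i^\vee\rangle$, not $0$), which is another reason the preliminary reduction to $\pi_\gL*\pi_2$ is not merely cosmetic.

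In short: your framework is correct, but you should (a) reduce to $\pi_\gL*\pi_2$ first, and (b) realise that the ``finite combinatorial selection'' is simply choosing a $J$-highest element in the finite $J$-orbit; the strict increase of the frontier then follows in two lines.
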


\begin{proof}
  If $\pi_0 \in \mB_0(\gl)^{\gL}$, $\pi_{\gL} * \pi_0$ satisfies 
  $m_i^{\pi_{\gL}*\pi_0} = 0$ for all $i \in \hI$, which implies by Corollary \ref{Cor: crystal_isom}
  that there exists an isomorphism of $U_q(\hfg)$-crystals
  from $C (\pi_{\gL} * \pi_0)$ to $\mcB\big(\gL +\pi_0(1)\big)$ that maps $\pi_{\gL} * \pi_0$ to $b_{\gL + \pi_0(1)}$.
  From this isomorphism and (\ref{eq: uniqueness_of_highest}), we have that
  \[ \{ \pi \in C(\pi_{\gL} * \pi_0) \mid \te_i\pi = 0 \ \text{for all} \ i \in \hI\} = \{ \pi_{\gL} * \pi_0 \},
  \]
  which implies that a sum $\bigcup_{\pi_0 \in \mB_0(\gl)^{\gL}} C(\pi_{\gL} * \pi_0)$ is disjoint. Hence we have
  \[  \mB_0(\gL) * \mB_0(\gl) \supseteq \bigoplus_{\pi_0 \in \mB_0(\gl)^{\gL}} C(\pi_\gL * \pi_0).
  \]
  We need to show the opposite containment. 
  By Lemma \ref{Lem: concatenation}, we can see for arbitrary $\pi_1 * \pi_2 \in \mB_0(\gL) * \mB_0(\gl)$ that there exists
  a sequence $j_1, \dots, j_s$ of elements of $\hI$ such that
  \[ \te_{j_1} \cdots \te_{j_s} (\pi_1 * \pi_2) = \pi_{\gL} * \pi_2'\ \ \ \text{for some} \ \pi_2' \in \mB_0(\gl).
  \] 
  Hence to show the opposite containment, it suffices to show for any $\pi_2 \in \mB_0(\gl)$ that 
  there exists a sequence $i_1,\dots,i_k$ of elements of $\hI$ such that
  \[ \te_{i_1}\cdots\te_{i_k} (\pi_\gL * \pi_2) = \pi_{\gL} * \pi_0 \ \ \ \text{for some} \ \pi_0 \in \mB_0(\gl)^{\gL}.
  \]
  Let $\ul{\gs}: 0= \gs_0 < \gs_1< \cdots < \gs_N =1$ be a sufficiently fine sequence for $\gl$.
  By Lemma \ref{Lem: sufficiently_fine}, there exists some $0 \le p_0 \le N$ such that
  \[ \gs_{p_0} = \max\{u \in [0,1] \mid H_i^{\pi_2}(t) \ge - \langle \gL, \ga_i^{\vee}\rangle \ 
     \text{for all} \ t \in [0,u], \ i \in \hI \}.
  \]
  We shall show the above assertion by the descending induction on $p_0$.
  If $p_0 = N$, we have $\pi_2 \in \mB_0(\gl)^{\gL}$ and there is nothing to prove.
  Assume $p_0 < N$, and set
  \[ J = \{ i \in \hI \mid H_i^{\pi_2}(\gs_{p_0}) = - \langle \gL, \ga_i^{\vee}\rangle \},
  \]
  which is a proper subset of $\hI$ by Lemma \ref{Lem: properness}.
  It obviously follows that
  \begin{align*}
    \{\te_{i_1}\cdots \te_{i_s} (\pi_{\gL} * \pi_2) & \mid s \ge 0, i_k \in J\}\setminus \{\0\} \\
                                                    & \subseteq \pi_{\gL} * \{ \te_{i_1}\cdots \te_{i_s} \pi_2 \mid s \ge 0, 
                                                       i_k \in J\}\setminus \{ \0\},
  \end{align*}
  and since the right hand side is a finite set by Lemma \ref{Lem: finiteness}, so is the left hand side. 
  Hence we can see by weight consideration that there exists 
  \[ \pi_{\gL} * \pi_2' \in  \{\te_{i_1}\cdots \te_{i_s} (\pi_{\gL} * \pi_2)  \mid s \ge 0, i_k \in J\}\setminus \{\0\}
  \]
  such that 
  \begin{equation} \label{eq: containment_in J}
   \te_i(\pi_\gL * \pi_2') = \0 \ \text{for all} \ i \in J.
  \end{equation}
  Note that we have from Lemma \ref{Lem: Technical_Lemma} (ii) and the definition of $p_0$ that
  \begin{equation} \label{eq: equality}
    \pi_2'(t)  = \pi_2(t) \ \ \ \text{for all} \ t \in [0, \gs_{p_0}].
  \end{equation}
  Let $0 \le p_0' \le N$ be an integer such that 
  \[ \gs_{p_0'} = \max\{u \in [0,1] \mid H_i^{\pi_2'}(t) \ge - \langle\gL,\ga_i^{\vee}\rangle \ \text{for all} 
     \ t \in [0,u], \ i \in \hI \}.
  \]
  We have $m_i^{\pi_\gL*\pi_2'} =0$ for $i \in J$ by (\ref{eq: containment_in J}), which implies that 
  $H_i^{\pi_2'} (t)\ge -\langle \gL, \ga_i^{\vee}\rangle$ for all $i \in J$ and $t \in [0,1]$.
  On the other hand if $i \in \hI \setminus J$, we have from (\ref{eq: equality}) and the definition of $J$ that 
  \[ H_i^{\pi_2'} (t) \ge - \langle\gL,\ga_i^{\vee}\rangle \ \text{for all} \ t\in[0,\gs_{p_0}] \ \ \ 
     \text{and} \ \ \ H_i^{\pi_2'}(\gs_{p_0}) > -\langle\gL,\ga_i^{\vee}\rangle.
  \]
  Hence we have $p_0' > p_0$, which together with the induction hypothesis completes the proof of the assertion.
  Now the proposition is proved.
\end{proof}

By Proposition \ref{Prop: concatenation}, Theorem \ref{Thm: crystal_isom}
and Corollary \ref{Cor: crystal_isom}, the above proposition implies the following corollary.
This is some sort of generalization of \cite[Theorem 1.6]{MR2015316} 
in which $\fg = \mathfrak{sl}_{\ell+1}$ and $\gl = m\varpi_1$ are assumed, 
and \cite[Theorem 4.3.2]{NS} in which $\fg$ is general and $\gl$ is a minuscule fundamental weight.

\begin{Cor} \label{Cor: decomposition}
  We have
  \[ \mcB(\gL) \otimes \mB_0(\gl) \stackrel{\sim}{\to} \bigoplus_{\pi_0 \in \mB_0(\gl)^{\gL}} \mcB\big(\gL + \pi_0(1)\big)
  \]
  as $U_q(\hfg)$-crystals, where the given isomorphism maps 
  $b_{\gL} \otimes \pi_0$ for each $\pi_0 \in \mB_0(\gl)^{\gL}$ to $b_{\gL + \pi_0(1)} \in \mcB(\gL + \pi_0(1))$. 
\end{Cor}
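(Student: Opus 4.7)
The plan is to assemble the corollary by chaining together three isomorphism results that have already been established, with Proposition \ref{Prop: equality} doing essentially all the genuine work.

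First, I would reduce the statement from crystal-basis language to path-model language. By Theorem \ref{Thm: crystal_isom}(ii), there is a unique $U_q(\hfg)$-crystal isomorphism $\mB_0(\gL) \stackrel{\sim}{\to} \mcB(\gL)$ sending $\pi_{\gL}$ to $b_{\gL}$. Tensoring with the identity on $\mB_0(\gl)$ and applying Proposition \ref{Prop: concatenation} (whose isomorphism sends $\pi_1 * \pi_2$ to $\pi_1 \otimes \pi_2$), I obtain a chain of $U_q(\hfg)$-crystal isomorphisms
\[
  \mB_0(\gL) * \mB_0(\gl) \stackrel{\sim}{\to} \mB_0(\gL) \otimes \mB_0(\gl) \stackrel{\sim}{\to} \mcB(\gL) \otimes \mB_0(\gl),
\]
under which $\pi_{\gL} * \pi_0$ corresponds to $b_{\gL} \otimes \pi_0$ for each $\pi_0 \in \mB_0(\gl)$.

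Next I would transport the decomposition of Proposition \ref{Prop: equality} through this identification. That proposition gives
\[
  \mB_0(\gL) * \mB_0(\gl) = \bigoplus_{\pi_0 \in \mB_0(\gl)^{\gL}} C(\pi_{\gL} * \pi_0)
\]
as a disjoint union of connected components, so $\mcB(\gL) \otimes \mB_0(\gl)$ inherits the same decomposition into connected components indexed by $\pi_0 \in \mB_0(\gl)^{\gL}$.

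Finally, I would identify each summand with a highest weight crystal. Fix $\pi_0 \in \mB_0(\gl)^{\gL}$. From the definition of concatenation and the definition of $\mB_0(\gl)^{\gL}$, the path $\pi := \pi_{\gL} * \pi_0$ satisfies $m_i^{\pi} = 0$ for every $i \in \hI$; moreover $\pi(1) = \gL + \pi_0(1) \in \hP_+$ since $\pi_0 \in \mB_0(\gl)^{\gL}$ forces $\gL + \pi_0(1)$ to be dominant. Corollary \ref{Cor: crystal_isom} then produces a unique isomorphism of $U_q(\hfg)$-crystals $C(\pi_{\gL} * \pi_0) \stackrel{\sim}{\to} \mcB\bigl(\gL + \pi_0(1)\bigr)$ sending $\pi_{\gL} * \pi_0$ to $b_{\gL + \pi_0(1)}$. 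Assembling these summand-wise isomorphisms yields the claimed crystal isomorphism, and the tracking of highest weight elements at each stage shows that $b_{\gL} \otimes \pi_0$ is sent to $b_{\gL + \pi_0(1)}$, as required. There is no substantive obstacle in this derivation: all the analytic content has already been packaged into Proposition \ref{Prop: equality}, so the only care needed is making sure the highest weight correspondences align through the chain of identifications.
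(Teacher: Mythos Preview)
Your proposal is correct and follows essentially the same approach as the paper, which simply states that the corollary follows from Proposition \ref{Prop: concatenation}, Theorem \ref{Thm: crystal_isom}, and Corollary \ref{Cor: crystal_isom} applied to Proposition \ref{Prop: equality}. You have spelled out exactly this chain of identifications, including the tracking of highest weight elements, so there is nothing to add.
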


Note that we have shown the following fact in the proof of Proposition \ref{Prop: equality}, 
which is used again in the next section:

\begin{Lem} \label{Lem: used_again}
  Assume that $\ul{\gs}: 0= \gs_0 <\dots< \gs_N =1$ is sufficiently fine for $\gl$ and $\pi_2 \in \mB_0(\gl)$.
  Let $0 \le p_0 \le N$ be an integer such that 
  \[ \gs_{p_0} = \max \{ u \in [0,1] \mid H_i^{\pi_2}(t) \ge - \langle\gL,\ga_i^{\vee}\rangle \ 
  \text{for all} \ t \in [0,u], i \in \hI \},
  \]
  and assume $ p_0 < N$.
  Let $J = \{ i \in \hI \mid H_i^{\pi_2}(\gs_{p_0}) = -\langle\gL,\ga_i^{\vee}\rangle \}$.
  Then there exists $\pi_2' \in \mB_0(\gl)$ such that
  \[ \pi_{\gL} * \pi_2' \in \{ \te_{i_1}\cdots\te_{i_s}(\pi_\gL * \pi_2) \mid s \ge 0, i_k \in J \} \setminus \{ \0 \}
  \]
  and $p_0' > p_0$, where $p_0'$ denotes the integer defined by
  \[ \gs_{p_0'} = \max \{ u \in [0,1] \mid H_i^{\pi_2'}(t) \ge - \langle\gL,\ga_i^{\vee}\rangle \ 
     \text{for all} \ t \in [0,u], i \in \hI \}.
  \]
\end{Lem}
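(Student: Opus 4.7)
The plan is to reconstruct the argument already carried out inside the proof of Proposition \ref{Prop: equality}, which the authors explicitly note establishes this lemma. First I would apply Lemma \ref{Lem: properness} at $u = \gs_{p_0}$ to conclude that $J$ is a proper subset of $\hI$; in particular $\hW_J$ is finite. Using the tensor product rule (\ref{eq: tensor_rule}) together with the fact that $\te_i \pi_{\gL} = \0$ for every $i \in \hI$ (since $H_i^{\pi_{\gL}}$ is non-decreasing with minimum $0$), whenever $\te_i(\pi_{\gL} * \eta)$ is nonzero it must equal $\pi_{\gL} * \te_i\eta$. Iterating,
\[
  \bigl\{ \te_{i_1}\cdots\te_{i_s}(\pi_{\gL} * \pi_2) \mid s \ge 0,\ i_k \in J \bigr\} \setminus \{ \0 \}
  \ \subseteq\ \pi_{\gL} * \Bigl(\bigl\{ \te_{i_1}\cdots\te_{i_s}\pi_2 \mid s \ge 0,\ i_k \in J \bigr\} \setminus \{ \0 \}\Bigr),
\]
and the right-hand side is finite by Lemma \ref{Lem: finiteness}.

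Since the set on the left is finite and closed under the operators $\te_i$ with $i \in J$ (modulo $\0$), I would next pick an element $\pi_{\gL} * \pi_2'$ inside it of maximal weight; such an element automatically satisfies $\te_i(\pi_{\gL} * \pi_2') = \0$ for every $i \in J$, because otherwise $\te_i(\pi_{\gL}*\pi_2')$ would lie in the same set with strictly larger weight $\wt(\pi_{\gL}*\pi_2') + \ga_i$, contradicting maximality. This $\pi_2'$ is my candidate. To compare it with $\pi_2$ on the initial segment $[0, \gs_{p_0}]$, I would apply Lemma \ref{Lem: Technical_Lemma}(ii) with $u = \gs_{p_0}$ inductively along the sequence of $\te_{i_k}$ producing $\pi_{\gL} * \pi_2'$ from $\pi_{\gL} * \pi_2$: the hypothesis $H_i^{\pi_2}(t) \ge -\langle \gL, \ga_i^{\vee}\rangle$ on $[0, \gs_{p_0}]$ is precisely the defining property of $p_0$, and it is preserved at each step, yielding $\pi_2'(t) = \pi_2(t)$ for all $t \in [0, \gs_{p_0}]$.

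Finally I would verify $p_0' > p_0$ by splitting on indices. For $i \in J$, the relation $\te_i(\pi_{\gL} * \pi_2') = \0$ unwinds through the concatenation formula to $m_i^{\pi_2'} \ge -\langle \gL, \ga_i^{\vee}\rangle$, giving the desired bound globally on $[0,1]$. For $i \in \hI \setminus J$, the equality $\pi_2' = \pi_2$ on $[0, \gs_{p_0}]$ together with the definition of $J$ gives the non-strict bound on $[0, \gs_{p_0}]$ and a strict inequality $H_i^{\pi_2'}(\gs_{p_0}) > -\langle \gL, \ga_i^{\vee}\rangle$, so continuity propagates the non-strict bound to some larger interval $[0, \gs_{p_0} + \gee]$. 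Hence $\gs_{p_0'} > \gs_{p_0}$, and since $\gs_{p_0'}$ must lie in $\{\gs_0, \ldots, \gs_N\}$ by Lemma \ref{Lem: sufficiently_fine}, I conclude $p_0' \ge p_0 + 1$. The only mild obstacle is precisely this closing step, where a purely continuous one-sided openness has to be parlayed into a discrete gain via the sufficiently fine hypothesis on $\ul{\gs}$.
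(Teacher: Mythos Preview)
Your proposal is correct and follows essentially the same approach as the paper, which extracts this lemma verbatim from the proof of Proposition~\ref{Prop: equality}. The only difference is that you are slightly more explicit in the closing step, invoking Lemma~\ref{Lem: sufficiently_fine} to pass from $\gs_{p_0'} > \gs_{p_0}$ to $p_0' > p_0$, whereas the paper leaves this implicit.
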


\section{Relations among Demazure crystals, $\mB_0(\gl)$ and $\mB(\gl)_{\cl}$}

\subsection{Demazure crystals}

For a $U_q(\hfg)$-crystal $\mcB$ and a subset $\mathcal{C} \subseteq \mcB$, 
we define a subset $\mcF_i\mathcal{C}$ of $\mcB$ for $i \in \hI$ by
\[ \mcF_i\mathcal{C} = \{ \tf_i^k b \mid b \in \mathcal{C}, k \ge 0 \} \setminus \{ 0 \},
\] 
and for a sequence $\mathbf{i} : i_1, i_2,\dots, i_m$ of elements of $\hI$,
we define $\mathcal{F}_{\mathbf{i}}\mathcal{C}$ by
\[ \mathcal{F}_{\mathbf{i}}\mathcal{C} = \mcF_{i_1}\mcF_{i_2}\cdots\mcF_{i_m}\mathcal{C}.
\]
For the notational convenience, we set $\mcF_{\emptyset} \mathcal{C} = \mathcal{C}$
and $\mcF_{\mathbf{i}} b= \mcF_{\mathbf{i}} \{ b \}$ for $b \in \mcB$.

Let $\gL \in \hP_+$, $w \in \hW$, and $w = s_{i_1}\cdots s_{i_m}$ a reduced expression.

\begin{Prop}[\cite{MR1240605}]
  The subset 
  \[ \mcB_w(\gL) = \mathcal{F}_{i_1,\dots,i_m} b_{\gL} \subseteq \mcB(\gL)
  \]
  is independent of the choice of the reduced expression. 
\end{Prop}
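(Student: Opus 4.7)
The strategy is to identify $\mcB_w(\gL) = \mcF_{i_1,\ldots,i_m}b_\gL$ with the crystal basis of the quantized Demazure module $V_{q,w}(\gL)$ introduced in Subsection 4.3. Since $V_{q,w}(\gL) = U_q(\hfn_+).V_q(\gL)_{w\gL}$ depends on $w$ only through the weight $w\gL$ and not on any chosen reduced expression, the desired independence would follow immediately from such an identification.

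The argument proceeds by induction on $m = \ell(w)$. The case $m = 0$ is immediate: $\mcB_e(\gL) = \{b_\gL\}$ corresponds to the one-dimensional weight space $V_q(\gL)_\gL$. For the inductive step, write $w = s_{i_1} w'$ with $\ell(w') = m-1$ (so in particular $\ell(s_{i_1}w') > \ell(w')$), and assume by induction that $\mcF_{i_2,\ldots,i_m}b_\gL$ is the crystal basis of $V_{q,w'}(\gL)$, which is in particular independent of the chosen reduced expression for $w'$. It then suffices to verify the recursive identity
\[ \mcF_{i_1}\bigl(\text{crystal basis of } V_{q,w'}(\gL)\bigr) = \text{crystal basis of } V_{q,w}(\gL).
\]
This rests on two inputs: first, a module-level recursion asserting that $V_{q,w}(\gL)$ is generated from $V_{q,w'}(\gL)$ under the action of the rank-one subalgebra associated to $\ga_{i_1}$, which holds precisely because $\ell(s_{i_1}w') > \ell(w')$ and follows from a length-additive PBW-type decomposition of $U_q(\hfn_+)$ adapted to a reduced expression of $w$ starting with $s_{i_1}$; and second, a compatibility statement saying that closing a crystal-lattice-compatible submodule under divided powers in this rank-one subalgebra corresponds at $q = 0$ exactly to the combinatorial operation $\mcF_{i_1}$ on crystal bases.

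The main obstacle is the second input, namely the compatibility of crystal bases with restriction to rank-one subalgebras; this is a genuinely structural property of crystal bases over symmetrizable Kac-Moody types, proved in \cite{MR1240605} via Kashiwara's grand loop argument, and I would not attempt to reprove it. Instead, I would carefully verify that the inductive data at each stage satisfies the hypotheses under which this compatibility applies, that is, that $V_{q,w'}(\gL)$ inherits a crystal lattice compatible with the $\ga_{i_1}$-string decomposition from the ambient $\mcB(\gL)$. Once both ingredients are granted, the induction closes, $\mcB_w(\gL)$ is identified as the crystal basis of $V_{q,w}(\gL)$, and independence from the choice of reduced expression is automatic.
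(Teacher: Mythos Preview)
The paper does not give its own proof of this proposition; it is quoted directly from Kashiwara \cite{MR1240605} without argument. Your proposal is therefore not being compared against anything in the present paper, but rather against the original source.

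That said, your outline is an accurate summary of Kashiwara's approach in \cite{MR1240605}. The key theorem there (Proposition 3.2.3 and its consequences) establishes precisely the inductive step you describe: if a subset $S \subseteq \mcB(\gL)$ arises as the crystal basis of a $U_q(\hfb)$-submodule compatible with the global basis, then $\mcF_i S$ is again of this form and corresponds to the submodule obtained by saturating under the $i$-th $\mathfrak{sl}_2$. Starting from $\{b_\gL\}$ and iterating along any reduced word for $w$ lands on the crystal basis of $V_{q,w}(\gL)$, which depends only on $w\gL$. Your identification of the rank-one compatibility (ultimately resting on the grand loop) as the nontrivial input is exactly right, and your decision not to reprove it is appropriate for a result being quoted.

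One minor sharpening: independence of the reduced expression can also be deduced more combinatorially, once one knows that $\mcB_w(\gL)$ is closed under all $\te_i$ (Proposition \ref{Prop: Demazure crystal2}(i) in this paper, also from \cite{MR1240605}). With that stability in hand, Matsumoto's theorem reduces the question to checking invariance under braid moves, and each braid move involves only a finite-type rank-two sub-crystal where the claim can be verified directly. This route avoids invoking the Demazure module itself, though of course the $\te_i$-stability is itself a consequence of the same structural results in \cite{MR1240605}.
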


\begin{Def} \normalfont
  We call $\mcB_w(\gL)$ the \textit{Demazure crystal} associated with $\gL$ and $w$.
  (Note that $\mcB_w(\gL)$ does not have a $U_q(\hfg)$-crystal structure).
\end{Def}

Demazure crystals are known to have the following properties:

\begin{Prop}[\cite{MR1240605}] \label{Prop: Demazure crystal2}
  {\normalfont(i)} For any $i \in \hI$, we have $\te_i \mcB_w(\gL) \subseteq \mcB_w(\gL) \cup \{ 0 \}$.\\
  {\normalfont(ii)} We have
     \[  \ch{\hfh}V_w(\gL) =\sum_{b \in \mcB_w(\gL)} e\big(\wt(b)\big).
     \]
\end{Prop}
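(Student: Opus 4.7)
The plan is to prove (i) and (ii) together by induction on the length $\ell(w)$, using the recursion $\mcB_w(\gL) = \mcF_{i_1}\mcB_{w'}(\gL)$ attached to a reduced expression $w = s_{i_1} w'$. The base case $w = e$ is immediate: $\mcB_e(\gL) = \{b_\gL\}$ is $\te_j$-stable for every $j$ by (\ref{eq: uniqueness_of_highest}), and $V_e(\gL) = \C b_\gL$ has character $e(\gL)$.

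For the inductive step of (i), closure of $\mcF_{i_1}\mcB_{w'}(\gL)$ under $\te_{i_1}$ is automatic, since $\mcF_{i_1}$ fills in complete $i_1$-strings. The nontrivial content is closure under $\te_j$ for $j \neq i_1$. Following Kashiwara \cite{MR1240605}, the cleanest route passes through the quantum side: one proves by parallel induction that $\{G(b)\mid b \in \mcB_w(\gL)\}$ is a basis of the quantized Demazure submodule $V_{q,w}(\gL)$ of Subsection~\ref{subsection: Quantized_Demazure}, where $G(b)$ denotes the global lower crystal basis element, and extracts the required $\te_j$-stability from the preservation of $V_{q,w}(\gL)$ under $E_j$ for $j \neq i_1$.

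For (ii), the classical Demazure character formula (valid for any symmetrizable Kac-Moody Lie algebra) gives
\[ \ch{\hfh}V_w(\gL) = D_{i_1}\big(\ch{\hfh}V_{w'}(\gL)\big), \]
where $D_{i_1}$ is the Demazure operator. Combined with the induction hypothesis, this reduces (ii) to the crystal-side identity
\[ \sum_{b \in \mcF_{i_1}\mcB_{w'}(\gL)} e\big(\wt(b)\big) = D_{i_1}\bigg(\sum_{b \in \mcB_{w'}(\gL)} e\big(\wt(b)\big)\bigg). \]
The verification uses the rigid structure of Demazure crystals supplied by part (i): the $\te_j$-stability for all $j$ forces $\mcB_{w'}(\gL)$ to have a restricted form of intersection with each $i_1$-string of $\mcB(\gL)$, sufficient to make the $D_{i_1}$-calculation match string by string.

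The main obstacle is part (i), and specifically the $\te_j$-stability of $\mcF_{i_1}\mcB_{w'}(\gL)$ for $j \neq i_1$. This cannot be extracted from the abstract crystal axioms alone --- one can cook up $\te_{i_1}$-stable subsets of $\mcB(\gL)$ whose $\mcF_{i_1}$-saturation violates $\te_j$-stability --- and Kashiwara's argument genuinely relies on the $U_q(\hfg)$-module structure of $V_q(\gL)$ matched with the global basis. Once this stability is in place, both the structural description of $\mcB_w(\gL)$ used above and the character identity in (ii) follow.
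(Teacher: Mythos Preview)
The paper does not give its own proof of this proposition: it is stated as a cited result from Kashiwara \cite{MR1240605} and used as a black box. Your outline is a faithful sketch of Kashiwara's original argument --- induction on length, the global basis identification $\{G(b) \mid b \in \mcB_w(\gL)\}$ with a basis of $V_{q,w}(\gL)$ to obtain $\te_j$-stability, and the Demazure operator recursion matched against $i_1$-strings for the character formula --- so there is nothing in the paper to compare against beyond noting that your sketch accurately reflects the cited source.
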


\begin{Lem} \label{Lem: Demazure_crystal}
  {\normalfont(i)} For any $i \in \hI$ and $w \in \hW$, we have
      \[ \mcF_{i} \mcB_w(\gL) = \begin{cases} \mcB_{s_iw}(\gL) & \text{if} \ s_i w > w, \\
                                              \mcB_{w}(\gL) & \text{if} \ s_i w < w.
                                \end{cases}
      \]
  {\normalfont(ii)} For an arbitrary sequence $\mathbf{i}:i_1,\dots,i_m$ of elements of $\hI$,
                    there exists some $w \in \hW$ such that $\mcF_{\mathbf{i}}b_{\gL} =\mcB_w(\gL)$.
\end{Lem}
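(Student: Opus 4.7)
The plan is to prove (i) by working with a well-chosen reduced expression of $w$, and then deduce (ii) by an easy induction on the length of the sequence $\mathbf{i}$.

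For part (i), consider first the case $s_i w > w$. Fix a reduced expression $w = s_{i_1} \cdots s_{i_m}$; since $s_i w > w$, the expression $s_i s_{i_1} \cdots s_{i_m}$ is a reduced expression of $s_i w$. Then by the definition of the Demazure crystal (together with its independence of the reduced expression, stated in the preceding proposition),
\[
\mcB_{s_i w}(\gL) = \mcF_i \mcF_{i_1} \cdots \mcF_{i_m} b_{\gL} = \mcF_i \mcB_w(\gL).
\]
For the case $s_i w < w$, I would pick a reduced expression of $w$ beginning with $s_i$, say $w = s_i s_{j_2} \cdots s_{j_m}$, which is possible precisely when $s_i w < w$. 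Then $\mcB_w(\gL) = \mcF_i \mcF_{j_2} \cdots \mcF_{j_m} b_{\gL} = \mcF_i \mcB_{s_iw}(\gL)$, and the desired equality reduces to the idempotence $\mcF_i \mcF_i \mathcal{C} = \mcF_i \mathcal{C}$ for any $\mathcal{C} \subseteq \mcB(\gL)$. This idempotence is immediate from the definition: any $\tf_i^k \tf_i^{k'} b$ is of the form $\tf_i^{k+k'} b$, so $\mcF_i \mcF_i \mcB_{s_iw}(\gL) = \mcF_i \mcB_{s_iw}(\gL) = \mcB_w(\gL)$.

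For part (ii), I would argue by induction on $m$, the length of the sequence $\mathbf{i} = (i_1, \dots, i_m)$. The base case $m = 0$ gives $\mcF_{\emptyset} b_{\gL} = \{b_{\gL}\} = \mcB_e(\gL)$ with $e$ the identity of $\hW$. For the inductive step, write $\mathbf{i}' = (i_2, \dots, i_m)$; by induction there exists $w' \in \hW$ with $\mcF_{\mathbf{i}'} b_{\gL} = \mcB_{w'}(\gL)$. Then $\mcF_{\mathbf{i}} b_{\gL} = \mcF_{i_1} \mcB_{w'}(\gL)$, and part (i) identifies this with either $\mcB_{s_{i_1} w'}(\gL)$ or $\mcB_{w'}(\gL)$ depending on whether $s_{i_1} w' > w'$ or $s_{i_1} w' < w'$.

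The only subtle point is the case $s_i w < w$ of part (i), where one must be careful to leverage the independence-of-reduced-expression proposition to rewrite $w$ starting with $s_i$; apart from that, everything is essentially formal manipulation, so I do not anticipate any significant obstacle.
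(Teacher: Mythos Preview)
Your proof is correct and follows essentially the same approach as the paper: the paper also treats the case $s_i w > w$ directly from the definition, handles $s_i w < w$ by choosing a reduced expression of $w$ beginning with $s_i$ (justified there via the exchange condition) and using the idempotence $\mcF_i\mcF_i = \mcF_i$, and then deduces (ii) by induction on the length of $\mathbf{i}$.
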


\begin{proof}
  If $s_i w > w$, (i) follows by definition.
  If $s_i w < w$, then $w$ has a reduced expression in the form $w =s_i s_{j_1}\cdots s_{j_m}$ 
  by the exchange condition (\cite[Lemma 3.11]{MR1104219}),
  and hence (i) follows since
  \[ \mcF_{i} \mcB_w(\gL) = \mcF_{i} \mcF_{i,j_1,\dots,j_m} b_{\gL} = \mcF_{i,j_1,\dots,j_m} b_{\gL} = \mcB_w(\gL).
  \]
  The assertion (ii) can be shown inductively from (i).
\end{proof}

\begin{Prop}\label{Prop: stability}
  Let $J$ be a proper subset of $\hI$, and $\mathbf{i}: i_1,\dots,i_m$ a sequence of $\hI$.
  We assume that there exists some $1\le m' \le m$ such that $i_k \in J$ for all $1 \le k \le m'$ and 
  $s_{i_1} \cdots s_{i_{m'}}$ is a reduced expression of the longest element of $\hW_J$.
  Then there exists some element $w \in \hW$ satisfying 
  \[ \mcF_{\mathbf{i}}b_{\gL} = \mcB_w(\gL), \ \ \ \text{and} \ \ \ \langle w \gL, \ga_i^{\vee} \rangle \le 0 \ \ \ 
     \text{for all} \ i \in J.
  \]
  Moreover, this Demazure crystal $\mcB_w(\gL)$ satisfies
  \begin{equation}\label{eq: stability}
    \tf_i \mcB_w(\gL) \subseteq \mcB_w(\gL) \cup \{ 0 \} \ \ \ \text{for all} \ i \in J.
  \end{equation}
\end{Prop}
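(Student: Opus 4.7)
The plan is to identify the Demazure crystal $\mcF_\mathbf{i} b_\gL$ as $\mcB_w(\gL)$ for a specific $w \in \hW$ and then to show $s_i w < w$ for every $i \in J$; the two remaining claims follow immediately from this.

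To carry this out, I would iterate Lemma \ref{Lem: Demazure_crystal}(i) from right to left along $\mathbf{i}$, starting with $\mcB_e(\gL) = \{b_\gL\}$: applying $\mcF_{i_k}$ to the current Demazure crystal $\mcB_{w'}(\gL)$ produces $\mcB_{s_{i_k}w'}(\gL)$ if $s_{i_k}w' > w'$ and $\mcB_{w'}(\gL)$ otherwise. Thus $\mcF_\mathbf{i}b_\gL = \mcB_w(\gL)$ where $w = s_{i_1}\diamond s_{i_2}\diamond\cdots\diamond s_{i_m}$ is the left Demazure product, the associative operation on $\hW$ defined by $s_i\diamond x = s_ix$ if $s_ix > x$ and $s_i\diamond x = x$ otherwise. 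Using associativity together with the hypothesis that $s_{i_1}\cdots s_{i_{m'}}$ is a reduced expression of $w_J$ -- so that the Demazure product of the first $m'$ reflections collapses to $w_J$ itself -- we obtain $w = w_J \diamond w_1$, where $w_1 := s_{i_{m'+1}}\diamond\cdots\diamond s_{i_m}$. Writing $w_1 = u w_1'$ with $u \in \hW_J$ and $w_1'$ the minimal-length representative of the coset $\hW_J w_1$, the standard identity $\ell(xw_1') = \ell(x) + \ell(w_1')$ for $x \in \hW_J$ identifies $w_J w_1'$ as the unique longest element of the coset $\hW_J w_1$; unpacking the Demazure product then yields $w = w_J w_1'$. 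In particular, $s_i w < w$ holds for every $i \in J$.

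From $s_i w < w$ the two remaining conclusions are formal. Lemma \ref{Lem: Demazure_crystal}(i) gives $\mcF_i \mcB_w(\gL) = \mcB_w(\gL)$ for every $i \in J$; in particular $\tf_i b \in \mcB_w(\gL) \cup \{0\}$ for every $b \in \mcB_w(\gL)$, which is exactly (\ref{eq: stability}). For the weight inequality, $s_i w < w$ is equivalent to $w^{-1}\ga_i$ being a negative root of $\hfg$, so $w^{-1}\ga_i^\vee$ is a negative coroot; dominance of $\gL \in \hP_+$ then yields $\langle w\gL, \ga_i^\vee\rangle = \langle \gL, w^{-1}\ga_i^\vee\rangle \le 0$. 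The main technical step is thus the identification $w = w_J w_1'$ as the longest element of $\hW_J w_1$, which rests on well-known properties of the Demazure product and of minimal-length coset representatives for the finite parabolic subgroup $\hW_J$.
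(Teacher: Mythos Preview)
Your argument is correct and reaches the same key reduction as the paper: it suffices to find $w$ with $\mcF_{\mathbf{i}}b_\gL = \mcB_w(\gL)$ and $s_i w < w$ for all $i\in J$, after which the weight inequality and (\ref{eq: stability}) follow exactly as you say (and as the paper says) from Lemma~\ref{Lem: Demazure_crystal}(i) and the fact that $s_iw<w$ forces $w^{-1}\alpha_i$ to be negative.

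Where you diverge is in how you establish $s_iw<w$. The paper does a bare-hands descending induction on $k$ from $m'+1$ to $1$, maintaining the invariant that $w_k$ (with $\mcF_{i_k,\dots,i_m}b_\gL=\mcB_{w_k}(\gL)$) admits a reduced expression beginning with $s_{i_k}\cdots s_{i_{m'}}$; the inductive step is a direct application of the exchange condition, and at $k=1$ one reads off that $w=w_1$ has a reduced expression starting with a reduced word for $w_J$, whence $s_iw<w$. Your route instead packages the iteration of Lemma~\ref{Lem: Demazure_crystal}(i) as the Demazure (0-Hecke) product, invokes its associativity to write $w=w_J\diamond w_1$, and then uses the parabolic coset decomposition $w_1=uw_1'$ together with the absorption $w_J\diamond u=w_J$ for $u\in\hW_J$ and $\ell(w_Jw_1')=\ell(w_J)+\ell(w_1')$ to conclude $w=w_Jw_1'$. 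This is more conceptual and identifies $w$ explicitly as the longest element of the coset $\hW_J w_1$, at the cost of importing standard but unproved-in-the-paper facts about the Demazure product (associativity, the absorption property for parabolic longest elements). The paper's argument is entirely self-contained from the exchange condition. Either way, the content is the same: the prefix $s_{i_1}\cdots s_{i_{m'}}=w_J$ survives as a left factor of a reduced expression for $w$.
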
  

\begin{proof}
  It suffices to show that there exists $w \in \hW$ satisfying
  \begin{equation}\label{eq:condition}
    \mcF_{\mathbf{i}}b_{\gL} = \mcB_w(\gL), \ \ \ \text{and} \ \ \ s_iw<w \ \ \ \text{for all} \ i \in J.
  \end{equation}
  Indeed if this is true, since $s_iw<w$ if and only if $w^{-1}\ga_i$ is a negative root of $\hat{\gD}$
  (\cite[Lemma 3.11]{MR1104219}), it follows that
  \[ \langle w\gL, \ga_i^{\vee} \rangle = \langle \gL, w^{-1}\ga_i^{\vee} \rangle \le 0 \ \text{for} \ i \in J,
  \]
  and (\ref{eq: stability}) also follows from Lemma \ref{Lem: Demazure_crystal} (i).
  We shall show by the descending induction on $k$ that there exists an element $w_k \in \hW$ for each $1\le k \le m'+1$ 
  satisfying the following two conditions: $\mcF_{i_k, \dots, i_m}b_{\gL} = \mcB_{w_k}(\gL)$ holds, 
  and $w_k$ has a reduced expression in the form
  \[ w_k =s_{i_k} \cdots s_{i_{m'}} s_{j_1} \cdots s_{j_\ell} \ \ \ \text{for some} \ \ell \in \Z_{\ge 0}, \ 
     j_1,\dots,j_\ell \in \hI.
  \]
  Since $w=w_1$ satisfies (\ref{eq:condition}), this completes the proof.
  The assertion for $k = m' +1$ follows from Lemma \ref{Lem: Demazure_crystal} (ii) 
  since the second condition is trivial in this case.
  Assume $k \le m'$. By the induction hypothesis, $w_{k+1}$ has a reduced expression in the form
  $ w_{k+1} = s_{i_{k+1}} \cdots s_{i_{m'}} s_{j_1} \cdots s_{j_{\ell}}.
  $
  If $s_{i_k}w_{k+1} > w_{k+1}$, $w_k = s_{i_k} w_{k+1}$ obviously satisfies the required conditions since
  we have
  \[ \mcF_{i_k,\dots,i_m}b_{\gL} =\mcF_{i_k} \mcB_{w_{k+1}} (\gL) = \mcB_{s_{i_k}w_{k+1}}(\gL)
  \]
  by Lemma \ref{Lem: Demazure_crystal} (i).
  Assume that $s_{i_k}w_{k+1} < w_{k+1}$. Then by the exchange condition, there exists some $k+1 \le p \le m'$ such that
  \[ s_{i_k} s_{i_{k+1}} \cdots s_{i_{p-1}} = s_{i_{k+1}} \cdots s_{i_p},
  \]
  or there exists some $1 \le q \le \ell$ such that
  \[ s_{i_k} s_{i_{k+1}} \cdots s_{i_{m'}} s_{j_1} \cdots s_{j_{q-1}} = s_{i_{k+1}} \cdots s_{i_{m'}} s_{j_1} \cdots s_{j_q}.
  \]
  However the first case cannot occur since $s_{i_1}\cdots s_{i_{m'}}$ is reduced,
  and hence the second case occurs. Then 
  \[ w_{k+1} = s_{i_k} s_{i_{k+1}} \cdots s_{i_{m'}} s_{j_1} \cdots s_{j_{q-1}} s_{j_{q+1}} \cdots s_{j_\ell}
  \]
  is a reduced expression of $w_{k+1}$.
  Since we have from Lemma \ref{Lem: Demazure_crystal} (i) that
  \[ \mcF_{i_k,\dots,i_m} b_{\gL} = \mcF_{i_k}\mcB_{w_{k+1}}(\gL) = \mcB_{w_{k+1}}(\gL),
  \]
  $w_k = w_{k+1}$ satisfies the required conditions, and the assertion is proved.
\end{proof}

\subsection{Demazure crystal decomposition of $b_{\gL} \otimes \mB_0(\gl)$}

Throughout the rest of this section, we assume that $\gL \in \hP_+ \setminus \Z \gd$, $\gl \in P_+$ 
and a sequence of real numbers $\ul{\gs}: 0= \gs_0 < \dots < \gs_N =1$ is sufficiently fine for $\gl$.
Denote by $\hI_{\gL}$ the proper subset of $\hI$ defined by
\[ \hI_{\gL} = \{ i \in \hI \mid \langle \gL, \ga_i^{\vee} \rangle = 0 \}.
\]

This subsection is devoted to show the following proposition.
The proof is carried out in the similar line as \cite[Proposition 12]{MR1887117}.

\begin{Prop}\label{Prop: Demazure_crystal_decomposition1}
  For each $\pi_0 \in \mB_0(\gl)^{\gL}$, there exists some $w_{\pi_0} \in \hW$ such that 
  the image of a subset $b_{\gL} \otimes \mB_0(\gl)$ of $\mcB(\gL) \otimes \mB_0(\gl)$ under the isomorphism 
  given in Corollary \ref{Cor: decomposition} coincides with the disjoint union of the Demazure crystals
  \[ \coprod_{\pi_0 \in \mB_0(\gl)^{\gL}} \mcB_{w_{\pi_0}}\big(\gL+\pi_0(1)\big).
  \]
  Moreover, each $w_{\pi_0}$ satisfies $\big\langle w_{\pi_0}\big(\gL + \pi_0(1)\big) , \ga_i^{\vee} \big\rangle \le 0$ 
  for all $i \in \hI_{\gL}$.
\end{Prop}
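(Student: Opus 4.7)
The plan is to identify, for each $\pi_{0} \in \mB_{0}(\gl)^{\gL}$, the image $\mathcal{S}(\pi_{0})$ of $(\{b_{\gL}\} \otimes \mB_{0}(\gl)) \cap C(b_{\gL} \otimes \pi_{0})$ inside $\mcB(\gL + \pi_{0}(1))$ with a Demazure crystal $\mcB_{w_{\pi_{0}}}(\gL + \pi_{0}(1))$ satisfying the stated inequality. The conclusion will follow by applying Proposition~\ref{Prop: stability} with $J = \hI_{\gL}$, which is a proper subset of $\hI$ since $\gL \notin \Z \gd$; accordingly $\hW_{\hI_{\gL}}$ is finite and admits a longest element, a reduced expression of which I denote $\mathbf{i}_{0}$.

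Two consequences of the tensor product rule set the stage. First, $\mathcal{S}(\pi_{0})$ is $\te$-stable: whenever $\te_{i}(b_{\gL} \otimes \pi) \neq 0$ the tensor rule forces it to equal $b_{\gL} \otimes \te_{i}\pi$, since $\te_{i} b_{\gL} = 0$ rules out the alternative. Second, for every $i \in \hI_{\gL}$ one has $\gph_{i}(b_{\gL}) = \langle \gL, \ga_{i}^{\vee}\rangle = 0 \le \gee_{i}(\pi)$, so $\tf_{i}(b_{\gL} \otimes \pi)$ either vanishes or equals $b_{\gL} \otimes \tf_{i}\pi$; hence the Demazure operator $\mathcal{F}_{i}$ preserves $\mathcal{S}(\pi_{0})$ for every $i \in \hI_{\gL}$, and in particular $\mathcal{F}_{\mathbf{i}_{0}} b_{\gL + \pi_{0}(1)} \subseteq \mathcal{S}(\pi_{0})$ under the isomorphism.

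The main construction is to append to $\mathbf{i}_{0}$ a tail so that the resulting $\mathcal{F}_{\mathbf{i}} b_{\gL + \pi_{0}(1)}$ exhausts $\mathcal{S}(\pi_{0})$. I would proceed by iterating Lemma~\ref{Lem: used_again}: fixing a sufficiently fine partition $\ul{\gs}$ for $\gl$, for any $\pi$ in the component of $\pi_{0}$ the lemma produces a finite chain $\pi = \pi^{(0)}, \pi^{(1)}, \ldots, \pi^{(K)} = \pi_{0}$ along which the integer $p_{0}$ strictly increases, with each $\pi^{(k+1)}$ obtained from $\pi^{(k)}$ by $\te_{j}$ operators whose indices $j$ lie in the proper subset $J^{(k)} = \{i \in \hI : H_{i}^{\pi^{(k)}}(\gs_{p_{0}(\pi^{(k)})}) = -\langle \gL, \ga_{i}^{\vee}\rangle\}$. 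Each such $\te_{j}$ keeps the first tensor factor at $b_{\gL}$ by the tensor rule, and the mutually inverse character of $\te_{j}$ and $\tf_{j}$ on $\{b_{\gL}\} \otimes \mB_{0}(\gl)$ shows the reversed chain is a $\tf$-sequence from $b_{\gL} \otimes \pi_{0}$ to $b_{\gL} \otimes \pi$ that stays inside $\{b_{\gL}\} \otimes \mB_{0}(\gl)$. Appending reduced expressions for the longest elements of the various $\hW_{J^{(k)}}$ to $\mathbf{i}_{0}$ then produces a single $\mathbf{i}$ whose $\mathcal{F}$-closure captures every such $\pi$.

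The main obstacle will be assembling these per-$\pi$ chains into one uniform sequence $\mathbf{i}$: different $\pi$ can travel different proper subsets $J^{(k)}$, so the construction must be done recursively level-by-level, at each level collecting the union of the relevant subsets and inserting the longest elements of the corresponding parabolic subgroups. Once $\mathbf{i}$ is fixed, verifying $\mathcal{F}_{\mathbf{i}} b_{\gL + \pi_{0}(1)} \subseteq \mathcal{S}(\pi_{0})$ uses the tensor rule at each step to ensure $\tf$'s stay in $\{b_{\gL}\} \otimes \mB_{0}(\gl)$ when nonzero, while the reverse inclusion is supplied by the reversed chains above. Proposition~\ref{Prop: stability} applied to $\mathbf{i}$ then gives $w_{\pi_{0}}$ with the desired inequality for $i \in \hI_{\gL}$, and disjointness of the union $\coprod_{\pi_{0}} \mcB_{w_{\pi_{0}}}(\gL + \pi_{0}(1))$ is inherited from Corollary~\ref{Cor: decomposition}.
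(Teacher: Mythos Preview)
Your overall strategy---build a sequence $\mathbf{i}$ beginning with a reduced word for the longest element of $\hW_{\hI_\gL}$, prove $\mcF_{\mathbf{i}}(b_\gL\otimes\pi_0)=\mathcal{S}(\pi_0)$, then invoke Proposition~\ref{Prop: stability}---is exactly what the paper does. But the ``main obstacle'' you flag is a symptom of a missing observation that the paper uses throughout: every $\pi$ with $b_\gL\otimes\pi\in C(b_\gL\otimes\pi_0)$ satisfies $\pi(t)=\pi_0(t)$ for all $t\in[0,\gs_{p_0(\pi)}]$. This follows directly from Lemma~\ref{Lem: Technical_Lemma}~(ii) applied to any $\te$-chain from $b_\gL\otimes\pi$ back to $b_\gL\otimes\pi_0$. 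In particular $H_i^{\pi}(\gs_{p_0(\pi)})=H_i^{\pi_0}(\gs_{p_0(\pi)})$, so your set $J^{(k)}$ depends only on the level $p=p_0(\pi^{(k)})$ and on $\pi_0$, never on the particular $\pi^{(k)}$. There is nothing to ``union'': for each $0\le p\le N-1$ put $J^p=\{i\in\hI\mid H_i^{\pi_0}(\gs_p)=-\langle\gL,\ga_i^\vee\rangle\}$ (so $J^0=\hI_\gL$), let $\mathbf{i}^p$ be a reduced word for the longest element of $\hW_{J^p}$, and set $\mathbf{i}_{\pi_0}=\mathbf{i}^0,\mathbf{i}^1,\dots,\mathbf{i}^{N-1}$. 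The paper then proves, by descending induction on $p$, that $\mcF_{\mathbf{i}^p,\dots,\mathbf{i}^{N-1}}(\pi_\gL*\pi_0)$ equals the set of $\pi_\gL*\pi$ in $C(\pi_\gL*\pi_0)$ with $\pi(t)=\pi_0(t)$ on $[0,\gs_p]$.

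Your forward inclusion (``$\tf$'s stay in $\{b_\gL\}\otimes\mB_0(\gl)$ by the tensor rule'') is also incomplete as stated. For $i\notin\hI_\gL$ the tensor rule does not automatically send $\tf_i(b_\gL\otimes\pi)$ to $b_\gL\otimes\tf_i\pi$; you need $\gee_i(\pi)\ge\langle\gL,\ga_i^\vee\rangle$. This is exactly what the paper's Lemma~\ref{Lem: Technical_Lemma2} supplies: if $\pi$ agrees with $\pi_0$ on $[0,\gs_{p+1}]$ and $i\in J^p$, then (inheriting from $\pi_0$) one has $H_i^\pi(t)\ge-\langle\gL,\ga_i^\vee\rangle$ on $[0,\gs_p]$ with equality at $\gs_p$, which both forces the $\tf_i$ onto the second factor and keeps $\tf_i\pi$ agreeing with $\pi_0$ on $[0,\gs_p]$. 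Without the agreement-on-initial-segments observation, neither the uniform construction of $\mathbf{i}$ nor this step goes through.
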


\begin{Lem} \label{Lem: Technical_Lemma2}
  Let $0 < u \le 1$ be a real number, $J$ a subset of $\hI$, and $\pi \in \mB_0(\gl)$ a path satisfying for all $i \in J$ that
  \begin{equation} \label{eq: assumption2}
    H_i^{\pi}(t) \ge - \langle \gL,\ga_i^{\vee}\rangle \ \ \ \text{for all} \ t \in [0,u] \ \ \ \text{and} \ \ \ H_i^{\pi}(u) 
    = - \langle \gL,\ga_i^{\vee}\rangle. 
  \end{equation}
  {\normalfont(i)} For $i \in J$ such that $\tf_i(\pi_{\gL} * \pi) \neq \0$, we have 
    \[ \tf_i (\pi_{\gL} * \pi) = \pi_{\gL} * \tf_i\pi \ \ \ \text{and} \ \ \ 
       \tf_i\pi(t) = \pi (t) \ \ \ \text{for all} \ t \in [0,u].
    \]
  {\normalfont(ii)} For a sequence $i_1, \dots, i_s$ of elements of $J$ such that 
    $\tf_{i_1}\cdots\tf_{i_s}(\pi_{\gL} * \pi) \neq \0$, we have 
    \[ \tf_{i_1} \cdots \tf_{i_k} (\pi_{\gL} * \pi) = \pi_{\gL} * (\tf_{i_1} \cdots \tf_{i_k}\pi) \ \text{and} \ 
       \tf_{i_1} \cdots \tf_{i_k} \pi(t) = \pi(t) \  \text{for all} \ t \in [0,u].
    \]
\end{Lem}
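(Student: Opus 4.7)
The plan is to reduce part (i) to two tools already developed in the excerpt: the tensor product rule \eqref{eq: tensor_rule2} and Lemma \ref{Lem: root_operator}(ii). Since $\pi_\gL(t) = t\gL$, the function $H_i^{\pi_\gL}$ is non-decreasing on $[0,1]$ with $m_i^{\pi_\gL} = 0$ and $H_i^{\pi_\gL}(1) = \langle \gL, \ga_i^\vee \rangle$, so by \eqref{eq: map_on_crystal} we have $\gph_i(\pi_\gL) = \langle \gL, \ga_i^\vee \rangle$. The hypothesis $H_i^\pi(u) = -\langle \gL, \ga_i^\vee \rangle$ forces $m_i^\pi \le -\langle \gL, \ga_i^\vee \rangle$, hence $\gee_i(\pi) = -m_i^\pi \ge \langle \gL, \ga_i^\vee \rangle = \gph_i(\pi_\gL)$. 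The second case of \eqref{eq: tensor_rule2} then applies to yield
\[
\tf_i(\pi_\gL * \pi) = \pi_\gL * \tf_i \pi,
\]
and since the left-hand side is nonzero, so is $\tf_i \pi$. At this point Lemma \ref{Lem: root_operator}(ii) with $M = \langle \gL, \ga_i^\vee \rangle$ applies directly, its hypotheses being precisely \eqref{eq: assumption2} together with $\tf_i \pi \ne \0$, and concludes $\tf_i\pi(t) = \pi(t)$ for every $t \in [0,u]$.

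For (ii), I would induct on the length of the sequence, with (i) as the base case. The key observation is that hypothesis \eqref{eq: assumption2} is preserved under any $\tf_j$ with $j \in J$ once (i) is available: if $\pi'$ satisfies \eqref{eq: assumption2} for every $i \in J$ and $\tf_j(\pi_\gL * \pi') \ne \0$, then (i) applied to $\pi'$ gives $\tf_j\pi'(t) = \pi'(t)$ on $[0,u]$, hence $H_i^{\tf_j \pi'}(t) = H_i^{\pi'}(t)$ on $[0,u]$ for every $i \in J$, so \eqref{eq: assumption2} persists verbatim for $\tf_j\pi'$. Peeling off $\tf_{i_s}$ first (the nonvanishing of the full composition supplies the nonvanishing of each partial composition needed to invoke (i)), applying (i), and iterating with the remaining sequence $i_1,\dots,i_{s-1}$ on $\tf_{i_s}\pi$ produces both the claimed identity $\tf_{i_1}\cdots\tf_{i_s}(\pi_\gL*\pi) = \pi_\gL * \tf_{i_1}\cdots\tf_{i_s}\pi$ and the pointwise equality on $[0,u]$.

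I do not anticipate a serious obstacle; the lemma is a propagation-of-structure argument. The only delicate point is that hypothesis \eqref{eq: assumption2} involves simultaneous control of all $i \in J$ at once, so it would not suffice to know that each individual $H_i^{\tf_j \pi}$ agrees with $H_i^\pi$ near $0$. What makes the induction work is that part (i) freezes $\pi$ entirely on $[0,u]$, not just one of the functions $H_i^{\pi}$, and this total freezing is the reason the $J$-indexed family of conditions can be carried through the composition without interference.
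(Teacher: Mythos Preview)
Your proof is correct and follows essentially the same approach as the paper: for (i) you combine the tensor rule \eqref{eq: tensor_rule2} with Lemma~\ref{Lem: root_operator}(ii) exactly as the paper does, and for (ii) you spell out the induction that the paper leaves implicit with the remark that (ii) ``can be proved inductively from'' (i). Your explicit observation that (i) freezes $\pi$ entirely on $[0,u]$, thereby preserving the full $J$-indexed family of conditions in \eqref{eq: assumption2}, is a nice clarification of why the induction goes through.
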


\begin{proof}
  It suffices to show (i) only since (ii) can be proved inductively from this.
  We have from (\ref{eq: map_on_crystal}) and (\ref{eq: assumption2}) that 
  \[ \varphi_i(\pi_{\gL}) = \langle\gL,\ga_i^{\vee}\rangle \ \ \ \text{and} \ \ \ 
     \gee_i(\pi) = -m_i^{\pi} \ge \langle\gL,\ga_i^{\vee}\rangle.
  \]
  Hence $\tf_i(\pi_{\gL} * \pi) = \pi_{\gL} * \tf_i \pi$ follows by (\ref{eq: tensor_rule2}).
  From Lemma \ref{Lem: root_operator} (ii) and (\ref{eq: assumption2}), $ \tf_i \pi(t) = \pi(t)$ follows for all $t \in [0,u]$,
  and (i) is proved.
\end{proof}

For each $\pi_0 \in \mB_0(\gl)^{\gL}$, we define a sequence $\mathbf{i}_{\pi_0}$ of elements of $\hI$ as follows.
For $0 \le p \le N-1$, set
\[ J^p= \{ i\in \hI \mid H_i^{\pi_0}(\gs_p) = -\langle \gL,\ga_i^{\vee}\rangle\}.
\]
Note that $J^p$ is a proper subset by Lemma \ref{Lem: properness}, and hence $\hW_{J^p}$ is finite.
When $J^p \neq \emptyset$, fix a sequence $\mathbf{i}^p : i^p_1,\dots, i^p_{m_p}$ of elements of $J^p$
so that $s_{i^p_1} \dots s_{i^p_{m_p}}$ is a reduced expression of the longest element of $\hW_{J^p}$.
When $J^p = \emptyset$ we set $\mathbf{i}^p = \emptyset$.
Then a sequence $\mathbf{i}_{\pi_0}$ is defined by $ \mathbf{i}^0,\mathbf{i}^1,\dots,\mathbf{i}^{N-1}$.
The following lemma is essential for the proof of Proposition \ref{Prop: Demazure_crystal_decomposition1}:

\begin{Lem} \label{Lem: essential_lemma}
  For $\pi_0 \in \mB_0(\gl)^{\gL}$, we have
  \[ C(\pi_{\gL} * \pi_0) \cap \big(\pi_{\gL} * \mB_0(\gl)\big) = \mcF_{\mathbf{i}_{\pi_0}}(\pi_{\gL} * \pi_0).
  \]
\end{Lem}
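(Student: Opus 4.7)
I would prove the lemma by establishing the stronger identity
\begin{equation*}
A_p := \{\pi_\gL * \pi \in C(\pi_\gL * \pi_0) \cap (\pi_\gL * \mB_0(\gl)) : \pi(t) = \pi_0(t) \text{ for all } t \in [0, \gs_p]\} = \mcF_{\mathbf{i}^p, \mathbf{i}^{p+1}, \ldots, \mathbf{i}^{N-1}}(\pi_\gL * \pi_0)
\end{equation*}
by descending induction on $p \in \{0, 1, \ldots, N\}$. Taking $p = 0$, where the condition at $t = 0$ is vacuous, recovers the statement. The base case $p = N$ is immediate: the decomposition in Proposition~\ref{Prop: equality} forces the unique $\pi \in \mB_0(\gl)^{\gL}$ with $\pi_\gL * \pi \in C(\pi_\gL * \pi_0)$ to be $\pi_0$, while the right-hand side reduces to $\{\pi_\gL * \pi_0\}$ by the convention $\mcF_{\emptyset} \mathcal{C} = \mathcal{C}$.

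For the inductive step $p + 1 \to p$, the direction $\supseteq$ is handled by Lemma~\ref{Lem: Technical_Lemma2} applied at $u = \gs_p$: any $\pi_\gL * \pi' \in A_{p+1}$ satisfies $\pi' = \pi_0$ on $[0, \gs_{p+1}] \supseteq [0, \gs_p]$, so for each $i \in J^p$ we have $H_i^{\pi'}(t) \ge -\langle \gL, \ga_i^\vee \rangle$ on $[0, \gs_p]$ with equality at $\gs_p$. The lemma then gives $\tf_i(\pi_\gL * \pi') = \pi_\gL * \tf_i \pi'$ with $\tf_i \pi'$ still agreeing with $\pi_0$ on $[0, \gs_p]$; iterating through the letters of $\mathbf{i}^p$ yields $\mcF_{\mathbf{i}^p} A_{p+1} \subseteq A_p$, which combined with the inductive hypothesis gives $\supseteq$. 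For the direction $\subseteq$, I would take $\pi_\gL * \pi \in A_p$ and, if $\pi \notin A_{p+1}$, ascend using Lemma~\ref{Lem: used_again}. The key point is that since $\pi = \pi_0$ on $[0, \gs_p]$, whenever Lemma~\ref{Lem: used_again} is applied at level $p_0(\pi) = p$, the tight set $J$ it produces coincides with $J^p$, so the $\te_i$-operators involved lie in $J^p$. A $\te_i$-analogue of Lemma~\ref{Lem: Technical_Lemma2} -- obtained from the tensor product rule (which forces $\te_i(\pi_\gL * \pi) = \pi_\gL * \te_i \pi$ when nonzero, since $\te_i \pi_\gL = 0$) together with Lemma~\ref{Lem: root_operator}(i) in the regime $m_i^{\pi} < -\langle \gL, \ga_i^\vee \rangle$ (so that the minimum location $t_1$ lies strictly past $\gs_p$, hence $\te_i$ only modifies $\pi$ beyond $\gs_p$) -- ensures each such $\te_i$ preserves membership in $A_p$. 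Iterating eventually reaches $\pi_\gL * \pi^* \in A_{p+1}$; reversing the ascent, $\pi_\gL * \pi$ is recovered from $\pi_\gL * \pi^*$ by a word of $\tf_i$'s with $i \in J^p$, and since $\mathbf{i}^p$ is a reduced expression for the longest element of $\hW_{J^p}$, the Demazure-type stability from Proposition~\ref{Prop: stability} places $\pi_\gL * \pi$ inside $\mcF_{\mathbf{i}^p} A_{p+1}$.

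The main obstacle is verifying that the Lemma~\ref{Lem: used_again} ascent terminates in the strong set $A_{p+1}$ rather than in the weaker set of paths with $p_0 \ge p + 1$: a single application of that lemma only guarantees $H_i^{\pi'} \ge -\langle \gL, \ga_i^\vee \rangle$ pointwise on $[0, \gs_{p+1}]$, not actual equality with $\pi_0$. To close this gap one must continue the iteration at the new level $p_0(\pi')$ and argue, using that $\pi_0$ is the unique element of $\mB_0(\gl)^{\gL}$ in the component, that the $\te_i$-moves ultimately strip off enough slack to force pointwise coincidence with $\pi_0$ on $[0, \gs_{p+1}]$. Coordinating this inner iteration with the outer downward induction -- so that the $\te_i$-moves stay consistent with the block structure of $\mathbf{i}_{\pi_0}$ and collapse to a reduced word in the appropriate $\hW_{J^q}$ at each intermediate level -- is the technical heart of the argument.
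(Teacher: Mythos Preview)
Your overall framework matches the paper's proof exactly: the same descending induction on $p$, the same set $A_p$ (which the paper writes as $C(\pi_\gL*\pi_0)\cap\big(\pi_\gL*\mB_0(\gl)^p\big)$), the same base case, and the same argument for~$\supseteq$ via Lemma~\ref{Lem: Technical_Lemma2}.

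The ``main obstacle'' you identify, however, is not an obstacle at all, and your proposed inner iteration is unnecessary. The point you are missing is that Lemma~\ref{Lem: Technical_Lemma}~(ii) applies to \emph{any} raising sequence in $\hI$, not just to indices in $J^p$. Concretely: suppose $\pi_\gL*\pi$ lies in $C(\pi_\gL*\pi_0)$ and let $q$ be the integer with
\[
\gs_q=\max\{u\in[0,1]\mid H_i^\pi(t)\ge-\langle\gL,\ga_i^\vee\rangle\ \text{for all }t\in[0,u],\ i\in\hI\}.
\]
Since $\pi_\gL*\pi\in C(\pi_\gL*\pi_0)$, there is a sequence $j_1,\dots,j_\ell\in\hI$ with $\te_{j_1}\cdots\te_{j_\ell}(\pi_\gL*\pi)=\pi_\gL*\pi_0$; as $\te_i\pi_\gL=\0$, the tensor rule forces $\te_{j_1}\cdots\te_{j_\ell}\pi=\pi_0$. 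Now Lemma~\ref{Lem: Technical_Lemma}~(ii), applied with $u=\gs_q$, gives $\pi_0(t)=\pi(t)$ for all $t\in[0,\gs_q]$. In other words, within the component the ``weak'' condition $p_0(\pi)\ge q$ already coincides with the ``strong'' condition $\pi\in\mB_0(\gl)^q$.

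With this observation the $\subseteq$ direction becomes immediate: given $\pi_\gL*\pi\in A_p$, either $p_0(\pi)>p$, in which case $\pi\in\mB_0(\gl)^{p_0(\pi)}$ and the induction hypothesis at level $p_0(\pi)$ finishes; or $p_0(\pi)=p$, in which case a single application of Lemma~\ref{Lem: used_again} produces $\pi'$ with $p_0(\pi')>p$, the same observation gives $\pi'\in\mB_0(\gl)^{p_0(\pi')}$, the induction hypothesis places $\pi_\gL*\pi'$ in $\mcF_{\mathbf{i}^{\ge p}}(\pi_\gL*\pi_0)$, and Proposition~\ref{Prop: stability} (stability under $\tf_i$, $i\in J^p$) pulls $\pi_\gL*\pi$ into the same set. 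No nested iteration or bookkeeping at intermediate levels is required.
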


\begin{proof}
  For $0\le p \le N$, set $\mathbf{i}^{\ge p} = \mathbf{i}^p, \mathbf{i}^{p+1},\dots, \mathbf{i}^{N-1}$ and 
  \[ \mB_0(\gl)^{p} = \{ \pi \in \mB_0(\gl) \mid \pi(t) = \pi_0(t) \ \text{for all} \ t \in [0, \gs_p] \}.
  \]
  We shall show by the descending induction on $p$ that 
  \[ C(\pi_{\gL} * \pi_0) \cap \big(\pi_\gL * \mB_0(\gl)^p\big) = \mcF_{\mathbf{i}^{\ge p}}(\pi_\gL*\pi_0),
  \]
  which for $p = 0$ is the assertion of the lemma.
  If $p = N$, there is nothing to prove.
  Assume $p < N$. 
  Since 
  \[ \pi_\gL * \mB_0(\gl)^{p+1} \supseteq \mcF_{\mathbf{i}^{\ge p+1}} (\pi_\gL * \pi_0)
  \]
  follows by the induction hypothesis, in order to show the containment $\supseteq$ it suffices to show that
  \[ \pi_\gL * \mB_0(\gl)^p \supseteq \mcF_{\mathbf{i}^p} \big(\pi_\gL * \mB_0(\gl)^{p+1}\big).
  \]
  Let $\pi \in \mB_0(\gl)^{p+1}$.
  Since $H_i^{\pi}(t)=H_i^{\pi_0}(t)$ for all $t \in [0,\gs_{p+1}]$ and $i \in \hI$,
  we have for all $i \in J^p$ that
  \[  H_i^{\pi}(t) \ge - \langle \gL,\ga_i^{\vee}\rangle \ \ \ \text{for all} \ t \in [0,\gs_p] \ \ \ \text{and} \ \
      \ H_i^{\pi}(\gs_p) =-\langle\gL,\ga_i^{\vee}\rangle.
  \]
  Hence for any sequence $i_1, \dots, i_s$ of elements of $J^p$ satisfying
  $\tf_{i_1}\cdots\tf_{i_s}(\pi_{\gL}* \pi) \neq \0$, we have from Lemma \ref{Lem: Technical_Lemma2} (ii) that
  \[ \tf_{i_1} \cdots \tf_{i_s} (\pi_{\gL} * \pi) = \pi_{\gL} * (\tf_{i_1} \cdots \tf_{i_s}\pi)
  \]
  and
  \[ \tf_{i_1} \cdots \tf_{i_s} \pi(t) = \pi(t) = \pi_0(t) \ \ \ \text{for all} \ t \in [0,\gs_p],
  \]
  which implies $\pi_{\gL} * \mB_0(\gl)^p \supseteq \mcF_{\mathbf{i}^p}(\pi_{\gL} * \pi)$ as required.
  The containment $\supseteq$ is proved.
  
  Now we show the opposite containment $\subseteq$.
  Suppose that $\pi \in \mB_0(\gl)^p$ satisfies $\pi_{\gL} * \pi \in C(\pi_{\gL} * \pi_0)$,
  and define $p_0\in \{1,\ldots,N\}$ by an integer satisfying
  \[ \gs_{p_0} = \max\{u \in [0,1] \mid H_i^\pi(t) \ge -\langle \gL, \ga_i\rangle \ \text{for all} \ t \in [0,u],i\in \hI\}.
  \]
  Note that $p_0 \ge p$ since $\pi \in \mB_0(\gl)^{p}$.
  Since there exists a sequence $j_1, \ldots,j_\ell$ of elements of $\hI$ such that 
  \[ \te_{j_1} \cdots \te_{j_\ell} (\pi_\gL * \pi) = \pi_\gL * (\te_{j_1} \cdots \te_{j_\ell} \pi) = \pi_\gL * \pi_0,
  \]
  we have from Lemma \ref{Lem: Technical_Lemma} (ii) that $\pi_0(t) = \pi(t)$ for all $t \in [0,\gs_{p_0}]$.
  Hence $\pi \in \mB_0(\gl)^{p_0}$ follows.
  If $p_0 > p$, we have from the induction hypothesis that
  \[ \pi_\gL * \pi \in C(\pi_\gL * \pi_0) \cap \big( \pi_\gL * \mB_0(\gl)^{p_0}\big) \subseteq \mcF_{\mathbf{i}^{\ge p_0}}
     (\pi_\gL * \pi_0) \subseteq \mcF_{\mathbf{i}^{\ge p}}(\pi_\gL * \pi_0).
  \]
  Assume that $p_0 = p$.
  Then by Lemma \ref{Lem: used_again}, there exists $\pi' \in \mB_0(\gl)$ such that
  \begin{equation} \label{eq: path_containment}
    \pi_{\gL} * \pi' \in \{ \te_{i_1}\cdots\te_{i_s}(\pi_{\gL}*\pi)\mid s\ge 0, i_k \in J^p\}\setminus\{\0\}
  \end{equation}
  and $p_0' > p$, where $p_0'$ is the integer satisfying 
  \[ \gs_{p_0'} = \max\{u \in [0,1] \mid H_i^{\pi'}(t) \ge -\langle \gL, \ga_i\rangle \
     \text{for all} \ t \in [0,u],i\in \hI\}.
  \]
  Similar argument as above shows that $\pi' \in \mB_0(\gl)^{p'}$, and then 
  \[ \pi_\gL * \pi' \in \pi_\gL * \mB_0(\gl)^{p'} \subseteq \mcF_{\mathbf{i}^{\ge p'}}(\pi_\gL * \pi_0) \subseteq 
     \mcF_{\mathbf{i}^{\ge p}}(\pi_\gL * \pi_0)
  \] 
  follows by the induction hypothesis.
  By Proposition \ref{Prop: stability} and $C(\pi_{\gL} * \pi_0) \cong \mcB(\gL + \pi_0(1))$, we have for all $i \in J^p$ that 
  \[ \tf_i \mcF_{\mathbf{i}^{\ge p}} (\pi_{\gL} * \pi_0) \subseteq \mcF_{\mathbf{i}^{\ge p}}
     (\pi_{\gL} * \pi_0) \cup \{ \0 \}.
  \]
  Hence we have from (\ref{eq: path_containment}) that
  \[ \pi_{\gL} * \pi \in \{ \tf_{i_1} \cdots \tf_{i_s}(\pi_{\gL} * \pi') \mid s \ge 0, i_k \in J^{p} \}\setminus
     \{\0\} \subseteq \mcF_{\mathbf{i}^{\ge p}} (\pi_{\gL} * \pi_0),
  \]
  and the containment $\subseteq$ is proved.
\end{proof}

\noindent {\textit{Proof of Proposition \ref{Prop: Demazure_crystal_decomposition1}.\ }}
  For $\pi_0 \in \mB_0(\gl)^{\gL}$, 
  Lemma \ref{Lem: essential_lemma} implies that the image of $C(b_{\gL} \otimes \pi_0) \cap \big(b_{\gL} \otimes \mB_0(\gl)\big)$
  under the isomorphism given in Corollary \ref{Cor: decomposition} is $\mcF_{\mathbf{i}_{\pi_0}}b_{\gL + \pi_0(1)}$.
  By Proposition \ref{Prop: stability}, there exists an element $w_{\pi_0} \in \hW$
  satisfying
  \[ \mcF_{\mathbf{i}_{\pi_0}}b_{\gL + \pi_0(1)} = \mcB_{w_{\pi_0}}(\gL + \pi_0(1)), \ \ \ \text{and} 
  \] 
  \[ \big\langle w_{\pi_0}\big(\gL + \pi_0(1)\big), \ga_i^{\vee} \big\rangle \le 0 \ \ \ \text{for all} \ i \in J^0 = \hI_{\gL}.
  \]
  Now the proposition follows since we have
  \[ b_{\gL} \otimes \mB_0(\gl) = \coprod_{\pi_0 \in \mB_0(\gl)^{\gL}} C(b_{\gL} \otimes \pi_0) 
     \cap (b_{\gL} \otimes \mB_0(\gl)).
  \]
  \qed

\subsection{Demazure crystal decomposition of $b_{\gL} \otimes \mB(\gl)_{\cl}$}

First we make an elementary remark on crystals:

\begin{Rem} \normalfont
  Let $\mcB$ be a $U_q(\hfg)$-crystal with a weight map $\wt\colon \mcB \to \hP$. 
  Then we can regard $\mcB$ naturally as a $U_q'(\hfg)$-crystal
  by replacing the weight map with $\cl \circ \wt\colon \mcB \to \hP_{\cl}$.
\end{Rem}

Similarly as $\mB_0(\gl)^{\gL}$, we define $\mB(\gl)_{\cl}^{\gL}$ by
\[ \mB(\gl)_{\cl}^{\gL} = \{ \eta \in \mB(\gl)_{\cl} \mid \langle \eta(t), 
   \ga_i^{\vee} \rangle \ge -\langle \gL,\ga_i^{\vee}\rangle \ \text{for all} \ t \in [0,1],
   i \in \hI \}.
\]
It is easily checked that
\begin{equation} \label{eq: fix_highest}
  \mB_0(\gl)^{\gL} = \coprod_{\eta_0 \in \mB(\gl)_{\cl}^{\gL}} \cl^{-1}(\eta_0) \cap \mB_0(\gl).
\end{equation}
Similarly as a $U_q(\hfg)$-crystal, for a $U_q'(\hfg)$-crystal $\mcB$ and an element $b \in \mcB$
we denote by $C(b)$ the connected component of $\mcB$ containing $b$.

\begin{Lem}\label{Lem: isom_of_cl}
  Let $\eta_0 \in \mB(\gl)_{\cl}^{\gL}$. Then for any $\pi_0 \in \cl^{-1}(\eta_0) \cap \mB_0(\gl) \subseteq \mB_0(\gl)^{\gL}$,
  the map $\id \otimes \cl\colon \mcB(\gL) \otimes \mB_0(\gl) \to \mcB(\gL) \otimes \mB(\gl)_{\cl}$ induces an 
  isomorphism of $U_q'(\hfg)$-crystals from $C(b_{\gL} \otimes \pi_0)$ to $C(b_{\gL} \otimes \eta_0)$.
\end{Lem}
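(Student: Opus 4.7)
The plan is to verify that $\id \otimes \cl$ restricts to a $U_q'(\hfg)$-crystal isomorphism from $C(b_{\gL} \otimes \pi_0)$ onto $C(b_{\gL} \otimes \eta_0)$. The starting observation is that, by the very definition of the $U_q'(\hfg)$-crystal structure on $\mB(\gl)_{\cl}$ in Subsection 5.2, the projection $\cl\colon \mB_0(\gl) \to \mB(\gl)_{\cl}$ intertwines $\te_i$, $\tf_i$, $\gee_i$ and $\gph_i$ for every $i \in \hI$. Combining this with the tensor product rule (Proposition \ref{Prop: concatenation}) shows that $\id \otimes \cl$ is a morphism of $U_q'(\hfg)$-crystals from $\mcB(\gL) \otimes \mB_0(\gl)$ to $\mcB(\gL) \otimes \mB(\gl)_{\cl}$ sending $b_{\gL} \otimes \pi_0$ to $b_{\gL} \otimes \eta_0$, so the image of $C(b_{\gL} \otimes \pi_0)$ is contained in $C(b_{\gL} \otimes \eta_0)$.

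For surjectivity, any element of $C(b_{\gL} \otimes \eta_0)$ can be reached from $b_{\gL} \otimes \eta_0$ by a finite word in $\te_i, \tf_i$. Applying the same word to $b_{\gL} \otimes \pi_0$ produces a preimage: at each step the tensor-product branch depends only on $\gee_i$, $\gph_i$ of the second factor, which coincide on $\pi$ and $\cl(\pi)$, and the result is nonzero because $\te_i \pi = \0$ iff $\te_i \cl(\pi) = \0$, and likewise for $\tf_i$.

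Injectivity is the essential point. Suppose $b_{\gL} \otimes \pi$ and $b_{\gL} \otimes \pi'$ both lie in $C(b_{\gL} \otimes \pi_0)$ and have the same image, so $\cl(\pi) = \cl(\pi')$; by Lemma \ref{Lem: lem_for_degree_fucnction}(i) we have $\pi' = \pi + \pi_{k d_{\gl} \gd}$ for some $k \in \Z$. The key observation is that translation of the second tensor factor, $b \otimes \gs \mapsto b \otimes (\gs + \pi_{kd_{\gl}\gd})$, is a $U_q(\hfg)$-crystal automorphism of $\mcB(\gL) \otimes \mB_0(\gl)$: because $\langle \gd, \ga_i^{\vee}\rangle = 0$ for every $i \in \hI$, shifting by $\pi_{kd_{\gl}\gd}$ preserves all of $H_i^{\gs}$, $m_i^{\gs}$, $\gee_i$, $\gph_i$, $\te_i$ and $\tf_i$. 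This automorphism carries $C(b_{\gL} \otimes \pi_0)$ bijectively onto $C(b_{\gL} \otimes (\pi_0 + \pi_{kd_{\gl}\gd}))$, and for $k \ne 0$ the latter is a distinct connected component by Corollary \ref{Cor: decomposition}. Hence the only way for $b_{\gL} \otimes \pi' = b_{\gL} \otimes (\pi + \pi_{kd_{\gl}\gd})$ to remain in $C(b_{\gL} \otimes \pi_0)$ is $k = 0$, giving $\pi = \pi'$.

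The principal obstacle is this injectivity step, since distinct paths in a single $\cl$-fiber could a priori belong to the same connected component; the translation-equivariance argument together with the disjointness in Corollary \ref{Cor: decomposition} is what rules this out.
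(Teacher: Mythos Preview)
Your approach is essentially the same as the paper's: both exploit that shifting the second tensor factor by $\pi_{kd_{\gl}\gd}$ commutes with all root operators, and then use the decomposition of $\mcB(\gL)\otimes\mB_0(\gl)$ into connected components indexed by $\mB_0(\gl)^{\gL}$ (the paper phrases this via the uniqueness of the highest-weight element, equation~(\ref{eq: uniqueness_of_highest}), rather than Corollary~\ref{Cor: decomposition}, but these are equivalent here).

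There is, however, one gap in your injectivity argument. You only treat elements of the form $b_{\gL}\otimes\pi$, but a general element of $C(b_{\gL}\otimes\pi_0)$ has the form $b\otimes\pi$ with $b\in\mcB(\gL)$ arbitrary, and injectivity must be checked for all such elements. The fix is immediate: two elements $b_1\otimes\pi_1$ and $b_2\otimes\pi_2$ have the same image under $\id\otimes\cl$ if and only if $b_1=b_2$ and $\cl(\pi_1)=\cl(\pi_2)$, and your translation-automorphism argument goes through verbatim with $b_{\gL}$ replaced by an arbitrary $b$. Indeed, if $b\otimes\pi\in C(b_{\gL}\otimes\pi_0)$ then the translation automorphism sends it to $b\otimes(\pi+\pi_{kd_{\gl}\gd})\in C(b_{\gL}\otimes(\pi_0+\pi_{kd_{\gl}\gd}))$, and the disjointness conclusion is unchanged. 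With this correction your proof is complete and matches the paper's.
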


\begin{proof}
  By the definition of $\cl\colon \mP \to \mP_{\cl}$ in Subsection \ref{subsection: crystal and bases}, 
  we can see that $\id \otimes \cl$ preserves $\hP_{\cl}$-weights, $\gee_i$, $\gph_i$, and commutes with root operators.
  Hence it suffices to show that the induced map is bijective. 
  The surjectivity is obvious.
  Let us show the injectivity.
  Let $b \otimes \eta \in C(b_\gL \otimes \eta_0)$ with $b \in \mcB(\gL)$ and $\eta \in \mB(\gl)_{\cl}$,
  and take $\pi_1, \pi_2 \in \mB_0(\gl)$ such that
  \[ b \otimes \pi_j \in C(b_{\gL} \otimes \pi_0) \ \ \ \text{and} \ \ \ \cl(\pi_j) = \eta
  \]
  for $j=1,2$.
  By Lemma \ref{Lem: lem_for_degree_fucnction} (i), there exists $k \in \Z$ such that $\pi_2 = \pi_1 + \pi_{kd_{\gl}\gd}$.
  By $C(b_{\gL} \otimes \pi_0) \cong\mcB\big(\gL + \pi_0(1)\big)$, there exists a sequence 
  $i_1,\dots,i_s$ of elements of $\hI$ such that 
  $\te_{i_1} \cdots\te_{i_s} (b \otimes \pi_1) = b_{\gL} \otimes \pi_0$, and then it is easily seen 
  (cf.\ \cite[Lemma 3.3.1]{MR2199630}) that 
  \[ \te_{i_1} \cdots \te_{i_s}(b \otimes \pi_2) =\te_{i_1} \cdots \te_{i_s}\big(b \otimes (\pi_1 + \pi_{kd_{\gl}\gd})\big) 
     =b_{\gL} \otimes (\pi_0 + \pi_{kd_{\gl}\gd}).
  \]
  Hence $b_{\gL} \otimes (\pi_0 + \pi_{kd_{\gl}\gd}) \in C(b_{\gL} \otimes \pi_0)$, 
  which together with (\ref{eq: uniqueness_of_highest}) implies
  $k= 0$. Therefore we have $\pi_1 = \pi_2$, and the injectivity follows.  
\end{proof}

Recall that we have
\[ \mcB(\gL) \otimes \mB_0(\gl) = \bigoplus_{\pi_0 \in \mB_0(\gl)^{\gL}}C(b_{\gL} \otimes \pi_0)
\]
by Proposition \ref{Prop: equality}. 
Applying $\id \otimes \cl$ to this, we have from (\ref{eq: fix_highest}) that
\begin{equation} \label{eq: decomposition}
   \mcB(\gL) \otimes \mB(\gl)_{\cl} = \bigoplus_{\eta_0 \in \mB(\gl)_{\cl}^{\gL}} C(b_{\gL} \otimes \eta_0).
\end{equation}
For each $\eta_0 \in \mB(\gl)_{\cl}^{\gL}$, fix an arbitrary $\pi^{\eta_0} \in \cl^{-1}(\eta_0) \cap \mB_0(\gl)$.
Then the following proposition is obtained:

\begin{Prop}\label{Prop: decomposition2}
  {\normalfont(i)} We have 
      \[ \mcB(\gL) \otimes \mB(\gl)_{\cl} \stackrel{\sim}{\to} \bigoplus_{\eta_0 \in \mB(\gl)_{\cl}^{\gL}} 
         \mcB\big(\gL + \pi^{\eta_0}(1)\big)
      \]
      as $U_q'(\hfg)$-crystals, where the given isomorphism maps each 
      $b_{\gL} \otimes \eta_0 \in b_{\gL} \otimes \mB(\gl)_{\cl}^{\gL}$ to
      $b_{\gL + \pi^{\eta_0}(1)} \in \mcB\big(\gL + \pi^{\eta_0}(1)\big)$.\\
  {\normalfont(ii)}   Under the isomorphism given in {\normalfont(i)}, 
      the image of the subset $b_{\gL} \otimes \mB(\gl)_{\cl}$ coincides with 
      the disjoint union of Demazure crystals
      \[ \coprod_{\eta_0 \in \mB(\gl)_{\cl}^{\gL}} \mcB_{w_{\eta_0}}\big(\gL + \pi^{\eta_0}(1)\big) \ 
         \text{for some} \ w_{\eta_0} \in \hW. 
      \]
      Moreover each $w_{\eta_0}$ satisfies $\big\langle w_{\eta_0}\big(\gL + \pi^{\eta_0}(1)\big) , 
      \ga_i^{\vee} \big\rangle \le 0$ for all $i \in \hI_{\gL}$.
\end{Prop}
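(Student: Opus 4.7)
The plan is to push forward the decomposition of $\mcB(\gL) \otimes \mB_0(\gl)$ given by Corollary \ref{Cor: decomposition} and Proposition \ref{Prop: Demazure_crystal_decomposition1} along the $U_q'(\hfg)$-crystal isomorphism $\id \otimes \cl$ from Lemma \ref{Lem: isom_of_cl}, using the chosen lifts $\pi^{\eta_0}$. A preliminary observation is that $\langle \gd, \ga_i^\vee\rangle = 0$ for all $i \in \hI$, since $\gd$ annihilates $\fh \oplus \C K$ and every $\ga_i^\vee$ lies there. Consequently $H_i^{\pi}(t)$ depends only on $\cl(\pi)$, so $\pi^{\eta_0}$ lies in $\mB_0(\gl)^\gL$ whenever $\eta_0 \in \mB(\gl)_{\cl}^\gL$, and Corollary \ref{Cor: decomposition} and Proposition \ref{Prop: Demazure_crystal_decomposition1} apply to it.

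For (i), I fix $\eta_0 \in \mB(\gl)_{\cl}^\gL$. Lemma \ref{Lem: isom_of_cl} gives a $U_q'(\hfg)$-crystal isomorphism $\id \otimes \cl\colon C(b_\gL \otimes \pi^{\eta_0}) \xrightarrow{\sim} C(b_\gL \otimes \eta_0)$, while Corollary \ref{Cor: decomposition} gives a $U_q(\hfg)$-crystal isomorphism $C(b_\gL \otimes \pi^{\eta_0}) \xrightarrow{\sim} \mcB(\gL + \pi^{\eta_0}(1))$ sending $b_\gL \otimes \pi^{\eta_0}$ to $b_{\gL + \pi^{\eta_0}(1)}$. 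Viewing the latter as an isomorphism of $U_q'(\hfg)$-crystals via $\cl \circ \wt$ and composing with the inverse of the former produces a $U_q'(\hfg)$-crystal isomorphism $C(b_\gL \otimes \eta_0) \xrightarrow{\sim} \mcB(\gL + \pi^{\eta_0}(1))$ sending $b_\gL \otimes \eta_0$ to $b_{\gL + \pi^{\eta_0}(1)}$. Taking the direct sum over $\eta_0 \in \mB(\gl)_{\cl}^\gL$ and invoking the decomposition (\ref{eq: decomposition}) yields (i).

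For (ii), I apply Proposition \ref{Prop: Demazure_crystal_decomposition1} to $\pi^{\eta_0}$ to obtain $w_{\pi^{\eta_0}} \in \hW$ with $\bigl\langle w_{\pi^{\eta_0}}(\gL + \pi^{\eta_0}(1)), \ga_i^\vee\bigr\rangle \le 0$ for all $i \in \hI_\gL$, such that the image of the subset $\{b_\gL \otimes \pi : \pi \in \mB_0(\gl)\} \cap C(b_\gL \otimes \pi^{\eta_0})$ under the Corollary \ref{Cor: decomposition} isomorphism is the Demazure crystal $\mcB_{w_{\pi^{\eta_0}}}(\gL + \pi^{\eta_0}(1))$. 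Since $\id \otimes \cl$ carries this subset bijectively onto $\{b_\gL \otimes \eta : \eta \in \mB(\gl)_{\cl}\} \cap C(b_\gL \otimes \eta_0)$, setting $w_{\eta_0} := w_{\pi^{\eta_0}}$ shows that the image of $b_\gL \otimes \mB(\gl)_{\cl} \cap C(b_\gL \otimes \eta_0)$ under the isomorphism of (i) coincides with $\mcB_{w_{\eta_0}}(\gL + \pi^{\eta_0}(1))$. Summing over $\eta_0 \in \mB(\gl)_{\cl}^\gL$ gives (ii).

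There is no substantive obstacle; the argument is essentially transport of structure through Lemma \ref{Lem: isom_of_cl}. The only point requiring care is the preliminary invariance observation, which guarantees that the combinatorial data (such as the sets $J^p$) underlying Proposition \ref{Prop: Demazure_crystal_decomposition1} are determined by $\eta_0$ alone rather than by a particular lift in $\cl^{-1}(\eta_0) \cap \mB_0(\gl)$, so that the resulting $w_{\eta_0}$ is well-defined by the choice of $\pi^{\eta_0}$ in the statement.
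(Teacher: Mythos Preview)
Your proof is correct and follows essentially the same approach as the paper's: compose the $U_q'(\hfg)$-crystal isomorphism of Lemma \ref{Lem: isom_of_cl} with the isomorphism from Corollary \ref{Cor: decomposition}, and then transport the Demazure crystal decomposition of Proposition \ref{Prop: Demazure_crystal_decomposition1} through this composite. The paper's proof is terser, relying on (\ref{eq: fix_highest}) in place of your preliminary observation to know that $\pi^{\eta_0} \in \mB_0(\gl)^{\gL}$, but the substance is identical.
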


\begin{proof}
  The assertion (i) follows from (\ref{eq: decomposition}) 
  since for each $\eta_0 \in \mB(\gl)_{\cl}^{\gL}$, we have from Lemma \ref{Lem: isom_of_cl} 
  and Corollary \ref{Cor: decomposition} that
  \[  C(b_{\gL} \otimes \eta_0) \cong C(b_{\gL} \otimes \pi^{\eta_0}) \cong \mcB\big(\gL + \pi^{\eta_0}(1)\big)
  \]
  as $U_q'(\hfg)$-crystals.
  The assertion (ii) also follows from these isomorphisms and 
  Proposition \ref{Prop: Demazure_crystal_decomposition1}.
\end{proof}

\section{Study on the decomposition of $b_{\gL_0} \otimes \mB(\gl)_{\cl}$}

\subsection{Preliminaries about the weight sum of $\mB(\gl)_{\cl}$}

In the previous section, we have seen that $b_{\gL} \otimes \mB(\gl)_{\cl}$ 
coincides with the disjoint union of some Demazure crystals.
In this section, we study in more detail this result with $\gL = \gL_0$.

First we prepare some notation.
Let $\gL \in \hP_+$ and $w \in \hW$ be elements satisfying $w \gL = w_0\gl + \ell \gL_0 + m \gd$
for some $\gl \in P_+, \ell \in \Z_{> 0}, m \in \Z$.
Then we use the following notation which is compatible with that of modules:
\[ \mathcal{B}(\ell, \gl)[m] = \mcB_w(\gL).
\]
Note that we have from Proposition \ref{Prop: Demazure crystal2} (ii) that
\begin{equation*}\label{eq: character}
  \ch{\hfh} \D(\ell, \gl)[m] = \sum_{b \in \mcB(\ell, \gl)[m]} e\big(\wt(b)\big).
\end{equation*}

Let $\gl \in P_+$ and $\eta_0 \in \mB(\gl)_{\cl}^{\gL_0}$.
Then $\pi_{\eta_0} \in \cl^{-1}(\eta_0) \cap \mB_0(\gl)$ follows 
($\pi_{\eta_0}$ is defined in Subsection \ref{Degree function}).
Hence from Proposition \ref{Prop: decomposition2}, we obtain an isomorphism
\[ \gk\colon \mcB(\gL_0) \otimes \mB(\gl)_{\cl} \stackrel{\sim}{\to} 
   \bigoplus_{\eta_0 \in \mB(\gl)_{\cl}^{\gL_0}} \mcB\big(\gL_0 + \pi_{\eta_0}(1)\big)
\]
of $U_q'(\hfg)$-crystals which satisfies
$\gk(b_{\gL_0} \otimes \eta_0) = b_{\gL_0 + \pi_{\eta_0}(1)}$ for all $\eta_0 \in \mB(\gl)_{\cl}^{\gL_0}$.

\begin{Lem} 
  For each $\eta \in \mB(\gl)_{\cl}$, we have 
  \[ \wt\circ \gk(b_{\gL_0} \otimes \eta) = \wt(\pi_{\eta}) + \gL_0 = \wt_{\hP}(\eta) + \gL_0.
  \]
\end{Lem}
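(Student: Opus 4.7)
The plan is to lift the statement to the $U_q(\hfg)$-crystal isomorphism
$\widetilde{\gk}\colon \mcB(\gL_0) \otimes \mB_0(\gl) \stackrel{\sim}{\to} \bigoplus_{\pi_0 \in \mB_0(\gl)^{\gL_0}} \mcB(\gL_0 + \pi_0(1))$
from Corollary \ref{Cor: decomposition}, which automatically preserves the $\hP$-weight. Since $\widetilde{\gk}$ sends $b_{\gL_0} \otimes \pi_{\eta_0}$ to $b_{\gL_0+\pi_{\eta_0}(1)}$, applying $\widetilde{\gk}$ to $b_{\gL_0} \otimes \pi_\eta$ produces an element of $\mcB(\gL_0 + \pi_{\eta_0}(1))$ whose $\hP$-weight equals $\gL_0 + \wt(\pi_\eta) = \gL_0 + \wt_{\hP}(\eta)$. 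Everything therefore reduces to showing the identity $\gk(b_{\gL_0} \otimes \eta) = \widetilde{\gk}(b_{\gL_0} \otimes \pi_\eta)$.

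Tracing the construction of $\gk$ in the proof of Proposition \ref{Prop: decomposition2} (carried out with the choice $\pi^{\eta_0} = \pi_{\eta_0}$), this identity is equivalent to the claim that $b_{\gL_0} \otimes \pi_\eta$ lies in the connected component $C(b_{\gL_0} \otimes \pi_{\eta_0})$ of $\mcB(\gL_0) \otimes \mB_0(\gl)$, where $\eta_0 \in \mB(\gl)_{\cl}^{\gL_0}$ denotes the unique highest weight element of the $U_q'(\hfg)$-component of $\mcB(\gL_0) \otimes \mB(\gl)_{\cl}$ containing $b_{\gL_0} \otimes \eta$.

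To establish this I would pick a sequence $i_1,\dots,i_s \in \hI$ with $\tf_{i_s}\cdots \tf_{i_1}(b_{\gL_0} \otimes \eta_0) = b_{\gL_0} \otimes \eta$ and argue by induction on $s$. The key observation is that every $\tf_{i_k}$ in the sequence must act on the second tensor factor: since $\gph_i(b_{\gL_0}) = \gd_{i,0}$, once the first factor leaves $b_{\gL_0}$ it can never return under further applications of $\tf$'s, so any sequence ending at $b_{\gL_0} \otimes \eta$ must preserve the first factor throughout. This produces intermediate paths $\eta^{(0)} = \eta_0, \eta^{(1)}, \dots, \eta^{(s)} = \eta$ with $\eta^{(k)} = \tf_{i_k} \eta^{(k-1)}$. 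Using the equality $\gee_i(\pi) = \gee_i\bigl(\cl(\pi)\bigr)$, the tensor product rule (\ref{eq: tensor_rule2}) forces the same second-factor action on $b_{\gL_0} \otimes \pi_{\eta^{(k-1)}}$, so the induction step reduces to the identity $\tf_{i_k} \pi_{\eta^{(k-1)}} = \pi_{\eta^{(k)}}$.

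The hard part is the affine case $i_k = 0$, where Lemma \ref{Lem: lemma_of_pi}(ii) requires the hypothesis $\te_0 \eta^{(k-1)} \neq \0$. The point is that this hypothesis is automatic in our setting: the second-factor condition $\gph_0(b_{\gL_0}) \le \gee_0(\eta^{(k-1)})$, together with $\gph_0(b_{\gL_0}) = 1$, gives $\gee_0(\eta^{(k-1)}) \ge 1$, which is precisely $\te_0 \eta^{(k-1)} \neq \0$. For $i_k \neq 0$ the equation $\tf_{i_k} \pi_{\eta^{(k-1)}} = \pi_{\eta^{(k)}}$ follows directly from Lemma \ref{Lem: lemma_of_pi}(i). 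Iterating places $b_{\gL_0} \otimes \pi_\eta$ in $C(b_{\gL_0} \otimes \pi_{\eta_0})$, which completes the proof.
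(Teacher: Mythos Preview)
Your proof is correct and follows essentially the same route as the paper: both reduce to showing $\gk'(b_{\gL_0}\otimes\eta)=b_{\gL_0}\otimes\pi_\eta$ (equivalently, $b_{\gL_0}\otimes\pi_\eta\in C(b_{\gL_0}\otimes\pi_{\eta_0})$) by induction on the length of a $\tf$-sequence from $b_{\gL_0}\otimes\eta_0$ to $b_{\gL_0}\otimes\eta$, invoking Lemma~\ref{Lem: lemma_of_pi} at each step and using $\gee_0(\eta^{(k-1)})\ge 1$ for the affine index. The one place where you are slightly more explicit than the paper is the ``once the first factor leaves $b_{\gL_0}$ it can never return'' argument, which justifies that every $\tf_{i_k}$ in such a sequence necessarily acts on the second tensor factor; the paper simply asserts the existence of a sequence with this property.
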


\begin{proof} 
  The second equality follows from the definition of the $\hP$-weight of $\eta$.
  Let us show the first one.
  By (\ref{eq: decomposition}), there exists some $\eta_0 \in \mB(\gl)_{\cl}^{\gL_0}$ 
  such that $b_{\gL_0} \otimes\eta \in C(b_{\gL_0} \otimes \eta_0)$. 
  Recall that $\gk$ is defined by the composition
  \[ C(b_{\gL_0} \otimes \eta_0) \stackrel{\sim}{\to} C(b_{\gL_0} \otimes \pi_{\eta_0}) \stackrel{\sim}{\to} 
     \mcB\big(\gL_0 + \pi_{\eta_0}(1)\big),
  \]
  and the second isomorphism is of  $U_q(\hfg)$-crystals. 
  Hence it suffices to show that the first one, which we denote by $\gk'$ here, satisfies
  $\gk'(b_{\gL_0} \otimes \eta) = b_{\gL_0} \otimes \pi_{\eta}$.
  Let $i_1,\dots,i_k$ be a sequence of elements of $\hI$ such that 
  \[ \tf_{i_1}\cdots\tf_{i_k}(b_{\gL_0} \otimes \eta_0) = b_{\gL_0} \otimes (\tf_{i_1}\cdots\tf_{i_k} \eta_0)
     = b_{\gL_0} \otimes \eta.
  \]
  We show $\gk'(b_{\gL_0} \otimes \eta) = b_{\gL_0} \otimes \pi_{\eta}$ by induction on $k$. 
  If $k = 0$, there is nothing to prove.
  Assume $k>0$. By the induction hypothesis, $\eta' =\tf_{i_2}\cdots\tf_{i_k}\eta_0$ satisfies 
  $\gk'(b_{\gL_0} \otimes \eta') = b_{\gL_0} \otimes \pi_{\eta'}$.
  Note that $\gee_{i_1}(\eta') \ge \gd_{i_10}$ follows since
  $\tf_{i_1} (b_{\gL_0} \otimes \eta') = b_{\gL_0} \otimes \tf_{i_1}\eta'$.
  Hence we have from Lemma \ref{Lem: lemma_of_pi} that
  \[ \gk'(b_{\gL_0} \otimes \eta) = \gk'\big(\tf_{i_1}(b_{\gL_0}\otimes \eta')\big) = \tf_{i_1}(b_{\gL_0} \otimes \pi_{\eta'})
     = b_{\gL_0} \otimes \pi_{\eta}
  \]
  as required.
\end{proof}

By Proposition \ref{Prop: decomposition2} (ii), $\gk\big(b_{\gL_0} \otimes \mB(\gl)_{\cl}\big)$ is 
the disjoint union of some Demazure crystals in the form
$\mcB_w(\gL')$ with  $\gL' \in \hP_+$ of level $1$ and $w \in \hW$ satisfying 
$\langle w\gL', \ga_i^{\vee} \rangle \le 0$ for all $i \in I$, 
which can be written as $\mcB(1, \mu)[n]$ for some $\mu \in P_+$ and $n \in \Z$.
In conclusion, we have the following:

\begin{Prop}\label{Prop: disjoint}
  Let $\gl \in P_+$. 
  Then there exist sequences $\mu_1,\dots, \mu_\ell \in P_+$ and $n_1,\dots,n_\ell \in \Z$ such that
  \[ \gk\big(b_{\gL_0} \otimes \mB(\gl)_{\cl}\big) = \coprod_{1 \le j \le \ell} \mcB(1, \mu_j)[n_j].
  \]
  Moreover we have $\wt \circ \gk(b_{\gL_0} \otimes \eta) = \wt_{\hP}(\eta) + \gL_0$ for all $\eta \in \mB(\gl)_\cl$.
\end{Prop}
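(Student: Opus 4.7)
The plan is to deduce this proposition as a direct corollary of Proposition \ref{Prop: decomposition2}(ii) applied with $\gL = \gL_0$, together with the lemma immediately preceding it. The main work is translating the Demazure crystals appearing on the right-hand side of that decomposition into the notation $\mcB(1,\mu)[n]$ introduced at the beginning of Subsection 8.1.

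First I would apply Proposition \ref{Prop: decomposition2}(ii) with $\gL = \gL_0$. This produces, for each $\eta_0 \in \mB(\gl)_\cl^{\gL_0}$, an element $w_{\eta_0} \in \hW$ such that
\[ \gk\big(b_{\gL_0} \otimes \mB(\gl)_{\cl}\big) = \coprod_{\eta_0 \in \mB(\gl)_{\cl}^{\gL_0}} \mcB_{w_{\eta_0}}\big(\gL_0 + \pi_{\eta_0}(1)\big), \]
with the additional property that $\big\langle w_{\eta_0}\big(\gL_0 + \pi_{\eta_0}(1)\big), \ga_i^{\vee} \big\rangle \le 0$ for every $i \in \hI_{\gL_0}$.

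Second, I would identify each summand $\mcB_{w_{\eta_0}}\big(\gL_0 + \pi_{\eta_0}(1)\big)$ with some $\mcB(1,\mu)[n]$. Two observations suffice: (a) since $\langle \gL_0, \ga_i^{\vee}\rangle = \gd_{i0}$, we have $\hI_{\gL_0} = I$; (b) since $\gL_0$ is of level $1$, $\pi_{\eta_0}(1) \in \gl - Q_+ + \Z\gd$ is of level $0$ by Lemma \ref{Lem: contain_wt}(i), and $\hW$ preserves the level, the weight $w_{\eta_0}\big(\gL_0 + \pi_{\eta_0}(1)\big)$ is of level $1$. Combining (a) and (b), this weight lies in $-P_+ + \gL_0 + \Z\gd = w_0 P_+ + \gL_0 + \Z\gd$, so there exist unique $\mu_{\eta_0} \in P_+$ and $n_{\eta_0} \in \Z$ with
\[ w_{\eta_0}\big(\gL_0 + \pi_{\eta_0}(1)\big) = w_0\mu_{\eta_0} + \gL_0 + n_{\eta_0}\gd. \]
By the definition of $\mcB(\ell,\gl)[m]$ given in Subsection 8.1, this means $\mcB_{w_{\eta_0}}\big(\gL_0 + \pi_{\eta_0}(1)\big) = \mcB(1,\mu_{\eta_0})[n_{\eta_0}]$. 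Since $\mB(\gl)_\cl^{\gL_0}$ is finite, enumerating its elements yields the required sequences $\mu_1,\dots,\mu_\ell$ and $n_1,\dots,n_\ell$.

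The ``moreover'' assertion is exactly the statement of the unnamed lemma immediately preceding the proposition, which was already proved. Hence no serious obstacle is expected; the only point requiring care is the verification that the level-one constraint together with the nonpositivity inequalities on $\hI_{\gL_0} = I$ pin $w_{\eta_0}\big(\gL_0 + \pi_{\eta_0}(1)\big)$ into the exact form $w_0\mu + \gL_0 + n\gd$, which is essentially bookkeeping about the weight lattice.
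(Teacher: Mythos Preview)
Your proposal is correct and follows essentially the same route as the paper: the paper also applies Proposition \ref{Prop: decomposition2}(ii) with $\gL = \gL_0$, observes that $\hI_{\gL_0} = I$ so that each $w_{\eta_0}(\gL_0 + \pi_{\eta_0}(1))$ is a level-one weight lying in $-P_+ + \gL_0 + \Z\gd$, and then rewrites each Demazure crystal in the form $\mcB(1,\mu)[n]$; the ``moreover'' clause is exactly the preceding unnamed lemma. Your write-up is simply more explicit about the bookkeeping (invoking Lemma \ref{Lem: contain_wt}(i) and the $\hW$-invariance of the level), but the argument is the same.
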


\begin{Cor}\label{Cor: character_forumula}
  \begin{equation*}
    \sum_{\eta \in \mB(\gl)_{\cl}} e\big(\wt_{\hP} (\eta)\big) = \sum_{1 \le j \le \ell} \ch{\fh_d}\D(1,\mu_j)[n_j].
  \end{equation*}
\end{Cor}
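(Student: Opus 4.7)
The plan is to transport the sum on the left-hand side directly across the isomorphism $\gk$ supplied by Proposition \ref{Prop: disjoint}. That proposition identifies $\gk$ as a bijection from $b_{\gL_0} \otimes \mB(\gl)_{\cl}$ onto the disjoint union $\coprod_{j=1}^{\ell} \mcB(1,\mu_j)[n_j]$, and records the weight identity $\wt \circ \gk(b_{\gL_0} \otimes \eta) = \wt_{\hP}(\eta) + \gL_0$. Summing $e(\wt(b))$ over $b$ in the right-hand union therefore equals $\sum_{\eta \in \mB(\gl)_{\cl}} e(\wt_{\hP}(\eta) + \gL_0)$.

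Next I would invoke the character identity
\[ \ch{\hfh}\D(\ell,\gl)[m] = \sum_{b \in \mcB(\ell,\gl)[m]} e(\wt(b)) \]
recorded at the beginning of Subsection 8.1, which is a direct consequence of Proposition \ref{Prop: Demazure crystal2} (ii). Applying it to each factor $\mcB(1,\mu_j)[n_j]$ converts the previous equality into
\[ \sum_{\eta \in \mB(\gl)_{\cl}} e\big(\wt_{\hP}(\eta) + \gL_0\big) = \sum_{j=1}^{\ell} \ch{\hfh}\D(1,\mu_j)[n_j]. \]

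It remains to convert $\ch{\hfh}$ into $\ch{\fh_d}$ on both sides. Since $\D(1,\mu_j)[n_j]$ has level $1$, every weight $\nu$ appearing in it satisfies $\langle \nu, K\rangle = 1$, so $\nu - \gL_0$ lies in $\fh_d^* \subseteq \hfh^*$ under the canonical embedding; hence $\ch{\hfh}\D(1,\mu_j)[n_j] = e(\gL_0)\,\ch{\fh_d}\D(1,\mu_j)[n_j]$. On the left, Lemma \ref{Lem: contain_wt} together with the definition $\wt_{\hP}(\eta) = \wt(\pi_{\eta})$ shows $\wt_{\hP}(\eta) \in \gl - Q_+ + \Z\gd \subseteq \fh_d^*$, so $e(\wt_{\hP}(\eta) + \gL_0) = e(\gL_0)\,e(\wt_{\hP}(\eta))$. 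Cancelling the common factor $e(\gL_0)$ yields the desired identity. The argument is essentially bookkeeping; I do not anticipate any serious obstacle beyond carefully tracking the distinction between $\hfh$-weights and $\fh_d$-weights and verifying that both sides of the claimed equality genuinely lie in $\C[\fh_d^*]$.
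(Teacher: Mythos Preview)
Your argument is correct and is exactly the intended derivation: the paper states this corollary without proof as an immediate consequence of Proposition \ref{Prop: disjoint} together with the character identity $\ch{\hfh}\D(\ell,\gl)[m] = \sum_{b \in \mcB(\ell,\gl)[m]} e(\wt(b))$ displayed in Subsection 8.1, and your bookkeeping with the $e(\gL_0)$ factor is precisely how one passes from $\hfh$-characters to $\fh_d$-characters.
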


If $\fg$ is simply laced, $\mu_j$'s and $n_j$'s in Corollary \ref{Cor: character_forumula} are easily determined 
from a result in \cite{MR2323538}:

\begin{Prop}\label{Prop: simply_laced}
  If $\fg$ is simply laced, then we have
  \begin{equation*}
    \sum_{\eta\in \mB(\gl)_{\cl}} e\big(\wt_{\hP}(\eta)\big) =\ch{\fh_d}\D(1,\gl)[0].
  \end{equation*}
\end{Prop}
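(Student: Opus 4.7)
The plan is to combine Corollary \ref{Cor: character_forumula} with the simply-laced case of the crystal-graph result cited in the introduction from \cite{MR2323538}. By the corollary, the left-hand side of the proposition equals $\sum_{j=1}^{\ell} \ch{\fh_d}\D(1,\mu_j)[n_j]$, where $(\mu_j, n_j)$ parametrize the Demazure decomposition $\gk\big(b_{\gL_0} \otimes \mB(\gl)_{\cl}\big) = \coprod_j \mcB(1,\mu_j)[n_j]$ furnished by Proposition \ref{Prop: disjoint}. It therefore suffices to show that, when $\fg$ is simply laced, this decomposition collapses to a single term with $(\mu_1, n_1) = (\gl, 0)$.

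To establish this, I invoke the result of \cite{MR2323538} cited in the introduction: in the simply laced case the crystal graph of $b_{\gL_0} \otimes \mB(\gl)_{\cl}$ coincides with that of $\mcB(1,\gl)[0]$. Since $\gk$ is a $U_q'(\hfg)$-crystal isomorphism and a Demazure crystal is connected as such, the disjoint union from Proposition \ref{Prop: disjoint} must reduce to a single piece $\mcB(1,\mu_1)[n_1]$, $U_q'(\hfg)$-crystal isomorphic to $\mcB(1,\gl)[0]$. To pin down $(\mu_1,n_1)$ I then use the $\hP$-weight compatibility $\wt \circ \gk(b_{\gL_0} \otimes \eta) = \wt_{\hP}(\eta) + \gL_0$ from Proposition \ref{Prop: disjoint}: taking $\eta = \cl(\pi_{\gl})$ gives $\pi_{\eta} = \pi_{\gl}$ and $\wt_{\hP}(\eta) = \gl$, so the maximal $\hP$-weight occurring in the image is $\gL_0 + \gl$. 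This identifies the ambient integrable crystal as $\mcB(\gL_0 + \gl)$ and singles out $\mcB(1,\gl)[0]$ as the Demazure submodule in question, forcing $(\mu_1, n_1) = (\gl, 0)$.

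The main obstacle is precisely this last step: the cited result from \cite{MR2323538} provides only a $U_q'(\hfg)$-crystal isomorphism (preserving only $\hP_{\cl}$-weights), whereas the identification of the parameters $(\mu, n)$ requires matching $\hP$-weights. Fortunately, the necessary $\hP$-weight information is carried by $\gk$ through Proposition \ref{Prop: disjoint}, and the linear independence of the Demazure characters $\ch{\fhd}\D(1,\mu)[m]$ over varying $(\mu, m)$, remarked upon after Corollary \ref{Cor: Joseph's_result}, rules out any alternative identification. Substituting $\ell = 1$, $\mu_1 = \gl$, $n_1 = 0$ into Corollary \ref{Cor: character_forumula} then yields the proposition.
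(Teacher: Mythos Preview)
Your proposal is correct and follows essentially the same route as the paper. The paper cites \cite[Proposition~3]{MR2323538} directly to obtain $\gk\big(b_{\gL_0}\otimes\mB(\gl)_{\cl}\big)=\mcB(1,\gl)[n]$ as a single Demazure crystal, then uses $\wt\circ\gk(b_{\gL_0}\otimes\eta_{\cl(\gl)})=\gl+\gL_0$ to force $n=0$; you instead invoke the same result via the crystal-graph phrasing from the introduction and insert a connectedness argument to collapse the disjoint union to one piece before pinning down $(\mu_1,n_1)=(\gl,0)$ by the identical weight computation. The connectedness step is valid (each Demazure piece $\mcB_{w_{\eta_0}}(\gL_0+\pi_{\eta_0}(1))$ lies in a distinct connected component $\mcB(\gL_0+\pi_{\eta_0}(1))$ of the ambient crystal by Proposition~\ref{Prop: decomposition2}, and any Demazure crystal $\mcF_{\mathbf{i}}b_\gL$ is connected through $b_\gL$), but strictly speaking it is redundant once \cite[Proposition~3]{MR2323538} is cited in full strength.
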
 

\begin{proof}
  From \cite[Proposition 3]{MR2323538}, we have $\gk\big(b_{\gL_0} \otimes \mB(\gl)_{\cl}\big) = \mcB(1,\gl)[n]$ 
  for some $n \in \Z$.
  Since $\pi_{\eta_{\cl(\gl)}} = \pi_{\gl}$ follows by definition, we have 
  \[ \wt \circ \gk\big(b_{\gL_0} \otimes \eta_{\cl(\gl)}\big) = \gl + \gL_0.
  \]
  Hence $\mcB(1, \gl)[n]$ contains an element of weight $\gl+\gL_0$, which forces $n = 0$. 
\end{proof}

The decomposition being more complicated when $\fg$ is non-simply laced, 
it is hard to determine $\mu_j$'s and $n_j$'s by straightforward calculations for general $\gl$.
In the fundamental weight case, however, the following proposition is obtained using a result 
in \cite{MR2115972} and Theorem \ref{Thm: isom_of_finite_crystals}:

\begin{Prop}\label{Prop: fundamental}
  For general $\fg$ and each $i \in I$, we have
  \[ \sum_{\eta\in \mB(\varpi_i)_{\cl}} e\big(\wt_{\hP}(\eta)\big) =\ch{\fh_d}\D(1,\varpi_i)[0].
  \]
\end{Prop}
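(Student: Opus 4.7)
The plan is to combine Corollary \ref{Cor: character_forumula} with a character identification available via Theorem \ref{Thm: isom_of_finite_crystals}(ii) and the external input \cite{MR2115972}. Applying Corollary \ref{Cor: character_forumula} with $\gl = \varpi_i$ yields
\[ \sum_{\eta\in \mB(\varpi_i)_{\cl}} e\big(\wt_{\hP}(\eta)\big) = \sum_{j=1}^{\ell} \ch{\fh_d}\D(1,\mu_j)[n_j] \]
for some $\mu_j \in P_+$ and $n_j \in \Z$. By the linear independence of the family of Demazure characters $\{\ch{\fh_d}\D(1,\mu)[m]\}$ established in Subsection \ref{subsection: Quantized_Demazure}, it suffices to show that the left-hand side equals the single Demazure character $\ch{\fh_d}\D(1,\varpi_i)[0]$.

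To evaluate the left-hand side directly, I would use Theorem \ref{Thm: isom_of_finite_crystals}(ii) to identify $\mB(\varpi_i)_{\cl} \cong \mcB\big(W_q(\varpi_i)\big)$ as $U_q'(\hfg)$-crystals, and then Remark \ref{Rem: energy_function} to translate the degree function $\Deg$ on $\mB(\varpi_i)_{\cl}$ into the energy function on $\mcB(W_q(\varpi_i))$ up to an additive constant. The key external input from \cite{MR2115972} is a character identity for level-one affine Demazure modules attached to fundamental weights, which, combined with the identifications above, shows that the $\hP$-graded weight sum of $\mB(\varpi_i)_{\cl}$ coincides with $\ch{\fh_d}\D(1,\varpi_i)[0]$. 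The additive constant in the comparison between $\Deg$ and the energy function is pinned down by observing that $\pi_{\varpi_i} \in \mB_0(\varpi_i)$ has initial direction $\varpi_i \in W\varpi_i$, so for $\eta_{*} = \cl(\pi_{\varpi_i})$ one has $\pi_{\eta_{*}} = \pi_{\varpi_i}$ and therefore $\wt_{\hP}(\eta_{*}) = \varpi_i$ with $\Deg(\eta_{*}) = 0$, matching the weight and grading of the highest-weight generator of $\D(1,\varpi_i)[0]$.

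The main obstacle is extracting from \cite{MR2115972} exactly the character identity described above and reconciling its normalization convention for the energy function with the normalization of $\Deg$ fixed in Subsection \ref{Degree function}; the highest-weight matching above is the tool for doing this alignment. Once these conventions are reconciled, the two evaluations of the left-hand side are combined with the linear independence of Demazure characters to force $\ell = 1$, $\mu_1 = \varpi_i$, and $n_1 = 0$, completing the proof.
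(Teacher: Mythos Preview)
Your approach has the right ingredients but takes an unnecessary detour through the energy function, and this detour is precisely what creates the ``main obstacle'' you identify. The paper's proof avoids this entirely.

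The paper uses \cite[Corollary 4.8]{MR2115972} as a purely $U_q'(\hfg)$-crystal statement: it says that $b_{\gL_0} \otimes \mcB\big(W_q(\varpi_i)\big)$, inside the decomposition of $\mcB(\gL_0) \otimes \mcB\big(W_q(\varpi_i)\big)$ into highest weight crystals, is a single Demazure crystal. Transporting this through the $U_q'(\hfg)$-crystal isomorphism of Theorem \ref{Thm: isom_of_finite_crystals}(ii) shows directly that $\gk\big(b_{\gL_0} \otimes \mB(\varpi_i)_{\cl}\big) = \mcB(1,\varpi_i)[n]$ for some $n \in \Z$. The point is that Proposition \ref{Prop: disjoint} already guarantees $\wt \circ \gk(b_{\gL_0} \otimes \eta) = \wt_{\hP}(\eta) + \gL_0$, so the $\hP$-weight bookkeeping is built into $\gk$ and no comparison of $\Deg$ with an energy function is needed. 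The integer $n$ is then pinned down exactly as you suggest, by evaluating at $\eta_{\cl(\varpi_i)}$ (this is the ``rest of the proof is the same as Proposition \ref{Prop: simply_laced}'').

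By contrast, your route attempts to compute $\sum_\eta e\big(\wt_{\hP}(\eta)\big)$ externally and then match it against the Demazure character, which forces you to reconcile Kashiwara's $\hP$-grading on the affinization of $W_q(\varpi_i)$ with the $\Deg$-grading on $\mB(\varpi_i)_{\cl}$ via Remark \ref{Rem: energy_function}. This can be made to work, but it is extra labor that the paper's argument renders unnecessary: once you know the image under $\gk$ is a single Demazure crystal, Corollary \ref{Cor: character_forumula} already gives the character identity with the correct $\hP$-weights.
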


\begin{proof}
  Since $\mB(\varpi_i)_{\cl}$ is isomorphic to $\mcB\big(W_q(\varpi_i)\big)$ by Theorem \ref{Thm: isom_of_finite_crystals} (ii),
  $\gk\big(b_{\gL_0} \otimes \mB(\varpi_i)_{\cl}\big) = \mcB(1,\varpi_i)[n]$ follows 
  for some $n \in \Z$ by \cite[Corollary 4.8]{MR2115972}.
  The rest of the proof is the same as that of Proposition \ref{Prop: simply_laced}.
\end{proof}

\begin{Cor}\label{Cor: h-weight}
  If $\gl = \sum_{i \in I} \gl_i \varpi_i \in P_+$, then we have
  \[ \sum_{\eta \in \mB(\gl)_{\cl}} e\big(p \circ \wt(\eta)\big) = \prod_{i \in I}\ch{\fh}\D(1,\varpi_i)^{\gl_i},
  \]
  where $p$ denotes the canonical projection $\hP_{\cl} \to P$.
\end{Cor}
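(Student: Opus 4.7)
The plan is to derive the identity as a direct combination of Proposition \ref{Prop: fundamental} (which handles each fundamental weight case at the level of $\hP$-weights) and Theorem \ref{Thm: isom_of_finite_crystals} (i) (which decomposes $\mB(\gl)_{\cl}$ as a tensor product of fundamental crystals), then projected down to $\Z[P]$.

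First I would apply Theorem \ref{Thm: isom_of_finite_crystals} (i) to obtain a $U_q'(\hfg)$-crystal isomorphism $\mB(\gl)_{\cl} \cong \bigotimes_{i \in I} \mB(\varpi_i)_{\cl}^{\otimes \gl_i}$. Under such an isomorphism the $\hP_{\cl}$-weight of an element is the sum of the $\hP_{\cl}$-weights of its tensor factors, so that
\[
\sum_{\eta \in \mB(\gl)_{\cl}} e\bigl(\wt(\eta)\bigr) \;=\; \prod_{i \in I} \Bigl(\sum_{\eta_i \in \mB(\varpi_i)_{\cl}} e\bigl(\wt(\eta_i)\bigr)\Bigr)^{\gl_i}
\]
holds in $\Z[\hP_{\cl}]$. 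Applying the ring homomorphism $\Z[\hP_{\cl}] \to \Z[P]$ induced by $p$ then gives the corresponding identity with $\wt$ replaced by $p \circ \wt$ everywhere.

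Next I would reinterpret each factor on the right using Proposition \ref{Prop: fundamental}, which states
\[
\sum_{\eta_i \in \mB(\varpi_i)_{\cl}} e\bigl(\wt_{\hP}(\eta_i)\bigr) \;=\; \ch{\fhd}\D(1, \varpi_i)[0]
\]
in $\Z[\hP]$. Restriction to $\fh$ kills both $\gd$ and the $\gL_0$-direction inside $\hfh^*$; by the definition $\wt_{\hP}(\eta_i) = i_{\cl}\circ\wt(\eta_i) - \gd\,\Deg(\eta_i)$, this restriction maps $\wt_{\hP}(\eta_i)$ to $p \circ \wt(\eta_i)$, while it obviously maps $\ch{\fhd}\D(1,\varpi_i)[0]$ to $\ch{\fh}\D(1,\varpi_i)$. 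Hence I would conclude
\[
\sum_{\eta_i \in \mB(\varpi_i)_{\cl}} e\bigl(p \circ \wt(\eta_i)\bigr) \;=\; \ch{\fh}\D(1, \varpi_i)
\]
in $\Z[P]$, and substituting this into the previously derived product formula yields the corollary.

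No step presents a genuine obstacle; the only delicate point is the bookkeeping of the various weight lattices and their projections, which ultimately reduces to the definitional identity $\wt_{\hP}(\eta)|_{\fh} = p \circ \wt(\eta)$ and the fact that the isomorphism in Theorem \ref{Thm: isom_of_finite_crystals} (i) preserves $\hP_{\cl}$-weights.
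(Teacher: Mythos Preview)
Your proof is correct and follows essentially the same approach as the paper: both arguments combine Proposition~\ref{Prop: fundamental} with Theorem~\ref{Thm: isom_of_finite_crystals}~(i), and the only difference is the order in which these are applied (the paper first restricts the fundamental-weight identity to $\fh$, then takes the tensor product, whereas you take the tensor product first). Your explicit verification that $\wt_{\hP}(\eta_i)|_{\fh} = p\circ\wt(\eta_i)$ is slightly more detailed than what the paper writes, but the content is identical.
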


\begin{proof}
  From the above proposition, we have 
  \[ \sum_{\eta \in \mB(\varpi_i)_{\cl}} e\big(p \circ \wt(\eta)\big) = \ch{\fh}\D(1,\varpi_i)
  \]
  for each $i \in I$.
  Then the assertion follows since $\mB(\gl)_{\cl} \cong \bigotimes_{i \in I} \mB(\varpi_i)_{\cl}^{\otimes \gl_i}$ 
  by Theorem \ref{Thm: isom_of_finite_crystals} (i).
\end{proof}

From the next subsection, we begin to determine $\mu_j$'s and $n_j$'s 
in the non-simply laced case using Demazure crystals for $U_q(\hfg^{\sh})$.

\subsection{Path models for $U_q(\hfg^{\sh})$}

In the rest of this section we assume that $\fg$ is non-simply laced, 
and apply the theory of path models for $U_q(\hfg^{\sh})$.
Here we fix some notation used throughout the rest of this section.

Let $\gt^{\sh}$ be the highest root in $\lpishr$ and $\ga_0^{\sh} = \gd - \gt^{\sh} \in  \hat{\gD}$,
which corresponds to a simple root of $\hfg^{\sh}$.
Note that $(\ga_0^{\sh})^{\vee} = rK -(\gt^{\sh})^{\vee}$.
Let $s_0^{\sh} \in \hW$ denote the reflection associated with $\ga_0^{\sh}$, 
and $\hW^{\sh}$ the subgroup of $\hW$ generated by $\{ s_0^{\sh}\} \cup \{ s_i \mid i\in I^{\sh} \}$.
Set $\hat{I}^{\sh} =\{ 0\} \cup I^{\sh}$, and 
\[ \hat{P}^{\sh} 
   = \sum_{i \in I^{\sh}} \Z \overline{\varpi_i} +r^{-1}\Z \overline{\gL}_0 + \Z \overline{\gd} \subseteq \big(\hfh^{\sh}\big)^*.
\]
Note that $s_0^{\sh}$ acts on $\hP^{\sh}$ by $s_0^{\sh}(\nu) = \nu - \langle \nu, (\ga_0^{\sh})^{\vee} \rangle \ol{\ga}_0^{\sh}$
for $\nu \in \hP^{\sh}$, and $s_i$ for $i \in I^{\sh}$ also acts similarly. 

Let $\mP^{\sh}$ be the set of paths with weights in $\hP^{\sh}$, and define $\mPint^{\sh}$ similarly as $\mPint$.
As described in Subsection \ref{Def_of_path}, root operators associated with $i \in \hI^{\sh}$
are defined on $\mPint^{\sh}$ using the above actions of simple reflections.
To distinguish them from $\te_i$ and $\tf_i$, we denote them by $\te_i^{\sh}$ and $\tf_i^{\sh}$ ($i \in \hI^{\sh}$).
The maps $\wt\colon \mPint^{\sh} \to \hP^{\sh}$ and $\gee_i$, $\gph_i\colon \mPint^{\sh} \to \Z_{\ge 0}$ 
for $i \in \hI^{\sh}$ are defined similarly.
Then Theorem \ref{Thm: crystal_structure} implies the following:

\begin{Prop}
  Let $\mB \subseteq \mPint^{\sh}$ be a subset such that $\te_i^{\sh} \mB \subseteq \mB \cup \{ \0 \}$
  and $\tf_i^{\sh} \mB \subseteq \mB \cup \{ \0 \}$ for all $i \in \hI^{\sh}$.
  Then $\mB$, together with the root operators $\te_i^{\sh}, \tf_i^{\sh}$ for $i \in \hI^{\sh}$ and the maps 
  $\mathrm{wt}, \gee_i, \gph_i$ for $i \in \hI^{\sh}$, becomes a $U_q(\hfg^{\sh})$-crystal.
\end{Prop}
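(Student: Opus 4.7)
The plan is to apply Theorem \ref{Thm: crystal_structure} directly, with $\hfg$ replaced by $\hfg^{\sh}$. Since $\hfg^{\sh}$ is itself a non-twisted affine Lie algebra by the construction in Section \ref{Notation}, and since Littelmann's path model theory (which underlies Theorem \ref{Thm: crystal_structure}) applies uniformly to any symmetrizable Kac--Moody algebra, the crystal structure on a stable subset of $\mPint^{\sh}$ is obtained by the identical construction. The sentence immediately preceding the proposition already advertises this strategy, so the real content is verifying that the ambient setup for $\hfg^{\sh}$ fits Littelmann's framework.

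First I would check that $\hP^{\sh}$ is stable under $\hW^{\sh}$. For $s_i$ with $i \in I^{\sh}$ this is routine, so the only case to handle is $s_0^{\sh}$: for $\nu \in \hP^{\sh}$ one has $s_0^{\sh}(\nu) = \nu - \langle \nu, (\ga_0^{\sh})^{\vee}\rangle(\overline{\gd} - \overline{\gt}^{\sh})$, and since $\overline{\gt}^{\sh} \in \sum_{i \in I^{\sh}} \Z \overline{\ga}_i \subseteq \hP^{\sh}$, this lies in $\hP^{\sh}$. Next I would verify that $\langle \nu, \ga_i^{\vee}\rangle \in \Z$ for every $\nu \in \hP^{\sh}$ and $i \in \hI^{\sh}$, which is what makes the integrality condition defining $\mPint^{\sh}$ meaningful and ensures the $H_i^{\pi}$ take integer values at their local minima. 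The only non-obvious case is $i = 0$, which uses $(\ga_0^{\sh})^{\vee} = rK - (\gt^{\sh})^{\vee}$: checking on generators of $\hP^{\sh}$, one gets $\langle \overline{\varpi}_j, (\ga_0^{\sh})^{\vee}\rangle = -\langle \varpi_j, (\gt^{\sh})^{\vee}\rangle \in \Z$ for $j \in I^{\sh}$, $\langle r^{-1}\overline{\gL}_0, (\ga_0^{\sh})^{\vee}\rangle = r^{-1} \cdot r = 1$, and $\langle \overline{\gd}, (\ga_0^{\sh})^{\vee}\rangle = 0$. Note that the appearance of $r^{-1}$ in the definition of $\hP^{\sh}$ is precisely calibrated so that this last integrality holds.

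Once these two bookkeeping checks are in place, the construction of $\te_i^{\sh}, \tf_i^{\sh}$ on $\mPint^{\sh}$ and the verification of the crystal axioms, together with the identities $\gee_i(\pi) = -m_i^{\pi}$ and $\gph_i(\pi) = H_i^{\pi}(1) - m_i^{\pi}$, are literally the statements of Theorem \ref{Thm: crystal_structure} transported from $\hfg$ to $\hfg^{\sh}$, and no new argument is required. The assumptions $\te_i^{\sh}\mB \subseteq \mB \cup \{\0\}$ and $\tf_i^{\sh}\mB \subseteq \mB \cup \{\0\}$ ensure that $\mB$ itself, rather than some enveloping set, inherits the structure. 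I do not expect any substantive obstacle; the step requiring the most care is simply keeping the two lattices $\hP$ and $\hP^{\sh}$ straight and confirming that the rescaling by $r^{-1}$ in front of $\overline{\gL}_0$ does not disrupt integrality with respect to $(\ga_0^{\sh})^{\vee}$.
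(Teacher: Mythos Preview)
Your proposal is correct and matches the paper's approach exactly: the paper gives no separate proof, stating only that the proposition is implied by Theorem \ref{Thm: crystal_structure} (applied with $\hfg$ replaced by $\hfg^{\sh}$). Your additional bookkeeping checks on $\hW^{\sh}$-stability of $\hP^{\sh}$ and integrality of the pairing with $(\ga_0^{\sh})^{\vee}$ are more detail than the paper supplies, but they are the natural verifications one would make and are all correct.
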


Denote by $H_i^{\sh, \pi}: [0,1] \to \R$ and $m_i^{\sh, \pi} \in \R$ for $\pi \in \mP^{\sh}$ and $i \in \hI^{\sh}$ 
the counterparts of $H_i^{\pi'}$ and $m_i^{\pi'}$ respectively.
For $\nu \in \hP^{\sh}$, let $\pi_\nu \in \mPint^{\sh}$ denote the straight line path: $\pi_\nu(t) = t\nu$,
and $\mB^{\sh}_0(\nu) \subseteq \mPint^{\sh}$ the connected component containing $\pi_\nu$, which is a $U_q(\hfg^{\sh})$-crystal.
Since $\fg^{\sh}$ is simply laced, Proposition \ref{Prop: simply_laced} and Remark \ref{Rem: remark_of_character}
imply the following lemma:

\begin{Lem}\label{Lem: simply-laced}
  Let $\nu \in \ol{P}_+$.
  Then we have
  \[ \sum_{\begin{smallmatrix} \pi \in \mB_0^{\sh}(\nu)\\ \iota(\pi) \in \ol{P} \end{smallmatrix}} 
     e\big(\wt(\pi)\big) = \ch{\fh^{\sh}_d} \D^{\sh}(1, \nu)[0].
  \] 
\end{Lem}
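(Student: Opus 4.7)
The plan is to derive this lemma as a direct transposition of Proposition \ref{Prop: simply_laced} (combined with Remark \ref{Rem: remark_of_character}) from the ambient affine algebra $\hfg$ to its short-root counterpart $\hfg^{\sh}$. Since $\fg^{\sh}$ is always of simply-laced type $A$ (as recorded in Section \ref{Notation}), the simply-laced hypothesis of Proposition \ref{Prop: simply_laced} is satisfied for $\fg^{\sh}$, so the identity should apply verbatim once we interpret all path-model notions in the $\sh$-setting supplied by Subsection 8.2.

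More concretely, I would first observe that combining Proposition \ref{Prop: simply_laced} with Remark \ref{Rem: remark_of_character} yields, for simply laced $\fg$ and $\gl \in P_+$, the identity
\[
  \sum_{\begin{smallmatrix} \pi \in \mB_0(\gl) \\ \iota(\pi) \in P \end{smallmatrix}} e\bigl(\wt(\pi)\bigr)
  \;=\; \sum_{\eta \in \mB(\gl)_{\cl}} e\bigl(\wt_{\hP}(\eta)\bigr)
  \;=\; \ch{\fh_d}\D(1,\gl)[0].
\]
Next I would apply this identity to the simply-laced affine algebra $\hfg^{\sh}$: with $\fg$ replaced by $\fg^{\sh}$, $P$ by $\ol{P}$, $\mB_0(\gl)$ by $\mB_0^{\sh}(\nu)$, $\hP$ by $\hP^{\sh}$, and $\fh_d$ by $\fh^{\sh}_d$, one obtains precisely the asserted formula for $\nu \in \ol{P}_+$.

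The main point that needs to be verified is that Proposition \ref{Prop: simply_laced} and Remark \ref{Rem: remark_of_character} transport unchanged to $\hfg^{\sh}$. For Proposition \ref{Prop: simply_laced}, the proof given in the text only invoked \cite[Proposition 3]{MR2323538} together with formal features of $\gk$ and the path-model machinery, all of which are available in the $\sh$-setting by the parallel constructions of Subsection 8.2 (path space $\mP^{\sh}$, operators $\te^{\sh}_i, \tf^{\sh}_i$, connected component $\mB_0^{\sh}(\nu)$, and the crystal structure from Theorem \ref{Thm: crystal_structure} applied to $\hfg^{\sh}$). Remark \ref{Rem: remark_of_character}, in turn, was a formal consequence of the definition of the degree function and of $\pi_{\eta}$ as the unique lift with initial direction in $W\gl$, which works identically for $W^{\sh}$-orbits in $\hP^{\sh}$.

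The only real obstacle, and it is a mild bookkeeping one rather than a conceptual one, is to confirm that the $\sh$-analog $\mB(\nu)^{\sh}_{\cl}$ of the classical projection (not explicitly defined in Subsection 8.2) together with the associated degree function and $\hP^{\sh}$-weight function behaves as in Subsections \ref{subsection: crystal and bases}--\ref{Degree function}; once this is in place, the chain of identities above is immediate.
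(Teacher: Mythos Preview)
Your proposal is correct and matches the paper's approach exactly: the paper simply states that since $\fg^{\sh}$ is simply laced, Proposition \ref{Prop: simply_laced} together with Remark \ref{Rem: remark_of_character} imply the lemma. Your additional bookkeeping about transporting the $\cl$-projection and degree function to the $\sh$-setting is more careful than what the paper writes, but the underlying argument is the same.
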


\subsection{Identity on the weight sum of $\mB(\gl)_{\cl}$}

This subsection is devoted to the proof of the following proposition:

\begin{Prop}\label{Prop: critical_prop}
  Let $\gl \in P_+$, and set $\gl' = \gl -i_{\sh}(\ol{\gl})$. 
  Then we have
  \[ \sum_{\begin{smallmatrix} \eta \in \mB(\gl)_{\cl} \\ 
     \wt_{\hP}(\eta) \in \gl -Q_+^{\sh}+\Z \gd\end{smallmatrix}} e\big(\wt_{\hP}(\eta)\big)
     = e(\gl')i_{\sh}\ch{\fh_d^{\sh}}\D^{\sh}(1,\ol{\gl})[0].
  \]
\end{Prop}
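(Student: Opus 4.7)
The plan is to rewrite the left-hand side as a sum over paths in $\mB_0(\gl)$, set up a weight-compatible bijection with paths in $\mB_0^{\sh}(\ol\gl)$ via the projection $\hfh^* \to (\hfh^{\sh})^*$, and conclude using Lemma~\ref{Lem: simply-laced} applied to $\fg^{\sh}$. By Remark~\ref{Rem: remark_of_character}, the map $\eta \leftrightarrow \pi_\eta$ identifies $\mB(\gl)_{\cl}$ weight-preservingly with $\{\pi \in \mB_0(\gl) : \iota(\pi) \in P\}$, so the left-hand side equals
\[ \sum_{\substack{\pi \in \mB_0(\gl),\,\iota(\pi) \in P \\ \wt(\pi) \in \gl - Q^{\sh}_+ + \Z\gd}} e(\wt(\pi)). \]

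Fix such a $\pi = (\mu_1,\dots,\mu_N;\ul\gs)$. Each direction lies in $\hW\gl = W\gl + d_\gl\Z\gd$ by~(\ref{eq: id_of_Weyl_gp}); write $\mu_j = w_j\gl + k_j d_\gl\gd$ with $w_j \in W$, $k_j \in \Z$, and $\gl - w_j\gl = \sum_i m_{i,j}\ga_i$ with $m_{i,j} \in \Z_{\ge 0}$. Comparing $\ga_i$-coefficients in
\[ \gl - \wt(\pi) = \sum_i \Big(\sum_j(\gs_j - \gs_{j-1})m_{i,j}\Big)\ga_i \;-\; \Big(\sum_j(\gs_j - \gs_{j-1})k_j\Big)d_\gl\gd \]
for $i \in I \setminus I^{\sh}$ forces $m_{i,j} = 0$ for all such $i$ and all $j$, by positivity of $\gs_j - \gs_{j-1}$ (as in the proof of Lemma~\ref{Lem: character}). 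Hence $\gl - w_j\gl \in Q^{\sh}_+$, and the standard fact $W\gl \cap (\gl - Q^{\sh}_+) = W^{\sh}\gl$ (proved by dropping vanishing terms from a reduced expression of $w_j$) yields $w_j\gl \in W^{\sh}\gl$. Consequently the projected directions $\overline{\mu_j}$ lie in $W^{\sh}\ol\gl + d_\gl\Z\ol\gd \subseteq \hW^{\sh}\ol\gl$. I then define $\varphi(\pi)(t) := \overline{\pi(t)}$ and verify $\varphi(\pi) \in \mB_0^{\sh}(\ol\gl)$ with $\iota(\varphi(\pi)) = \overline{\iota(\pi)} \in \ol P$, by checking $\mPint^{\sh}$-integrality ($H_i^{\sh,\overline\pi} = H_i^\pi$ for $i \in I^{\sh}$, with a parallel identity for $i=0 \in \hI^{\sh}$ using $\ga_0^{\sh} = \ol\gd - \ol{\gt^{\sh}}$) and by descending a connecting sequence of Kashiwara operators from the $\hfg$-side. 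The inverse $\psi(\pi^{\sh})(t) := i_{\sh}(\pi^{\sh}(t)) + t\gl'$ is treated dually.

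Under this bijection the weight transforms as $\wt(\pi) = \gl' + i_{\sh}(\wt(\varphi(\pi)))$, because any $\mu \in \gl - Q^{\sh}_+ + \Z\gd$ admits the unique decomposition $\mu = \gl' + i_{\sh}(\ol\mu)$ (using $\gl = \gl' + i_{\sh}(\ol\gl)$, $\ol{\gl'} = 0$, and $\mu - \gl' \in i_{\sh}((\hfh^{\sh})^*)$). Substituting and applying Lemma~\ref{Lem: simply-laced} to $\fg^{\sh}$ then yields
\[ \sum_{\substack{\pi \in \mB_0(\gl),\,\iota(\pi) \in P \\ \wt(\pi) \in \gl - Q^{\sh}_+ + \Z\gd}}\!\!\! e(\wt(\pi)) \;=\; e(\gl')\, i_{\sh}\!\!\sum_{\substack{\pi^{\sh} \in \mB_0^{\sh}(\ol\gl) \\ \iota(\pi^{\sh}) \in \ol P}}\!\! e(\wt(\pi^{\sh})) \;=\; e(\gl')\, i_{\sh}\ch{\fh_d^{\sh}}\D^{\sh}(1,\ol\gl)[0], \]
the desired right-hand side. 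The hardest part will be verifying that $\overline{\pi}$ lands in the specific connected component $\mB_0^{\sh}(\ol\gl)$ of $\mPint^{\sh}$ rather than merely in $\mPint^{\sh}$; this demands careful tracking of how the Kashiwara operators for $i \in \hI^{\sh}$ interact with the projection---in particular the affine operator associated with $\ga_0^{\sh}$, which is not a simple root of $\hfg$---and the matching of the affine-translation parameters $d_\gl$ and $d^{\sh}_{\ol\gl}$ controlling the imaginary-direction part of the Weyl orbit.
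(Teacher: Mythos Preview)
Your map $\psi(\pi^{\sh})(t) = i_{\sh}(\pi^{\sh}(t)) + t\gl'$ is exactly the map the paper uses (there denoted $\varphi$), and the overall architecture --- reduce to a bijection between $\{\pi \in \mB_0(\gl) : \iota(\pi)\in P,\ \wt(\pi)\in\gl-Q^{\sh}_+ + \Z\gd\}$ and $\{\pi^{\sh}\in\mB_0^{\sh}(\ol\gl) : \iota(\pi^{\sh})\in\ol P\}$, then invoke Lemma~\ref{Lem: simply-laced} --- matches the paper's. The difference is in how the bijection is established, and here your plan has two gaps.

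First, the claim that $\psi$ lands in $\mB_0(\gl)$ is not a ``dual'' formality: it is the main technical step of the paper's argument (Lemma~\ref{Lem: critical_lemma1}/\ref{Lem: critical_lemma2}). The obstruction is the operator $\te_0^{\sh}$, whose simple root $\ga_0^{\sh} = \gd - \gt^{\sh}$ is \emph{not} simple in $\hat\gD$, so there is no single $\te_i$ on the $\hfg$-side to match it. The paper resolves this by exhibiting, type by type (Lemma~\ref{Lem: existance}), an element $\tau\in\hW$ and an index $j\in I^{\sh}$ with $\tau\ga_j = \ga_0^{\sh}$ and a carefully chosen expression for $\tau$, so that $\psi\circ\te_0^{\sh}$ corresponds to $S_\tau \te_j S_{\tau^{-1}}$ on $\mB_0(\gl)$. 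Without this, you have no way to track connectedness across $\te_0^{\sh}$.

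Second, your proposed proof that the projection $\varphi(\pi) = \overline\pi$ lands in the connected component $\mB_0^{\sh}(\ol\gl)$ --- by ``descending a connecting sequence of Kashiwara operators from the $\hfg$-side'' --- does not work as stated. A sequence connecting $\pi$ to $\pi_\gl$ in $\mB_0(\gl)$ will in general use operators $\te_i,\tf_i$ for $i\in\hI\setminus\hI^{\sh}$ (including the affine $\te_0$ for $\hfg$, which is unrelated to $\te_0^{\sh}$), and these have no counterpart on $\mPint^{\sh}$. Knowing that every direction of $\overline\pi$ lies in $\hW^{\sh}\ol\gl$ is necessary but far from sufficient for membership in the connected component. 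The paper avoids this direction entirely: once $\psi$ is shown to map $\mB_0^{\sh}(\ol\gl)$ into $\mB_0(\gl)$ (hence injectively into the target set, since the formal identity $\varphi\circ\psi=\id$ is easy), surjectivity is obtained by a cardinality count using Corollary~\ref{Cor: h-weight}, Lemma~\ref{Lem: short_Demazure}, and Lemma~\ref{Lem: lemma_of_A}. If you want to salvage your approach, that dimension count is the efficient substitute for the direct connectedness argument you are missing.
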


\begin{Rem}\normalfont
  In the final part of this article, we shall prove that
  \[ \ch{\fh_d} W(\gl) = \sum_{\eta \in \mB(\gl)_{\cl}} e\big(\wt_{\hP}(\eta)\big),
  \]
  which is compatible with the above proposition, Lemma \ref{Lem: subisom} and \ref{Lem: character}.
\end{Rem}

First we prepare a technical lemma:

\begin{Lem}\label{Lem: existance}
  There exist $\tau \in \hW$ and $j \in I^{\sh}$ satisfying the following two conditions: \\
  {\normalfont(i)} $\tau(\ga_j) = \ga_0^{\sh}$, \\
  {\normalfont(ii)} $\tau$ has an expression $\tau = s_{i_1} \dots s_{i_M}$ satisfying for all $1 \le L \le M$ that
       \begin{equation} \label{eq: conditionii}
          s_{i_1}\dots s_{i_{L-1}}(\ga_{i_{L}}) \notin \{ \ga + k \gd \mid \ga \in \lpishr, k \in \Z \}.
       \end{equation}
\end{Lem}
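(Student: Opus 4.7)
The plan is to establish the lemma by an explicit case-by-case construction of $(\tau,j)$ for each of the four non-simply-laced types $B_n$, $C_n$, $F_4$, $G_2$. First, the bare existence of \emph{some} $\tau \in \hW$ with $\tau(\ga_j) = \ga_0^{\sh}$ is essentially automatic: both $\ga_j$ (for any $j \in I^{\sh}$) and $\ga_0^{\sh} = \gd - \gt^{\sh}$ are short real affine roots, and $\hW$ acts transitively on short real roots in non-twisted type. The real content is condition (ii), which (for a reduced expression) is equivalent to requiring that $\tau$ be the minimal-length representative of its right coset modulo $\hW^{\sh}$, so that the inversion set of $\tau$ is disjoint from the positive real roots of $\hat{\fg}^{\sh}$ sitting inside $\hat{\gD}$.

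My plan is to handle each type by exhibiting such a representative explicitly. For $B_n$ (where $I^{\sh}=\{n\}$), I would take $j=n$ and $\tau = s_{n-1}s_{n-2}\cdots s_1 s_0 s_2 s_3 \cdots s_{n-1}$; all indices lie in $\hat I \setminus I^{\sh}$, so every $\ga_{i_L}$ is itself outside $\lpishr + \Z\gd$, and a direct trajectory computation confirms both $\tau(\ga_n) = \gd - \ga_n = \ga_0^{\sh}$ and that each intermediate inversion root $\beta_L$ contains a nonzero coefficient on some $\ga_1, \ldots, \ga_{n-1}$, so avoids $\{\pm\ga_n + \Z\gd\}$. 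For $C_n$ (where $I^{\sh}=\{1,\dots,n-1\}$), the analogous choice is $j=n-1$ and $\tau = s_0 s_1 s_2 \cdots s_{n-2} s_n$, verified by the same method.

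For the small exceptional cases $F_4$ and $G_2$, where $\fg^{\sh}$ has rank $>1$, the ``use only long indices'' heuristic no longer suffices and some simple reflection $s_i$ with $i \in I^{\sh}$ must be included; however, the low rank makes direct verification by hand feasible, and a short search yields a valid expression. For example, for $G_2$ with $\ga_1$ short, the choice $j = 1$ and $\tau = s_2 s_1 s_0 s_2$ works: the only inversion that might look dangerous is $\beta_2 = s_2(\ga_1) = \ga_1 + \ga_2$, but this contains $\ga_2$ and so escapes $\{\pm\ga_1 + \Z\gd\}$.

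The main obstacle I expect is the $F_4$ case, where the expression is necessarily longer and there is no obvious pattern forcing the correct choice of $(\tau,j)$; once a candidate is proposed, however, verification of (i) and (ii) is reduced to tracing the action of each simple reflection in turn and checking finitely many intermediate roots, which is routine.
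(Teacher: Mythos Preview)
Your approach is the same as the paper's: both proceed by explicit case-by-case construction of $(\tau,j)$ followed by direct verification of (i) and (ii). Your expressions for $B_n$ and $G_2$ coincide with the paper's (for $B_n$ with $n\ge 2$, your $s_{n-1}\cdots s_1 s_0 s_2\cdots s_{n-1}$ and the paper's $s_{n-1}\cdots s_2 s_0 s_1 s_2\cdots s_{n-1}$ are the same element since $s_0$ and $s_1$ commute; for $G_2$ your expression is the paper's with the short and long simple roots relabeled). For $C_n$ your $\tau = s_0 s_1\cdots s_{n-2} s_n$ with $j=n-1$ is a different but equally valid choice from the paper's $\tau = s_n s_{n-1}\cdots s_1 s_0$ with $j=1$.

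Two small points. First, your coset-representative interpretation of (ii) is heuristically useful but needs care: $\hW^{\sh}$ is a reflection subgroup of $\hW$ but not a standard parabolic (the reflection $s_0^{\sh}$ is not simple in $\hW$), so the usual Coxeter characterization of minimal coset representatives does not apply out of the box; since your actual argument is direct verification this is harmless. Second, you leave the $F_4$ case as a search to be done; the paper's choice (Kac's numbering, with $\ga_3,\ga_4$ short) is $\tau = s_2 s_3 s_1 s_2 s_3 s_4 s_0 s_1 s_2$ and $j=3$, and the check is indeed the routine finite computation you describe.
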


\begin{proof}
  For each type of $\fg$, we give $\tau \in \hW$ with its expression and $j \in I^{\sh}$  as follows, 
  where we use the numbering of elements of $\hI$ in \cite[\S 4]{MR1104219}: \\[0.1cm]
  $\circ$ Type $B_\ell$:
  Let $\tau = s_{\ell-1} s_{\ell-2} \cdots s_2 s_0 s_1 s_2 \cdots s_{\ell-1}$ and $j = \ell$. \\
  \ \ \  \ \ \  \ \ \ \ \ \ \ \ \ \ 
  In this case, $ \ga_0^{\sh}=\ga_0+ \ga_1 + 2\ga_2 + 2\ga_3 + \dots + 2\ga_{\ell-1} + \ga_\ell$.\\[0.1cm]
  $\circ$ Type $C_\ell$:
  Let $\tau = s_\ell s_{\ell-1} \cdots s_1 s_0$ and $j = 1$. \\
  \ \ \ \ \ \ \ \ \ \ \ \ \ \ \ \ In this case, $ \ga_0^{\sh} = \ga_0 + \ga_1 + \dots +\ga_{\ell}$. \\[0.1cm]
  $\circ$ Type $F_4$: 
  Let $\tau = s_2 s_3 s_1 s_2 s_3 s_4 s_0 s_1 s_2$ and $j =3$. \\
  \ \ \ \ \ \ \ \ \ \ \ \ \ \ \ \ In this case, $ \ga_0^{\sh} =\ga_0+ 2\ga_1 + 3 \ga_2 + 3\ga_3 + \ga_4$. \\[0.1cm]
  $\circ$ Type $G_2$:
  Let $\tau =s_1 s_2 s_0 s_1$ and $j = 2$.\\
  \ \ \ \ \ \ \ \ \ \ \ \ \ \ \ \ In this case, $ \ga_0^{\sh}= \ga_0 + 2\ga_1 + 2\ga_2$. \\[0.1cm]
  Though it is a little troublesome work, 
  we can check directly that these elements actually satisfy the conditions (for informations of root
  systems, see \cite[Ch.\ VI.\ \S 4]{MR1890629} for example).
  Note that if $\ga_{i_L}$ is a long root, 
  the condition (\ref{eq: conditionii}) is trivial since the right hand side consists of short roots.
  Using this fact, we can reduce a bit the amount of calculations.  
\end{proof}

Now let us begin the proof of the proposition.
Denote by $\pi_{\gl'}$ the straight line path: $\pi_{\gl'}(t) = t\gl'$.
For $\pi \in \mB_0^{\sh}(\overline{\gl})$, define maps $i_{\sh}(\pi)\colon [0,1] \to \R \otimes_{\Z} \hP$ by 
$i_{\sh}(\pi)(t)  = i_{\sh}\big(\pi(t)\big)$, and $\varphi(\pi): [0,1] \to \R \otimes_\Z \hP$ by 
\[ \varphi(\pi) = i_{\sh}(\pi) + \pi_{\gl'}.
\]
The following lemma is essential for the proof of the above proposition.

\begin{Lem}\label{Lem: critical_lemma1}
  We have $\varphi(\pi) \in \mB_0(\gl)$ for all $\pi \in \mB_0^{\sh}(\overline{\gl})$.
\end{Lem}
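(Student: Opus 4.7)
The plan is to induct on the number of root operators needed to obtain $\pi \in \mB_0^{\sh}(\ol\gl)$ from the straight-line path $\pi_{\ol\gl}$. The base case is immediate: $\varphi(\pi_{\ol\gl})(t) = t\,i_{\sh}(\ol\gl) + t\gl' = t\gl = \pi_\gl(t)$, which belongs to $\mB_0(\gl)$ by definition. For the inductive step, assuming $\varphi(\pi) \in \mB_0(\gl)$, I must show that $\varphi(\te_i^{\sh}\pi)$ and $\varphi(\tf_i^{\sh}\pi)$ lie in $\mB_0(\gl) \cup \{\0\}$ for every $i \in \hI^{\sh}$. The decisive preliminary observation is that the coroot $\ga_i^\vee$ annihilates $\gl' = \gl - i_{\sh}(\ol\gl)$ for every $i \in \hI^{\sh}$: for $i \in I^{\sh}$ this follows because $i_{\sh}$ is a section of the projection, and for $i = 0$ it holds because $(\ga_0^{\sh})^\vee = rK - (\gt^{\sh})^\vee$ lies in $\hfh^{\sh}$. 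Consequently each simple reflection $s_i$ ($i \in \hI^{\sh}$) fixes $\gl'$, and one checks $\langle \varphi(\pi)(t), \ga_i^\vee \rangle = \langle \pi(t), \ga_i^\vee \rangle = H_i^{\sh,\pi}(t)$ for all $i \in \hI^{\sh}$.

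When $i \in I^{\sh}$, $\ga_i$ is a simple root of both $\hfg$ and $\hfg^{\sh}$. Matching the explicit piecewise formulas for $\te_i$ and $\te_i^{\sh}$, using the coincidence of heights together with the $s_i$-invariance of $\gl'$, yields $\te_i\varphi(\pi) = \varphi(\te_i^{\sh}\pi)$ (and the analogue for $\tf_i$), with the nullity of each side detected identically. Since $\mB_0(\gl) \cup \{\0\}$ is stable under $\te_i$ and $\tf_i$, this case is settled.

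The main obstacle is the case $i = 0$: the root $\ga_0^{\sh} = \gd - \gt^{\sh}$ is a positive real affine root of $\hat\gD$ but not a simple root, so $\te_0^{\sh}$ has no direct counterpart on the big side. Here I would invoke Lemma \ref{Lem: existance} to fix $\tau = s_{i_1}\cdots s_{i_M} \in \hW$ and $j \in I^{\sh}$ with $\tau(\ga_j) = \ga_0^{\sh}$ satisfying condition (ii), and aim to establish the identity
\[
\varphi(\te_0^{\sh}\pi) \;=\; S_\tau\,\te_j\,S_{\tau^{-1}}\,\varphi(\pi)
\]
(and its analogue for $\tf_0^{\sh}$). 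Since $S_w$ preserves $\mB_0(\gl)$ and $\te_j$ preserves $\mB_0(\gl) \cup \{\0\}$, this would close the induction. To prove the identity I would apply the one-step Weyl-group actions $S_{s_{i_L}}$ iteratively to $\varphi(\pi)$, exploiting that $\varphi(\pi)$ has direction vectors in $\hW^{\sh}\gl$ (because $i_{\sh}$ intertwines the $\hW^{\sh}$-action and $\gl'$ is $\hW^{\sh}$-invariant). Condition (ii) in Lemma \ref{Lem: existance}, combined with an extension of Lemma \ref{Lem:_preserve_sh} to the affine $\hW^{\sh}$-action, should then force each intermediate height function $H_{i_{L+1}}$ to be monotonic, so that Lemma \ref{Lem: action_of_Weyl} guarantees each $S_{s_{i_L}}$ acts pointwise by $s_{i_L}$. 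The hard part will be the careful bookkeeping through these iterated Weyl-group and root-operator moves and verifying that the resulting composition does produce $\varphi(\te_0^{\sh}\pi)$ exactly.
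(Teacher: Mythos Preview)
Your proposal is correct and follows essentially the same approach as the paper's proof (carried out in Lemma \ref{Lem: critical_lemma2}): the same induction on root operators, the same direct treatment of $i \in I^{\sh}$, and for $i=0$ the same key identity $\varphi(\te_0^{\sh}\pi) = S_\tau\,\te_j\,S_{\tau^{-1}}\,\varphi(\pi)$, proved by showing that $S_{\tau^{-1}}$ acts pointwise via Lemma \ref{Lem: action_of_Weyl} thanks to condition (ii) of Lemma \ref{Lem: existance}. One small simplification the paper uses that you may find helpful: instead of extending Lemma \ref{Lem:_preserve_sh} to the affine group $\hW^{\sh}$, it writes each direction as $\nu_k = w_k\ol{\gl} + p_k\ol{\gd}$ with $w_k \in W^{\sh}$ finite (via (\ref{eq: id_of_Weyl_gp})), so that the monotonicity check reduces to $\langle \gl, (w_k^{-1}\gb)^\vee\rangle$ with $\gb \in \gD\setminus\gD^{\sh}$ and Lemma \ref{Lem:_preserve_sh} applies verbatim.
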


It is easily seen that $\varphi(\pi_{\overline{\gl}}) = \pi_{\gl} \in \mB_0(\gl)$.
Hence by the definition of $\mB_0^{\sh}(\overline{\gl})$, in order to prove this lemma it suffices to show the following:

\begin{Lem}\label{Lem: critical_lemma2}
  Assume that $\pi \in \mB_0^{\sh}(\overline{\gl})$ satisfies $\varphi(\pi) \in \mB_0(\gl)$.
  Then for each $i \in \hI^{\sh}$, the following statements hold.\\
  {\normalfont(i)} If $\te_i^{\sh} \pi \neq \0$, then $\varphi(\te_i^{\sh} \pi) \in \mB_0(\gl)$. \\ 
  {\normalfont(ii)} If $\tf_i^{\sh} \pi \neq \0$, then $\varphi(\tf_i^{\sh} \pi) \in \mB_0(\gl)$.
\end{Lem}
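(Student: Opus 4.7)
The plan is to treat the cases $i \in I^{\sh}$ and $i = 0$ separately, with the second being substantially harder.

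\textbf{Case $i \in I^{\sh}$.} For such $i$, the coroot $\ga_i^{\vee}$ lies in $\fh^{\sh}$, so a direct computation using $\gl' = \gl - i_{\sh}(\ol{\gl})$ and the fact that $\ol{\gl}$ is the restriction of $\gl$ to $\fh^{\sh}$ gives $\langle \gl', \ga_i^{\vee}\rangle = 0$. This yields the pointwise equality $H_i^{\varphi(\pi)}(t) = H_i^{\sh, \pi}(t)$, so the breakpoints $t_0, t_1$ entering the definitions of $\te_i$ on $\varphi(\pi)$ and $\te_i^{\sh}$ on $\pi$ coincide. Since $s_i\gl' = \gl'$, since $i_{\sh}$ intertwines $s_i$ on $(\hfh^{\sh})^*$ with $s_i$ on $\hfh^*$, and since $i_{\sh}(\ol{\ga}_i) = \ga_i$, a segment-by-segment comparison from the piecewise definitions gives $\varphi(\te_i^{\sh}\pi) = \te_i\varphi(\pi)$ and $\varphi(\tf_i^{\sh}\pi) = \tf_i\varphi(\pi)$, both of which lie in $\mB_0(\gl)$ because $\mB_0(\gl)$ is closed under $\te_i, \tf_i$.

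\textbf{Case $i = 0$.} Now $\ga_0^{\sh} = \gd - \gt^{\sh}$ is not a simple root of $\hfg$, and the direct intertwining of $\te_0^{\sh}$ with $\te_0$ fails. I would reduce to Case 1 by invoking Lemma \ref{Lem: existance}: pick $\tau \in \hW$ with $\tau(\ga_j) = \ga_0^{\sh}$ for some $j \in I^{\sh}$, together with its reduced expression $\tau = s_{i_1}\cdots s_{i_M}$ satisfying condition (ii). The idea is to apply the Weyl group action $S_{\tau^{-1}} = S_{i_M}\cdots S_{i_1}$ on $\mB_0(\gl)$ (Theorem \ref{Thm: action_of_Weyl}) to transport $\varphi(\pi)$ to a path $\psi \in \mB_0(\gl)$ on which the operator of interest is the $\ga_j^{\vee}$-counterpart; applying $\tf_j$ or $\te_j$ to $\psi$ then falls under Case 1, and applying $S_\tau$ to the result transports the path back to $\varphi(\tf_0^{\sh}\pi)$ or $\varphi(\te_0^{\sh}\pi)$, as appropriate.

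\textbf{Main obstacle.} The critical technical point is to verify, by induction on $L$, that each intermediate path $S_{i_L}\cdots S_{i_1}\varphi(\pi)$ equals $\varphi$ of an appropriately transformed element of $\mB_0^{\sh}(\ol{\gl})$, and that the function $H_{i_L}$ on that intermediate path is monotonic, so that by Lemma \ref{Lem: action_of_Weyl} the abstract Weyl-group operator $S_{i_L}$ coincides with the geometric reflection $\eta \mapsto s_{i_L} \circ \eta$ and thereby commutes with $\varphi$. Condition (ii) of Lemma \ref{Lem: existance} is exactly what enables this step: the intermediate root $s_{i_1}\cdots s_{i_{L-1}}(\ga_{i_L})$ avoids $\gD^{\sh} + \Z\gd$, which forces the contribution of the $i_{\sh}(\pi(\cdot))$-part of the path to vanish in the $H_{i_L}$-pairing at step $L$, leaving only the linear contribution coming from $\pi_{\gl'}$, whose monotonicity is automatic. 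The bookkeeping needed to track which part of the transformed path contributes to each coroot pairing at each step, and to ensure the induction closes, is the technical heart of the proof.
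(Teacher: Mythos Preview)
Your treatment of the case $i \in I^{\sh}$ is correct and matches the paper's. For $i = 0$ your overall strategy---conjugate by $S_{\tau^{-1}}$, apply $\te_j$ (or $\tf_j$), conjugate back by $S_\tau$---is also the paper's. But two of your key claims for this case are wrong, and fixing them requires a different mechanism than the one you describe.

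First, the intermediate paths $S_{i_L}\cdots S_{i_1}\varphi(\pi) = \tau_L^{-1}\varphi(\pi)$ are \emph{not} in the image of $\varphi$: the reflections $s_{i_L}$ involve roots outside $\gD^{\sh} + \Z\gd$, so $\tau_L^{-1}$ does not preserve $i_{\sh}\big((\hfh^{\sh})^*\big) + \R\gl'$. Consequently you cannot ``fall under Case~1'' after transporting by $S_{\tau^{-1}}$; instead the paper computes $\te_j$ on $\tau^{-1}\varphi(\pi)$ directly via the piecewise expression, then checks that $S_\tau$ acts geometrically on the result.

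Second, your stated reason for monotonicity is false: the pairing $\big\langle i_{\sh}(\pi(t)),(\gb+\ell\gd)^{\vee}\big\rangle$ does not vanish merely because $\gb \notin \gD^{\sh}$ (e.g.\ in type $B_2$, $\langle \ga_1,\ga_2^{\vee}\rangle = -1$ with $\ga_1$ short and $\ga_2$ long). The correct argument writes the directions of $\varphi(\pi)$ as $\mu_k = w_k\gl + p_k\gd$ with $w_k \in W^{\sh}$ (Claim~4 in the paper), so that the slope of $H_{i_L}$ on segment $k$ of $\tau_{L-1}^{-1}\varphi(\pi)$ is $\langle \gl,(w_k^{-1}\gb)^{\vee}\rangle$. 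Since $\gb \notin \gD^{\sh}$ and $w_k \in W^{\sh}$, Lemma~\ref{Lem:_preserve_sh} forces $w_k^{-1}\gb$ to lie in $\gD_+$ for all $k$ (or in $\gD_-$ for all $k$), whence all slopes share a sign and $H_{i_L}$ is monotone. This is where condition~(ii) of Lemma~\ref{Lem: existance} is actually used---to guarantee $\gb \notin \gD^{\sh}$---not to make the $i_{\sh}(\pi)$-contribution disappear.
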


\begin{proof} 
  Since the proof of (ii) is similar, we shall only prove (i).\\[-0.2cm] \\
  Claim 1.\ \ \  For $w \in \hW^{\sh}$ and $\nu \in \hP^{\sh}$, we have $i_{\sh}(w\nu) = wi_{\sh}(\nu)$.\\[-0.2cm]
    
    It suffices to show the claim for $w = s_i$ ($i \in I^\sh$) and $w = s_0^{\sh}$. 
    Let $i \in I^{\sh}$.
    Since $\ga_i^{\vee} \in \hfh^{\sh}$, we have 
    $\langle \nu, \ga_i^{\vee} \rangle = \langle i_{\sh}(\nu), \ga_i^{\vee} \rangle$,
    which together with $i_{\sh}(\overline{\ga}_i) = \ga_i$ implies $i_{\sh}(s_i\nu) = s_i i_{\sh}(\nu)$.
    The claim for $w = s_0^{\sh}$ is proved similarly.\\
    \\  
  Claim 2.\ \ \ We have $i_{\sh}(\te_i^{\sh}\pi) = \te_ii_{\sh}(\pi)$ for $i \in I^{\sh}$ 
    (though $i_{\sh}(\pi)$ may not belong to $\mPint$, $\te_ii_{\sh}(\pi)$ is defined in the same way as in 
    Subsection \ref{Def_of_path}). \\[-0.2cm]
  
    It follows that
    \[ H_i^{i_{\sh}(\pi)}(t) = \big\langle i_{\sh}\big(\pi(t)\big), \ga_i^{\vee} \big\rangle
       = \langle \pi(t), \ga_i^{\vee} \rangle =H_i^{\sh,\pi}(t)\ \ \ \text{for all} \ t \in [0,1] .
    \]
    Then the claim follows from the definition of the root operators and Claim 1.\\
    \\
  Claim 3.\ \ \ If $i \in I^{\sh}$, then (i) follows. \\[-0.2cm]
  
    Since $\langle \gl', \ga_i^{\vee} \rangle = 0$ follows, we have $H_i^{i_{\sh}(\pi)}(t) = H_i^{i_{\sh}(\pi) + \pi_{\gl'}}$.
    From this and the definition of $\te_i$, we can check that 
    \[ \te_i i_{\sh}(\pi) + \pi_{\gl'} = \te_i \big(i_{\sh}(\pi) + \pi_{\gl'}\big) = \te_i\varphi(\pi).
    \]
    Now the claim follows since we have from Claim 2 that
    \[ \varphi(\te_i^{\sh}\pi) = i_{\sh}(\te_i^{\sh}\pi) + \pi_{\gl'} 
       = \te_i i_{\sh}(\pi) + \pi_{\gl'} =  \te_i \varphi(\pi) \in \mB_0(\gl).
    \]
  
  It remains to show the case $i = 0$.
  Let $(\nu_1,\dots,\nu_N; \ul{\gs})$ be an expression of $\pi$, where $\nu_i \in \hW^{\sh} \ol{\gl}$.
  For each $1 \le k \le N$, there exist $w_k \in W^{\sh}$ and $p_k \in \Z$ 
  such that $\nu_k = w_k \overline{\gl} + p_k \overline{\gd}$ by (\ref{eq: id_of_Weyl_gp}).\\
  \\
  Claim 4.\ \ \ We have $\varphi(\pi) = (w_1 \gl + p_1\gd, \dots, w_N \gl + p_N\gd;\ul{\gs})$.\\[-0.2cm]
  
  For each $1\le k \le N$, we have from Claim 1 that 
  \[ i_{\sh}(w_k\ol{\gl} + p_k\ol{\gd}) +\gl' = w_ki_{\sh}(\ol{\gl}) + p_k \gd + \gl' 
     = w_k \big(i_{\sh}(\ol{\gl})+\gl'\big) + p_k \gd =w_k\gl + p_k \gd,
  \]
  which implies the claim.\\
  
  We set $\mu_k = w_k \gl + p_k \gd$ for $1 \le k \le N$.
  Put 
  \begin{align*}
    t_1 &= \min\{t\in[0,1]\mid H_0^{\sh, \pi}(t) = m_0^{\sh, \pi}\}, \\
    t_0 &= \max\{t\in[0,t_1]\mid H_0^{\sh,\pi}(t) = m_0^{\sh,\pi} + 1\}.
  \end{align*}
  By replacing $\ul{\gs}$ if necessary, we assume that $t_0, t_1 \in \{\gs_0, \dots, \gs_N\}$.
  Let $q_0,q_1 \in \{0, \dots, N \}$ be the integers such that $\gs_{q_0} = t_0$ and $\gs_{q_1} = t_1$.
  Then by the definition of $\te_0^{\sh}$ and the expression, we have
  \[ \te_0^{\sh} \pi = (\nu_1, \dots, \nu_{q_0}, s_0^{\sh}\nu_{q_0+1}, \dots, s_0^{\sh}\nu_{q_1}, 
     \nu_{q_1+1}, \dots,\nu_N; \ul{\gs}).
  \]
  Since $i_{\sh}(s_0^{\sh}\nu_k)+\gl' = s_0^{\sh}\big(i_{\sh}(\nu_k) + \gl'\big) = s_0^{\sh}\mu_k$, we also have
  \begin{equation} \label{eq:j}
     \varphi(\te_0^{\sh} \pi) = (\mu_1, \dots, \mu_{q_0}, s_0^{\sh}\mu_{q_0+1}, \dots, s_0^{\sh}\mu_{q_1}, \mu_{q_1+1}, 
     \dots, \mu_N; \ul{\gs}).
  \end{equation}
  Assume $\tau \in \hW$, its expression $\tau = s_{i_1} \cdots s_{i_M}$ and $j \in I^{\sh}$ satisfy 
  the conditions in Lemma \ref{Lem: existance}.
  In the sequel, we denote by $w \pi'$ for $w \in \hW$ and $\pi' \in \mP$ the path defined by $w \pi'(t) 
  =w \big(\pi'(t)\big)$. \\
  \\
  Claim 5.\ \ \ We have 
  \[ S_{\tau^{-1}}\varphi(\pi)=\tau^{-1} \varphi(\pi) = ( \tau^{-1}\mu_1, \dots, \tau^{-1}\mu_N; \ul{\gs})
  \]
  (the action $S_{\tau^{-1}}$ is defined in Theorem \ref{Thm: action_of_Weyl}). \\[-0.2cm]

    Set $\tau_L = s_{i_1} \cdots s_{i_L}$ for $1 \le L \le M$ and $\tau_0 = \id$.
    We shall show by induction on $L$ that 
    \[ S_{\tau_L^{-1}} \varphi(\pi) =\tau^{-1}_L \varphi(\pi).
    \]
    The case $L = 0$ is trivial.
    Assume $L \ge 1$ and $S_{\tau_{L-1}^{-1}} \varphi(\pi) = \tau_{L-1}^{-1} \varphi(\pi)$.
    By the condition (ii), $\tau_{L-1}\ga_{i_L} = \gb + \ell \gd$ follows for some 
    $\gb \in \gD \setminus \lpishr$ and $\ell \in \Z$,
    and we have for each $1 \le k \le N$ that 
    \begin{equation}\label{eq:adding}
      \langle \tau_{L-1}^{-1} \mu_k, \ga_{i_L}^{\vee} \rangle = \langle \gl, w_k^{-1}\tau_{L-1}\ga_{i_L}^{\vee}\rangle 
      = \langle \gl, (w_k^{-1} \gb)^{\vee} \rangle.
    \end{equation}
    By Lemma \ref{Lem:_preserve_sh}, if $\gb \in \gD_+ \setminus \lpishr_+$ we have $w_k^{-1} \gb \in \gD_+$ 
    for all $1 \le k \le N$, which implies the right hand side of (\ref{eq:adding}) is nonnegative for all $k$. 
    On the other hand if $\gb \in \gD_- \setminus \gD^{\sh}_-$,
    we have $w_k^{-1} \gb \in \gD_-$ and hence it is nonpositive for all $k$.
    Therefore the function $H_{i_L}^{\tau_{L-1}^{-1}\varphi(\pi)}$ is non-decreasing or non-increasing, 
    and then Lemma \ref{Lem: action_of_Weyl} completes the proof of the assertion. The claim is proved.\\[-0.2cm]
  
  Now the assertion (i) follows from the following claim:\\
  \\
  Claim 6.\ \ \ We have $\varphi(\te_0^{\sh}\pi)=S_\tau \te_j S_{\tau^{-1}} \varphi(\pi) \in \mB_0(\gl)$.\\[-0.2cm]
  
    Since $\varphi(\pi) = i_{\sh}(\pi) + \pi_{\gl'}$ and $\langle \gl', (\ga_0^{\sh})^{\vee} \rangle = 0$,
    we have from the condition (i) that
    \begin{align*}
      H_j^{\tau^{-1}\varphi(\pi)}(t) &= \langle \tau^{-1}\varphi(\pi)(t), \ga_j^{\vee} \rangle 
      = \langle \varphi(\pi)(t),  (\ga_0^{\sh})^{\vee} \rangle \\
      &= \langle \pi(t), (\ga_0^{\sh})^\vee \rangle = H_0^{\sh, \pi}(t) \ \ \ \ \ \ \text{for all} \ t \in [0,1],
    \end{align*}
    which implies $m_j^{\tau^{-1}\varphi(\pi)} = m_0^{\sh, \pi}$.
    Then it follows that
    \begin{align*}
      &\min\{t\in[0,1]\mid H^{\tau^{-1}\varphi(\pi)}_j = m_j^{\tau^{-1}\varphi(\pi)}\} = t_1 = \gs_{q_1} \ \text{and}\\
      &\max\{t\in[0,t_1]\mid H^{\tau^{-1}\varphi(\pi)}_j= m_j^{\tau^{-1}\varphi(\pi)} + 1\} = t_0 = \gs_{q_0}.
    \end{align*}
    Hence by Claim 5 and the definition of the root operator, we have
    \begin{align*}
      \te_j &S_{\tau^{-1}}\varphi(\pi) = \te_j(\tau^{-1}\mu_1,\dots,\tau^{-1}\mu_N; \ul{\gs}) \\
            &= (\tau^{-1} \mu_1,\dots, \tau^{-1}\mu_{q_0}, s_j\tau^{-1} \mu_{q_0+1},\dots,s_j\tau^{-1} \mu_{q_1}, \tau^{-1}
                                   \mu_{q_1+1}, \dots, \tau^{-1}\mu_N; \ul{\gs}) \\
            &= (\tau^{-1} \mu_1,\dots, \tau^{-1}\mu_{q_0}, \tau^{-1}s_0^{\sh}\mu_{q_0+1},\dots,\tau^{-1}s_0^{\sh} \mu_{q_1},
                                   \tau^{-1}\mu_{q_1+1}, \dots, \tau^{-1}\mu_N; \ul{\gs}),
    \end{align*}
    where the last equality follows from the condition (i). 
    Then similarly as the proof of Claim 5, it is proved that
    \[ S_{\tau} \te_j S_{\tau^{-1}}\varphi(\pi) =  (\mu_1,\dots, \mu_{q_0}, s_0^{\sh} \mu_{q_0+1},\dots,s_0^{\sh} 
       \mu_{q_1}, \mu_{q_1+1}, \dots, \mu_N; \ul{\gs}),
    \]
    which is equal to $\varphi(\te_0^{\sh}\pi)$ by (\ref{eq:j}).
\end{proof}

By Lemma \ref{Lem: simply-laced}, we have
\[ \sum_{\begin{smallmatrix} \pi \in \mB_0^{\sh}(\overline{\gl}) \\ \iota(\pi) \in \ol{P}\end{smallmatrix}} 
   e\big(\wt(\pi)\big) = \ch{\fh_d^{\sh}} \D^{\sh}(1,\overline{\gl})[0].
\]
Since $\wt\big(\varphi(\pi)\big) = i_{\sh}(\pi)(1) + \gl' = i_{\sh}\big(\wt(\pi)\big) + \gl'$,
this equality implies that
\[ \sum_{\begin{smallmatrix}\pi \in \mB_0^{\sh}(\overline{\gl}) \\ \iota(\pi) \in \ol{P}\end{smallmatrix}} 
    e\big(\wt (\varphi(\pi))\big) 
   = e(\gl')i_{\sh}\ch{\fhd^{\sh}}\D^{\sh}(1,\ol{\gl})[0].
\] 
On the other hand,  we have from Remark \ref{Rem: remark_of_character} that
\[ \sum_{\eta \in \mB(\gl)_{\cl}} e\big(\wt_{\hP}(\eta)\big) 
   = \sum_{\begin{smallmatrix}\pi' \in \mB_0(\gl) \\ \iota(\pi') \in P\end{smallmatrix}} e\big(\wt(\pi')\big).
\]
Hence we see that the following lemma, together with the above equalities, gives the proof of Proposition 
\ref{Prop: critical_prop}:

\begin{Lem}
  The map $\varphi$ induces a bijection 
  \[ \{ \pi \in \mB_0^{\sh}(\overline{\gl}) \mid \iota(\pi) \in \ol{P} \} \stackrel{\sim}{\to} 
     \{ \pi' \in \mB_0(\gl) \mid \iota(\pi') \in P, \ \wt(\pi') \in 
        \gl - Q^{\sh}_+ + \Z \gd \}.
  \]
\end{Lem}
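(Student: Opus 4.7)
The plan is to construct a two-sided inverse $\psi$ to $\varphi$ via pointwise projection. Write $\mathrm{pr}_{\sh}\colon\hfh^*\to(\hfh^{\sh})^*$ for the canonical projection ($\mathrm{pr}_{\sh}(\mu)=\overline{\mu}$), which satisfies $\mathrm{pr}_{\sh}\circ i_{\sh}=\id$ and $\mathrm{pr}_{\sh}(\gl')=0$, and set $\psi(\pi')(t):=\mathrm{pr}_{\sh}(\pi'(t))$ for $\pi'\in\mP$. The relation $\psi\circ\varphi=\id$ is immediate, which already yields injectivity of $\varphi$; what remains is to show that $\psi$ sends the right-hand side into the left-hand side and that $\varphi\circ\psi=\id$ on the right-hand side.

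The forward inclusion is routine: for $\pi\in\mB_0^{\sh}(\overline{\gl})$ with $\iota(\pi)\in\overline{P}$, one has $\iota(\varphi(\pi))=i_{\sh}(\iota(\pi))+\gl'\in P$ and, since $\wt(\pi)\in\overline{\gl}-Q_+^{\sh}+\Z\overline{\gd}$ by Lemma \ref{Lem: contain_wt}, also $\wt(\varphi(\pi))=i_{\sh}(\wt(\pi))+\gl'\in\gl-Q_+^{\sh}+\Z\gd$. For the converse, take $\pi'$ in the right-hand side with expression $\pi'=(\mu_1,\dots,\mu_N;\ul{\gs})$, where each $\mu_k=v_k\gl+p_k\gd$ with $v_k\in W$, $p_k\in\Z$. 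The identity $\varphi(\psi(\pi'))=\pi'$ is equivalent to the pointwise condition $\pi'(t)-t\gl'\in i_{\sh}((\hfh^{\sh})^*)$; using that $W^{\sh}$ pointwise fixes $\gl'$, this reduces to showing $v_k\gl-\gl\in Q^{\sh}$ for every $k$.

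The key convexity step establishes this containment. Since $\gl$ is dominant, $W\gl\subseteq\gl-Q_+$, so each $\gl-v_k\gl$ has non-negative coefficient on every simple root. For $j\notin I^{\sh}$, pairing with the dominant coweight $\varpi_j^\vee$ (noting $\langle\gd,\varpi_j^\vee\rangle=0$) yields
\[ \sum_{k}(\gs_k-\gs_{k-1})\langle\gl-v_k\gl,\varpi_j^\vee\rangle = \langle\gl-\wt(\pi'),\varpi_j^\vee\rangle = 0, \]
the last equality because $\wt(\pi')\in\gl-Q_+^{\sh}+\Z\gd$ and $\langle Q_+^{\sh},\varpi_j^\vee\rangle=0$. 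Non-negativity of every summand and positivity of each $\gs_k-\gs_{k-1}$ force every $\ga_j$-coefficient of $\gl-v_k\gl$ to vanish, so $v_k\gl-\gl\in Q^{\sh}$. The standard identity $W\gl\cap(\gl+Q^{\sh})=W^{\sh}\gl$ (reflecting that $\mathrm{conv}(W^{\sh}\gl)$ is the face of $\mathrm{conv}(W\gl)$ at $\gl$ in the directions $-\ga_i$, $i\in I^{\sh}$) upgrades this to $v_k\gl\in W^{\sh}\gl$, so the directions of $\psi(\pi')$ lie in $\hW^{\sh}\overline{\gl}$ and $\varphi(\psi(\pi'))=\pi'$.

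The last step, and the main obstacle, is to verify $\psi(\pi')\in\mB_0^{\sh}(\overline{\gl})=C(\pi_{\overline{\gl}})$ rather than merely some larger set of paths with directions in $\hW^{\sh}\overline{\gl}$. I plan to argue this by lifting a connecting sequence of Kashiwara operators from $\pi_\gl=\varphi(\pi_{\overline{\gl}})$ to $\pi'$ through $\psi$, using Lemma \ref{Lem: critical_lemma2} and its $\tf$-analog: operators $\te_i,\tf_i$ with $i\in I^{\sh}$ applied inside $\varphi(\mB_0^{\sh}(\overline{\gl}))$ correspond under $\psi$ to $\te_i^{\sh},\tf_i^{\sh}$, while the compound operators $S_\tau\te_jS_{\tau^{-1}}$, $S_\tau\tf_jS_{\tau^{-1}}$ correspond to $\te_0^{\sh},\tf_0^{\sh}$. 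The subtle point, and where the real work lies, is to exhibit such a connecting sequence entirely inside $\varphi(\mB_0^{\sh}(\overline{\gl}))$; I expect this to follow from a descending induction on the height of $\overline{\gl}-\wt(\psi(\pi'))\in Q_+^{\sh}+\Z\overline{\gd}$ combined with the crystal structure on $\mB_0^{\sh}(\overline{\gl})$.
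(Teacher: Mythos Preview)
Your convexity argument is correct and elegant: pairing against $\varpi_j^\vee$ for $j\notin I^{\sh}$ does force every direction of $\pi'$ into $W^{\sh}\gl+\Z\gd$, and the parabolic fact $W\gl\cap(\gl+Q^{\sh})=W^{\sh}\gl$ is indeed standard. So you genuinely obtain $\varphi(\psi(\pi'))=\pi'$ and hence injectivity plus a candidate inverse on the level of paths.

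The gap is exactly where you flag it, and it is real rather than cosmetic. Knowing that $\psi(\pi')$ has all directions in $\hW^{\sh}\overline{\gl}$ and is integral does \emph{not} place it in $\mB_0^{\sh}(\overline{\gl})=C(\pi_{\overline{\gl}})$; that set is a single connected component, not the full set of integral paths with those directions. Your proposed fix---lifting a connecting sequence through $\varphi$ using Lemma~\ref{Lem: critical_lemma2}---is circular as stated: that lemma is proved under the hypothesis $\pi\in\mB_0^{\sh}(\overline{\gl})$, and to run your induction you would need to know that after applying $\te_i^{\sh}$ (or the compound operator for $i=0$) you remain in the image of $\psi$ restricted to the right-hand side, which is precisely the connectedness you are trying to establish. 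One could try to reprove Claims~4--6 of Lemma~\ref{Lem: critical_lemma2} under the weaker hypothesis ``$\varphi(\pi)\in\mB_0(\gl)$ and the directions of $\pi$ lie in $\hW^{\sh}\overline{\gl}$'', but this requires nontrivial additional work and is not what you wrote.

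The paper avoids this difficulty entirely by a counting argument: after noting injectivity and the forward inclusion (both of which you also have), it computes the cardinalities of the two sides separately and shows they coincide, both equal to $\prod_{i\in I^{\sh}}\dim V_{\fg^{\sh}}(\overline{\varpi}_i)^{\gl_i}$. The right-hand side is counted via Corollary~\ref{Cor: h-weight}, Lemma~\ref{Lem: short_Demazure} and Lemma~\ref{Lem: lemma_of_A}; the left-hand side via Lemma~\ref{Lem: simply-laced}, the Fourier--Littelmann dimension formula, and Lemma~\ref{Lem: lemma_of_A} again. This bypasses any need to understand which paths lie in the connected component $\mB_0^{\sh}(\overline{\gl})$ beyond the dimension count.
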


\begin{proof}
  It is seen from Lemma \ref{Lem: critical_lemma1} that the image of the left hand side is contained in the right hand side,
  and the injectivity is obvious.
  Hence it suffices to show that these sets have the same number of elements.
  We have from Remark \ref{Rem: remark_of_character} that
  $\{ \pi' \in \mB_0(\gl)\mid \iota(\pi') \in P\} = \{ \pi_\eta \mid \eta \in \mB(\gl)_{\cl} \}$, 
  which implies 
  \begin{align*}
    \#\{ \pi' \in \mB_0(\gl) \mid \iota(\pi') &\in P, \ \wt(\pi') \in \gl - Q^{\sh}_+ + \Z \gd \}  \\
     &= \#\{ \eta \in \mB(\gl)_{\cl}\mid \wt(\eta) \in \cl(\gl-Q^{\sh}_+)\}.
  \end{align*}
  Write $\gl = \sum_{i \in I} \gl_i \varpi_{i}$.
  By Corollary \ref{Cor: h-weight}, we have
  \[ \sum_{\begin{smallmatrix}\eta \in \mB(\gl)_{\cl}\\ \wt(\eta) \in \cl(\gl-Q_+^{\sh})\end{smallmatrix}} 
     e\big(p \circ \wt(\eta)\big)
     = P_{\gl - Q_+^{\sh}} \prod_{i \in I} \ch{\fh} \D(1, \varpi_i)^{\gl_i}
  \]
  where $p\colon\hP_{\cl} \to P$ is the canonical projection, 
  and then
  \[ \sum_{\begin{smallmatrix}\eta \in \mB(\gl)_{\cl} \\ \wt(\eta) \in \cl(\gl-Q_+^{\sh})\end{smallmatrix}} 
     e\big(p \circ \wt(\eta)\big) = \prod_{i \in I} \left(P_{\varpi_i - Q_+^{\sh}}\ch{\fh}\D(1,\varpi_i)\right)^{\gl_i}
  \]
  follows since $\wt_{\fh}\D(1,\varpi_i) \subseteq \varpi_i - Q_+$.
  We have from Lemma \ref{Lem: short_Demazure} that
  \[ P_{\varpi_i - Q^{\sh}_+} \ch{\fh} \D(1,\varpi_i) = e\big(\varpi_i - i_{\sh}(\ol{\varpi}_i)\big)
     i_{\sh}\ch{\fh^{\sh}}\D^{\sh}(r,\ol{\varpi}_i),
  \]
  and from Lemma \ref{Lem: lemma_of_A} that 
  \[ \dim \D^{\sh}(r,\ol{\varpi}_i)  = \begin{cases} \dim V_{\fg^{\sh}}(\ol{\varpi}_i) & \text{if} \ i \in I^{\sh}, \\
                                                       1                               & \text{if} \ i \notin I^{\sh}.
                                                                             \end{cases}
  \]
  From these equations, we can see that
  \[ \#\{ \pi' \in \mB_0(\gl) \mid \iota(\pi') \in P,\ \wt(\pi') \in \gl - Q^{\sh}_+ + \Z \gd \}= \prod_{i \in I^{\sh}}  
     \dim V_{\fg^{\sh}}(\ol{\varpi}_i)^{\gl_i}.
  \]
  On the other hand, it follows that
  \begin{align*} \#\{ \pi \in \mB_0^{\sh}(\overline{\gl}) &\mid \iota(\pi) \in \ol{P} \} = \dim \D^{\sh}(1,\ol{\gl}) \\
     &= \prod_{i \in I^{\sh}}\dim \D^{\sh}(1, \ol{\varpi}_i)^{\gl_i}= \prod_{i \in I^{\sh}}  
     \dim V_{\fg^{\sh}}(\ol{\varpi}_i)^{\gl_i},
  \end{align*}
  where the first equality follows from Lemma \ref{Lem: simply-laced},
  the second one follows from \cite[Theorem 1]{MR2235341}, and the last one follows from Lemma \ref{Lem: lemma_of_A}.
  Hence it is proved that the two sets have the same number of elements. The lemma is proved.
\end{proof}

\subsection{Lower bound for the weight sum of $\mB(\gl)_{\cl}$}

Let $\gl \in P_+$.
Recall that, by Corollary \ref{Cor: Joseph's_result}, $\D^{\sh}(1,\overline{\gl})[0]$ has a $\Cgd^{\sh}$-module filtration
$0 = D_0 \subseteq D_1 \subseteq \dots \subseteq D_k = \D^{\sh}(1,\overline{\gl})[0]$ such that
\[ D_i / D_{i-1} \cong \D^{\sh}(r, \nu_i)[m_i] \ \text{for some} \ \nu_i \in \ol{P}_+ \ \text{and} \ m_i \in \Z_{\ge 0}.
\]
On the other hand by Corollary \ref{Cor: character_forumula}, there exist a sequence $\mu_1,\dots,\mu_\ell$ of elements of $P_+$ 
and a sequence $n_1,\dots,n_\ell$ of integers such that 
\begin{equation}\label{eq:equal}
  \sum_{\eta \in \mB(\gl)_{\cl}} e\big(\wt_{\hP}(\eta)\big) = \sum_{1 \le j \le \ell} \ch{\fh_d}\D(1,\mu_j)[n_j].
\end{equation}
Now we can state the main result in this section:

\begin{Prop}\label{Prop: Main_Theorem2}
  Let $\gl' = \gl -i_{\sh}(\ol{\gl})$.
  Then there exists a subset $S$ of $\{1,\ldots,\ell\}$ such that
  \[ \{ \mu_{j} + n_j\gd \mid j \in S \} = \big\{ \big(i_{\sh}(\nu_i) + \gl'\big) + m_i \gd \mid 1 \le i \le k\big\}
  \]
  as multisets.
\end{Prop}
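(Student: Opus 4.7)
The plan is to project both sides of the character identity
\[ \sum_{\eta \in \mB(\gl)_{\cl}} e\big(\wt_{\hP}(\eta)\big) = \sum_{1 \le j \le \ell} \ch{\fhd}\D(1,\mu_j)[n_j] \]
from Corollary \ref{Cor: character_forumula} onto weights in $\gl - Q^{\sh}_+ + \Z\gd$ via $P_{\gl - Q^{\sh}_+ + \Z\gd}$, and then compare. On the left, Proposition \ref{Prop: critical_prop} immediately produces $e(\gl')\, i_{\sh}\, \ch{\fhd^{\sh}}\D^{\sh}(1, \ol{\gl})[0]$, which, after feeding in the Joseph-type filtration with subquotients $\D^{\sh}(r, \nu_i)[m_i]$ supplied by the hypothesis, becomes $\sum_{i=1}^{k} e(\gl')\, i_{\sh}\, \ch{\fhd^{\sh}}\D^{\sh}(r, \nu_i)[m_i]$.

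For the right-hand side, set $S := \{j : \mu_j - i_{\sh}(\ol{\mu}_j) = \gl'\}$; I plan to show that only terms indexed by $S$ survive the projection, and that each such term contributes $e(\gl')\, i_{\sh}\, \ch{\fhd^{\sh}}\D^{\sh}(r, \ol{\mu}_j)[n_j]$ via Lemma \ref{Lem: short_Demazure}. Because every $\mu_j + n_j\gd$ appears as a weight on the right, it appears on the left as well, so Lemma \ref{Lem: contain_wt} gives $\mu_j \in \gl - Q_+$. Decomposing $\alpha \in Q_+$ as $\alpha_1 + \alpha_2$ with $\alpha_1 \in Q^{\sh}_+$ and $\alpha_2 \in \sum_{i \notin I^{\sh}}\Z_{\ge 0}\ga_i$, and splitting $\gl - \mu_j \in Q_+$ analogously, the condition that a weight $\mu_j - \alpha + s\gd$ of $\D(1,\mu_j)[n_j]$ lie in $\gl - Q^{\sh}_+ + \Z\gd$ reduces to $(\gl - \mu_j)_2 + \alpha_2 = 0$; nonnegativity then forces both $(\gl - \mu_j)_2 = 0$ (equivalently $j \in S$) and $\alpha \in Q^{\sh}_+$. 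For $j \in S$ the projection therefore collapses to $P_{\mu_j - Q^{\sh}_+ + \Z\gd}$ applied to $\ch{\fhd}\D(1,\mu_j)[n_j]$, which Lemma \ref{Lem: short_Demazure} evaluates as $e\big(\mu_j - i_{\sh}(\ol{\mu}_j)\big)\, i_{\sh}\, \ch{\fhd^{\sh}}\D^{\sh}(r,\ol{\mu}_j)[n_j] = e(\gl')\, i_{\sh}\, \ch{\fhd^{\sh}}\D^{\sh}(r, \ol{\mu}_j)[n_j]$.

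Equating the two projected expressions, cancelling the invertible $e(\gl')$, and using the injectivity of $i_{\sh}\colon \C[(\hfh^{\sh})^*] \to \C[\hfh^*]$, one obtains
\[ \sum_{i=1}^{k}\ch{\fhd^{\sh}}\D^{\sh}(r,\nu_i)[m_i] = \sum_{j \in S}\ch{\fhd^{\sh}}\D^{\sh}(r,\ol{\mu}_j)[n_j]. \]
The linear independence of $\{\ch{\fhd^{\sh}}\D^{\sh}(r,\nu)[m] : \nu \in \ol{P}_+, m \in \Z\}$ (the $\fg^{\sh}$-analogue of the statement in Subsection \ref{subsection: Quantized_Demazure}) then yields a bijection $\sigma\colon S \to \{1,\dots,k\}$ with $\ol{\mu}_j = \nu_{\sigma(j)}$ and $n_j = m_{\sigma(j)}$. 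Since $\mu_j = i_{\sh}(\ol{\mu}_j) + \gl'$ for $j \in S$, this unwinds to the required multiset identity $\{\mu_j + n_j\gd : j \in S\} = \{i_{\sh}(\nu_i) + \gl' + m_i\gd : 1 \le i \le k\}$. The one delicate step is the weight bookkeeping that justifies the vanishing of the projection for $j \notin S$; once that is in place, the remainder is a direct application of the results already established.
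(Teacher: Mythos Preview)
Your proof is correct and follows essentially the same route as the paper. Your set $S=\{j:\mu_j-i_{\sh}(\ol{\mu}_j)=\gl'\}$ coincides with the paper's $S=\{j:\mu_j\in\gl-Q_+^{\sh}\}$ (both amount to the ``long part'' of $\gl-\mu_j$ vanishing), and the remaining steps---projecting via $P_{\gl-Q_+^{\sh}+\Z\gd}$, invoking Proposition~\ref{Prop: critical_prop} on the left, Lemma~\ref{Lem: short_Demazure} on the right, and finishing with linear independence of level~$r$ Demazure characters---match the paper's argument.
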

 
\begin{proof} 
  Define a subset $S$ of $\{1,\ldots,\ell\}$ by 
  \[ S = \{1 \le j \le \ell \mid \mu_j \in \gl - Q_+^{\sh}\}.
  \]
  We shall prove that this $S$ is the required subset.
  Let $S^c = \{1,\ldots,\ell\} \setminus S$.
  Since $\wt_{\hP}\, B(\gl)_\cl\subseteq \gl - Q_+ + \Z\gd$ by Lemma \ref{Lem: contain_wt} (ii), 
  we have from (\ref{eq:equal}) that
  \[ \mu_j \in (\gl - Q_+) \setminus (\gl - Q_+^{\sh}) \ \ \ \text{for all} \ j \in S^c.
  \]
  From this, it is easily seen that
  \[ P_{\gl - Q_+^{\sh} + \Z\gd} \ch{\fhd} \D(1,\mu_j)[n_j] = 0 \ \ \ \text{if} \ j \in S^c.
  \]
  On the other hand if $j \in S$, we easily see from Lemma \ref{Lem: short_Demazure} that
  \begin{align*}
    P_{\gl - Q_+^{\sh}+\Z\gd} \ch{\fhd} \D(1,\mu_j)[n_j] 
    &= e\big(\mu_j -i_{\sh}(\ol{\mu}_j)\big)i_{\sh}\ch{\fhd^{\sh}}\D^{\sh}(r,\ol{\mu}_j)[n_j]\\
                                                   &= e(\gl')i_{\sh}\ch{\fhd^{\sh}}\D^{\sh}(r,\ol{\mu}_j)[n_j].
  \end{align*}
  Hence by applying $P_{\gl - Q_+^{\sh} + \Z\gd}$ to (\ref{eq:equal}) and using Proposition \ref{Prop: critical_prop},
  we obtain
  \[ e(\gl') i_{\sh} \ch{\fhd^{\sh}} \D^{\sh}(1, \ol{\gl})[0] 
   = e(\gl') \sum_{j \in S}i_{\sh}\ch{\fhd^{\sh}}\D^{\sh}(r,\ol{\mu}_j)[n_j].
  \]
  Note that we have
  \[ \ch{\fhd^{\sh}} \D^{\sh}(1, \ol{\gl})[0] = \sum_{1\le i \le k}\ch{\fhd^{\sh}}\D^{\sh}(r,\nu_i)[m_i].
  \]
  Hence by the linearly independence of the characters of lever $r$ Demazure modules, 
  it follows that $\{ \ol{\mu}_j + n_j \ol{\gd}\mid j \in S\} = \{ \nu_i + m_i \ol{\gd}\mid 1\le i\le k \}$ as multisets.
  Since each $\mu_j$ belongs to $\gl - Q_+^{\sh}$, the proposition is proved.
\end{proof}

\begin{Cor}\label{Cor: main_result2}
  Let $\gl \in P_+$, and set $\gl' = \gl - i_{\sh}(\ol{\gl})$. 
  Then we have
  \begin{align*}
    \sum_{\eta \in \mB(\gl)_{\cl}}& e\big(\wt_{\hP}(\eta)\big) \\ 
                                  & \ge \sum_{\nu \in \ol{P}_+, m \in \Z_{\ge 0}} \big(\D^{\sh}(1, \ol{\gl})[0] 
                                                    : \D^{\sh}(r, \nu)[m]\big)\ch{\fhd} \D\big(1,i_{\sh}(\nu) + \gl'\big)[m].
  \end{align*}
\end{Cor}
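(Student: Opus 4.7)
The plan is to deduce Corollary \ref{Cor: main_result2} almost immediately from Proposition \ref{Prop: Main_Theorem2} and Corollary \ref{Cor: character_forumula}, using the fact that characters of Demazure modules lie in $\Z_{\ge 0}[\fhd^*]$. First I would recall Corollary \ref{Cor: character_forumula}, which gives sequences $\mu_1,\dots,\mu_\ell \in P_+$ and $n_1,\dots,n_\ell \in \Z$ with
\[
\sum_{\eta \in \mB(\gl)_\cl} e(\wt_{\hP}(\eta)) = \sum_{1 \le j \le \ell} \ch{\fhd} \D(1,\mu_j)[n_j].
\]

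Next I would invoke Proposition \ref{Prop: Main_Theorem2} to extract a subset $S \subseteq \{1,\dots,\ell\}$ whose associated multiset $\{\mu_j + n_j\gd \mid j\in S\}$ coincides with $\{(i_{\sh}(\nu_i)+\gl') + m_i\gd \mid 1 \le i \le k\}$, where $(\nu_i,m_i)$ are the labels of the successive quotients of the filtration of $\D^{\sh}(1,\ol{\gl})[0]$ supplied by Corollary \ref{Cor: Joseph's_result}. Splitting the sum on the right-hand side of the character identity into contributions from $S$ and $\{1,\dots,\ell\}\setminus S$, and discarding the latter (which contributes a nonnegative element of $\Z_{\ge 0}[\fhd^*]$ since each $\ch{\fhd}\D(1,\mu_j)[n_j]$ has nonnegative coefficients), I obtain the inequality
\[
\sum_{\eta \in \mB(\gl)_\cl} e(\wt_{\hP}(\eta)) \ge \sum_{1 \le i \le k} \ch{\fhd} \D(1, i_{\sh}(\nu_i)+\gl')[m_i].
\]

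Finally, regrouping the sum on the right according to the pairs $(\nu,m) \in \ol{P}_+ \times \Z_{\ge 0}$ gives
\[
\sum_{1 \le i \le k} \ch{\fhd} \D(1, i_{\sh}(\nu_i)+\gl')[m_i] = \sum_{\nu, m} \bigl(\D^{\sh}(1,\ol{\gl})[0] : \D^{\sh}(r,\nu)[m]\bigr)\, \ch{\fhd}\D(1, i_{\sh}(\nu)+\gl')[m],
\]
by the definition of the multiplicities in Subsection \ref{subsection: Quantized_Demazure}, which is well-defined because the characters $\{\ch{\fhd^{\sh}}\D^{\sh}(r,\nu)[m]\}$ are linearly independent. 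Combining these two displays yields the desired inequality.

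There is essentially no obstacle: all the hard work has already been done in Proposition \ref{Prop: Main_Theorem2} (matching the level-$1$ Demazure labels in $S$ with Joseph's filtration labels) and in Corollary \ref{Cor: character_forumula}. The only thing to verify carefully is the positivity observation that $\sum_{j \in S^c}\ch{\fhd}\D(1,\mu_j)[n_j] \in \Z_{\ge 0}[\fhd^*]$, which is immediate.
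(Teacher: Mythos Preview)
Your proposal is correct and takes essentially the same approach as the paper: the corollary is stated without proof there, being an immediate consequence of Proposition~\ref{Prop: Main_Theorem2} and Corollary~\ref{Cor: character_forumula} via exactly the splitting and positivity argument you describe.
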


\section{Main theorems and corollaries}\label{Section: Main_Theorem}

In order to prove our main theorems, we need the following lemma:

\begin{Lem} \label{Lem:dim}
  For $\gl \in P_+$, we have
  \[ \dim W(\gl) \ge \# \mB(\gl)_{\cl}.
  \]
\end{Lem}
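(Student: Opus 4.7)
The plan is to reduce to the fundamental weight case via the fusion product construction and then apply the character identity of Proposition \ref{Prop: fundamental}. Write $\gl = \sum_{i \in I} \gl_i \varpi_i$ and fix a decomposition $\gl = \varpi_{j_1} + \cdots + \varpi_{j_p}$ with $p = \sum_i \gl_i$ (each $\varpi_i$ occurring $\gl_i$ times). Pick pairwise distinct complex numbers $c_1,\dots,c_p$. By Lemma \ref{Lem: fusion_product} (ii), there is a surjective $\Cgd$-module homomorphism
\[
W(\gl) \twoheadrightarrow \D(1, \varpi_{j_1})_{c_1} * \cdots * \D(1, \varpi_{j_p})_{c_p}.
\]
Taking dimensions and applying Lemma \ref{Lem: fusion_product} (i) (which implies in particular that the fusion product has the same dimension as the ordinary tensor product), I would obtain
\[
\dim W(\gl) \;\ge\; \prod_{i \in I} \big(\dim \D(1, \varpi_i)\big)^{\gl_i}.
\]

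Next, I would invoke Proposition \ref{Prop: fundamental}, which asserts the character identity $\sum_{\eta \in \mB(\varpi_i)_{\cl}} e(\wt_{\hP}(\eta)) = \ch{\fh_d} \D(1, \varpi_i)[0]$. Since $\mB(\varpi_i)_{\cl}$ is finite and $\D(1, \varpi_i)$ is finite-dimensional, specializing every $e(\mu)$ to $1$ on both sides is legitimate and yields
\[
\#\,\mB(\varpi_i)_{\cl} \;=\; \dim \D(1, \varpi_i).
\]

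Finally, Theorem \ref{Thm: isom_of_finite_crystals} (i) provides a bijection of underlying sets $\mB(\gl)_{\cl} \stackrel{\sim}{\to} \bigotimes_{i \in I} \mB(\varpi_i)_{\cl}^{\otimes \gl_i}$, so
\[
\#\,\mB(\gl)_{\cl} \;=\; \prod_{i \in I} \big(\#\,\mB(\varpi_i)_{\cl}\big)^{\gl_i}.
\]
Combining the three displays gives the desired inequality $\dim W(\gl) \ge \#\,\mB(\gl)_{\cl}$. There is essentially no obstacle: all required ingredients are already established earlier in the paper, and the argument is simply a chain of three comparisons (surjection from a Weyl module onto a fusion product, character-to-dimension specialization for fundamental weights, and multiplicativity of the crystal under tensor decomposition).
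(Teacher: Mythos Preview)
Your proof is correct and follows essentially the same approach as the paper's own argument: both use the surjection from $W(\gl)$ onto a fusion product of fundamental Demazure modules (Lemma \ref{Lem: fusion_product} (ii)), then invoke Proposition \ref{Prop: fundamental} to equate $\dim \D(1,\varpi_i)$ with $\#\,\mB(\varpi_i)_{\cl}$, and finally apply Theorem \ref{Thm: isom_of_finite_crystals} (i) for the multiplicativity of $\#\,\mB(\gl)_{\cl}$. The paper's proof is simply a one-line compression of your three displayed steps.
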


\begin{proof}
  Write $\gl = \sum_{i \in I} \gl_i \varpi_i$. 
  The inequality follows since
  \[ \dim W(\gl) \ge \prod_{i \in I} \dim \D(1,\varpi_i)^{\gl_i} = \prod_{i \in I} \# \mB(\varpi_i)_{\cl}^{\gl_i} 
     = \#\mB(\gl)_{\cl},
  \]
  where the first inequality follows from Lemma \ref{Lem: fusion_product} (ii), the second equality
  from Proposition \ref{Prop: fundamental}, and the third one from Theorem \ref{Thm: isom_of_finite_crystals} (i).
\end{proof}

Now we state the main theorems in this article.

\begin{Thm}\label{Thm: Main_Theorem}
  For $\gl \in P_+$, we have 
  \[ \ch{\fh_d}W(\gl) = \sum_{\eta \in \mB(\gl)_{\cl}} e\big(\wt_{\hP}(\eta)\big).
  \]
  Moreover, the both sides of this equality are equal to $\ch{\fhd}\D (1,\gl)[0]$ if $\fg$ is simply laced,
  and are equal to 
  \[ \sum_{\nu \in \ol{P}_+, m \in \Z_{\ge 0}} \big(\D^{\sh}(1, \ol{\gl})[0] : \D^{\sh}(r,\nu)[m]\big)
     \ch{\fhd}\D\big(1,i_{\sh}(\nu)+\gl'\big)[m]
  \]
  if $\fg$ is non-simply laced, where we set $\gl' = \gl - i_{\sh}(\ol{\gl})$.
\end{Thm}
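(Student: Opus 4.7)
The plan is a sandwich argument. In the simply laced case the theorem is almost immediate: by Theorem \ref{Thm: FL's_Thm} we have $W(\gl) \cong \D(1, \gl)[0]$ as $\Cgd$-modules, so $\ch{\fhd} W(\gl) = \ch{\fhd}\D(1,\gl)[0]$, and Proposition \ref{Prop: simply_laced} identifies the path side with the same character. I therefore concentrate on the non-simply laced case and set $\gl' = \gl - i_{\sh}(\ol{\gl})$.

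Corollary \ref{Cor: Main_corollary1} supplies the upper bound
\[
  \ch{\fhd} W(\gl) \le \sum_{\nu \in \ol{P}_+,\, m \in \Z_{\ge 0}} \bigl(\D^{\sh}(1,\ol{\gl})[0]:\D^{\sh}(r,\nu)[m]\bigr)\,\ch{\fhd}\D\bigl(1, i_{\sh}(\nu) + \gl'\bigr)[m],
\]
and Corollary \ref{Cor: main_result2} supplies the matching lower bound
\[
  \sum_{\eta \in \mB(\gl)_{\cl}} e\bigl(\wt_{\hP}(\eta)\bigr) \ge \sum_{\nu \in \ol{P}_+,\, m \in \Z_{\ge 0}} \bigl(\D^{\sh}(1,\ol{\gl})[0]:\D^{\sh}(r,\nu)[m]\bigr)\,\ch{\fhd}\D\bigl(1, i_{\sh}(\nu) + \gl'\bigr)[m].
\]
Chaining these gives the componentwise relation $\ch{\fhd} W(\gl) \le (\text{the displayed sum}) \le \sum_{\eta \in \mB(\gl)_{\cl}} e(\wt_{\hP}(\eta))$ in $\Z_{\ge 0}[\fhd^*]$, with the ``Moreover'' expression sitting in the middle.

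To upgrade this chain to equality I will pass to total dimensions. Summing the coefficients on the outer sides yields $\dim W(\gl) \le \#\mB(\gl)_{\cl}$, while Lemma \ref{Lem:dim} supplies the opposite inequality $\dim W(\gl) \ge \#\mB(\gl)_{\cl}$. Hence the two totals coincide, which forces every componentwise inequality in the sandwich to be an equality. This simultaneously proves the identity $\ch{\fhd} W(\gl) = \sum_{\eta \in \mB(\gl)_{\cl}} e(\wt_{\hP}(\eta))$ and identifies their common value with the explicit sum stated in the theorem.

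The conceptual work has already been carried out in the two corollaries that produce the sandwich bounds, whose proofs rely respectively on the filtration construction for $W(\gl)$ obtained by transporting the Joseph-type result \ref{Cor: Joseph's_result} from $\Cgd^{\sh}$ to $\Cgd$, and on the path-theoretic map $\varphi\colon \mB_0^{\sh}(\ol{\gl}) \to \mB_0(\gl)$ combined with the projection $P_{\gl - Q_+^{\sh} + \Z\gd}$; given those, the only remaining ingredient is the dimension squeeze above, so no serious obstacle is expected here.
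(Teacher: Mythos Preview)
Your proof is correct and follows exactly the paper's approach: the simply laced case via Theorem~\ref{Thm: FL's_Thm} and Proposition~\ref{Prop: simply_laced}, and the non-simply laced case via the sandwich $\ch{\fhd}W(\gl)\le(\text{sum})\le\sum_{\eta}e(\wt_{\hP}(\eta))$ from Corollaries~\ref{Cor: Main_corollary1} and~\ref{Cor: main_result2}, closed by the dimension comparison of Lemma~\ref{Lem:dim}.
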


\begin{proof}
  If $\fg$ is simply laced, the assertion follows from Theorem \ref{Thm: FL's_Thm} and Proposition \ref{Prop: simply_laced}.
  If $\fg$ is non-simply laced, it follows from Corollary \ref{Cor: Main_corollary1},
  \ref{Cor: main_result2} and Lemma \ref{Lem:dim}.
\end{proof}

Let $\gl \in P_+$. When $\fg$ is non-simply laced,
the Demazure module $\D^{\sh}(1, \ol{\gl})[0]$ has a $\Cgd^{\sh}$-module filtration
$0= D_0 \subseteq D_1 \subseteq \dots \subseteq D_k = \D^{\sh}(1, \ol{\gl})[0]$ such that
\[ D_i / D_{i-1} \cong \D^{\sh}(r, \nu_i)[m_i] \ \text{for some} \ \nu_i \in \ol{P}_+\ \text{and} \ m_i \in \Z_{\ge 0}
\] 
by Corollary \ref{Cor: Joseph's_result}. 
Then for each $1 \le i \le k$, we set $\mu_i = i_{\sh}(\nu_i) + \big(\gl - i_{\sh}(\ol{\gl})\big)$.
When $\fg$ is simply laced, we set $k = 1$, $\mu_1 = \gl$ and $m_1 = 0$.
Now we obtain the following result on the structure of the Weyl module $W(\gl)$:

\begin{Thm}\label{Thm: module_main_theorem}
  The Weyl module $W(\gl)$ has a $\Cgd$-module filtration 
  $0 = W_0 \subseteq W_1 \subseteq \cdots \subseteq W_k = W(\gl)$ such that 
  each subquotient $W_i /W_{i-1}$ is isomorphic to the Demazure module $\D(1,\mu_i)[m_i]$.
\end{Thm}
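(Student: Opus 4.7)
The plan is to combine the filtration already constructed in Proposition \ref{Prop: Main_Theorem1} with the character formula in Theorem \ref{Thm: Main_Theorem}, using the standard principle that a surjection of finite-dimensional modules with equal characters is forced to be an isomorphism. In the simply laced case there is nothing to do: one takes $k=1$, $\mu_1=\gl$, $m_1=0$, and appeals directly to Theorem \ref{Thm: FL's_Thm}, which gives $W(\gl)\cong\D(1,\gl)[0]$.

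Assume $\fg$ is non-simply laced. Proposition \ref{Prop: Main_Theorem1} already supplies a $\Cgd$-module filtration $0=W_0\subseteq W_1\subseteq\cdots\subseteq W_k=W(\gl)$ together with surjections $\D(1,\mu_i)[m_i]\twoheadrightarrow W_i/W_{i-1}$ for each $i$, where the $\mu_i,m_i$ come from the Joseph-type filtration of $\D^{\sh}(1,\ol{\gl})[0]$ fixed above the theorem statement. Consequently, since characters are additive in short exact sequences and surjections can only decrease characters,
\[
   \ch{\fhd} W(\gl) = \sum_{i=1}^{k} \ch{\fhd}\bigl(W_i/W_{i-1}\bigr) \;\le\; \sum_{i=1}^{k} \ch{\fhd}\D(1,\mu_i)[m_i].
\]

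The key observation is that the right-hand sum is exactly the expression appearing in Theorem \ref{Thm: Main_Theorem}: indexing the filtration of $\D^{\sh}(1,\ol{\gl})[0]$ by pairs $(\nu_i,m_i)$ and setting $\mu_i=i_{\sh}(\nu_i)+\gl'$ regroups the sum into
\[
   \sum_{\nu\in\ol{P}_+,\,m\in\Z_{\ge0}} \bigl(\D^{\sh}(1,\ol{\gl})[0]:\D^{\sh}(r,\nu)[m]\bigr)\,\ch{\fhd}\D\bigl(1,i_{\sh}(\nu)+\gl'\bigr)[m],
\]
which by Theorem \ref{Thm: Main_Theorem} equals $\ch{\fhd} W(\gl)$ itself. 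Hence the inequality above is in fact an equality term by term, and for each $i$ the surjection $\D(1,\mu_i)[m_i]\twoheadrightarrow W_i/W_{i-1}$ matches characters on finite-dimensional $\fhd$-weight modules. Such a surjection must be an isomorphism, which gives the desired conclusion $W_i/W_{i-1}\cong\D(1,\mu_i)[m_i]$.

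There is essentially no obstacle left at this stage, since both main ingredients (the filtration by Demazure quotients and the character identity) have already been established in Sections 4 and 9 respectively; the only subtlety is bookkeeping, namely verifying that the indexing of the $(\mu_i,m_i)$ coming out of Proposition \ref{Prop: Main_Theorem1} is compatible, as a multiset, with the sum appearing in Theorem \ref{Thm: Main_Theorem}. This is immediate from the definition $\mu_i=i_{\sh}(\nu_i)+\gl'$ together with the multiplicity identity $(\D^{\sh}(1,\ol{\gl})[0]:\D^{\sh}(r,\nu)[m]) = \#\{i:\nu_i=\nu,\,m_i=m\}$, which holds by the linear independence of the characters of level-$r$ Demazure modules recorded in Subsection~\ref{subsection: Quantized_Demazure}.
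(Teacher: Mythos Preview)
Your proof is correct and follows exactly the paper's own approach: for simply laced $\fg$ apply Theorem~\ref{Thm: FL's_Thm} directly, and for non-simply laced $\fg$ combine the filtration from Proposition~\ref{Prop: Main_Theorem1} (giving surjections $\D(1,\mu_i)[m_i]\twoheadrightarrow W_i/W_{i-1}$) with the character identity of Theorem~\ref{Thm: Main_Theorem} to force each surjection to be an isomorphism. The paper states this in one line; you have simply unpacked the dimension/character comparison that makes it work.
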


\begin{proof}
  The assertion for simply laced $\fg$ is just Theorem \ref{Thm: FL's_Thm}.
  Proposition \ref{Prop: Main_Theorem1} together with Theorem \ref{Thm: Main_Theorem} 
  implies the assertion for non-simply laced $\fg$.
\end{proof}

Next we state the following theorem about the crystal $\mB(\gl)_{\cl}$:

\begin{Thm}\label{Thm: crystal_main_theorem}
  There exists an isomorphism
  \[ \gk\colon\mcB(\gL_0) \otimes \mB(\gl)_{\cl} \stackrel{\sim}{\to}
     \bigoplus_{\eta_0 \in \mB(\gl)_{\cl}^{\gL_0}} \mcB\big(\gL_0 + \pi_{\eta_0}(1)\big)
  \]
  of $U_q'(\hfg)$-crystals whose restriction to $b_{\gL_0} \otimes \mB(\gl)_{\cl}$
  preserves the $\hP$-weights, and we have
  \[ \gk\big(b_{\gL_0} \otimes \mB(\gl)_{\cl}\big) = \coprod_{1\le i \le k} \mcB(1, \mu_i)[m_i]. 
  \]
\end{Thm}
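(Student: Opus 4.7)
The plan is to combine two results already established: Proposition \ref{Prop: disjoint}, which supplies the isomorphism $\gk$ and decomposes $\gk(b_{\gL_0} \otimes \mB(\gl)_{\cl})$ into a disjoint union of \emph{some} Demazure crystals of level $1$, and Theorem \ref{Thm: Main_Theorem}, which pins down the character sum on the nose. The residual task is to identify the unknown Demazure crystals in the former decomposition with $\mcB(1,\mu_i)[m_i]$ for $1 \le i \le k$.

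First, apply Proposition \ref{Prop: disjoint}: one obtains an isomorphism $\gk$ of $U_q'(\hfg)$-crystals with the claimed target, its restriction to $b_{\gL_0} \otimes \mB(\gl)_{\cl}$ preserves $\hP$-weights (since $\wt \circ \gk(b_{\gL_0} \otimes \eta) = \wt_{\hP}(\eta) + \gL_0$), and there exist sequences $\mu_1', \dots, \mu_\ell' \in P_+$ and $n_1',\dots, n_\ell' \in \Z$ such that
\[ \gk\big(b_{\gL_0} \otimes \mB(\gl)_{\cl}\big) = \coprod_{1 \le j \le \ell} \mcB(1, \mu_j')[n_j'].
\]
By Corollary \ref{Cor: character_forumula} this yields
\[ \sum_{\eta \in \mB(\gl)_{\cl}} e\big(\wt_{\hP}(\eta)\big) = \sum_{1 \le j \le \ell} \ch{\fhd}\D(1,\mu_j')[n_j'].
\]

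Next, invoke Theorem \ref{Thm: Main_Theorem}, which asserts that this same weight sum coincides with
\[ \sum_{1 \le i \le k} \ch{\fhd}\D(1,\mu_i)[m_i].
\]
(In the simply laced case $k=1$ and this is immediate from Theorem \ref{Thm: FL's_Thm} and Proposition \ref{Prop: simply_laced}; in the non-simply laced case one combines Corollary \ref{Cor: Main_corollary1}, Corollary \ref{Cor: main_result2}, and the dimension bound Lemma \ref{Lem:dim}.) Combining the two displays, the two formal sums of level $1$ Demazure characters must agree.

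The final step is to invoke the linear independence of the characters $\{\ch{\fhd}\D(1, \gl)[m] \mid \gl \in P_+, m \in \Z\}$ established at the end of Subsection \ref{subsection: Quantized_Demazure}. This forces the equality of multisets
\[ \{(\mu_j', n_j') \mid 1 \le j \le \ell\} = \{(\mu_i, m_i) \mid 1 \le i \le k\},
\]
so in particular $\ell = k$, and after relabeling indices one concludes
\[ \gk\big(b_{\gL_0} \otimes \mB(\gl)_{\cl}\big) = \coprod_{1 \le i \le k} \mcB(1, \mu_i)[m_i].
\]
The proof is essentially bookkeeping once the two key inputs are in hand; the substantive obstacle was hidden in Theorem \ref{Thm: Main_Theorem}, whose two-sided inequality (the upper bound from Corollary \ref{Cor: Main_corollary1} via the Weyl-module filtration, the lower bound from Corollary \ref{Cor: main_result2} via the path model, pinched together by Lemma \ref{Lem:dim}) is the only nontrivial thing that must be in place before this final identification can be made.
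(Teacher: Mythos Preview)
Your proof is correct and takes essentially the same approach as the paper, which simply cites Proposition \ref{Prop: disjoint} and Theorem \ref{Thm: Main_Theorem}; you have merely unpacked how these two results combine via the linear independence of level $1$ Demazure characters stated at the end of Subsection \ref{subsection: Quantized_Demazure}. The additional commentary on how Theorem \ref{Thm: Main_Theorem} itself is proved is accurate but not strictly needed here, since that theorem is being invoked as an established result.
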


\begin{proof}
  This follows from Proposition \ref{Prop: disjoint} and Theorem \ref{Thm: Main_Theorem}.  
\end{proof}

Next we introduce some corollaries of our main theorems.
The following corollary was previously shown for $\mathfrak{sl}_n$ in \cite{MR2271991},
and for simply laced $\fg$ in \cite{MR2323538}:

\begin{Cor}\label{Cor: Main_Corollary1}
  Let $\gl \in P_+$. \\
  {\normalfont(i)} If $\gl= \sum_{i \in I}\gl_i \varpi_i$, then
    \[ \dim W(\gl) = \prod_{i \in I} \dim W(\varpi_i)^{\gl_i}.
    \]
  {\normalfont(ii)} Let $\gl_1, \dots, \gl_\ell \in P_+$ be elements satisfying $\gl = \gl_1 + \dots + \gl_\ell$.
  Then for arbitrary pairwise distinct complex numbers $c_1,\dots,c_\ell$, we have 
  \[ W(\gl) \cong W(\gl_1)_{c_1} *\dots * W(\gl_\ell)_{c_\ell}
  \]
  as $\Cgd$-modules.
\end{Cor}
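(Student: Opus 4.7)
The plan is to deduce both parts of the corollary from Theorem \ref{Thm: Main_Theorem} (the character identity for $W(\gl)$) together with the crystal factorization in Theorem \ref{Thm: isom_of_finite_crystals}(i) and a fusion product surjection analogous to Lemma \ref{Lem: fusion_product}(ii).

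For (i), I will apply the specialization $e(\mu) \mapsto 1$ to the identity $\ch{\fhd} W(\gl) = \sum_{\eta \in \mB(\gl)_{\cl}} e(\wt_{\hP}(\eta))$ coming from Theorem \ref{Thm: Main_Theorem}. Since $W(\gl)$ is finite-dimensional (Theorem \ref{Thm:finite-dim}) and a $\fhd$-weight module, this specialization produces $\dim W(\gl) = \#\mB(\gl)_{\cl}$. Now by Theorem \ref{Thm: isom_of_finite_crystals}(i), $\mB(\gl)_{\cl} \cong \bigotimes_{i \in I} \mB(\varpi_i)_{\cl}^{\otimes \gl_i}$ as $U_q'(\hfg)$-crystals, hence $\#\mB(\gl)_{\cl} = \prod_i (\#\mB(\varpi_i)_{\cl})^{\gl_i}$; applying the same count to each $\varpi_i$ (the case $\gl = \varpi_i$ of the first identity) yields $\#\mB(\varpi_i)_{\cl} = \dim W(\varpi_i)$, finishing (i).

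For (ii), the approach is to mimic the surjection in Lemma \ref{Lem: fusion_product}(ii) with $\D(1,\gl_i)$ replaced by $W(\gl_i)$. Let $v_i$ be the cyclic generator of $W(\gl_i)$ from Definition \ref{Def:Weyl_module}, and let $\bar v$ denote the image of $v_1 \otimes \cdots \otimes v_\ell$ in the associated graded $W(\gl_1)_{c_1} * \cdots * W(\gl_\ell)_{c_\ell}$. This $\bar v$ is cyclic and sits in degree $0$, so $d.\bar v = 0$; it is a $\fh$-weight vector of weight $\gl$; and since each $v_i$ is annihilated by $\fn_+ \otimes \C[t]$ and $\fh \otimes t\C[t]$, so is $\bar v$, by the same argument as \cite[Lemma 5]{MR2323538}. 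The Serre relation $f_i^{\langle \gl, \ga_i^{\vee}\rangle+1}.\bar v = 0$ follows from the fact that $v_1 \otimes \cdots \otimes v_\ell$ is an $\mathfrak{sl}_{2,i}$-highest weight vector of weight $\langle\gl,\ga_i^{\vee}\rangle$ via the coproduct action of $\fg$. By Theorem \ref{Thm:finite-dim}, this produces a surjection $W(\gl) \twoheadrightarrow W(\gl_1)_{c_1} * \cdots * W(\gl_\ell)_{c_\ell}$. To upgrade this to an isomorphism, note that Lemma \ref{Lem: fusion_product}(i) gives $\dim W(\gl_1) * \cdots * W(\gl_\ell) = \prod_i \dim W(\gl_i)$, while iterating part (i) yields $\dim W(\gl) = \prod_j \dim W(\varpi_j)^{\gl_j} = \prod_i \dim W(\gl_i)$, forcing the surjection to be an isomorphism.

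The main obstacle is really absorbed into Theorem \ref{Thm: Main_Theorem}: once the character identity is available, (i) is a dimension count and (ii) is a standard fusion product/Weyl module universal property argument. The only new verification beyond what already appears in the simply laced treatment is that the Serre and loop-nilpotency relations for $\bar v$ transfer unchanged to the non-simply laced case, which they do because they depend only on the $\fg$-module structure of the tensor factors and on the grading construction, both of which are insensitive to whether $\fg$ is simply laced.
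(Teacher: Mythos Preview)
Your proof is correct and follows essentially the same route as the paper: part (i) is deduced from Theorem \ref{Thm: Main_Theorem} via $\dim W(\gl) = \#\mB(\gl)_{\cl}$ together with the crystal factorization of Theorem \ref{Thm: isom_of_finite_crystals}(i), and part (ii) is obtained from the fusion product surjection (which the paper simply cites as ``the same way as \cite[Lemma 5]{MR2323538}'') plus the dimension equality from (i). You have merely spelled out in more detail the verification of the Weyl module relations on $\bar v$ and the dimension comparison.
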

 
\begin{proof}
  By Theorem \ref{Thm: Main_Theorem} and Theorem \ref{Thm: isom_of_finite_crystals} (i), we have
  \[ \dim W(\gl) = \# \mB(\gl)_{\cl} = \prod_{i \in I} \# \mB(\varpi_i)_{\cl}^{\gl_i} 
     = \prod_{i \in I} \dim W(\varpi_i)^{\gl_i}.
  \]
  Hence the assertion (i) follows.
  By the same way as \cite[Lemma 5]{MR2323538}, we can show that
  there exists a surjective homomorphism from $W(\gl)$ to $W(\gl_1)_{c_1} *\dots * W(\gl_\ell)_{c_\ell}$.
  Since this is an isomorphism by (i), the assertion (ii) follows.
\end{proof}

Before stating the next corollary, we prepare some notation.
For a $\Cgd$-module $M$ and $c \in \C$, we set $M_c = \{ v \in M \mid d.v = cv \}$, which is obviously a $\fg$-module.
For $\mu \in P_+$, we set
\[ [M:V_{\fg}(\mu)]_q = \sum_{c \in \C}\,[M_c: V_{\fg}(\mu)]q^c,
\]
where $[M_c:V_{\fg}(\mu)]$ denotes the multiplicity of $V_{\fg}(\mu)$ in $M_c$.
Let $\mathbf{i} = (i_1,\dots,i_\ell)$ be a sequence of elements of $I$, and put
\[ W_{\mathbf{i}} = W_q(\varpi_{i_1}) \otimes \dots \otimes W_q(\varpi_{i_\ell}),
   \ \ \ \mcB_{\mathbf{i}} = \mcB\big(W_q(\varpi_{i_1})\big) \otimes \dots \otimes 
   \mcB\big(W_q(\varpi_{i_\ell})\big).
\]
As stated in \cite[\S 5,1]{MR2474320}, $W_q(\varpi_i)$ is the Kirillov-Reshetikhin module
$W_s^{(i)}$ (see \cite{MR1745263,MR1903978}) with $s = 1$.
Hence for $\mu \in P_+$, we can define the fermionic form $M(W_{\mathbf{i}}, \mu, q)$ and 
the classically restricted one-dimensional sum
$X(\mcB_{\mathbf{i}}, \mu,q)$ as in \cite{MR1745263} and \cite{MR1903978}.

For $i \in I$, let $\mathrm{KR}(\varpi_i)$ denote the Kirillov-Reshetikhin module
for $\Cgd$ associated with $\varpi_i$, which was defined in \cite{MR2238884} in terms of generators and relations.
By comparing the defining relations, we can easily see that 
\begin{equation}\label{eq:isomorphism}
  W(\varpi_i) \cong \mathrm{KR}(\varpi_i).
\end{equation}
Note that we have for each $i \in I$ that
\[ \dim \mathrm{KR}(\varpi_i) = \dim W(\varpi_i) = \# \mB(\varpi_i)_\cl = \dim W_q(\varpi_i)
\]
by Theorem \ref{Thm: Main_Theorem} and \ref{Thm: isom_of_finite_crystals} (ii).
Then this equality implies, as shown in \cite{MR2428305}, that
\[ M(W_{\mathbf{i}}, \mu, q) = [ \mathrm{KR}(\varpi_{i_1})_{c_1} * \dots * 
\mathrm{KR}(\varpi_{i_\ell})_{c_\ell}: 
 V_{\fg}(\mu)]_{q^{-1}}
\]
for arbitrary pairwise distinct complex numbers $c_1,\dots,c_\ell$.
Moreover by setting $\gl = \sum_{1 \le j \le \ell} \varpi_{i_j}$, we have from (\ref{eq:isomorphism}) and Corollary 
\ref{Cor: Main_Corollary1} (ii) that 
\[ W(\gl) \cong \mathrm{KR}(\varpi_{i_1})_{c_1} * \dots * \mathrm{KR}(\varpi_{i_\ell})_{c_\ell}.
\]
Hence we obtain the following equality:
\begin{equation} \label{eq:M=ch}
  M(W_{\mathbf{i}}, \mu, q) = [W(\gl) : V_{\fg}(\mu)]_{q^{-1}}.
\end{equation}
In \cite[Corollary 5.1.1]{MR2474320}, the authors showed that
\begin{equation} \label{eq: final}
     \sum_{\begin{smallmatrix}\eta \in \mB(\gl)_{\cl} \\ \te_j \eta = \0 \ (j \in I) \\ 
     \eta(1) = \cl(\mu) \end{smallmatrix}} q^{\Deg(\eta)}
     =q^{-D_{\mathbf{i}}^{\mathrm{ext}}}X(\mcB_{\mathbf{i}}, \mu,q),
\end{equation}
where $D_{\mathbf{i}}^{\mathrm{ext}}$ is a certain constant defined in 
\cite[Subsection 4.1]{MR2474320}.
Combining these results with our theorems, we obtain the following corollary,
which implies a special case of the so-called $X=M$ conjecture (see \cite{MR1745263,MR1903978}):

\begin{Cor}\label{Cor: final_Cor}
  Let $\gl \in P_+$, $\mathbf{i} = (i_1,\dots,i_\ell)$ be a sequence of elements of $I$ 
  such that $\gl = \sum_{1 \le j\le \ell} \varpi_{i_j}$ and $\mu \in P_+$.
  Then we have
  \begin{align*} M(W_{\mathbf{i}},\mu,q) &= q^{-D_{\mathbf{i}}^{\mathrm{ext}}}X(\mcB_{\mathbf{i}}, \mu,q) \\
                 &= [W(\gl):V_{\fg}(\mu)]_{q^{-1}}.
  \end{align*}
\end{Cor}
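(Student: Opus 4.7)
The plan is as follows. The second equality $M(W_{\mathbf{i}},\mu,q) = [W(\gl):V_{\fg}(\mu)]_{q^{-1}}$ is already recorded as equation (\ref{eq:M=ch}) in the paragraph immediately preceding the statement, so only the first equality needs proof. By the Naito--Sagaki identity (\ref{eq: final}), it suffices to establish
\begin{equation*}
  [W(\gl):V_{\fg}(\mu)]_{q^{-1}} = \sum_{\substack{\eta \in \mB(\gl)_{\cl} \\ \te_j\eta = \0 \ (j \in I) \\ \eta(1) = \cl(\mu)}} q^{\Deg(\eta)}.
\end{equation*}

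I intend to read this off from Theorem \ref{Thm: Main_Theorem}, which gives $\ch{\fhd}W(\gl) = \sum_{\eta \in \mB(\gl)_{\cl}} e\big(\wt_{\hP}(\eta)\big)$, by matching $\fg$-isotypic components on both sides. Standard $\fg$-module theory rewrites the left hand side as $\sum_{\mu \in P_+} [W(\gl):V_{\fg}(\mu)]_q \big|_{q = e(\gd)} \cdot \ch{\fh} V_{\fg}(\mu)$. To decompose the right hand side analogously, I would partition $\mB(\gl)_{\cl}$ into its connected components under the $\fg$-crystal operators $\te_i, \tf_i$ $(i \in I)$.

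The crucial observation is that the degree function is constant on each such $\fg$-component. Indeed, for $i \in I$ and $\eta \in \mB(\gl)_{\cl}$ with $\tf_i \eta \neq \0$, Lemma \ref{Lem: lemma_of_pi}(i) gives $\pi_{\tf_i \eta} = \tf_i \pi_\eta$, so $\wt_{\hP}(\tf_i \eta) = \wt(\tf_i \pi_\eta) = \wt(\pi_\eta) - \ga_i = \wt_{\hP}(\eta) - \ga_i$; since $\ga_i$ has no $\gd$-component and the $\gd$-coefficient of $\wt_{\hP}$ is exactly $-\Deg$, one concludes $\Deg(\tf_i \eta) = \Deg(\eta)$. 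Each $\fg$-component contains a unique $\fg$-highest element $\eta_0$ (with $\te_j \eta_0 = \0$ for all $j \in I$), and via the path-model picture together with Theorem \ref{Thm: isom_of_finite_crystals}, the component is isomorphic as a $\fg$-crystal to the crystal of $V_{\fg}(\mu_0)$, where $\mu_0 \in P_+$ is the element satisfying $\cl(\mu_0) = \wt(\eta_0)$. Hence the contribution of this component to the sum on the right is precisely $e\big(-\Deg(\eta_0)\gd\big) \cdot \ch{\fh} V_{\fg}(\mu_0)$.

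Comparing $\fg$-isotypic components then yields
\begin{equation*}
  [W(\gl):V_{\fg}(\mu)]_q \big|_{q = e(\gd)} = \sum_{\eta_0} e\big(-\Deg(\eta_0)\gd\big),
\end{equation*}
the sum ranging over $\fg$-highest elements $\eta_0 \in \mB(\gl)_{\cl}$ with $\wt(\eta_0) = \cl(\mu)$; substituting $q \mapsto q^{-1}$ gives the desired identity. The main technical point is the combination of Lemma \ref{Lem: lemma_of_pi}(i) with the general crystal-theoretic fact that each $\fg$-component of $\mB(\gl)_{\cl}$ realizes the full $\fg$-crystal of some $V_{\fg}(\mu_0)$; once these are in place, the rest is a routine character comparison.
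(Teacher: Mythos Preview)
Your proposal is correct and follows essentially the same route as the paper's proof: both reduce to the identity $[W(\gl):V_{\fg}(\mu)]_{q^{-1}} = \sum_{\eta_0} q^{\Deg(\eta_0)}$ via (\ref{eq:M=ch}) and (\ref{eq: final}), then deduce it from Theorem~\ref{Thm: Main_Theorem} by decomposing both sides into $\fg$-isotypic pieces and using that $\Deg$ is constant on each $\fg$-crystal component of $\mB(\gl)_{\cl}$. Your argument is in fact slightly more explicit than the paper's, since you justify the $\fg$-invariance of $\Deg$ via Lemma~\ref{Lem: lemma_of_pi}(i) rather than simply asserting it.
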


\begin{proof}
  From (\ref{eq:M=ch}) and (\ref{eq: final}), it suffices to show the equality
  \begin{equation} \label{eq:final_id}
    [W(\gl): V_{\fg}(\mu)]_{q^{-1}} =      
     \sum_{\begin{smallmatrix}\eta \in \mB(\gl)_{\cl} \\ \te_j \eta = \0 \ (j \in I) \\ \eta(1) = \cl(\mu) 
     \end{smallmatrix}} q^{\Deg(\eta)}.
  \end{equation}
  We obviously have 
  \[ \ch{\fhd}W(\gl) = \sum_{\nu \in P_+} [W(\gl):V_{\fg}(\nu)]_{q}\ch{\fh}V_{\fg}(\nu),
  \]
  where we identify $e(\gd) = q$.
  On the other hand, since $\mB(\gl)_{\cl}$ is isomorphic to the crystal basis of a finite-dimensional 
  $U_q'(\hfg)$-module (in particular $U_q(\fg)$-module)
  and $\te_{j}, \tf_{j}$ for $j \in I$ preserve the degree function, we can see that
  \[ \sum_{\eta \in \mB(\gl)_{\cl}} e\big(\wt_{\hP}(\eta)\big) = 
     \sum_{\nu \in P_+}
     \sum_{\begin{smallmatrix}\eta \in \mB(\gl)_{\cl} \\ \te_j \eta 
     = \0 \ (j \in I) \\ \eta(1) = \cl(\nu) \end{smallmatrix}} q^{-\Deg(\eta)} \ch{\fh}V_{\fg}(\nu),
  \]
  where we identify $e(\gd) = q$.
  Hence by Theorem \ref{Thm: Main_Theorem} and the linearly independence of the characters of 
  finite-dimensional irreducible $\fg$-modules, (\ref{eq:final_id}) is proved.
\end{proof}
 
\ \\
\textbf{Acknowledgements:} The author is very grateful to his supervisor Hisayosi Matumoto
for his constant encouragement and patient guidance.
He is also grateful to R.\ Kodera, S.\ Naito, Y.\ Saito and V.\ Chari for their helpful comments on this work.
He is supported by the Japan Society for the Promotion of Science Research
Fellowships for Young Scientists.



\def\cprime{$'$}

\end{document}